\theoremstyle{plain}
\date{\today}
\title[Operators without non-trivial invariant closed subspaces]
{A general approach to Read's type constructions of operators without non-trivial invariant closed subspaces}
\author{Sophie Grivaux}
\address{CNRS,
Laboratoire Paul Painlev\' e, UMR 8524, Universit\'e Lille 1, Cit\' e Scientifique, 59655 Villeneuve d'Ascq
Cedex, France}
\email{grivaux@math.univ-lille1.fr}
\author{Maria Roginskaya}
\address{Department of Mathematical Sciences,
Chalmers University of Technology, SE-41296 G\"oteborg, Sweden, \emph{and}
Department of Mathematical Sciences,
G\"oteborg University, SE-41296 G\"oteborg, Sweden}
\email{maria@chalmers.se}
\subjclass{47A15, 47A16}
\keywords{Invariant Subspace and Invariant Subset Problems
on Banach spaces, cyclic and hypercyclic vectors, orbits
of linear operators, Read's type operators, quasi-reflexive spaces, weakly
compact operators}
\newcommand{\ls}{\lesssim}
\def\K{\ensuremath{\mathbb K}}
\def\R{\ensuremath{\mathbb R}}
\def\C{\ensuremath{\mathbb C}}
\def\N{\ensuremath{\mathbb N}}
\newcommand{\sep}{separable}
\newcommand{\hy}{hypercyclic}
\newcommand{\ops}{operators}
\newcommand{\op}{operator}
\newcommand{\cy}{cyclic}
\newcommand{\wrt}{with respect to}
\newcommand{\nt}{non-trivial}
\newcommand{\nz}{non-zero}
\newcommand{\re}{reflexive}
\newcommand{\nr}{non-reflexive}
\newcommand{\inv}{invariant}
\newcommand{\pol}{polynomial}
\newcommand{\ifff}{if and only if}
\newcommand{\pss}[2]{\ensuremath{{\langle #1,#2\rangle}}}
\newcommand{\loi}{lay-off interval}
\newcommand{\woi}{working interval}
\newcommand{\To}{\longrightarrow}
\newcommand{\qr}{quasi-reflexive}
\newcommand{\ww}{$w^{*}$-closed}
\newtheorem{theorem}{Theorem}[section]
\newtheorem{lemma}[theorem]{Lemma}
\newtheorem{claim}[theorem]{Claim}
\newtheorem{proposition}[theorem]{Proposition}
\newtheorem{corollary}[theorem]{Corollary}
\theoremstyle{definition}}
\theoremstyle{definition}}
\newtheorem{fact}[theorem]{Fact}
\theoremstyle{definition}}
\theoremstyle{definition}}
\theoremstyle{definition}\newtheorem{remark}[theorem]{Remark}}
\begin{document}

\begin{abstract}
We present a general method for constructing operators without non-trivial invariant closed subsets on a large class of non-reflexive Banach spaces. In particular, our approach  unifies and generalizes several constructions due to Read of operators without non-trivial invariant subspaces on the spaces $\ell_{1}$, $c_{0}$ or $\oplus_{\ell_{2}}J$, and without non-trivial invariant subsets on $\ell_{1}$. We also investigate how far our methods can be extended to the Hilbertian setting, and construct an operator on a quasireflexive dual Banach space which has no non-trivial $w^{*}$-closed invariant subspace.
\end{abstract}

\maketitle

\section{Introduction}

Let $X$ be a real or complex separable Banach space of infinite
dimension, and $T\in \mathcal{B}(X)$ a bounded linear \op\ on $X$.
 An important question in \op\ theory is to determine whether or not
 $T$ always admits a non trivial invariant closed subspace, i.e. if there
 exists a closed subspace $M$ of $X$ with $M\not =\{0\}$ and $M\not = X$
 such that $T(M)\subseteq M$. This problem is called the \emph{Invariant
 Subspace Problem}, and it has been answered in the negative in the $80$'s
 by Enflo \cite{E} and Read \cite{R}: they constructed some separable spaces
 $X$ along with some \ops\ $T$ on $X$ having no non-trivial invariant closed
  subspaces. Then Read \cite{R1}, \cite{R2} gave examples of such \ops\ on some
  classical Banach spaces such as $\ell_{1}$ or $c_{0}$.
\par\smallskip
All these \ops\ live on non-reflexive Banach
spaces. The Invariant Subspace Problem remains open for
reflexive Banach spaces, and in particular for the Hilbert spaces. The best counterexample
in this direction is due to Read \cite{R2}: he constructed an \op\ without \nt\ \inv\
  closed subspace on
 $X=\oplus_{\ell_{2}}J$, the $\ell_{2}$-sum of countably many copies of the
 James space $J$. Since $J$ has codimension $1$ in its bidual, the space
 $X^{**}/X$ is separable.
\par\smallskip
The \emph{Invariant Subset Problem},  which is to know whether a bounded
\op\  $T$ on $X$ always admits a closed subset $F$ with $F\not =\{0\}$,
$F\not  = X$ such that $T(F)\subseteq F$, is even more widely open than
the Invariant Subspace Problem. The only known counterexamples to this problem are due to Read, who constructed in \cite{R3} \ops\ on
$\ell_{1}$ (and more generally on any space containing a complemented copy of $\ell_1$) with no \nt\ \inv\ closed subset.
\par\smallskip
The Invariant Subspace and Subset Problems can be reformulated in terms of \cy\  and \hy\ vectors: recall that if $x$ is any vector of $X$, the
\emph{orbit of $x$ under the action of $T$} is the set
 $\mathcal{O}\textrm{rb}(x,T)=\{T^{n}x \textrm{ ; } n\geq 0\}$. Its closure
$\overline{\mathcal{O}\textrm{rb}}(x,T)$ is the smallest closed
subset of $X$ which is invariant by $T$ and contains $x$, while
the closure of the linear span of the orbit of $x$ is the smallest
closed subspace of $X$ which is \inv\ by $T$ and contains $x$.
The vector $x$ is said to be \emph{cyclic} for $T$ is the linear span
of the orbit of $x$ is dense in $X$, and \emph{hypercyclic} if the orbit
 itself is dense in $X$. Hence $T$ has no \nt\ \inv\ closed subspace \ifff\
  every non-zero vector is cyclic for $T$, and no \nt\ \inv\ closed subset \ifff\
   every non-zero vector is \hy\ for $T$.
\par\smallskip
We will be concerned in this paper with Read's type \ops: by this we mean \ops\ of the kind constructed by Read in his various works \cite{R}, \cite{R1}, \cite{R2} and \cite{R3} on invariant subspaces and subsets issues. They are of a very combinatorial nature, and although the constructions of \cite{R}, \cite{R1}, \cite{R2} and \cite{R3} rely on a basis of common techniques, each of them has to be adapted to the particular space one is working with. For instance, the constructions of \cite{R1} and \cite{R3} rely heavily on the additive properties of the $\ell_{1}$ norm, while those of \cite{R2} depend on some particular features of the canonical basis of the spaces involved, namely $c_{0}$ or $\oplus_{\ell_{2}}J$.
\par\smallskip
Our first aim in this paper is to present a unified approach to all these constructions, and to show how all the known counterexamples to the Invariant Subspace or Subset Problem on ``concrete'' Banach spaces can be derived from a single general statement. We are especially interested in determining which geometric properties of a Banach space will ensure that it supports an \op\ without non-trivial \inv\ closed subset. This is of interest in the view of the recent works of Argyros and Haydon \cite{AH} and Argyros and Motakis \cite{AM}: in \cite{AH}, examples are constructed of spaces on which every \op\ is of the form $\lambda I+K$, with $\lambda $ a scalar and $K$ a compact \op, and it is well-known that such operators always have a non-trivial closed \inv\ subspace \cite{Lom}. In \cite{AM}, the authors exhibit reflexive Banach spaces on which every \op\  has a non-trivial \inv\ closed subspace. The spaces of of \cite{AH} and \cite{AM} are hereditarily indecomposable, so they are certainly very far from the kind of spaces which we are going to consider in our forthcoming Theorem \ref{th1}.
\par\smallskip
Here is our first main result, which yields a large class of non-reflexive Banach spaces on which \ops\ without non-trivial \inv\ closed subsets can be constructed. Contrary to the spaces of \cite{AH} and \cite{AM}, which support no \op\ with no \nt\ \inv\ closed subspace, 
the spaces to which Theorem \ref{th1} applies are decomposable in a very strong sense.

\begin{theorem}\label{th1}
 Let $Z$ be a non-reflexive Banach space admitting a Schauder basis. Let $X$ be one of the spaces $\oplus_{\ell_{p}}Z$, $1\le p<+\infty$, or $\oplus_{c_{0}}Z$, where we denote by  $\oplus_{\ell_{p}}Z$ or $\oplus_{c_{0}}Z$ the direct $\ell_{p}$- or $c_{0}$-sum of infinitely many copies of the space $Z$. 
 
 Then
   there exists a bounded \op\ $T$ on $X$ which has no non-trivial \inv\ closed subset. The same conclusion holds true for any \sep\ space which contains a complemented copy of one of the spaces $X$ above.
\end{theorem}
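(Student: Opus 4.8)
The plan is to construct $T$ by a Read-type induction: first to isolate a single general criterion guaranteeing that a \sep\ Banach space carries an \op\ with no \nt\ \inv\ closed subset, and then to check that $\oplus_{\ell_{p}}Z$ and $\oplus_{c_{0}}Z$ --- and, by a further reduction, any \sep\ space containing a complemented copy of one of these --- satisfy it. The construction itself goes as follows. Fix a Schauder basis of $Z$; combined with the canonical decomposition of $X$ into copies of $Z$, this yields a Schauder basis $(e_{i})_{i\ge 0}$ of $X$, one of whose vectors $e_{0}$ is singled out, the projections onto the initial segments $\vect(e_{0},\ldots,e_{d})$ serving as bookkeeping device. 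The \op\ $T$ is prescribed block by block on the $e_{i}$ along a very rapidly increasing sequence of integers, which cuts the index set into alternating \woi s and \loi s: on a \loi, $T$ acts in an innocuous, essentially weighted-shift fashion that is trivial to keep bounded, whereas on a \woi\ it performs the real dynamical work. The sole property of $Z$ used beyond the existence of a basis is its non-reflexivity, fed into the construction through James's theorem: inside (copies of) $Z$ it provides, for some $\theta\in(0,1)$, bounded sequences $(\zeta_{k})_{k\ge 1}$ in $Z$ and $(\zeta_{k}^{*})_{k\ge 1}$ in $Z^{*}$ with $\pss{\zeta_{k}^{*}}{\zeta_{\ell}}=\theta$ for $k\le\ell$ and $\pss{\zeta_{k}^{*}}{\zeta_{\ell}}=0$ for $k>\ell$. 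Such a ``summing-functional'' configuration is available in \emph{every} \nr\ space --- in particular in the James space --- and it is what played the role of the additivity of the $\ell_{1}$-norm in \cite{R1} and \cite{R3}, of the summing basis of $c_{0}$ in \cite{R2}, and of the James space itself in \cite{R2}: it is what keeps the orbits from closing up and lets them be steered towards prescribed targets.

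The construction is driven by a countable list of tasks. Fix a dense sequence $(y_{q})_{q\ge 1}$ in $X$ and a dense sequence $(v_{q})_{q\ge 1}$ of vectors each of which has finite support with respect to $(e_{i})$; the tasks alternate between (i) ``make a power of $T$ send $e_{0}$ to within $\varepsilon$ of $y_{q}$'' and (ii) ``make a power of $T$ send $v_{q}$ to within $\varepsilon$ of $e_{0}$'', with $\varepsilon\to 0$ along the list. On the $n$-th \woi, $T$ is designed so that a suitable power $T^{N_{n}}$ accomplishes the $n$-th task up to an error controlled by the freedom remaining in the construction. The combinatorial heart of the proof --- and, I expect, the main obstacle --- is to verify that the prescriptions imposed on successive \woi s are mutually compatible: accomplishing task $n$ must neither undo tasks $1,\ldots,n-1$ nor make $\|T\|$ blow up, and all of this has to be carried out using only the James $\theta$-system as structural input on $Z$, so that the argument is genuinely generic rather than tied to the special features of $\ell_{1}$, $c_{0}$ or $J$. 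The \loi s are inserted precisely to create the buffer room required for this, but the bookkeeping that makes it work is delicate; it is at this point that the $\theta$-system enters quantitatively, the coefficients produced along an orbit being telescoped against the $\zeta_{k}^{*}$'s so that the overflow they would otherwise create is absorbed rather than amplified.

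With the construction in place, I would check that $T$ extends to a bounded \op\ on $X$ by letting the integer parameters grow fast enough; the norm estimates are where the $\ell_{p}$- (or $c_{0}$-) structure of $X$ enters, $\|Te_{i}\|$ being computed coordinatewise across the copies of $Z$ and then recombined, with the uniform bounds on $(\zeta_{k})_{k}$ and $(\zeta_{k}^{*})_{k}$ controlling each coordinate and enough slack left in the estimates for the recombination to succeed simultaneously for every $p$ and for $c_{0}$. Finally, every \nz\ vector of $X$ is \hy. Given $x\ne 0$, pick an enumerated vector $v_{q}$ so close to $x$ that the task of type (ii) attached to it --- which yields an $N$ with $\|T^{N}v_{q}-e_{0}\|<\varepsilon$ --- also gives $\|T^{N}x-e_{0}\|<2\varepsilon$ (that such a $v_{q}$ exists, with the closeness required of it compatible with the size of the relevant power of $T$, is again arranged by the choice of parameters); letting $\varepsilon\to 0$ gives $e_{0}\in\overline{\mathcal{O}\textrm{rb}}(x,T)$, hence $\overline{\mathcal{O}\textrm{rb}}(x,T)\supseteq\overline{\mathcal{O}\textrm{rb}}(e_{0},T)$, while the tasks of type (i) give $\overline{\mathcal{O}\textrm{rb}}(e_{0},T)=X$. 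Thus $\overline{\mathcal{O}\textrm{rb}}(x,T)=X$ and $T$ has no \nt\ \inv\ closed subset.

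It remains to treat a \sep\ space $Y$ containing a complemented copy of $X$. Writing $Y=X\oplus W$, one wants to run the construction on $Y$ in a way that also activates the $W$-direction; since every \sep\ Banach space admits an injective bounded \op\ into $\ell_{p}$, hence into $X$, such a map can be used to feed $W$ into the dynamics, which amounts to a variant of the general criterion allowing the distinguished ``working'' part of the space to sit inside a larger ambient space. I do not expect this last step to bring in any idea absent from the main construction, only extra bookkeeping, and I would build the needed flexibility into the general criterion from the outset.
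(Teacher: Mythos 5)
There is a genuine gap, and it sits exactly at the point your plan treats as bookkeeping. Your scheme enumerates tasks for a dense sequence $(y_q)$ and for a dense sequence $(v_q)$ of \emph{finitely supported} vectors, and then, given an arbitrary $x\neq 0$, transfers the estimate $\|T^{N_q}v_q-e_0\|<\varepsilon$ to $x$ by choosing $v_q$ close to $x$. That transfer requires $\|x-v_q\|<\varepsilon/\|T^{N_q}\|$, where $N_q$ (hence $\|T^{N_q}\|$, which is unbounded along the construction) is frozen at stage $q$, before $x$ is given; which $q$ you may use depends on how close $v_q$ is to $x$, and how close it must be depends on $q$. No choice of parameters resolves this circularity: for each fixed $\varepsilon$ the set of $x$ admitting a usable $v_q$ is just a union of small balls around a countable set, i.e.\ an open dense set, so letting $\varepsilon\to 0$ your argument yields at best a dense $G_\delta$ of vectors whose orbits approach $e_0$ --- which every operator with a dense orbit has automatically --- and not the statement that \emph{every} non-zero vector is hypercyclic. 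This is precisely the difficulty that forces the actual proof to work with arbitrary vectors uniformly rather than with an enumerated dense family: one needs (i) the uniform tail estimates of Proposition \ref{prop3bis} and Proposition \ref{prop4bis}, obtained through the twisted finite-rank projections $Q_{\nu_n}$, which control $T^{c_{k,n}}(I-Q_{\nu_n})x$ by $103\,\|x-\pi_{[0,\nu_n]}x\|$ for \emph{all} $x$ simultaneously, and (ii) the dichotomy of Proposition \ref{prop6bis}, valid for every norm-one $x$, producing for infinitely many $n$ an early coordinate $e_j^{*(\mu_n)}(Q_{\mu_n}x)$, $j<(n-N)a_n$, that is not super-exponentially small; only then can Fact \ref{factf}/Proposition \ref{prop7bis} manufacture a polynomial of controlled modulus steering $Q_{\mu_n}x$ to $e_{(n-N)a_n}$, which Fact \ref{fact1bis} places within $1/a_N$ of $e_0$. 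Neither ingredient appears in your proposal, and without them the scheduled-tasks scheme cannot reach all non-zero vectors.

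Relatedly, the way you feed non-reflexivity into the construction is not yet an argument. You invoke James's $\theta$-system $(\zeta_k,\zeta_k^*)$ and say the coefficients are ``telescoped against the $\zeta_k^*$'', but the paper's mechanism is different and its precise form matters: by Zippin's theorem one chooses a Schauder basis $(z_j)$ of $Z$ and bounded coefficients $\alpha_j$ with $\limsup_j\alpha_j>\delta_0$ and $\sup_J\|\sum_{j\le J}\alpha_j z_j\|\le 1$ while the series \emph{diverges}; the basis vectors (in infinitely many copies of $Z$, with $n$ (a)-working intervals at step $n$ so that the approximation error $1/a_N$ can be made arbitrarily small) are interleaved into the basis of $X$, and it is exactly the divergence of $\sum_k\alpha_k z_k$ that closes the proof of Proposition \ref{prop6bis}: if all the relevant coordinates of $Q_{\mu_n}x$ were small for all large $n$, the construction would force that series to converge in some copy of $Z$ and hence $x=0$. (Theorem \ref{th3bis} shows what goes wrong when the series does converge, as on $\ell_2$: an exceptional vector $x_0$ with $Tx_0=0$ is unavoidable.) Your $\theta$-system is a legitimate certificate of non-reflexivity, but you would still have to convert it into a basis-compatible divergent-series statement of this kind, which is the content of Zippin's (or Kalton's) result, not of James's. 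Finally, for the complemented-copy assertion, an injective map of the complement $W$ into $\ell_p$ is not the mechanism used; one incorporates the extra space directly into the interleaved basis construction, as in Read's papers --- a minor point compared with the gap above.
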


We retrieve in particular the existence of an \op\ on $\ell_{1} $ with no non-trivial \inv\ closed subset \cite{R3}, and improve the results of \cite{R2} by showing that $c_{0}$ and $\oplus_{\ell_{2}}J$ support \ops\ without non-trivial \inv\ closed subsets.
\par\smallskip
An interesting consequence of Theorem \ref{th1} is:

\begin{corollary}\label{cor2}
Let $X$ be an infinite-dimensional non-\re\ \sep\ space having an unconditional basis. Then there exists a bounded \op\ on $X$ with no \nt\ \inv\ closed subset.
\end{corollary}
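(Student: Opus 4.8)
The plan is to deduce Corollary \ref{cor2} from Theorem \ref{th1}, the bridge being the classical structure theory of Banach spaces with an unconditional basis. The key input is the theorem of James: a Banach space with an unconditional basis is \re\ if and only if it contains no isomorphic copy of $c_{0}$ and no isomorphic copy of $\ell_{1}$. Since $X$ is \nr, it therefore contains a copy of $c_{0}$ or of $\ell_{1}$; what one actually needs, in order to apply the last assertion of Theorem \ref{th1}, is a \emph{complemented} such copy. So the first step is to upgrade ``copy'' to ``complemented copy'', and the second step is to recognize $c_{0}$ and $\ell_{1}$ as spaces of the form treated in Theorem \ref{th1}.

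For the first step, I would renorm $X$ so that its unconditional basis $(e_{n})_{n\ge 1}$ is $1$-unconditional. By James's theorem in its refined, ``block'' form, either $(e_{n})$ fails to be boundedly complete, or it fails to be shrinking. In the first case a standard gliding-hump argument produces a normalized block basic sequence $(u_{k})_{k\ge 1}$ of $(e_{n})$, with supports $I_{k}$, such that $\sup_{N}\|\sum_{k=1}^{N}u_{k}\|<+\infty$; by $1$-unconditionality $(u_{k})$ is then equivalent to the canonical basis of $c_{0}$. In the second case one gets in the same way a normalized block basic sequence $(u_{k})$ with supports $I_{k}$, together with a norm-one functional $f\in X^{*}$ such that $\inf_{k}f(u_{k})=:\delta>0$; by $1$-unconditionality $(u_{k})$ is now equivalent to the canonical basis of $\ell_{1}$. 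In either case one checks that $[u_{k}:k\ge 1]$ is complemented in $X$: letting $Q_{I_{k}}\colon X\to[e_{n}:n\in I_{k}]$ be the canonical (norm-one) projection attached to the $1$-unconditional basis, one defines functionals $u_{k}^{*}$ supported on $I_{k}$ with $u_{k}^{*}(u_{j})=\delta_{kj}$ and $\sup_{k}\|u_{k}^{*}\|<+\infty$ — Hahn--Banach extensions of norming functionals for the $u_{k}$ inside $[e_{n}:n\in I_{k}]$ in the $c_{0}$ case, and $(f\circ Q_{I_{k}})/f(u_{k})$ in the $\ell_{1}$ case — and then verifies, using that $(e_{n})$ is a Schauder basis (so $\|Q_{I_{k}}x\|\to 0$ for each $x$ as $\min I_{k}\to\infty$) in the $c_{0}$ case, and that $\sum_{k}|f(Q_{I_{k}}x)|\le\|x\|$ (again by $1$-unconditionality, choosing signs on the blocks) in the $\ell_{1}$ case, that $Px:=\sum_{k}u_{k}^{*}(x)\,u_{k}$ defines a bounded projection of $X$ onto $[u_{k}]$. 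This passage from an embedding to a complemented embedding is the only mildly technical point, and it is entirely classical (see, e.g., the treatment of unconditional bases in the books of Lindenstrauss--Tzafriri or of Albiac--Kalton); the genuine content of the corollary sits in Theorem \ref{th1}.

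For the second step, observe that $c_{0}$ is isometrically isomorphic to $\oplus_{c_{0}}c_{0}$ and that $\ell_{1}$ is isometrically isomorphic to $\oplus_{\ell_{1}}\ell_{1}$. Since $c_{0}$ and $\ell_{1}$ are \nr\ and admit a Schauder basis, both $\oplus_{c_{0}}c_{0}$ and $\oplus_{\ell_{1}}\ell_{1}$ are of the form $\oplus_{\ell_{p}}Z$ (with $p=1$, $Z=\ell_{1}$) or $\oplus_{c_{0}}Z$ (with $Z=c_{0}$) appearing in the statement of Theorem \ref{th1}. Combining this with the first step, $X$ contains a complemented subspace isomorphic either to $\oplus_{c_{0}}c_{0}$ or to $\oplus_{\ell_{1}}\ell_{1}$; as $X$ is \sep, the last assertion of Theorem \ref{th1} then provides a bounded \op\ on $X$ with no \nt\ \inv\ closed subset, which is exactly the claim. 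The only ``obstacle'' is thus the complemented embedding, and it is a well-trodden piece of Banach space theory rather than a real difficulty.
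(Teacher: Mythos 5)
Your proposal is correct and follows essentially the same route as the paper: the paper also deduces the corollary from the last assertion of Theorem \ref{th1} by invoking James's classical result that a non-reflexive separable space with an unconditional basis contains a \emph{complemented} copy of $c_{0}$ or $\ell_{1}$, these being of the form $\oplus_{c_{0}}c_{0}$ and $\oplus_{\ell_{1}}\ell_{1}$. The only difference is that you spell out the standard block-basis complementation argument, which the paper simply attributes to James.
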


Indeed by a classical result of James \cite{J}, such a space $X$ contains a complemented subspace which is isomorphic to either $c_0$ or $\ell_1$. It then suffices to apply Theorem \ref{th1}. If we are only interested in operators without \nt\ \inv\ closed subspaces, the analogue of Corollary \ref{cor2} follows already from this argument and the works \cite{R1} and \cite{R2} of Read.
\par\smallskip

Theorem \ref{th1} and Corollary \ref{cor2} impose as an assumption that the space $X$ one is working with be non-reflexive. As mentioned previously, all known counterexamples to the Invariant Subspace Problem live on non-reflexive Banach spaces, and the role played by reflexivity in these questions is not really clear. Our second goal in this work is to shed some light on this question, and to explain at which point of a Read's type construction the non-reflexivity assumption becomes really crucial. Two-thirds of such a construction can be adapted to any kind of ``reasonable'' space (for instance, to all the $\ell_p$ spaces), in particular to the Hilbert space. This was shown in the paper \cite{GR}, where \ops\ on the Hilbert space with few \inv\ closed subsets were constructed. We quote here the main result of \cite{GR} (combined with a remark from \cite{GR2}), which shows which kind of properties of Read's type \ops\ can be enforced in the Hilbertian setting.

\begin{theorem}\label{th0}\cite{GR}
 There exists a bounded \op\ on a \sep\ (real or complex) Hilbert space $H$ which satisfies the following two properties:

$\quad (P1)$ for every $x\in H$ the closure of the sets
 $\mathcal{O}\textrm{rb}(x,T)$ and $\textrm{sp}[\mathcal{O}\textrm{rb}(x,T)]$
 coincide;

$\quad (P2)$ the family
 $(\overline{\mathcal{O}\textrm{rb}}(x,T))_{x\in H}$ of the
 closures of the  orbits of $T$ is totally ordered, i.e. for any pair $(x,y)$ of
 vectors of $H$, either $\overline{\mathcal{O}\textrm{rb}}(x,T)\subseteq
\overline{\mathcal{O}\textrm{rb}}(y,T)$ or
$\overline{\mathcal{O}\textrm{rb}}(y,T)
\subseteq \overline{\mathcal{O}\textrm{rb}}(x,T)$.
\par\smallskip
\noindent It follows from this that the set of non-\hy\ vectors for $T$ is very small in the sense that it is contained in a countable union of closed subspaces of $H$ which are of infinite codimension in $H$.
\end{theorem}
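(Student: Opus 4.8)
The plan is to produce $T$ by a direct Read's type construction on $H=\ell_{2}(\N)$ --- the general framework developed in the rest of this paper being tailored precisely to such constructions --- following the lines of \cite{GR}. One works with the canonical basis $(e_{j})_{j\ge0}$ of $H$, singles out the vector $e=e_{0}$, and fixes a countable set $D=\{u_{k}\,;\,k\ge1\}$ of finitely supported vectors with rational coordinates which is dense in $H$. The integers are split into consecutive blocks alternating between a sequence of \woi{}s $I_{1}<I_{2}<\cdots$ and intervening \loi{}s, and $T$ is defined recursively by prescribing the vectors $Te_{j}$ one block at a time: away from the block boundaries $T$ acts as the forward shift $e_{j}\mapsto e_{j+1}$, so that $T^{n}e$ simply runs along the basis, while at the boundaries one inserts the corrections that (i) on the $k$-th \woi\ force the existence of a polynomial $p$, of degree bounded by the length of that interval, with $\|p(T)e-u_{k}\|$ as small as prescribed, and (ii) on the following \loi\ damp all the norms that the working phase has inflated. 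The recursion must be tuned so that the matrix of $T$ satisfies the (delicate, but for this type of construction standard) estimates guaranteeing that $T$ extends to a bounded \op\ on $H$; this is the one point at which the construction is genuinely weaker than Read's full ones --- one does \emph{not} attempt to make every non-zero vector cyclic, which is impossible on a Hilbert space short of settling the Invariant Subset Problem --- and it is exactly this weakening that lets the estimates close. Granting all this, $T$ is bounded, and $e$ is cyclic by construction.

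The substance of the statement is property $(P1)$. Fix $x\neq0$; one must show $\overline{\mathcal{O}\textrm{rb}}(x,T)=\overline{\textrm{sp}}[\mathcal{O}\textrm{rb}(x,T)]$, the inclusion $\supseteq$ being the only issue. The construction should be arranged so that the linear information carried by a polynomial can be recovered from a single high power of $T$: whenever $v=q(T)T^{a}x$ with $q$ a polynomial of controlled degree and coefficients and $a$ placed at the start of a \woi\ long enough to accommodate $q$, there is an integer $b$ further along that same interval with $\|T^{b}x-q(T)T^{a}x\|$ as small as desired. Running this over a sequence of \woi{}s then displays each element of $\overline{\textrm{sp}}[\mathcal{O}\textrm{rb}(x,T)]$ as a limit of genuine orbit vectors $T^{b}x$, which is $(P1)$. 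Establishing that the \woi\ mechanism really does convert polynomial combinations of orbit vectors into single powers --- uniformly in $x$, and with the quantitative control promised by the recursion --- is the technical heart of the whole argument, and is where I expect the real difficulty to lie; the remainder is bookkeeping around the recursion.

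Properties $(P2)$ and the concluding assertion should be comparatively soft once $(P1)$ is in hand. One shows that $F(x):=\overline{\textrm{sp}}[\mathcal{O}\textrm{rb}(x,T)]$ depends on $x$ only through a parameter $d(x)\in\N\cup\{+\infty\}$ read off from $x$ by the construction, that the corresponding subspaces form an increasing chain $F_{0}\subseteq F_{1}\subseteq\cdots\subseteq F_{+\infty}=H$ in which each $F_{d}$ with $d<+\infty$ is a proper closed subspace (while $F_{+\infty}=H$ and $d(e)=+\infty$ record the cyclicity of $e$), and that $\overline{\bigcup_{d<+\infty}F_{d}}=H$. Then $F(x)$ and $F(y)$ are always comparable, hence by $(P1)$ so are $\overline{\mathcal{O}\textrm{rb}}(x,T)=F(x)$ and $\overline{\mathcal{O}\textrm{rb}}(y,T)=F(y)$, which is $(P2)$. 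Finally, by $(P1)$ a vector fails to be \hy\ precisely when it fails to be cyclic, i.e.\ when $F(x)\neq H$, i.e.\ when $d(x)<+\infty$; so the set of non-\hy\ vectors is contained in $\bigcup_{d<+\infty}F_{d}$, a countable union of proper closed subspaces of $H$. That each $F_{d}$ has infinite codimension --- the refinement borrowed from \cite{GR2} --- is then automatic: the chain cannot be eventually constant (its union would fail to be dense), so it increases strictly infinitely often, and therefore a finite-codimensional $F_{d_{0}}$ ($d_{0}<+\infty$) would force $F_{D}=H$ for some finite $D$, contradicting properness.
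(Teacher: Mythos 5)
The paper does not itself prove Theorem~\ref{th0}: it is quoted from~\cite{GR}, the only related material here being the re-derivation in §\ref{sec3} of Properties (P1) and (P2) for the Theorem~\ref{th2} operators (the~\cite{GR} operators augmented with an (a)-part). Your outline of the construction and of (P1) follows that general scheme -- (c)-working intervals arranged so that $T^{c_{k,n}}$ imitates $p_{k,n}(T)$ on finite-dimensional pieces $F_{\nu_n}$, lay-off intervals damping inflated norms -- but it glosses over the decisive tail estimate $\|T^{c_{k,n}}(I-\pi_{[0,\nu_n]})\|\le 100$ (the analogue of Proposition~\ref{prop2} here), without which one cannot pass from a good approximation of $p(T)y$ on $F_{\nu_n}$ to one valid for all $x\in H$. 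You flag this as the hard step but do not confront it; the rest of your (P1) sketch is broadly consistent with the actual argument.

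The genuine gap is in (P2) and the closing assertion. You posit that $F(x)=\overline{\textrm{sp}}[\mathcal{O}\textrm{rb}(x,T)]$ is determined by a single parameter $d(x)\in\N\cup\{+\infty\}$, making the invariant-subspace lattice a literal countable chain $F_0\subseteq F_1\subseteq\cdots\subseteq F_{+\infty}=H$. This is neither justified nor what the argument of~\cite{GR} (or §\ref{sec3}) gives: (P2) asserts total orderedness, not countability, and nothing in the construction excludes continuum many distinct orbit closures. The real proof is a \emph{pairwise} comparison: one sets $j_n(x)$ to be the first index at which a coordinate of the projection of $x$ onto $F_{a_n}$ exceeds a doubly-exponential threshold (Fact~\ref{facte}), shows that $j_n(x)\le j_n(y)$ for infinitely many $n$ forces $\overline{\mathcal{O}\textrm{rb}}(y,T)\subseteq\overline{\mathcal{O}\textrm{rb}}(x,T)$ via a quantitative solvability lemma (Fact~\ref{factf}), and concludes by the pigeonhole trichotomy on the pair $(x,y)$. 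The concluding statement about $HC(T)^{c}$ likewise does not rest on countability of the lattice: it uses Property~(P4) of Proposition~\ref{prop6} -- every non-zero closed invariant subspace is an orbit closure, so $T$ restricted to it is hypercyclic, so every proper one automatically has infinite codimension -- together with a countable cofinal subchain of the proper invariant subspaces (e.g.\ the closed unions $L_m$ of those $M$ with $d(g_0,M)\ge 1/m$). Your alternative derivation of infinite codimension ("the chain cannot be eventually constant, else its union fails to be dense") is circular (it presupposes the unproved countable-chain structure) and the density premise is itself wrong: $HC(T)^{c}$ is meager and there is no reason for it to be dense.
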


Property (P1) can be reformulated as ``the closure of any orbit is a subspace'', while \ops\ satisfying Property (P2) are usually called \emph{orbit-unicellular}. The combination of Properties (P1) and (P2) is already a strong requirement on the structure of the lattice of invariant closed subspaces of $T$, and this is what forces the set of non-\hy\ vectors to be very small (see \cite[Section 5.2]{GR2} for more details). Still, properties (P1) and (P2) are no sufficient in order to guarantee that $T$ has no non-zero non-\hy\ vector, and it is proved in \cite{GR2} that the Read's type \ops\  on the Hilbert space constructed in \cite{GR} do have non-trivial \inv\ closed subsets (which are automatically subspaces by (P1)). In order to go one step further and gain more information on vectors which are close to the orbit of any non-zero vector, non-reflexivity of the space becomes essential. We prove the following theorem, which can be seen as a kind of ``first step'' towards the construction of \ops\ without non-trivial \inv\ closed subsets:

\begin{theorem}\label{th2}
 Let $X$ be a separable \nr\ Banach space having a Schauder basis and
 containing a complemented copy of one of the spaces $\ell_{p}$,
 $1\leq p<+\infty $, or $c_{0}$. Let $(g_{j})_{j\geq 0}$ denote the
 canonical basis of this space $\ell_{p}$ or $c_{0}$, where we suppose
 without loss of generality that $||g_{0}||=1$.
For any $\varepsilon \in (0,1)$ there exists a bounded \op\ $T$ on $X$ for which $g_{0}$ is hypercyclic and
which has the following three properties:

$\quad (P1)$ for every $x\in X$ the closure of the sets
 $\mathcal{O}\textrm{rb}(x,T)$ and $\textrm{sp}[\mathcal{O}\textrm{rb}(x,T)]$
 coincide;

$\quad (P2)$ the family
 $(\overline{\mathcal{O}\textrm{rb}}(x,T))_{x\in X}$ of the
 closures of its  orbits is totally ordered;

$\quad (P3)$ for any
 \nz\ vector $x\in X$ the distance of the orbit $\mathcal{O}\textrm{rb}(x,T)$ to
 $g_{0}$ is less than $\varepsilon $: $d(\mathcal{O}\textrm{rb}(x,T), g_0)
 <\varepsilon $.
\end{theorem}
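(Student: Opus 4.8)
The plan is to carry out a Read-type construction adapted to the complemented copy of $\ell_p$ (or $c_0$) inside $X$, and to exploit the non-reflexivity of $X$ precisely at the point where one needs, for \emph{every} non-zero $x$, to push a power $T^n x$ close to the distinguished vector $g_0$. Write $X = Y \oplus W$, where $Y$ is the complemented copy of $\ell_p$ or $c_0$ with canonical basis $(g_j)_{j\ge 0}$ and $W$ is a complementary closed subspace; since $X$ has a Schauder basis and is non-reflexive, one may arrange (after a standard change of basis, using James's characterisation of non-reflexivity via a non-trivial $\ell_1^+$-type structure, exactly as in the reflexive-space analysis of \cite{GR}) that $X$ has a Schauder basis $(f_k)_{k\ge 0}$ adapted to the decomposition, with $g_0 = f_0$, and with a distinguished sequence $(e_i)$ of basis vectors whose partial sums $e_0 + e_1 + \dots + e_N$ stay bounded in $X$ — this boundedness, which fails in a reflexive space, is what makes property (P3) achievable. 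The operator $T$ will be defined, as in all Read-type constructions, by prescribing its action on an enumeration of a dense sequence of ``rational'' vectors, cutting the integers into alternating \emph{working intervals} and \emph{lay-off intervals}, so that on working intervals $T$ forces orbits to approximate target vectors (including $g_0$), while on lay-off intervals $T$ acts like a controlled weighted backward-type shift to keep the norm estimates under control and to guarantee boundedness of $T$.

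The key steps, in order, are as follows. First, set up the combinatorial framework: fix the enumeration of rational vectors, choose the sequence of working/lay-off intervals with sufficiently fast-growing lengths, and define $T$ on the basis vectors so that (a) $T$ is bounded — this is the standard delicate norm estimate, handled separately on working and lay-off intervals, using here the $\ell_p$/$c_0$ additivity of the norm on the $Y$-part and a summability hypothesis on the coefficients on the $W$-part — and (b) on the $n$-th working interval, some power $T^m g_0$ is within $2^{-n}$ of the $n$-th rational vector, which gives hypercyclicity of $g_0$. Second, prove (P1) and (P2) by the now-standard argument from \cite{GR}: show that for any $x \ne 0$, after projecting onto a suitable finite-dimensional coordinate block one can, by applying a high power of $T$ landing in a working interval, bring $T^n x$ to within any prescribed $\delta$ of a fixed multiple of $g_0$; from this one deduces both that $\overline{\mathcal{O}\textrm{rb}}(x,T)$ is a subspace and that these subspaces are nested (the vector ``lower'' in the order being the one needing a larger power to reach $g_0$). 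Third — and this is the genuinely new point — quantify the previous step to get (P3): arrange the construction so that the multiple of $g_0$ which $T^n x$ approximates is \emph{exactly} $g_0$ up to an error less than $\varepsilon$, for \emph{all} non-zero $x$ simultaneously and with a fixed $\varepsilon \in (0,1)$. This is where the bounded sequence of partial sums $e_0 + \dots + e_N$ enters: one uses it to ``absorb'' the part of $x$ that is not aligned with $g_0$ into a bounded error, choosing the parameters of the construction so that this error contributes at most $\varepsilon - 2^{-n}$ to the distance, and then the $2^{-n}$ from the working-interval approximation closes the gap.

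The main obstacle I expect is reconciling (P3) with boundedness of $T$. Forcing $d(\mathcal{O}\textrm{rb}(x,T), g_0) < \varepsilon$ for every non-zero $x$ with a uniform $\varepsilon$ requires the ``renormalising'' action on lay-off intervals to be strong enough to collapse the $W$-component of an arbitrary $x$ towards the $g_0$-direction, yet this same action must not blow up the operator norm; in a reflexive space the two requirements are irreconcilable (the relevant partial sums are unbounded), and the whole point of the non-reflexivity hypothesis is to supply the bounded ``diagonal'' sequence $(e_i)$ that threads this needle. Making the bookkeeping precise — choosing the interval lengths, the coefficients of $T$ on each basis vector, and the error budget $2^{-n}$ versus the $\varepsilon$-budget so that all of (P1), (P2), (P3), hypercyclicity of $g_0$, and $\|T\| < \infty$ hold at once — is the technical heart of the argument, and will occupy the bulk of the construction; the verification that the resulting $T$ indeed has properties (P1)–(P3) then follows the template of \cite{GR} with the quantitative refinement described above.
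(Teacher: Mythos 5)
Your overall architecture does match the paper's: decompose $X$ as the complemented copy of $\ell_{p}$ or $c_{0}$ plus a complement, interlace the two bases into a basis $(f_{j})$, define $T e_{j}=e_{j+1}$ on a triangular system $(e_{j})$ organised into working and lay-off intervals, prove boundedness by norm estimates on each type of interval, and get hypercyclicity of $g_{0}$ from the (c)-type working intervals. But the pivotal step you propose for (P1)--(P2) is wrong: you claim that for every non-zero $x$ and every $\delta>0$ one can bring some $T^{n}x$ within $\delta$ of a fixed non-zero multiple of $g_{0}$, and that (P1), (P2), (P3) all flow from quantifying this. That intermediate claim is too strong to be true here: together with (P1) it would place $g_{0}$, a hypercyclic vector, in the closed orbit of every non-zero vector, i.e.\ it would give an operator with no non-trivial invariant closed subset at all --- which is the conclusion of Theorem \ref{th1}, needs infinitely many copies of the non-reflexive space, and actually fails for the operators of Theorem \ref{th2} (for $1<p<+\infty$ they do have non-trivial invariant closed subspaces, see Proposition \ref{propbizarre}). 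In the actual proof, (P1) is obtained by approximating $p(T)x$ by powers $T^{c_{k,n}}x$ uniformly over an $\varepsilon_{n}$-net of polynomials, and (P2) by comparing the positions of the first ``large'' $e_{j}$-coordinates of suitable projections of $x$ and $y$; neither argument goes through proximity to $g_{0}$, and indeed both already work on the Hilbert space, where your claimed approximation is impossible.

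The second gap is in (P3) itself: what makes it work is not merely that the partial sums of the series coming from non-reflexivity are bounded by $\varepsilon$, but that this bounded series \emph{diverges}. The orbit of $x$ is made to approximate the vectors $e_{a_{n}}=g_{0}+\sum_{k\le n}\alpha_{k}z_{\kappa_{k}}$ (not $g_{0}$ itself), and the whole difficulty is to show that an \emph{arbitrary} non-zero $x$ can be steered near some $e_{a_{n}}$. This requires a lower bound of the type of Proposition \ref{prop5}: for infinitely many $n$, some coordinate $e_{j}^{*(a_{n})}(Q_{a_{n}}x)$ with $j\le a_{n}-1$ is not super-exponentially small, which is proved by contradiction --- if all such coordinates were tiny, $x$ would be a norm limit of multiples of $e_{a_{n}}$, forcing the divergent series $\sum\alpha_{k}z_{\kappa_{k}}$ to converge. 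Your ``absorb the part of $x$ not aligned with $g_{0}$ into a bounded error'' heuristic does not supply this: with a bounded but \emph{convergent} series (the Hilbertian situation of Theorem \ref{th3bis}) the identical construction fails exactly for the vectors colinear to $x_{0}=\lim e_{a_{n}}$. Two further ingredients you would need to make the template of \cite{GR} run after adding the (a)-intervals are the modified projections $Q_{a_{n}},Q_{\nu_{n}}$ (the canonical projections no longer give the uniform tail bound on $T^{c_{k,n}}(I-Q_{\nu_{n}})$, because of the vectors $f_{a_{n+1}}$), and the refined Zippin--Kalton statement that the coefficients $\alpha_{j}$ can be taken bounded away from zero, which is what guarantees $\|(I-Q_{\nu_{n}})x\|\to 0$ and hence lets the finite-dimensional estimates pass to all of $X$.
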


We have observed that an
\op\ $T\in \mathcal{B}(X)$ has no \nt\ \inv\ closed subset \ifff\ every \nz\
 vector is \hy\ for $T$. A natural way to check this is the following: pick a
  given \hy\ vector of norm $1$, in our context the vector $g_{0}$. Then prove
  that whenever $\varepsilon \in (0,1)$ and $x\in X$ a \nz\ vector, there exists a
  nonnegative integer $n$  such that $||T^{n}x-g_{0}||<\varepsilon $, i.e.
  that the distance of  $\mathcal{O}\textrm{rb}(x,T)$ to $g_{0}$ is less
  than $\varepsilon $. Property (P3)
gives this for
a fixed $\varepsilon \in (0,1)$. In order to show that any non-zero vector $x$ is \hy\ for $T$, we would need Property (P3) to hold true  for a sequence $(\varepsilon_{k})_{k\ge 1} $ of real numbers in $(0,1)$ going to zero as $k$ goes to infinity. This is where the assumption of Theorem \ref{th1} comes into the picture: infinitely many copies of the non-reflexive space $Z$ are needed in order to $\varepsilon$-approximate the vector $g_{0}$ by an element of the orbit of any non-zero vector for arbitrary $\varepsilon$. But if we require this approximation for one \emph{single} $\varepsilon$, one copy of this space $Z$ is enough.
\par\smallskip
Property (P3) can be very close to forcing some \op\ on the space to have no non-trivial \inv\ closed subspace at all. When $X$ is a Hilbert space, this is indeed the case. This is the content of our next theorem, which highlights again the role played by the geometry of the space in such considerations:

\begin{theorem}\label{th3}
Let $g_{0}$ be a norm-one vector of the complex Hilbert space $\ell_{2}$. Let $\varepsilon \in (0,1)$ be a fixed real number. Suppose that 
 there exists a bounded \op\ $T$ on $\ell_{2}$ which has no eigenvector and
 satisfies the following property: 
 
 $\quad (P3')$ for any
 \nz\ vector $x\in X$, the distance of the closed invariant subspace
$M_{x}=\overline{\textrm{sp}}\,[T^{n}x \textrm{ ; } n\ge 0]$ generated by $x$ to the vector $g_{0}$ is less than $\varepsilon $: $d(M_{x}, g_0)
 <\varepsilon $.
 \par\smallskip
 Then there exists a bounded \op\ $T'$ on $\ell_{2}$ which has no non-trivial \inv\ closed subspace.
\end{theorem}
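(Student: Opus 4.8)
The plan is to treat the hypothetical operator $T$ as the single ``building block'' out of which an operator with no non-trivial invariant closed subspace is assembled, by amplifying the one-scale approximation property (P3$'$) into its limiting form, namely ``$g_0$ belongs to the closed invariant subspace generated by every non-zero vector''. This is precisely the role played, in the non-reflexive setting of Theorem~\ref{th1}, by the infinitely many copies of the space $Z$; in the Hilbert setting it is $T$ that must play it, and the geometry of $\ell_2$ is what makes the amplification possible. As a first (harmless) reduction, replacing $T$ by its restriction to $M_{g_0}=\overline{\mathrm{sp}}[T^{n}g_0\,;\,n\ge0]$ one may assume that $g_0$ is a cyclic vector for $T$: the subspace $M_{g_0}$ is infinite-dimensional, since otherwise $T$ restricted to it would be an operator on a finite-dimensional complex space and would have an eigenvector; hence $M_{g_0}$ is isometric to $\ell_2$; moreover $T|_{M_{g_0}}$ still has no eigenvector, and for every non-zero $x\in M_{g_0}$ one has $M_x\subseteq M_{g_0}$ and $d(M_x,g_0)<\varepsilon$, so (P3$'$) persists.

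The heart of the matter is the amplification step, and here the rigidity of the Hilbert geometry is essential: in $\ell_2$ the inequality $d(M,g_0)<\varepsilon$ is quantitative, for the orthogonal projection $m=P_Mg_0$ realizes the distance, satisfies $\|m\|^{2}=1-d(M,g_0)^{2}\ge 1-\varepsilon^{2}>0$ (so $m$ is a genuinely non-degenerate vector of $M$), and $g_0-m\perp M$. I would then run the general construction underlying Theorem~\ref{th1}, using countably many copies $(T_k,g_0^{(k)})$ of $(T,g_0)$ in place of the copies of $Z$, so as to produce a bounded operator $T'$ on the $\ell_2$-sum $\widehat H=\oplus_{\ell_2}H$ (again isometric to $\ell_2$) together with connecting maps chaining these copies with the following effect: from any non-zero vector of $\widehat H$ one first reaches, inside its cyclic subspace, a vector at distance less than $\varepsilon$ from $g_0^{(1)}$ by invoking (P3$'$) in the first copy; the connecting map transports the non-degenerate correction built out of $m$ and $g_0^{(1)}-m$ into the second copy, where a fresh application of (P3$'$) brings the cyclic subspace within a \emph{strictly smaller} distance of $g_0$, the Pythagorean identity forcing the successive distances to form a null sequence; iterating through all the copies, the cyclic subspace of every non-zero vector of $\widehat H$ accumulates exactly at $g_0:=g_0^{(1)}$. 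The hypothesis that $T$ has no eigenvector is used precisely to ensure that no vector gets trapped in a fixed finite-dimensional invariant subspace along the chain (which would freeze the constant), and the construction is set up so that $g_0$ remains cyclic for $T'$. The outcome is a bounded operator $T'$ on $\ell_2$ for which $g_0$ is cyclic and $g_0\in M_x:=\overline{\mathrm{sp}}[(T')^{n}x\,;\,n\ge0]$ for every non-zero $x\in\ell_2$.

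Granting this, the conclusion is immediate: if $M$ is a non-zero closed $T'$-invariant subspace, pick a non-zero $x\in M$; then $M_x\subseteq M$, so $g_0\in M$, so $M_{g_0}\subseteq M$, and $M_{g_0}=\ell_2$ since $g_0$ is cyclic; hence $M=\ell_2$, and $T'$ has no non-trivial invariant closed subspace.

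The main obstacle is the amplification step: converting a \emph{single} admissible $\varepsilon$ into an entire null sequence of scales inside one operator, that is, arranging the chaining of the copies of $T$ so that it genuinely compounds the approximations rather than merely repeating them at the same scale, all the while keeping $T'$ bounded, keeping $g_0$ cyclic for $T'$, and checking that the initial reductions remain compatible with the amalgamation. Everything else is routine.
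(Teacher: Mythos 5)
Your argument has a genuine gap at exactly the point you flag as ``the main obstacle'': the amplification step is not carried out, and there is no reason to believe it can be carried out by the route you sketch. Property (P3$'$) gives, for the \emph{fixed} operator $T$, the \emph{same} fixed bound $\varepsilon$ in every copy; chaining countably many copies of $(T,g_{0})$ and re-invoking (P3$'$) in each copy does not by itself produce a strictly smaller distance at the next stage, and the Pythagorean remark about $m=P_{M}g_{0}$, $\|m\|^{2}\ge 1-\varepsilon^{2}$, supplies no mechanism by which the transported ``correction'' interacts with (P3$'$) in the next copy to shrink the constant. In the proof of Theorem \ref{th1} the compounding of approximations comes from the non-semi-boundedly-complete basis of the non-reflexive space $Z$ (the divergent series $\sum\alpha_{j}z_{\kappa_{j}}$ with bounded partial sums), which is precisely what does not exist in $\ell_{2}$; indeed the paper's Theorems \ref{th2} and \ref{th3bis} are there to show how far one can push a Read-type construction in the Hilbert setting and why it stops short (the operator of Theorem \ref{th3bis} kills a vector $x_{0}$ and so has invariant subspaces). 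So your plan amounts to re-attempting the hard construction that the hypothesis (P3$'$) is meant to bypass, and the part you leave unproved is the whole content of the theorem.

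The paper's proof is of a completely different, much softer nature and uses the hypothesized $T$ itself rather than building a new operator from scratch. One constructs a decreasing sequence $(L_{n})$ of non-zero closed $T$-invariant subspaces chosen so that $d(L_{n},g_{0})\ge\frac{2^{n}-1}{2^{n}}d_{n}$, where $d_{n}$ is the supremum of $d(M,g_{0})$ over non-zero closed invariant subspaces $M\subseteq L_{n-1}$ (all $d_{n}\le\varepsilon<1$ by (P3$'$)). Picking near-minimizers $x_{n}\in L_{n}$ and a weak limit point $x_{0}$ (non-zero because $d_{n}\le\varepsilon<1$ and $\|g_{0}\|=1$), one sets $L=M_{x_{0}}$; since each $L_{n}$ is weakly closed, $L\subseteq L_{n}$ for all $n$, whence $d(L,g_{0})=\lim d_{n}=\|x_{0}-g_{0}\|$. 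If now $M\subseteq L$ is any non-zero closed invariant subspace, then $d(M,g_{0})=d(L,g_{0})$, the distance to $M$ is attained at some $y_{0}$, and $\bigl\|\tfrac{x_{0}+y_{0}}{2}-g_{0}\bigr\|=d(L,g_{0})$ forces $x_{0}=y_{0}$ by strict convexity of the Hilbert norm, so $M\supseteq M_{y_{0}}=L$, i.e.\ $M=L$. The no-eigenvector hypothesis only serves to rule out $\dim L=1$, so $L$ is isometric to $\ell_{2}$ and $T'=T|_{L}$ works. If you want to salvage your write-up, this extremal-subspace argument (weak compactness of bounded sets, weak closedness of subspaces, attainment of distances, strict convexity) is the missing idea; no Read-type construction or amalgamation of copies of $T$ is needed, and your first reduction to $M_{g_{0}}$ is superfluous.
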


As a straightforward corollary one obtains:

\begin{corollary}\label{cor3bis}
 Let $g_{0}$ be a norm-one vector of the complex Hilbert space $\ell_{2}$. Let $\varepsilon \in (0,1)$ be a fixed real number. Suppose that 
 there exists a bounded \op\ $T$ on $\ell_{2}$ which satisfies Property (P3) of Theorem \ref{th2} above.
 
  Then there exists a bounded \op\ $T'$ on $\ell_{2}$ which has no non-trivial \inv\ closed subspace.
\end{corollary}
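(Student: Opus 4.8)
The plan is to obtain Corollary \ref{cor3bis} as an immediate consequence of Theorem \ref{th3}. Given a bounded operator $T$ on $\ell_{2}$ satisfying Property (P3), it suffices to check that $T$ meets the two hypotheses of Theorem \ref{th3}: that it satisfies Property (P3$'$), and that it has no eigenvector. Theorem \ref{th3} then directly yields an operator $T'$ on $\ell_{2}$ with no non-trivial invariant closed subspace, which is the conclusion we want.

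That (P3) implies (P3$'$) requires no work: for every non-zero $x\in\ell_{2}$ one has $\mathcal{O}\textrm{rb}(x,T)\subseteq M_{x}=\overline{\textrm{sp}}\,[T^{n}x\,;\,n\ge0]$, so the distance from $M_{x}$ to $g_{0}$ is at most the distance from $\mathcal{O}\textrm{rb}(x,T)$ to $g_{0}$, which is $<\varepsilon$ by assumption.

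The only point calling for an argument — and it is an elementary one, which is why the statement is a straightforward corollary — is that (P3) prevents $T$ from having eigenvectors. I would argue by contradiction: suppose $Tx_{0}=\lambda x_{0}$ with $\|x_{0}\|=1$. Then for every $\mu\neq 0$ the orbit $\mathcal{O}\textrm{rb}(\mu x_{0},T)=\{\mu\lambda^{n}x_{0}\,;\,n\ge 0\}$ lies on the line $\C x_{0}$. Writing $g_{0}=\beta x_{0}+w$ with $w\perp x_{0}$, so that $|\beta|^{2}+\|w\|^{2}=1$ and $\|\mu\lambda^{n}x_{0}-g_{0}\|^{2}=|\mu\lambda^{n}-\beta|^{2}+\|w\|^{2}$, I would distinguish two cases. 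If $\|w\|\ge\varepsilon$, then $d(\mathcal{O}\textrm{rb}(\mu x_{0},T),g_{0})\ge\varepsilon$ for every $\mu\neq 0$, contradicting (P3). If $\|w\|<\varepsilon$, set $\eta=(\varepsilon^{2}-\|w\|^{2})^{1/2}$; since $\varepsilon<1$ one has $0<\eta<|\beta|$, and one can choose $\mu\neq 0$ with $|\mu|\le|\beta|-\eta$ when $|\lambda|\le 1$, or with $|\mu|\ge|\beta|+\eta$ when $|\lambda|>1$. In either case the modulus $|\mu\lambda^{n}|=|\mu|\,|\lambda|^{n}$ stays at distance at least $\eta$ from $|\beta|$ for all $n$, hence $|\mu\lambda^{n}-\beta|\ge\eta$ and $\|\mu\lambda^{n}x_{0}-g_{0}\|\ge(\eta^{2}+\|w\|^{2})^{1/2}=\varepsilon$ for all $n\ge 0$, again contradicting (P3). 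Thus $T$ has no eigenvector, both hypotheses of Theorem \ref{th3} are satisfied, and the conclusion follows. I do not expect any real obstacle beyond this short verification; the content of the corollary is essentially the observation that the approximation property (P3) is already strong enough to kill one-dimensional invariant subspaces.
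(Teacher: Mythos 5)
Your proposal is correct and takes essentially the same route as the paper: reduce to Theorem \ref{th3} by observing that (P3) trivially implies (P3$'$) and that (P3) forbids eigenvectors. Your exclusion of eigenvectors via the orthogonal decomposition $g_{0}=\beta x_{0}+w$ is only a minor variant of the paper's argument, which instead uses compactness of the set $\Gamma=\{\alpha\in\C \,;\, ||\alpha x_{0}-g_{0}||\le\varepsilon\}$ to trap $|\mu\lambda^{n}|$ in an annulus and derive the same contradiction by taking $|\mu|$ large or small.
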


It is funny to note that one can come surprisingly close to constructing an \op\ on a Hilbert space which satisfies the assumption of Corollary \ref{cor3bis}:

\begin{theorem}\label{th3bis}
Let $g_{0}$ be a norm-one vector of the complex Hilbert space $\ell_{2}$. Let $\varepsilon \in (0,1)$ be a fixed real number. There exists a bounded \op\ $T$ on $\ell_{2}$ and a non-zero vector $x_{0}\in\ell_{2}$ such that
$g_{0}$ is a hypercyclic vector for $T$, $||x_{0}-g_{0}||<\varepsilon$, and
\par\smallskip
$\quad (P3'')$ for any vector
  $x\in \ell_{2}$ not colinear to $x_{0}$, the distance of the orbit $\mathcal{O}\textrm{rb}(x,T)$ to
 $g_{0}$ is less than $\varepsilon $.
\par\smallskip
\noindent In particular, the distance of $g_{0}$ to any closed non-trivial \inv\ subspace of $T$ is less than $\varepsilon$.
\end{theorem}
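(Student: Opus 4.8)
The plan is to realize $T$ as a variant of the Read's type \op\ underlying Theorem~\ref{th2}, built this time directly on $\ell_{2}$. I would fix an orthonormal basis $(e_{n})_{n\geq 0}$ of $\ell_{2}$ with $e_{0}=g_{0}$ and let $T$ be the operator produced by the construction relative to this basis. Whatever the precise choice of its working and lay-off intervals and of its correction coefficients, the resulting $T$ is bounded on $\ell_{2}$, admits $e_{0}=g_{0}$ as a \hy\ vector, and can moreover be arranged to satisfy (P1) and (P2) exactly as the operators of Theorems~\ref{th0} and~\ref{th2}. The whole content of the theorem therefore lies in the extra tuning of these parameters that secures $(P3'')$ together with $||x_{0}-g_{0}||<\varepsilon$.

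Next I would recall the mechanism by which such a construction makes $d(\mathcal{O}\textrm{rb}(x,T),g_{0})$ small: one locates a stage of the construction assigned to an approximant $y$ of $x$, and shows that $T^{n}x$ lies within $\varepsilon$ of $g_{0}$ for an integer $n$ attached to that stage. Writing $x=y+r$, the error splits into $||T^{n}y-g_{0}||$, kept small by design, and $||T^{n}r||$, the image of the remainder. In a \nr\ space this second term is killed \emph{for every} $x$ by playing with a bounded sequence having no weakly convergent subsequence, and this is the single point at which non-reflexivity is genuinely used. On $\ell_{2}$ no such sequence exists: the sequence $(u_{i})_{i\geq 0}$ of auxiliary vectors that the construction attaches to its successive stages is bounded, hence weakly convergent along a subsequence, and I would take $x_{0}$ to be a fixed \nz\ vector proportional to the corresponding weak limit. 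The parameters would then be tuned so that, on one hand, the vectors $u_{i}$ cluster close to $g_{0}$, which forces $||x_{0}-g_{0}||<\varepsilon$, and, on the other hand, the remainder estimate still goes through for every $x$ not colinear to $x_{0}$: the component of $x$ transverse to $x_{0}$ is exactly what lets one choose the approximant $y$ and the stage so that $T^{n}r$ becomes negligible. This yields $d(\mathcal{O}\textrm{rb}(x,T),g_{0})<\varepsilon$, i.e. $(P3'')$.

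The final assertion is then formal. Let $M$ be a \nt\ closed \inv\ subspace of $T$. If $M$ contains a vector $x$ not colinear to $x_{0}$, then $\mathcal{O}\textrm{rb}(x,T)\subseteq M$ and $(P3'')$ give $d(g_{0},M)<\varepsilon$; otherwise $M=\C x_{0}$, and then $d(g_{0},M)\leq ||g_{0}-x_{0}||<\varepsilon$. In either case $d(g_{0},M)<\varepsilon$.

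The main obstacle is the middle step: showing that the one place where the non-reflexive argument collapses can genuinely be confined to the line $\C x_{0}$, i.e. that for every $x$ off this line the image $T^{n}r$ of the remainder can still be driven to $0$ along a suitable stage. This calls for a direction-sensitive sharpening of the remainder estimate, tracking how the part of $x$ transverse to $x_{0}$ interacts with the correction coefficients of the construction. One should not hope to do better: if the tuning could be pushed so as to make $x_{0}=0$, then $(P3'')$ would become property (P3) of Theorem~\ref{th2}, and Corollary~\ref{cor3bis} would produce an \op\ on $\ell_{2}$ with no \nt\ \inv\ closed subspace.
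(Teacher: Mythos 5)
Your high-level strategy is indeed the paper's: run the Read-type construction in the Hilbert setting, accept that the non-reflexivity argument must fail somewhere, identify the exceptional direction $x_{0}$ as a limit of the auxiliary vectors attached to the (a)-part, and prove $(P3'')$ off the line $\C x_{0}$ (your reduction of the final assertion to $(P3'')$ and $||x_{0}-g_{0}||<\varepsilon$ is fine). But the proposal stops exactly where the theorem begins: the step you yourself label ``the main obstacle'' --- showing that the collapse of the argument is confined to the line $\C x_{0}$ --- is the entire content of the proof, and you give no mechanism for it beyond the phrase ``direction-sensitive sharpening of the remainder estimate.'' In the paper this step is Proposition \ref{propnewC}, and it is not a sharpening of the tail estimate at all: the uniform tail bounds (the analogues of Propositions \ref{prop2} and \ref{prop5bis}) remain valid for \emph{every} vector $x\in\ell_{2}$, with no transversality hypothesis. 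What fails at multiples of $x_{0}$ is the \emph{main-term} step, namely the existence, for arbitrarily large $n$, of a coordinate $e_{j}^{*(a_{n})}(Q_{a_{n}}x)$ with $j\le a_{n}-1$ that is not too small, which is what Fact \ref{factf} needs in order to produce the polynomial sending $Q_{a_{n}}x$ close to $e_{a_{n}-1}$. For $x_{1}=x_{0}/||x_{0}||$ one has $\pi_{[0,a_{n}]}x_{0}=e_{a_{n}}$, hence $Q_{a_{n}}x_{1}=0$, so no polynomial argument can start; your attribution of the failure to the image $T^{n}r$ of a remainder misdiagnoses the structure, and a ``remainder'' fix would not close the gap.

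The missing argument is short but essential, and it hinges on a concrete choice you leave unspecified. The paper takes $H=\ell_{2}\oplus\ell_{2}$, $\alpha_{j}=\frac{\sqrt{3}\varepsilon}{\pi j}$, $\kappa_{j}=j$, and runs the construction of Theorem \ref{th2} verbatim; then $e_{a_{n}}=\sum_{j=1}^{n}\alpha_{j}z_{j}+e_{0}$ converges \emph{in norm} (not merely weakly along a subsequence) to $x_{0}=u_{0}+e_{0}$ with $||u_{0}||<\varepsilon$, and $Tx_{0}=0$. Proposition \ref{propnewC} is then proved by the same computation as Proposition \ref{prop5}: if all coordinates $e_{j}^{*(a_{n})}(Q_{a_{n}}x)$, $j\le a_{n}-1$, were super-exponentially small for all large $n$, then $e_{a_{n}}^{*(a_{n})}(x)\,e_{a_{n}}\to x$; applying $f_{0}^{*}$ gives $e_{a_{n}}^{*(a_{n})}(x)\to f_{0}^{*}(x)=:\gamma$, and since $e_{a_{n}}\to x_{0}$ in norm one gets $x=\gamma x_{0}$ --- i.e.\ the only obstruction is colinearity with $x_{0}$, exactly as needed. (One must also handle the loss of Facts \ref{factc}--\ref{factd}, which is why the paper switches at that point to the $Q_{\mu_{n}}$/$Q_{\nu_{n}}$ machinery of Theorem \ref{th1}, and must treat vectors of arbitrary norm separately since (P1) is no longer available to normalize.) Without this identification of the exceptional set and the explicit construction of $x_{0}$ as the norm limit of the $e_{a_{n}}$ with $Tx_{0}=0$, the proposal is a plan rather than a proof.
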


The \op\ $T$ constructed in the proof of Theorem \ref{th3bis} is not injective: $Tx_{0}=0$, and so it definitely has non-trivial \inv\ subspaces. The proof of Theorem \ref{th3bis} shows that any non-trivial invariant closed subspace of $T$ necessarily contains the vector $x_{0}$.
\par\smallskip

There is an obvious class of non-reflexive Banach spaces to which Theorem \ref{th1} does not apply, although Theorem \ref{th2} might: these are the quasi-reflexive spaces, i.e. the non-reflexive spaces $Z$ such that the quotient $Z^{**}/Z$ is finite-dimensional. For instance the James space $J$, which contains a complemented copy of $\ell_{2}$, supports by Theorem \ref{th2} \ops\ which satisfy (P1), (P2) and (P3), but it is not known whether it supports an \op\ with no non-trivial \inv\ closed subspace. There seems to be a good reason for this:

\begin{theorem}\label{th4}
 Suppose that there exists a \sep\ quasi-reflexive space $Z$ of order $1$ which supports an \op\ with no non-trivial \inv\ closed subspace. Then there exists a \sep\ reflexive space $X$ which supports an \op\ with no non-trivial \inv\ closed subspace.
\end{theorem}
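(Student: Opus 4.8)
The plan is to show that the hypotheses force $T$, after subtracting a suitable scalar, to be a \wc\ \op\ on $Z$, and then to transfer its transitivity to a reflexive space by means of the Davis--Figiel--Johnson--Pe\l czy\'nski (DFJP) factorisation. First I would observe that, since $T$ has no \nt\ \inv\ closed subspace, $T$ has no eigenvalue and is not a scalar multiple of the identity. Viewing $Z$ as a subspace of $Z^{**}$, the bidual $T^{**}$ leaves $Z$ invariant, and by quasi-reflexivity of order $1$ the subspace $Z$ has codimension $1$ in $Z^{**}$; hence the operator induced by $T^{**}$ on the one-dimensional quotient $Z^{**}/Z$ is multiplication by some scalar $\mu$, so that $(T^{**}-\mu I)(Z^{**})\subseteq Z$. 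Since $T^{**}=(T^{*})^{*}$ is $w^{*}$-continuous, the set $(T^{**}-\mu I)(B_{Z^{**}})$ is $w^{*}$-compact in $Z^{**}$ and contained in $Z$; as the $w^{*}$-topology of $Z^{**}$ induces on $Z$ the weak topology of $Z$, this set is weakly compact in $Z$, and it contains $(T-\mu I)(B_{Z})$. Thus $K:=T-\mu I$ is a \wc\ \op\ on $Z$. Moreover $K$ is injective ($T$ has no eigenvalue) and shares its lattice of \inv\ closed subspaces with $T$, hence has no \nt\ one.

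Next I would apply the DFJP theorem to the \wc\ \op\ $K$: there are a reflexive Banach space $\mathcal R$ and bounded \ops\ $a\colon Z\to\mathcal R$ and $b\colon\mathcal R\to Z$ with $K=ba$ and $b$ injective; since $K$ is injective, $a$ is injective as well. Put $\widetilde K:=ab\in\mathcal B(\mathcal R)$. Then $\ker\widetilde K=b^{-1}(\ker a)=\{0\}$, and $\widetilde K(\mathrm{ran}\,a)=a(K(Z))\subseteq\mathrm{ran}\,a$, so $X:=\overline{\mathrm{ran}\,a}$ is a closed $\widetilde K$-\inv\ subspace of $\mathcal R$; it is reflexive (a closed subspace of $\mathcal R$), \sep\ (the closure of the continuous image of the \sep\ space $Z$) and infinite-dimensional ($a$ being injective). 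I then claim that $T':=\widetilde K|_{X}$ has no \nt\ \inv\ closed subspace. Indeed, let $M\subseteq X$ be a closed $T'$-\inv\ subspace with $M\neq\{0\}$. Then $a^{-1}(M)$ is a closed subspace of $Z$, and it is $K$-\inv : if $a(z)\in M$ then $a(Kz)=a(ba(z))=\widetilde K(a(z))\in M$. Since $K$ has no \nt\ \inv\ closed subspace, $a^{-1}(M)$ equals $\{0\}$ or $Z$. If $a^{-1}(M)=Z$, then $\mathrm{ran}\,a\subseteq M$, hence $X=\overline{\mathrm{ran}\,a}\subseteq M$ and $M=X$. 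If $a^{-1}(M)=\{0\}$, then $M\cap\mathrm{ran}\,a=a(a^{-1}(M))=\{0\}$; but $\widetilde K(M)=ab(M)\subseteq\mathrm{ran}\,a$ and $\widetilde K(M)\subseteq M$, so $\widetilde K(M)=\{0\}$ and $M\subseteq\ker\widetilde K=\{0\}$, a contradiction. Therefore $M=X$, and $X$ is a \sep\ reflexive space carrying the \op\ $T'$ with no \nt\ \inv\ closed subspace.

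The step I expect to be the real obstacle — and the only one where the order-$1$ hypothesis is genuinely needed — is the first one. It is precisely the one-dimensionality of $Z^{**}/Z$ that lets a single \emph{scalar} correction $T-\mu I$ become \wc\ while retaining exactly the same \inv\ closed subspaces as $T$. For a quasi-reflexive space of order $n\ge 2$ one would instead be led to replace $T$ by $p(T)$, where $p$ is the characteristic polynomial (of degree $n$) of the operator induced by $T^{**}$ on $Z^{**}/Z$; but $p(T)$ has in general strictly more \inv\ closed subspaces than $T$, so the transfer argument of the second paragraph collapses, and for spaces with an infinite-dimensional bidual defect, such as Read's $\oplus_{\ell_{2}}J$, nothing comparable is available. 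Once the weak compactness of the first step is in place, the remaining steps are essentially formal, relying only on the injectivity built into the DFJP factorisation.
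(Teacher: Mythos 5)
Your proof is correct and follows essentially the same route as the paper's: subtract a scalar to make the operator weakly compact (the paper simply cites the Fonf--Lin--Wojtaszczyk decomposition $T=\lambda I+W$, while you reprove it via the bitranspose and the one-codimensionality of $Z$ in $Z^{**}$), then factor the weakly compact operator through a reflexive space by Davis--Figiel--Johnson--Pe\l czy\'nski and pass from $ba$ to $ab$. The only real divergence is the final transfer step: the paper arranges $A$ to have dense range and transfers cyclicity through the orbit identity $T_0^{n+1}x=AT^{n}Bx$ (an argument that works verbatim for invariant closed subsets as well, which is why its Proposition is stated in that generality), whereas you restrict to $X=\overline{\mathrm{ran}\,a}$ and transfer the lattice via the preimages $a^{-1}(M)$, using the injectivity of $K$ and of $\widetilde K$ (automatic here, since an operator with no non-trivial invariant closed subspace has trivial kernel); both versions are sound.
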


This is not hard to prove, and is implied by the following more general statement (see Proposition \ref{propnewD}): suppose that we are given a \sep\ Banach space $Z$ and a bounded \op\ $T$ on $Z$ which has no non-trivial \inv\ closed subspace (resp. subset), and that $T$ is weakly compact. Then there exists a reflexive \sep\ Banach space $X$ and a bounded \op\ on $X$ which has no non-trivial \inv\ closed subspace (resp. subset).
 Now every \op\ on a quasi-reflexive space 
$Z$ of order $1$ can be written as $\lambda I+V$, where $\lambda $ is a scalar and $V$ is a weakly compact \op\ on $Z$ (see \cite{FL}). Theorem \ref{th4} follows from this.
\par\smallskip
The closest one can get for the time being to constructing an \op\ without non-trivial \inv\ subspaces on a quasi-reflexive space is the following:

\begin{theorem}\label{th5}
 There exists a \sep\ quasi-reflexive space $Z$ of order $1$ whose dual $Z^{*}$ (which is also a \sep\ quasi-reflexive space of order $1$) supports an \op\ $T$ with no non-trivial \inv\ $w^{*}$-closed subspace. 
\end{theorem}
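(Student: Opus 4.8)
The plan is to reduce the statement to a property of orbits and then to build the \op\ by a Read--type construction on $Z^{*}$. The starting point is that every $w^{*}$-closed subspace $M$ of $Z^{*}$ is the annihilator $N^{\perp}$ of the closed subspace $N=M_{\perp}=\{z\in Z:\pss{f}{z}=0\textrm{ for all }f\in M\}$ of $Z$, and that $M$ is non-trivial exactly when $N$ is a non-trivial closed subspace of $Z$. Hence, for a bounded \op\ $T$ on $Z^{*}$ to have no non-trivial $w^{*}$-closed \inv\ subspace, it suffices that for every \nz\ $f\in Z^{*}$ the subspace $\textrm{sp}[\mathcal{O}\textrm{rb}(f,T)]$ be $w^{*}$-dense in $Z^{*}$, i.e.\ that for every \nz\ vector $z\in Z$ there exist $n\ge 0$ with $\pss{T^{n}f}{z}\neq 0$. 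So the real target is a bounded \op\ $T$ on $Z^{*}$ for which \emph{no \nz\ vector of the predual $Z$ is orthogonal to a whole orbit of $T$}. I would also record at the outset the standard fact that if $Z$ is \sep\ and \qr\ of order $1$, then so is $Z^{*}$, since $Z^{*}$ embeds with codimension one in $Z^{***}$.

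Next I would fix the space. I would take $Z$ to be a James--type (``quasi-reflexive'') space built over $\N$: organise a Schauder basis of $Z$ into consecutive blocks $B_{0},B_{1},\dots$, each block carrying an isomorphic copy of a fixed \re\ space with a Schauder basis, for instance $\ell_{2}$, and glue the blocks together by a single James--type variation norm over the concatenation, so that $Z$ is \sep, has a Schauder basis, is \qr\ of order $1$, and $Z^{*}$ contains a complemented copy of $\ell_{2}$. The point of this shape is twofold: $Z$, hence $Z^{*}$, is only ``one dimension away'' from being \re; while $Z^{*}$ still inherits an honest block structure with infinitely many ``slots'' --- a James--type space over $\N$ again --- and it is on these infinitely many slots of $Z^{*}$ that the combinatorics of the construction will be deployed.

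The construction itself is a Read--type induction on $Z^{*}$, run along the lines of the proof of Theorem \ref{th2} (the same working-interval/lay-off-interval mechanism, the same control of $\|T\|$), but with the objective modified: instead of merely forcing $d(\mathcal{O}\textrm{rb}(x,T),g_{0})<\varepsilon$ for a single $\varepsilon$, one forces, for \emph{every} \nz\ $z\in Z$ and \emph{every} $\varepsilon\in(0,1)$, that some iterate $T^{n}f$ of any \nz\ $f$ satisfy $|\pss{T^{n}f}{z}|$ bounded below by a fixed positive amount. This is weaker than pushing the orbit near $g_{0}$ in norm, and this is where the geometry enters: the remark in the introduction shows that ``$\varepsilon$-approximating $g_{0}$ by the orbit of every \nz\ vector, for all $\varepsilon$'' needs \emph{infinitely many} copies of a \nr\ space, whereas on $Z^{*}$ the $w^{*}$-topology only tests against the predual $Z$, which sits with codimension one in $Z^{***}$; so the \emph{single} non-reflexive direction carried by the \qr\ (order $1$) space $Z^{*}$ plays here the role that infinitely many copies of $Z$ played in Theorem \ref{th1}. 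The infinitely many blocks of $Z^{*}$ are still needed, but now only to run the bookkeeping over the countably many separation requirements (pairs $(z,\varepsilon)$ and approximation targets), one per working interval.

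The main difficulty, as always with these constructions, is the verification: one must prove that the \op\ produced by the induction really does satisfy that no \nz\ $z\in Z$ annihilates an entire orbit. Here the combinatorial estimates of the construction must be combined with quasi-reflexivity: the set $\{z\in Z:\pss{T^{n}f}{z}=0\textrm{ for all }n\ge 0\}$ is the pre-annihilator of a $w^{*}$-closed \inv\ subspace of $Z^{*}$, and one has to rule out, by a codimension count based on $\dim Z^{**}/Z=1$ together with the lower bounds coming from the construction, that it be anything other than $\{0\}$. One should also check --- and this is forced --- that $T$ is \emph{not} $w^{*}$-continuous: were $T=R^{*}$ for some $R\in\mathcal{B}(Z)$, then $R$ would have no non-trivial closed \inv\ subspace on a \qr\ space, which by Theorem \ref{th4} would already solve the Invariant Subspace Problem for \re\ spaces; accordingly $T$ will retain non-trivial norm-closed \inv\ subspaces --- typically an \inv\ hyperplane $\ker z_{0}^{**}$ with $z_{0}^{**}\in Z^{**}\setminus Z$, which is not $w^{*}$-closed --- and this is exactly why the conclusion of Theorem \ref{th5} is ``the closest one can get''. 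Finally, the bound on $\|T\|$ and the convergence of the inductive scheme require the same care as in Theorem \ref{th2}, with the additional technical nuisance that the norm of $Z^{*}$ is a James--type norm, lacking both the additivity of the $\ell_{1}$ norm and the symmetry of the $c_{0}$ norm, so the quantitative estimates on the lay-off intervals have to be redone in that setting.
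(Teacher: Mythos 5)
Your reduction (no non-zero $z\in Z$ may annihilate a whole orbit) and your preliminary observations (quasi-reflexivity of $Z^{*}$, the fact that $T$ cannot be an adjoint) are correct, but the core of the argument is missing, and the mechanism you propose for it is not one that can be made to work by the estimates available from a Read-type construction. Your plan is to run the induction of Theorem \ref{th2} directly on $Z^{*}$ and to force, for every non-zero $f\in Z^{*}$ and every non-zero $z\in Z$, some $n$ with $\langle T^{n}f,z\rangle\neq 0$, the claim being that ``the single non-reflexive direction'' of a quasi-reflexive space of order $1$ can replace the infinitely many copies of $Z$ used in Theorem \ref{th1}. That claim is precisely what would have to be proved, and nothing in the proposal does it: in the Read machinery the approximation of $g_{0}$ by orbits at scale $\varepsilon_{k}$ requires, for each $k$, a bounded divergent series $\sum_{j}\alpha_{j}z_{\kappa_{j}}$ with partial sums of norm $<\varepsilon_{k}$, i.e.\ infinitely many independent non-reflexive ``directions'', which a quasi-reflexive space of order $1$ does not provide --- this is exactly why Theorem \ref{th2} only yields property (P3) for one fixed $\varepsilon$. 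Your fallback, a ``codimension count based on $\dim Z^{**}/Z=1$'', does not engage with the problem: the pre-annihilator of $\mathrm{sp}[\mathcal{O}\mathrm{rb}(f,T)]$ is a closed subspace of $Z$, and knowing that $Z$ has codimension $1$ in $Z^{**}$ gives no obstruction whatsoever to that pre-annihilator being a non-trivial subspace; one needs it to be $\{0\}$, and no lower bound from the construction is produced that would force this.

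The paper proves the theorem by an entirely different device, which deliberately avoids strengthening the combinatorial construction. One takes $X=\ell_{2}\oplus J$ with $J$ renormed so that the dual norm on $X^{*}=\ell_{2}\oplus J^{*}$ is strictly convex, applies Theorem \ref{th2} on $X^{*}$ for a single fixed $\varepsilon$, and then repeats the minimal-subspace argument of Theorem \ref{th3} in the $w^{*}$-topology: a decreasing sequence $(L_{n})$ of $w^{*}$-closed invariant subspaces almost realizing the suprema $d_{n}=\sup_{M}d(M,g_{0})$, a $w^{*}$-cluster point $x_{0}^{*}\neq 0$ of near-minimizers, and $L$ the smallest $T$-invariant $w^{*}$-closed subspace containing $x_{0}^{*}$. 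Because the distance to a $w^{*}$-closed subspace is attained and the dual norm is strictly convex, every non-zero $w^{*}$-closed invariant subspace of $L$ must contain $x_{0}^{*}$ and hence equal $L$. Finally (P4) and Proposition \ref{facti} (all non-zero invariant subspaces of these operators are non-reflexive) show that $L$, being $w^{*}$-closed, is the dual of the separable quasi-reflexive space $Z=X/{}^{\perp}L$ of order $1$; in particular the space $Z$ of the statement is not chosen in advance, as in your plan, but emerges as a quotient at the end. So the gap in your proposal is the entire passage from what a Read-type construction can deliver on such a space (a single-$\varepsilon$ property) to the absence of all non-trivial $w^{*}$-closed invariant subspaces; without either the strict-convexity/minimality argument or a genuinely new construction, the proof does not go through.
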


As this \op\ is again obtained via a Read's type construction, it is not an adjoint. It was first proved by Troitsky and Schlumprecht in \cite{TS} that the \ops\ constructed by Read in \cite{R1} on $\ell_{1}$ are not adjoints of an \op\ on any predual of $\ell_1$, and it is a general fact that no Read's type \op\ on a dual space $X$ is ever an adjoint of some \op\ on a predual of $X$ (see Section \ref{sec7} for details).
\par\smallskip
The paper is organized as follows: we recall in Section \ref{sec2} the general principle of Read's type constructions, define the \ops\ which will be used in the proof of Theorem \ref{th2} and present some of their general properties. Theorem \ref{th2} itself is proved in Section \ref{sec3}. We then show how the \ops\ introduced previously have to be modified if we want them to satisfy the conclusion of Theorem \ref{th1}, and we prove Theorem \ref{th1} in Section \ref{sec4}. We investigate in Section \ref{sec5} how far these constructions may be extended to the Hilbert space setting, and prove there Theorems \ref{th3} and \ref{th3bis}. We consider the case of quasi-reflexive spaces in Section \ref{sec6}, where Theorems \ref{th4} and \ref{th5} are proved. 
Section \ref{sec7} contains miscellaneous remarks and comments.

\section{Read's type operators}\label{sec2}

\subsection{An informal introduction}
We present first in an informal way the general philosophy of Read's type constructions: they can be carried out on spaces of the form $X=\ell_{p}\oplus Z$, $1\le p<+\infty$, or $c_{0}\oplus Z$, or more generally on any space of the form $Z_{0}\oplus Z$, where $Z_{0}$ is a space with a symmetric basis on which weighted shifts  with bounded weights can safely be defined. The space $Z$ is a \sep\ space, which is usually assumed to have a Schauder basis. It is always possible  to incorporate in the construction an arbitrary \sep\ Banach space $Y$: the proofs can be extended without any trouble to spaces of the form $\ell_{p}\oplus Z\oplus Y$ or $c_{0}\oplus Z\oplus Y$. This does not present any new difficulty in this setting compared to the works \cite{R2} or \cite{R3} of Read, so we refer the reader to one of these papers where the argument is given in detail, and restrict ourselves from now on to the case of spaces of the form
$\ell_{p}\oplus Z$ or $c_{0}\oplus Z$. When writing $\ell_{p}\oplus Z$, we mean that the direct sum between $\ell_{p}$ and $Z$ is an $\ell_{p}$-sum, and when writing $c_{0}\oplus Z$ that it is an $\ell_{\infty}$-sum.

\par\smallskip

Denoting by $(g_{j})_{j\ge 0}$ the canonical basis of $\ell_{p}$ or $c_{0}$, and by $(z_{j})_{j\ge 0}$ a Schauder basis of $Z$, we construct two sequences $(f_{j})_{j\ge 0}$ and $(e_{j})_{j\ge 0}$ of vectors of $X$ in the following way: $(f_{j})_{j\ge 0}$ is a basis of $X$, obtained by choosing a certain amount of vectors from the basis $(g_{j})_{j\ge 0}$, let us say vectors $g_{0}, g_{1},\ldots, g_{j_{1}}$, then another amount of vectors $z_{0}, \ldots, z_{j_{2}}$ from the basis $(z_{j})_{j\ge 0}$, then again vectors
$g_{j_{1}+1},\ldots, g_{j_{3}}$
from $(g_{j})_{j\ge 0}$, etc. Since we choose these vectors without changing the order in the two sequences $(g_{j})_{j\ge 0}$ and $(z_{j})_{j\ge 0}$, and without skipping any of them, we get a basis $(f_{j})_{j\ge 0}$ of $X$ with $f_{0}=g_{0}$. The vectors $e_{j}$ are finitely supported \wrt\ the basis $(f_{j})_{j\ge 0}$, $e_{0}=f_{0}=g_{0}$, and for every $j\ge 1$ they are such that 
$\textrm{sp}[e_{0},\ldots, e_{j}]=\textrm{sp}[f_{0},\ldots, f_{j}]$. In other words, $e_{j}$ belongs to the linear span of the vectors $f_{0}, \ldots, f_{j}$, and its $j^{th}$ coordinate in the basis $(f_{j})_{j\ge 0}$ is non-zero. The \op\ $T$ is defined on the set of finitely supported vectors \wrt\ the basis $(f_{j})_{j\ge 0}$ by the formula
$$Te_{j}=e_{j+1}\quad \textrm{for every } j\ge 0.$$
If the vectors $e_{j}$ are suitably constructed, it will be possible to extend $T$ into a bounded \op\ on $X$, which hopefully will have few \inv\ closed subsets.
\par\smallskip
The construction of the vectors $f_{j}$ and $e_{j}$, $j\ge 0$, is done by induction. At step $n$, $f_{j}$ and $e_{j}$ are defined for $j$ belonging to a certain interval $[\xi _{n}+1, \xi _{n+1}]$, where $(\xi  _{n})_{n\ge 0}$ is a sequence of integers with $\xi  _{0}=0$ which will be chosen to grow very fast. There are different types of definitions for the vectors $e_{j}$ for $j\in [\xi  _{n}+1,\xi  _{n+1}]$, depending on whether $j$ lies in some sub-intervals of 
$j\in [\xi  _{n}+1,\xi  _{n+1}]$ which are called \emph{working intervals}, or in their complement which consists of \emph{\loi s}. Working intervals are those on which something happens. They are separated by very long  \loi s, on which $T$ acts simply as a (forward) weighted shift, and their role is to allow us to sew harmoniously the \woi s together, and to prevent side effects from one \woi\ on another. When $j$ belongs to a \loi, $e_{j}$ is defined by the relation $f_{j}=\lambda _{j}e_{j}$, where $\lambda _{j}$ is a positive coefficient which is very large if $j$ lies in the beginning of the \loi\ and very small if it lies towards the end, with $\lambda _{j}/\lambda _{j+1}$ extremely close to $1$. When both $j$ and $j+1$ belong to a \loi, $Tf_{j}=(\lambda _{j}/\lambda _{j+1})f_{j+1}$, i.e. $T$ acts on these vectors as a weighted shift with weights very close to $1$.
\par\smallskip
Working intervals are of three types, which we call (a), (b) and (c).
    The (c)-part is the part which makes Property (P1) hold true; then adding
    the (b)-part yields Property (P2). The (a)-part is the part in \cite{R}, \cite{R1}, \cite{R2}
    and \cite{R3} which provides the vectors belonging to the closures of the
    orbits of \nz\ vectors.
The (c)- and (b)-parts of a Read's type construction use only the copy of the space $\ell_{p}$ or $c_{0}$ which is present in the decomposition of $X$ as $X=\ell_p\oplus Z$ or $X=c_{0}\oplus Z$. The space $Z$ is simply incorporated in the construction, but could as well be dispensed with: non-reflexivity plays no role here, and this is why \ops\ enjoying properties (P1) and (P2) could be constructed in the Hilbertian setting in \cite{GR}. But non-reflexivity becomes crucial  in the construction of the (a)-part.
\par\smallskip
The way non-reflexivity appears
here is via a result of Zippin \cite{Z} which characterizes \re\ spaces within
the class of spaces having a Schauder basis. It states that a space $Z$ having
a Schauder basis is \re\ \ifff\ all the Schauder bases of $Z$ are boundedly
complete. Recall that if $(z_{j})_{j\geq 1}$ is a Schauder basis of $Z$,
$(z_{j})_{j\geq 1}$ is said to be \emph{boundedly complete}
 if whenever $(\alpha _{j})_{j\geq 1}$ is a sequence of scalars such that
$\sup_{J\geq 1}||\sum_{j=1}^{J}\alpha _{j}z_{j}||$ is finite,
the series $\sum \alpha _{j}z_{j}$ is automatically convergent
in $Z$. The prototype of a non-boundedly complete basis in a \nr\
Banach space is the canonical basis $(h_{j})_{j\geq 0}$ of $c_{0}$:
$\sup_{J\geq 1}||\sum_{j=1}^{J} h_{j}||=1$ but $\sum h_{j}$ does not
 converge in $c_{0}$. The result of \cite{Z} is that such a basis
 $(z_{j})_{j\geq 1}$ exists in any \nr\ space with a Schauder basis. We will actually need
 a more precise version of the result of Zippin, which follows from
 the proof of \cite{Z}, and is also proved in a more general context
  in Kalton's paper \cite{K1}: if $Z$ is a \nr\ Banach space with a
  Schauder basis, it admits a Schauder basis $(z_{j})_{j\geq 1}$ with $\inf_{j\ge 1}||z_{j}||>0$
  which is not \emph{semi-boundedly complete}: there exists a sequence of positive
  scalars $(\alpha _{j})_{j\geq 1}$ and a strictly increasing sequence
   $(\kappa_{j})_{j\geq 1}$ of integers such that  $\sup_{J\geq 1}
   ||\sum_{j=1}^{J}\alpha _{j}z_{\kappa_{j}}||$ is finite but the
   sequence $(\alpha _{j})_{j\geq 1}$ is bounded away from zero.
   In particular the series $\sum \alpha_{j}z_{\kappa_{j}}$ obviously cannot converge in $Z$.
Very roughly speaking, this basis $(z_{j})_{j\geq 1}$ of $Z$ is
used in the
construction of the (a)-part in the following way: for instance in the proof of Theorem \ref{th2}, we choose
$(\alpha _{j})$ and ${\kappa_{j}}$ as above, with
$\sup_{J\geq 1}||\sum_{j=1}^{J}\alpha _{j}z_{\kappa_{j}}||<\varepsilon $.
 The construction is done in such a way that for any \nz\ vector $x\in X$
 the distance of $g_{0}+\sum_{j=1}^{J}\alpha _{j}z_{\kappa_{j}}$ to
 $\mathcal{O}\textrm{rb}(x,T)$
can be made arbitrarily small for infinitely many integers $J$. Hence the
distance of $g_{0}$ to
$\mathcal{O}\textrm{rb}(x,T)$ is less than $\varepsilon $.
The construction fails if the series $\sum \alpha _{j}z_{\kappa_{j}}$
is convergent to a certain point $z\in Z$,
 because the construction of the \op\ $T$ forces the norms of the vectors
 $T(\sum_{j=1}^{J}\alpha _{j}z_{\kappa_{j}}+g_{0})$ to go to zero as $J$
  goes to infinity. Then necessarily $T(z+g_{0})=0$. Since the orbit of  every \nz\ vector under the action of $T$ comes within $\varepsilon $-distance of $g_{0}$ infinitely many times, this forces $z$ to be equal to $-g_{0}$, which is impossible.
\par\smallskip

We now suppose that the assumptions of Theorem \ref{th2} are in force.

\subsection{Definition of the operator $T$}

We denote by $(g_{j})_{j\geq 0}$ the canonical basis
 of either $\ell_{p}$ or $c_{0}$, and by $(z_{j})_{j\geq 1}$
  a non semi-boundedly complete basis of $Z$ as above. Then
there exists a sequence of positive
  scalars $(\alpha _{j})_{j\geq 1}$ which is bounded away from zero
  (there exists a $\delta _{0}>0$ such that $\alpha _{j}\geq \delta_{0} $
  for every $j\geq 1$) and a strictly increasing sequence
   $(\kappa_{j})_{j\geq 1}$ of integers such that  $\sup_{J\geq 1}
   ||\sum_{j=1}^{J}\alpha _{j}z_{\kappa_{j}}||<\varepsilon $.
   The way of defining the \op\ is similar in many places to the
   one employed in \cite{R2}, \cite{R3} or \cite{GR},
 so   we will sometimes refer
   the reader to these works for some details.
\par\smallskip
Let $\K[\zeta]$ denote the space of polynomials with
 coefficients in $\K=\R$ or $\C$, and let $\K_{d}[\zeta]$
 denote the space of polynomials of degree at most $d$.
  If $p(\zeta )=\sum_{k=0}^{d}a_{k}\zeta ^{k}$ is a polynomial, the modulus
   $|p|$ of $p$ is $|p|=\sum_{k=0}^{d}|a_{k}|$. We construct
   by induction on $n$:
\par\smallskip
$\bullet$ a sequence $(\xi_{n})_{n\geq 0}$ of integers
 increasing very quickly;
 
$\bullet$ two sequences $(f_{j})_{j\geq 0}$ and
 $(e_{j})_{j\geq 0}$
 of vectors of $X$, where at step $n$ the vectors $f_{j}$ and $e_{j}$ are constructed for $j\in [\xi  _{n}+1,\xi  _{n+1}]$;

$\bullet$ two sequences $(a_{n})_{n\geq 0}$ and
$(b_{n})_{n\geq 1}$ of integers increasing very quickly;

$\bullet$ a sequence $(\varepsilon _{n})_{n\geq 1}$
of positive numbers going very quickly to zero and a
sequence $(l_{n})_{n\geq 1}$ of integers going to infinity in a suitable fashion;

$\bullet$ for each $n\ge 1$, a sequence $(p_{k,n})_{1\leq k\leq k_{n}}$ of polynomials
which form an
$\varepsilon _{n}$-net of the closed ball of $\K_{l_{n}}[\zeta ]$ of radius
 $2$ for the norm $|\,.\,|$,  and a sequence $(c_{k,n})_{1\leq k\leq k_{n}}$
 of integers also increasing very quickly.
\par\smallskip
These sequences are constructed in such a way that for each $n\ge 1$ we have
 $$\xi_{n}\ll a_{n}\ll b_{n}\ll c_{1,n}\ll c_{2,n}\ll
\ldots\ll c_{k_{n},n}\ll \xi_{n+1},$$ where the symbol ``$\ll$''
means that the quantity on the right is very much larger than
the quantity on the left.
\par\smallskip
We set $f_{0}=e_{0}=g_{0}$ (the first basis vector
 of $\ell_{p}$ or $c_{0}$), and $\xi_{0}=a_{0}=0$.
 The vectors $f_{j}$ and $e_{j}$ are then defined
 inductively: at step $n$ we define $f_{j}$ and
 $e_{j}$ for $j\in [\xi_{n}+1, \xi_{n+1}]$.
As for the sequences $(a_{n})_{n\geq 1}$, $(b_{n})_{n\geq 1}$,
$(c_{k,n})_{1\leq k\leq k_{n}}$,
$(\varepsilon _{n})_{n\geq 1}$ and $(p_{k,n})_{1\leq k\leq k_{n}}$,
we define at step $n$ first $a_{n}$, then $b_{n}$ and then
$\varepsilon _{n}$, $(p_{k,n})_{1\leq k\leq k_{n}}$ and
lastly
$(c_{k,n})_{1\leq k\leq k_{n}}$.
The quantity $a_{n}$ for instance depends only on the
blocks which have been constructed until step $n-1$.
Once $a_{n}$ is firmly defined, we construct $b_{n}$
depending only on the construction until step $a_{n}$, etc... We will
often write in the rest of the paper that some quantity depends only on
$a_{n}$, for instance: by this sentence we will mean that the quantity
depends only on the vectors $e_{j}$ for $j\le a_{n}$.
\par\smallskip
We denote by
 $\sigma  $ the unique increasing bijection from the set
 $[1,+\infty [ \setminus\bigcup_{n\geq 1}J_{n}$, where $J_{n}$ is the interval
  $J_{n}=[a_{n}-(\kappa_{n}-\kappa_{n-1}-1), a_{n}]$,
   onto the set $[1,+\infty [$.
\par\medskip
$\bullet$ \textbf{Definition of the vectors $f_{j}$: }
We set
\begin{eqnarray*}
&& f_{0}=g_{0}\\
&&f_{j}=g_{\sigma  (j)} \quad \textrm{ for } j\not\in \bigcup_{n\geq 1}
J_{n}\\
&&f_{a_{n}-k}=z_{\kappa_{n}-k} \quad \textrm{ for } k=0, \ldots, \kappa_{n}-\kappa_{n-1}-1.
\end{eqnarray*}

Since in our definition of the vectors $f_{j}$ we simply alternate in our choice
between the vectors
$g_{n}$ and $z_{n}$, but without changing the order,
it is not difficult to see that
 the vectors $f_{j}$ form a normalized Schauder basis of $X$. If
$(f_{j}^{*})_{j\geq 0}$ denotes the sequence of coordinate functionals \wrt\ this basis, we have
$$||\sum_{j\geq 0}f_{j}^{*}(x)f_{j}||=\left(\sum_{j\not\in
\bigcup_{n\geq 1}J_{n}}|f_{j}^{*}(x)|^{p}+
||\sum_{j\in \bigcup_{n\geq 1}J_{n}} f_{j}^{*}(x) z_{j}||^{p}\right)^{\frac{1}{p}}$$
if $X=\ell_{p}\oplus_{\ell_{p}} Z$, and
$$||\sum_{j\geq 0}f_{j}^{*}(x)f_{j}||=\max \left(\sup_{j\not\in
\bigcup_{n\geq 1}J_{n}}|f_{j}^{*}(x)|\,,\,
||\sum_{j\in \bigcup_{n\geq 1}J_{n}} f_{j}^{*}(x) z_{j}||\right)$$ if $X=c_{0}
\oplus_{\ell_{\infty}}
Z$.
Observe that $||f_{j}^{*}||_{X^{*}}=1$ if $j\not \in \bigcup_{n\geq 1}J_{n}$,
and $||f_{a_{n}-k}^{*}||_{X^{*}}=||z_{\kappa_{n}-k}^{*}||_{Z^{*}}$ for $n\geq 1$
and $k=0,\ldots, \kappa_{n}-\kappa_{n-1}$.
\par\smallskip
For any $N\geq 0$ we denote by $\pi_{[0,N]}$ the canonical projection on
the first $N+1$ vectors of the basis $(f_{j})_{j\ge 0}$: for $x=\sum_{j\geq 0}f_{j}^{*}(x)f_{j}$,
$\pi_{[0,N]}x=\sum_{j=0}^{N}f_{j}^{*}(x)f_{j}$. Obviously $\pi_{[0,N]}x$
tends to $x$ as $N$ tends to infinity.
\par\smallskip
The vectors $e_{j}$ are defined depending on the vectors $f_{j}$, with
$e_{0}=f_{0}=g_{0}$, and in such a way that for every $j\geq 0$ we have
$\textrm{sp}[e_{0},\ldots, e_{j}]=\textrm{sp}[f_{0},\ldots, f_{j}]$. Thus
the vectors $e_{j}$ will also be linearly independent and will span a
dense subspace of $X$.

\par\medskip
$\bullet$ \textbf{Definition of the operator $T$: }
The \op\ $T$ is defined by $Te_{j}=e_{j+1}$ for $j\geq 0$. Since the vectors
$e_{j}$ are linearly independent, the definition makes sense. We will
show later on that the vectors $e_{j}$ are defined in such a way that $T$
extends to a bounded linear \op\ on $X=\overline{\textrm{sp}}[e_{j}
\textrm{ ; } j\geq
0]$.
\par\medskip
As mentioned already in our informal introduction, 
 $e_{j}$ is defined for $j\in [\xi_{n}+1,\xi _{n+1}]$ differently,
depending on whether $j$ belongs to a \emph{working interval} or to a
 \emph{lay-off
interval}. The working intervals are of three different types:
the (a)-, (b)- and (c)-working intervals. At step $n$ they are constructed using $a_{n}$, $b_{n}$ and $c_{1,n},\ldots, c_{k_{n},n}$ respectively.
\par\medskip
$\bullet$ \textbf{Definition of the vectors $e_{j}$ for $j$ in an (a)-working interval: }
There is only one (a)-working interval at step $n$, the interval
$[a_{n}-(\kappa_{n}-\kappa_{n-1}-1), a_{n}]$. In the constructions of \cite{R1}, \cite{R2}, \cite{R3},
there are several successive (a)-working intervals of decreasing length,
whose role is to improve at each step the approximation of $e_{0}$ by
 vectors of any
orbit of a \nz\ vector. There will also be several (a)-working intervals in the proof of Theorem \ref{th1}.
But here since $\varepsilon $ is fixed in the statement of Theorem \ref{th2},
 we only need to consider the simpler case where there is just one (a)-interval.
 Of course $a_{n}$ is chosen very large \wrt\ $\xi_{n}$ for each $n\ge 1$.
\par\smallskip
If $\kappa _{n}>\kappa _{n-1}-1$, we define for $j\in[a_{n}-(\kappa_{n}-\kappa_{n-1}-1),
 a_{n}-1]=J_{n}\setminus \{a_{n}\}$ 
$$f_{a_{n}-k}=\frac {1}{a_{n}^{k+1}}e_{a_{n}-k}\quad \textrm{ for }k=1,\ldots, \kappa_{n}-\kappa_{n-1}-1,$$
i.e. $$e_{a_{n}-k}=a_{n}^{k+1}f_{a_{n}-k}.$$
\par\smallskip
Then we define $$f_{a_{n}}=\frac {1}{\alpha _{n}}(e_{a_{n}}-e_{a_{n-1}}), $$ i.e.
$$e_{a_{n}}=\alpha _{n}f_{a_{n}}+e_{a_{n-1}}=\alpha _{n}z_{\kappa_{n}}+e_{a_{n-1}}.$$
\par\smallskip
Let us record already here the following immediate consequence of this
definition, which will be important later on in the proof:

\begin{fact}\label{facta}
For every $n\geq 1$ we have
$e_{a_{n}}=\sum_{k=1}^{n}\alpha _{k}z_{\kappa_{k}}+e_{0},$ so that
$||e_{a_{n}}-e_{0}||<\varepsilon .$
\end{fact}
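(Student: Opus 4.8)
The plan is to prove Fact~\ref{facta} by a straightforward induction on $n$: both assertions follow at once from the recursive definition of the vectors $e_{a_n}$ on the (a)-working intervals, so the ``proof'' is really just a telescoping computation followed by an appeal to the choice of the scalars $(\alpha_j)_{j\ge 1}$ and the integers $(\kappa_j)_{j\ge 1}$ made at the beginning of the construction.

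First I would settle the base case. Since $\xi_0 = a_0 = 0$ we have $e_{a_0} = e_0 = g_0$, so the identity $e_{a_n} = \sum_{k=1}^{n}\alpha_k z_{\kappa_k} + e_0$ holds for $n=0$ with the empty sum (and for $n=1$ it is exactly the displayed relation $e_{a_1} = \alpha_1 z_{\kappa_1} + e_0$). For the inductive step I would invoke the definition of the vectors $e_j$ on the $n$-th (a)-working interval: because $(\kappa_j)_{j\ge 1}$ is strictly increasing one always has $\kappa_n > \kappa_{n-1} - 1$, so that case of the definition applies, and taking $k=0$ in $f_{a_n-k} = z_{\kappa_n-k}$ gives $f_{a_n} = z_{\kappa_n}$. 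Hence
\[
e_{a_n} = \alpha_n f_{a_n} + e_{a_{n-1}} = \alpha_n z_{\kappa_n} + e_{a_{n-1}},
\]
and substituting the induction hypothesis $e_{a_{n-1}} = \sum_{k=1}^{n-1}\alpha_k z_{\kappa_k} + e_0$ yields $e_{a_n} = \sum_{k=1}^{n}\alpha_k z_{\kappa_k} + e_0$, as claimed.

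For the norm estimate I would simply note that $e_{a_n} - e_0 = \sum_{k=1}^{n}\alpha_k z_{\kappa_k}$ is precisely the $n$-th partial sum of the series $\sum_{j\ge 1}\alpha_j z_{\kappa_j}$, and the scalars $(\alpha_j)_{j\ge 1}$ and integers $(\kappa_j)_{j\ge 1}$ were chosen so that $\sup_{J\ge 1}\|\sum_{j=1}^{J}\alpha_j z_{\kappa_j}\| < \varepsilon$; therefore $\|e_{a_n} - e_0\| \le \sup_{J\ge 1}\|\sum_{j=1}^{J}\alpha_j z_{\kappa_j}\| < \varepsilon$. There is no genuine obstacle here: the only points needing care are that the recursion is anchored correctly at $e_{a_0} = e_0$ (which is why $a_0$ was set equal to $0$) and that the partial sums appearing are exactly the ones controlled by the non-semi-boundedly-complete data fixed at the outset. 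Everything else is bookkeeping.
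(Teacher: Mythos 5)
Your proof is correct and is essentially the argument the paper intends: the paper records Fact~\ref{facta} as an immediate consequence of the definition $e_{a_n}=\alpha_n f_{a_n}+e_{a_{n-1}}=\alpha_n z_{\kappa_n}+e_{a_{n-1}}$ together with $e_{a_0}=e_0$, which is precisely your telescoping induction. The norm bound then follows, as you say, from the initial choice of $(\alpha_j)$ and $(\kappa_j)$ with $\sup_{J\ge 1}\bigl|\bigl|\sum_{j=1}^{J}\alpha_j z_{\kappa_j}\bigr|\bigr|<\varepsilon$.
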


\par\medskip
$\bullet$ \textbf{Definition of the vectors $e_{j}$ for $j$ in a (b)-working interval: }
The (b)-fan consists of the collection of the $a_{n}$ (b)-working intervals $[r(b_{n}+1),rb_{n}+a_{n}]$, $r=1,\ldots,
a_{n}$, where $b_{n}$ is extremely large \wrt\ $a_{n}$. These successive intervals have decreasing length, from length $a_{n}$ for the first one to length $1$ for the last one. For $j$
in one of these intervals, $e_{j}$ is defined so that
$$f_{j}=e_{j}-b_{n}e_{j-b_{n}},$$ i.e. $$e_{j}=f_{j}+b_{n}e_{j-b_{n}}.$$
The (b)-fan terminates at the index $\nu _{n}=a_{n}(b_{n}+1)$.
\par\smallskip
These (b)-working intervals are the same as the ones considered in
\cite{GR}, except for the fact that there are $a_{n}$ of them, and not
$\xi_{n}$, and that their length decreases from $a_{n}$ to $1$, and
not from $\xi_{n}$ to $1$. Their role is to ensure that
$||(T^{b_{n}+1}/b_{n})x-Tx||$
is very small whenever $x$ belongs to $F_{a_{n}}=\textrm{sp}[e_{0},\ldots,
e_{a_{n}}]$ (see Fact \ref{factg} below).

\par\medskip
$\bullet$ \textbf{Definition of the vectors $e_{j}$ for $j$ in a (c)-working interval: }
Let us choose polynomials $p_{k,n}$, $k=1,\ldots, k_{n}$ forming an
$\varepsilon _{n}$-net of the ball of $\K_{b_{n}+a_{n}+1}[\zeta ]$ of radius
$2$ for the norm $|\,.\,|$ (the number $\varepsilon _{n}$ will be
chosen extremely small after the construction of $a_{n}$ and $b_{n}$). Let $l_{n}=b_{n}+a_{n}+1$.
\par\smallskip
The (c)-fan consists the collection of the $h_{n}k_{n}$ disjoint (c)-working intervals of length
$\nu_{n}$:
$$I_{s_{1},\ldots, s_{k_{n}}}=[s_{1}c_{1,n}+\ldots+s_{k_{n}}c_{k_{n},n},
s_{1}c_{1,n}+\ldots+s_{k_{n}}c_{k_{n},n}+\nu_{n}],$$ where $\nu_{n}=
a_{n}(b_{n}+1)\ll c_{1,n}\ll c_{2,n}
\ll \ldots \ll c_{k_{n},n}$, and $s_{1},\ldots , s_{k_{n}}$ are
nonnegative integers, at least one of which is \nz, belonging to
$[0,h_{n}]$. Here $h_{n}$ is an integer which is very large depending on
the construction until step
$\nu_{n}$, but not on the integers $c_{k,n}$. Let us write $s=(s_{1},\ldots , s_{k_{n}})$
and $|s|=s_{1}+\ldots +s_{k_{n}}$. Let $t$ be the largest integer
such that $s_{t} $ is \nz. For $j\in I_{s_{1},\ldots , s_{k_{n}}}$ we
define $f_{j}$ so that
$$f_{j}=\frac {4^{1-|s|}}{\gamma _{n}}(e_{j}-p_{t,n}(T)e_{j-c_{t,n}}),$$
i.e. $$e_{j}=\gamma _{n}4^{|s|-1}f_{j}+p_{t,n}(T)e_{j-c_{t,n}}$$ where $\gamma _{n}$
is a very small positive number depending only on $\nu _{n}$ which is
suitably chosen in the proof. The role of the initial (c)-intervals $[c_{k,n}, c_{k,n}+\nu _{n}]$, $1\le k\le k_{n}$, is to make
$||T^{c_{k,n}}x-p_{k,n}(T)x||$ very small whenever $x$ belongs to $F_{\nu _{n}}=
\textrm{sp}[e_{0},\ldots, e_{\nu _{n}}]$ (see Fact \ref{factb}), and thus to ensure that
Property (P1) holds true. Observe that it ensures also that $e_0$ is a hypercyclic vector for $T$. The other (c)-intervals (i.e. those for which $|s|>1$) are ``shades'' of these initial intervals, and their role is to ensure a uniform tail estimate on the \ops\ $T^{c_{k,n}}$, $1\le k\le k_{n}$ (see Proposition \ref{prop2}).

\par\medskip
$\bullet$ \textbf{Definition of the vectors $e_{j}$ for $j$ in a \loi: }
The \loi s are the intervals which lie between the working intervals. If
we write such an interval as $[k+1,k+l]$, we define $e_{j}$ for $j$ in
it so that $f_{j}=\lambda _{j}e_{j}$ where
$$\lambda _{j}=2^{\frac {1}{\sqrt{l}}(\frac {1}{2}l+k+1-j)}.$$
Hence when the length $l$ of the interval becomes very large, $\lambda _{j}$ is
very large when $j$ lies in the beginning of the \loi\ ($\lambda _{j}$ is approximately equal
to $2^{\frac {1}{2}\sqrt{l}}$ there) and very small when $j$ lies in the end of
the \loi\ ($\lambda _{j}$ is approximately equal to $2^{-\frac {1}{2}\sqrt{l}}$ this
time). The ratio $\lambda _{j}/\lambda _{j+1}$, which does not depend on
$j$ and is equal to $2^{-\frac{1}{\sqrt{l}}}$, becomes very close to $1$ when $l$ is large.
\par\smallskip
Just as in \cite{GR}, the definition of the scalars $\lambda_{j}$ when $j$
lies in some of the \loi s between the working intervals will be slightly
modified, just in order to make computations simpler. For $j\in [a_{n}+1, b_{n}]$, we set
$$\lambda _{j}=2^{\frac {1}{\sqrt{b_{n}}}(\frac {1}{2}b_{n}+a_{n}+1-j)}$$
 and for $j\in [rb_{n}+a_{n}+1, (r+1)b_{n}-1]$ we set
 $$\lambda _{j}=2^{\frac {1}{\sqrt{b_{n}}}(\frac {1}{2}b_{n}+rb_{n}+a_{n}+1-j)}.$$ This modification does not change anything to the asymptotic size of the coefficients $\lambda _{j}$.
\par\smallskip
The last index $\xi_{n+1}$ is defined as being very large \wrt\
$h_{n}c_{k_{n},n}$, so that the interval $[\xi_n+1,\xi_{n+1}]$ contains all the working intervals constructed at step $n$.

\subsection{Boundedness of $T$}
Let us show first that $T$ extends to a bounded linear \op\ on $X$. Most
of the work for this has been done already in \cite{GR}: we are considering
the same type of \op\ as in \cite{GR} as far as the (b)- and (c)-parts are concerned, and we are adding to it the (a)-\woi s. We will prove a
more precise statement:

\begin{proposition}\label{prop1}
Let $\rho >0$ be any (small) positive number.
Provided the sequences $(a_{n})$, $(b_{n})$, etc... increase
sufficiently fast, $T$ can be written as $T=S+K$, where

$\bullet$ $S$ is an \op\ of the form $S=S_{0}\oplus 0$ when seen as
acting on $\ell_{p}\oplus Z$ or $c_{0}\oplus Z$, and $S_{0}$ is a
forward weighted shift on $\ell_{p}$ or $c_{0}$ \wrt\ the basis $(g_{j})_{j\geq
0}$: $S_{0}g_{j}=w_{j}g_{j+1}$, where $(w_{j})_{j\geq 0}$ is a certain
sequence of real numbers with $0\leq w_{j}\leq 1+\rho $ for every $j\geq 0$;

$\bullet$ $K$ is a nuclear \op\ on $X$;
 there exists a
sequence $(u_{j})_{j\geq 0}$ of vectors of $X$ with
$$\sum_{j\geq 0}(1+||f_{j}^{*}||)||u_{j}||<\rho $$ such that
$$Kx=\sum_{j\geq 0}f_{j}^{*}(x)u_{j} \quad \textrm{ for every } x\in X.$$
So $T$ is bounded with $||T||\leq 1+2\rho $, and in particular $||T||\leq
2$.
\end{proposition}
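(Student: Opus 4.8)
The plan is to decompose $T$ according to the way the vectors $e_j$ are defined on the successive working and lay-off intervals, isolating a "shift-like" part $S$ and a "finite-rank perturbation per step" part $K$, and then to argue that the contribution of $K$ can be made arbitrarily small by choosing the sequences $(a_n)$, $(b_n)$, $(c_{k,n})$, $(\varepsilon_n)$, $(\gamma_n)$, $h_n$, and $\xi_{n+1}$ to grow fast enough. Concretely, I would compute $Tf_j$ for each $j$ by writing $f_j$ in terms of the $e_i$, applying $T$ (which shifts $e_i\mapsto e_{i+1}$), and re-expressing the result in the $f$-basis. On a lay-off interval where both $j$ and $j+1$ lie in the interval one gets exactly $Tf_j=(\lambda_j/\lambda_{j+1})f_{j+1}$, a weighted shift with weight $2^{-1/\sqrt l}\le 1+\rho$; this is the bulk of the indices and supplies the operator $S_0$. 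At the finitely many boundary indices of each interval, and at the indices inside (a)-, (b)- and (c)-working intervals, the formula $Tf_j$ picks up additional terms (coming from the defining relations $e_{a_n}=\alpha_n f_{a_n}+e_{a_{n-1}}$, $e_j=f_j+b_n e_{j-b_n}$, $e_j=\gamma_n 4^{|s|-1}f_j+p_{t,n}(T)e_{j-c_{t,n}}$, and the telescoping that expresses $e_{j-b_n}$ or $p_{t,n}(T)e_{j-c_{t,n}}$ back in the $f$-basis). The plan is to collect all of these extra terms into $Kf_j=:u_j\,\|f_j\|^{-1}$... more precisely to define $u_j$ so that $Kx=\sum_j f_j^*(x)u_j$ absorbs everything that is not the clean weighted shift, and to show $\sum_j(1+\|f_j^*\|)\|u_j\|<\rho$.

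The key steps, in order, are: (1) fix $\rho>0$ and, inductively over $n$, once the construction up to step $n-1$ is frozen, record that the "defect vectors" $u_j$ for $j$ in the working intervals of step $n$ and at the interval endpoints have norms controlled by quantities like $\alpha_n/\min(a_n\text{-block norms})$, $1/a_n$ (from the $a_n^{k+1}$ rescalings in the (a)-part), $1/b_n$ and the tail estimates of the (b)-telescoping, $\gamma_n 4^{h_n}$ and $\varepsilon_n$ in the (c)-part, and $|2^{-1/\sqrt l}-1|$-type errors at lay-off endpoints, all times factors $(1+\|f_j^*\|)$ that are bounded because $\inf_j\|z_j\|>0$ gives $\|f_j^*\|=\|z_{\kappa_n-k}^*\|$ bounded on the (a)-blocks; (2) observe that each of these bounds can be driven below $\rho\,2^{-n}$ by choosing the relevant parameter (the lay-off lengths $l$, i.e. $b_n$ and $\xi_{n+1}-$ stuff, then $\varepsilon_n$, then $\gamma_n$, then $c_{k,n}$) sufficiently large or small \emph{after} the earlier ones are fixed — this is exactly the "$\xi_n\ll a_n\ll b_n\ll c_{1,n}\ll\cdots\ll\xi_{n+1}$" ordering already built into the construction; (3) sum over $j$ and $n$ to get $\sum_j(1+\|f_j^*\|)\|u_j\|<\rho$, so that $K$ is nuclear with $\|K\|<\rho$; (4) check that the remaining weights $w_j$ of $S_0$ satisfy $0\le w_j\le 1+\rho$: they are the lay-off ratios $2^{-1/\sqrt l}\le 1$ on most indices, and where the construction forced $w_j$ slightly above $1$ it is still below $1+\rho$ once the lengths are large; set $w_j=0$ on the exceptional indices whose $f_j\in Z$, consistent with $S=S_0\oplus 0$. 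Finally $\|T\|\le\|S\|+\|K\|\le(1+\rho)+\rho=1+2\rho\le 2$ if $\rho\le 1/2$.

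The main obstacle, as usual in Read's constructions, is step (1)–(2): verifying that the defect terms really are summable. The delicate points are the (c)-part, where $e_j=\gamma_n 4^{|s|-1}f_j+p_{t,n}(T)e_{j-c_{t,n}}$ and one must unwind $p_{t,n}(T)e_{j-c_{t,n}}$ — a polynomial in $T$ of degree $l_n=b_n+a_n+1$ applied to an $e$-vector far to the left — back into the $f$-basis and bound the resulting tail uniformly over the $h_n k_n$ shade-intervals (this is where the factor $4^{1-|s|}$ and the smallness of $\gamma_n$ relative to $4^{h_n}$ and of $\varepsilon_n$ relative to $\gamma_n$ matter), and the (a)-part, where the $a_n^{k+1}$-scalings must be shown not to blow up $\|u_j\|$ when $Tf_{a_n-k}=a_n^{k+1}e_{a_n-k+1}$ is re-expressed (the point being that $e_{a_n-k+1}=f_{a_n-k+1}/a_n^{k}$ for $k\ge 2$, so these ratios are $O(1/a_n)$, and the single genuinely new term comes from $e_{a_n}=\alpha_n z_{\kappa_n}+e_{a_{n-1}}$, whose defect is $O(\alpha_n)$ but lands harmlessly since $\|z_{\kappa_n}\|$ is bounded and there is only one such $j$ per step). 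Since the paper explicitly says most of this was done in \cite{GR} for the (b)- and (c)-parts, I would treat those by citation and concentrate the written proof on the new (a)-working intervals and on re-checking that inserting them does not spoil the estimates at the adjacent lay-off endpoints.
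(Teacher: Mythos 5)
Your plan is essentially the paper's own proof: put the shift action $Tf_j=w_jf_{j+1}$ at all non-exceptional indices into $S_0$ (with $w_j=0$ where $f_j$ is a $z$-vector), collect the exceptional vectors $u_j=Tf_j$ (right endpoints of working and lay-off intervals plus the (a)-blocks) into a nuclear $K$, bound each $\|Tf_j\|$ and choose the parameters inductively so that $\sum_j(1+\|f_j^*\|)\|u_j\|<\rho$, quoting \cite{GR} for the (b)- and (c)-parts and doing the new (a)-part by hand. Two small slips to fix in a write-up: the (a)-scalings are $e_{a_n-k}=a_n^{k+1}f_{a_n-k}$, so $Tf_{a_n-k}=\frac{1}{a_n}f_{a_n-k+1}$ (you wrote the reciprocal relations, though your stated conclusion $O(1/a_n)$ is the correct one), and inside the (b)- and (c)-working intervals $T$ produces no extra terms at all ($Tf_j=f_{j+1}$ exactly, so those indices feed $S_0$, not $K$); the genuinely non-shift defects occur only at right endpoints and at $j=a_n$, where the relevant factor is $1/\alpha_n\le 1/\delta_0$ multiplied by the tiny lay-off coefficients, rather than $O(\alpha_n)$.
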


Recall that for each $n\ge 1$, $J_{n}$ denotes the (a)-\woi\ $[a_{n}-(\kappa_{n}-\kappa_{n-1}), a_{n}]$. We will call $J$ the set of all right endpoints of either a working or a \loi, and $\tilde{J}$ the set $\tilde{J}=J\cup\bigcup_{n\ge 1} J_{n}$.

\begin{proof}
It is not difficult to see that when $j$ does not belong to  $\tilde{J}$, then $Tf_{j}=w_{j}f_{j+1}$ for some real number $w_{j}$ with
$1-\rho \leq w_{j}\leq 1+\rho $
provided each one of the quantities $a_{n}$, etc... involved in the construction
is chosen sufficiently large \wrt\ the previous one. Observe indeed that when $j\not \in \tilde{J}$, then $j+1$
is not in the set $\bigcup_{n\geq 1} J_{n}$,
so $f_{j+1}=g_{\sigma (j+1)}=g_{\sigma
(j)+1}$. It then follows from the definition of $e_{j}$ in the (b)- and (c)-\woi s and in the \loi s 
that $Tg_{\sigma (j)}=w_{j}g_{\sigma (j)+1}$. Set $w_{j}=0$ when
$j\in J\setminus \bigcup_{n\geq 1} J_{n}$, and define $S_{0}$ on $\ell_{p}$
or $c_{0}$ by setting $S_{0}f_{j}=w_{j}f_{j+1}$ for $j\not\in \bigcup_{n\geq 1}
J_{n}$. Then $S_{0}$ is a bounded forward shift \wrt\ the basis $(g_{j})_{j\geq 0}$
of $\ell_{p}$ or $c_{0}$. If $S=S_{0}\oplus 0$ on $\ell_{p}\oplus Z$ or $c_{0}\oplus
Z$, then $Sf_{j}=S_{0}f_{j}$ for $j\not \in \bigcup_{n\geq 1} J_{n}$,
and $Sf_{j}=0$ for $j\in \bigcup_{n\geq 1} J_{n}$.
\par\smallskip
When $j$ is an endpoint of either a working or a \loi\ (i.e. $j\in J$), but $j\not\in \bigcup_{n\geq 1}
J_{n}$, exactly the same proof as in \cite[Prop. 2.4]{GR} shows that

$\bullet$ if $j$ is a right endpoint of a (b)-working or \loi\ (we denote by $J_{b}$ the set of such integers), $||Tf_{j}||\ls
b_{n}^{a_{n}}2^{-\frac {1}{2}\sqrt{b_{n}}}$.

$\bullet$ if $j$ is a right endpoint of a (c)-working or \loi\ (we denote by $J_{c}$ the set of such integers), then we have the estimate
 $||Tf_{j}||\ls
C_{h_{n},k_{n}} 2^{-\frac {1}{2}\sqrt{c_{1,n}}}$ where $C_{h_{n},k_{n}}$ is a constant depending on $h_{n}$ and $k_{n}$ only.
\par\smallskip
To complete the proof let us check the estimates for $||Tf_{j}||$ in the
remaining cases:
\par\smallskip
\textbf{Case 1:} if $j=a_{n}-(\kappa_{n}-\kappa_{n-1})$
we have
$$||Tf_{a_{n}-(\kappa_{n}-\kappa_{n-1})}||=||\lambda _{a_{n}-(\kappa_{n}-\kappa_{n-1})}
e_{a_{n}-(\kappa_{n}-\kappa_{n-1}-1)}||\simeq 2^{-\frac {1}{2}\sqrt{a_{n}}}a_{n}^{\kappa_{n}-\kappa_{n-1}}
$$ so that $||Tf_{a_{n}-(\kappa_{n}-\kappa_{n-1})}||$
can be made arbitrarily small if $a_{n}$ is large enough. This estimates applies in particular to $||Tf_{a_{n}-1}||$ in the case where $\kappa _{n}=\kappa _{n-1}+1$.
\par\smallskip
\textbf{Case 2:} if $\kappa _{n}>\kappa _{n-1}-1$ and $j$ belongs to the set $J_{n}=[a_{n}-(\kappa_{n}-\kappa_{n-1}-1), a_{n}]$, then we have two subcases to consider:
\par\smallskip
\textbf{Case 2a:} if  $k=2,\ldots, \kappa_{n}-\kappa_{n-1}-1$ we have
$$Tf_{a_{n}-k}=\frac {1}{a_{n}^{k+1}}e_{a_{n}-(k-1)}=\frac {a_{n}^{k}}{a_{n}^{k+1}}f_{a_{n}-k+1}=\frac {1}{a_{n}}
f_{a_{n}-k+1}$$ and $||Tf_{a_{n}-k}||=\frac {1}{a_{n}}$.
\par\smallskip
\textbf{Case 2b:} if $k=1$, then
$$Tf_{a_{n}-1}=\frac {1}{a_{n}^{2}}e_{a_{n}}$$ 
Hence
$||Tf_{a_{n}-1}||\le \frac {1}{a_{n}^{2}} (1+\varepsilon ),$
which can again be made arbitrarily small  provided $a_{n}$ is large enough.
\par\smallskip
\textbf{Case 3:} lastly $$Tf_{a_{n}}=\frac {1}{\alpha _{n}}(e_{a_{n}+1}-e_{a_{n-1}+1})=\frac {1}{\alpha _{n}}
\left(\frac {1}{\lambda _{a_{n}+1}}f_{a_{n}+1}-
\frac {1}{\lambda _{a_{n-1}+1}}f_{a_{n-1}+1}\right)$$ so that
$||Tf_{a_{n}}||\ls \frac {1}{\alpha _{n}}2^{-\frac {1}{2}\sqrt{a_{n-1}}}\ls
2^{-\frac {1}{4}\sqrt{a_{n-1}}}$.
\par\smallskip
Hence we have shown that if $j$ belongs to the set $\tilde{J}$, we have very small bounds on the norms of the vectors
$||Tf_{j}||$.
Let us now define, for $x=\sum_{j\geq 0}f_{j}^{*}(x)f_{j}$, $$Kx=\sum_{j\in \tilde{J}}
f_{j}^{*}(x)Tf_{j}.$$ We have
\begin{eqnarray*}
\sum_{j\in \tilde{J}}(1+ ||f_{j}^{*}||)||Tf_{j}||&=&\sum_{n\geq 1} \sum_{j\in [\xi _{n}+1,
\xi _{n+1}]} (1+||f_{j}^{*}||)||Tf_{j}||\\
&\ls&\sum_{n\geq 1} \Bigl(
\sum_{j\in [\xi _{n}+1,
\xi _{n+1}]\cap J_{b}} (1+||f_{j}^{*}||)
b_{n}^{a_{n}}2^{-\frac {1}{2}\sqrt{b_{n}}}\Bigr.\\
&+&
\sum_{j\in [\xi _{n}+1,
\xi _{n+1}]\cap J_{c}}(1+||f_{j}^{*}||) C_{h_{n},k_{n}}2^{-\frac
{1}{2}\sqrt{c_{1,n}}}\\
&+&\Bigl.
\sum_{k=1}^{\kappa_{n}-\kappa_{n-1}}
(1+ ||f_{a_{n}-k}^{*}||) \frac {1}{a_{n}}
+||f_{a_{n}}^{*}||
2^{-\frac {1}{2}\sqrt{a_{n-1}}}\Bigr).
\end{eqnarray*}
Since
there are $a_{n}$ (b)-working intervals, and $h_{n}k_{n}$ (c)-working
intervals, where $h_{n}$ depends only on $\nu_{n}$ but not on the integers
$c_{k,n}$, and
$||f_{j}^{*}||=1$ if $j\not \in \bigcup_{n\geq 1}J_{n}$,
$$\sum_{j\in [\xi _{n}+1,
\xi _{n+1}]\cap J_{b}}(1+ ||f_{j}^{*}||) b_{n}^{a_{n}} 2^{-\frac {1}{2}\sqrt{b_{n}}}\quad \textrm{ and }
\quad
\sum_{j\in [\xi _{n}+1,
\xi _{n+1}]\cap J_{c}}(1+ ||f_{j}^{*}||) C_{h_{n},k_{n}}2^{-\frac {1}{2}\sqrt{c_{1,n}}}
$$
can be made arbitrarily small if $b_{n}$ and $c_{1,n}$ are large enough.
Since $||f_{a_{n}-k}^{*}||=||z_{\kappa_{n}-k}^{*}||_{Z^{*}}$ for $n\geq 1$
and $k=0,\ldots, \kappa_{n}-\kappa_{n-1}+1$, these quantities do not depend on the
choice of $\xi_{n}$, $a_{n}$, etc... Moreover $\kappa_{n}$ is a fixed quantity which does not depend on $a_{n}$, and so the last terms can be made
arbitrarily small too. This shows that for any $\rho >0$ we can
manage so that $\sum_{j\in \tilde{J}}(1+ ||f_{j}^{*}||)||Tf_{j}||<\rho.$
It is clear that $Tf_{j}=(S+K)f_{j}$ for every $j\geq 0$, and this
proves in particular that $T$ is a bounded \op\ on $X$.
\end{proof}

\begin{remark}\label{remadd}
The proof of Proposition \ref{prop1} gives us a bit more information on
the structure of the operators $S$ and $K$: it actually shows that
 $Sf_{j}=0$ if and only if $j\in \tilde{J}$. When $j$ does not belong to $
\tilde{J}$, $j$ and $j+1$ are not in the set $\bigcup_{n\ge 1}J_{n}$, and so $f_{j}=g_{\sigma  (j)}$, $f_{j+1}=g_{\sigma  (j+1)}=g_{\sigma  (j)+1}$, and  $Sf_{j}=w_{j}f_{j+1}$, where $w_{j}$ becomes closer and
closer to $1$ as $j$ tends to infinity. Also $Kf_{j}=0$ as soon as $j\not\in
\tilde{J}$. Hence $Tf_{j}=Sf_{j}$ if $j\not\in\tilde{J}$ and $Tf_{j}=Kf_{j}=u_{j}$
if $j\in\tilde{J}$, so that
$$Tx=\sum_{j\not\in\tilde{J}} f_{j}^{*}(x)w_{j}f_{j+1}+\sum_{j\in\tilde{J}}f_{j}^{*}(x)u_{j}.$$ In particular if we write the \op\ $K$ as $Kx=\sum_{j\ge 0}
f_{j}^{*}(x)u_{j}$, then $u_{j}$ is \nz\ \ifff\
$j\not\in\tilde{J}$.
\par\smallskip
If we denote by $\tilde{S}$ the forward weighted shift defined by $\tilde{S}f_{j}=0$ if $j\in\tilde{J} $ and $Sf_{j}=f_{j+1}$ if $j\not\in\tilde{J}$, it is not difficult to see that $S-\tilde{S}$ is a compact operator. Hence $T$ can also be written as $T=\tilde{S}+L$, where $L$ is a compact operator. Since $\tilde{S}$ is a contraction, $T$ is in particular a compact perturbation of a power-bounded operator. We will come back to this observation later in Section \ref{sec7} of the paper.
\end{remark}

\begin{remark}\label{remaddbis}
Observe also that if $X=\ell_{p}\oplus Z$ with $1<p<+\infty $, then $T$
 is weakly compact.
\end{remark}

\section{Operators with few non-trivial invariant closed subsets  on non-reflexive spaces with a basis: proof of Theorem \ref{th2}}\label{sec3}

\subsection{Proof of Property (P1)}
Let us begin by proving that $T$ satisfies Property (P1) of Theorem \ref{th2}
above: the idea is the same as in \cite[Section 2]{GR}, but the introduction of the
(a)-interval makes some modifications necessary. Let us recall the
argument of \cite{GR}: we want to show that for any $x\not =0$, any
\pol\ $p$ and any $\delta  >0$, it is possible to find an integer $c$
such that
$||T^{c}x-p(T)x||<\delta $. It is not difficult to see that it suffices
to do this for $p$ satisfying the condition $|p|\leq 2$. Indeed if for any $\delta >0$ and any polynomial $p$ with $|p|\leq 2$ there exists an integer $c$ such that $||T^{c}x-p(T)x||<\delta $, then if $q$ is any polynomial, with $|q|\le 2^j$ for some positive integer $j$, we can find an integer $c_j$ such that
$||T^{c_j}x-2^{-j}q(T)x||<\delta 2^{-2j}$. Then we can find an integer $c_{j-1}$ such that
$||T^{c_{j-1}}x-2T^{c_j}x||<\delta 2^{-(2j-1)}$, etc... and thus there exists an integer $c_0$ such that $||T^{c_0}x-q(T)x||<\delta $.
\par\smallskip
In order to prove that for any $\delta >0$ and any polynomial $p$ with $|p|\le2$ there exists an integer $c$ such that $||T^{c}x-p(T)x||<\delta $, we proceed in the proof of \cite[Theorem 1.3]{GR} as follows:
 we first decompose $x$ as $x=\pi_{[0,\nu_{n}]}x+(x-
\pi_{[0,\nu_{n}]}x)$ where $\pi_{[0,\nu_{n}]}x=\sum_{j\geq 0}^{\nu _{n}}f_{j}^{*}(x)f_{j}$
is the projection of $x$ onto $F_{\nu _{n}}=\textrm{sp}[e_{j} \textrm{ ; }j\leq \nu
_{n}]$. Then the two crucial steps are the following:
\par\smallskip
$\bullet$ observe that for any $\delta _{n}>0$ the construction can be
carried out in such a way that for every $y$  in $F_{\nu _{n}}$ and  every $k\in [1,k_{n}]$, we have
$||T^{c_{k,n}}y-p_{k,n}(T)y||\leq \delta _{n}||y||$ (this is \cite[Fact 2.1]{GR});
\par\smallskip
$\bullet$ show that whenever $f_{j}^{*}(y)=0$ for every $j=0,\ldots, \nu_{n}$,
$||T^{c_{k,n}}y||\leq 100\,||y||$ for every $n\geq 1$ and every $k\in
[1,k_{n}]$, i.e. that $||T^{c_{k,n}}(I-\pi_{[0,\nu_{n}]})||\le 100$ (this is \cite[Proposition 2.2]{GR}).
\par\smallskip
Then it is not difficult to see that $||T^{c_{k,n}}x-p_{k,n}(T)x||$
becomes very small for $n$ very large, and using the fact that
$(p_{k,n})_{1\leq k\leq k_{n}}$ forms an $\varepsilon _{n}$-net of the set
of \pol s of degree at most $l_{n}$
with $|p|\leq 2$, with $\varepsilon _{n}$ so small that in particular $\varepsilon _{n}<4^{-\nu _{n}}$, we easily get that Property (P1) is satisfied.
\par\smallskip
Introducing the (a)-interval does not change anything to the first step:

\begin{fact}\label{factb}
Let $\delta _{n}$ be any small positive number. Provided $\gamma _{n}$
is small enough, we have
$||T^{c_{k,n}}y-p_{k,n}(T)y||\leq \delta _{n}||y||$ for any $y$
belonging to $F_{\nu _{n}}$ and any $k\in [1,k_{n}]$.
\end{fact}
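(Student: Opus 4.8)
The plan is to make completely explicit the action of $T^{c_{k,n}}$ on the generating vectors $e_{0},\ldots,e_{\nu_{n}}$ of $F_{\nu_{n}}$, and then to deduce the estimate from a purely finite-dimensional comparison of norms on $F_{\nu_{n}}$. This is the exact analogue of \cite[Fact 2.1]{GR}, and the computation there carries over essentially verbatim; the only thing that needs a word of justification is that the newly introduced (a)-\woi\ does not interfere.

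First I would recall that the \emph{initial} (c)-working interval attached to the index $k$ is $I_{s}$ for $s=(0,\ldots,0,1,0,\ldots,0)$ with the $1$ in position $k$, i.e.\ $[c_{k,n},c_{k,n}+\nu_{n}]$, and that for $j$ in this interval one has $|s|=1$, $t=k$, so that by the definition of the vectors $e_{j}$ in the (c)-\woi s, $e_{j}=\gamma_{n}f_{j}+p_{k,n}(T)e_{j-c_{k,n}}$. Writing $j=i+c_{k,n}$ with $0\le i\le\nu_{n}$ (so that $j$ then runs exactly over $I_{s}$) and using $T^{c_{k,n}}e_{i}=e_{i+c_{k,n}}$, this gives the \emph{exact} identity
\[
T^{c_{k,n}}e_{i}=p_{k,n}(T)e_{i}+\gamma_{n}f_{i+c_{k,n}},\qquad 0\le i\le\nu_{n}.
\]
Here I would point out that the indices $i+c_{k,n}$, $0\le i\le\nu_{n}$, lie in a region which meets no (a)-interval $J_{m}$, so that $f_{i+c_{k,n}}=g_{\sigma(i+c_{k,n})}=g_{\sigma(c_{k,n})+i}$; in particular each of these vectors has norm $1$ and they form $\nu_{n}+1$ \emph{consecutive} basis vectors of the $\ell_{p}$- (or $c_{0}$-) component of $X$. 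Also, $p_{k,n}$ has degree $l_{n}<\nu_{n}\ll c_{1,n}$, so $p_{k,n}(T)e_{i}$ involves only vectors $e_{m}$ with $m<c_{1,n}$: everything happens ``before'' the (c)-fan, and the (a)-\woi\ $J_{n}\subseteq[0,\nu_{n}]$ is simply carried along.

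Next, for an arbitrary $y=\sum_{i=0}^{\nu_{n}}\beta_{i}e_{i}\in F_{\nu_{n}}$, summing the identity above gives $T^{c_{k,n}}y-p_{k,n}(T)y=\gamma_{n}\sum_{i=0}^{\nu_{n}}\beta_{i}f_{i+c_{k,n}}$, and by the remark just made the norm of the right-hand vector equals $\gamma_{n}\bigl(\sum_{i=0}^{\nu_{n}}|\beta_{i}|^{p}\bigr)^{1/p}$ (resp.\ $\gamma_{n}\max_{i}|\beta_{i}|$ in the $c_{0}$ case). It remains to compare this with $||y||$: both $(\beta_{i})_{i}\mapsto(\sum_{i}|\beta_{i}|^{p})^{1/p}$ and $(\beta_{i})_{i}\mapsto||\sum_{i}\beta_{i}e_{i}||$ are norms on the finite-dimensional space $F_{\nu_{n}}$, hence there is a constant $M_{n}<+\infty$ with $(\sum_{i}|\beta_{i}|^{p})^{1/p}\le M_{n}||\sum_{i}\beta_{i}e_{i}||$ for all scalars $\beta_{i}$. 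The crucial point I would stress is that $M_{n}$ depends only on the vectors $e_{0},\ldots,e_{\nu_{n}}$, hence only on the part of the construction performed up to step $\nu_{n}$, and in particular \emph{not} on $\gamma_{n}$ (which enters only at indices $\ge c_{1,n}>\nu_{n}$). Therefore it suffices, at the moment $\gamma_{n}$ is chosen --- after $a_{n}$, $b_{n}$, the $\kappa_{j}$'s and $\nu_{n}$ have been fixed --- to take $\gamma_{n}\le\delta_{n}/M_{n}$; then $||T^{c_{k,n}}y-p_{k,n}(T)y||\le\gamma_{n}M_{n}||y||\le\delta_{n}||y||$ for every $y\in F_{\nu_{n}}$ and every $k\in[1,k_{n}]$.

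I do not expect a genuine obstacle here. The only thing that goes beyond the computation in \cite{GR} is the verification that the (a)-\woi\ $J_{n}$, although it is contained in $[0,\nu_{n}]$, does not disturb the identity above --- which it does not, since the vectors $e_{j}$ with $j\le\nu_{n}$ are fed into $T^{c_{k,n}}$ unchanged and the output indices $i+c_{k,n}$ fall in a range the (a)-construction never touches. At worst the presence of the (a)-interval enlarges $M_{n}$ (the vectors $e_{a_{n}-k}=a_{n}^{k+1}z_{\kappa_{n}-k}$ have large norm), but $M_{n}$ stays finite and is fixed before $\gamma_{n}$ is selected, so this causes no trouble.
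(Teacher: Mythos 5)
Your argument is correct and is essentially the proof the paper has in mind: it reproduces the computation of \cite[Fact 2.1]{GR} (the exact identity $T^{c_{k,n}}e_{i}-p_{k,n}(T)e_{i}=\gamma_{n}f_{i+c_{k,n}}$ on the initial (c)-interval, followed by a finite-dimensional comparison of norms on $F_{\nu_{n}}$ and the choice of $\gamma_{n}$ after everything up to index $\nu_{n}$ is fixed), together with the observation the paper itself makes, namely that the (a)-working interval, being contained in $[0,\nu_{n}]$ and away from the shifted indices $i+c_{k,n}$, does not disturb this identity.
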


But the uniform estimate on $||T^{c_{k,n}}y||$ when $y$ is supported in $[\nu _{n}+1,
+\infty [$ is not true anymore. Consider the action of $T^{c_{k,n}}$ on the vector $f_{a_{n+1}}$. We have $$T^{c_{k,n}}f_{a_{n+1}}=\frac {1}{\alpha _{n+1}}
(e_{a_{n+1}+c_{k,n}}-e_{a_{n}+c_{k,n}}).$$ Obviously $||e_{a_{n+1}+c_{k,n}}||$
can be made sufficiently small, since $a_{n+1}+c_{k,n}$ lies in the beginning of a \loi\ of length comparable to $b_{n+1}$, but this is not the case for the norm of
$e_{a_{n}+c_{k,n}}=\gamma _{n}f_{c_{k,n}+a_{n}}+p_{k,n}(T)e_{a_{n}}$: we
have no control at all on the behavior of this vector, which may have
very large norm. To circumvent this problem, the idea is to replace the
natural projection $\pi_{[0,\nu_{n}]}x$ of $x$ on $F_{\nu _{n}}$ by
another projection $Q_{\nu _{n}}x$ on $F_{\nu _{n}}$. Such a projection
was used by Read in \cite{R1}, \cite{R2} and \cite{R3}, but we did not
need it in \cite{GR} because there was no (a)-interval there. The
definition of $Q_{\nu _{n}}x$ is very natural: we just take out the
part of $x$ which is making trouble, i.e. the part corresponding to $a_{n+1}$:
\begin{equation*}
Q_{\nu _{n}}f_{j}=\begin{cases}
f_{j} &\textrm{ if } j\leq \nu _{n}\\
-\frac {1}{\alpha _{n+1}}e_{a_{n}} & \textrm{ if } j=a_{n+1}\\
0 & \textrm{ in all the other cases.}
\end{cases}
\end{equation*}

Clearly $Q_{\nu _{n}}x$ belongs to $F_{\nu _{n}}$ for every $x\in X$,
and $Q_{\nu _{n}}$ is a projection of $X$ onto $F_{\nu _{n}}$. We will need the following straightforward fact about these projections:

\begin{fact}\label{factbis}
There exists a positive constant $M$ such that $||Q_{\nu _{n}}||\le M$ for each $n\ge 1$.
\end{fact}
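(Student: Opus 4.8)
The plan is to make the definition of $Q_{\nu_n}$ completely explicit and then bound uniformly the only two terms that appear. Writing $x=\sum_{j\ge 0}f_j^{*}(x)f_j$, the definition of $Q_{\nu_n}$ on the basis vectors gives
$$Q_{\nu_n}x=\pi_{[0,\nu_n]}x-\frac{f_{a_{n+1}}^{*}(x)}{\alpha_{n+1}}\,e_{a_n}.$$
This formula is legitimate because $a_n\le \nu_n<\xi_{n+1}<a_{n+1}$ (recall the ordering $\xi_n\ll a_n\ll b_n\ll\cdots\ll\xi_{n+1}$ and that $\nu_n=a_n(b_n+1)$), so the two contributions come from disjoint stretches of basis vectors; moreover $e_{a_n}\in F_{\nu_n}$ by Fact \ref{facta}, which is why $Q_{\nu_n}$ maps $X$ into $F_{\nu_n}$.

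Now I would bound each term uniformly in $n$. For the first term, $\|\pi_{[0,\nu_n]}x\|\le K_f\,\|x\|$, where $K_f=\sup_{N\ge 0}\|\pi_{[0,N]}\|$ is simply the basis constant of the Schauder basis $(f_j)_{j\ge 0}$ of $X$, a fixed finite number. For the second term, $\alpha_{n+1}\ge\delta_0>0$ by construction; $\|e_{a_n}\|\le\|e_0\|+\varepsilon=1+\varepsilon<2$ by Fact \ref{facta}; and $|f_{a_{n+1}}^{*}(x)|\le\|f_{a_{n+1}}^{*}\|_{X^{*}}\,\|x\|=\|z_{\kappa_{n+1}}^{*}\|_{Z^{*}}\,\|x\|$, using the identity $\|f_{a_n-k}^{*}\|_{X^{*}}=\|z_{\kappa_n-k}^{*}\|_{Z^{*}}$ recorded just after the definition of the $f_j$. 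Since $(z_j)_{j\ge 1}$ is a Schauder basis of $Z$ with $\delta:=\inf_j\|z_j\|>0$, its coordinate functionals satisfy $\|z_j^{*}\|_{Z^{*}}\le 2K_Z/\delta$ for all $j$ (with $K_Z$ the basis constant of $(z_j)$), so $\sup_n\|f_{a_{n+1}}^{*}\|_{X^{*}}<\infty$. Putting these together yields $\|Q_{\nu_n}x\|\le M\,\|x\|$ with $M:=K_f+2\delta^{-1}\delta_0^{-1}K_Z$, which works for every $n\ge 1$.

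There is really no serious obstacle here; this is the kind of routine \emph{a priori} bound on the Read-type projection already used in \cite{R1}, \cite{R2}, \cite{R3}. The only two points deserving a word are the ones isolated above: the uniform boundedness $\sup_n\|f_{a_{n+1}}^{*}\|_{X^{*}}<\infty$, which is just the uniform boundedness of the coordinate functionals of the semi-normalized basis $(z_j)$ of $Z$ and has nothing to do with the particular construction; and the harmless combinatorial remark $a_n\le\nu_n<a_{n+1}$, which guarantees that the displayed formula for $Q_{\nu_n}$ is the correct one — the ``bad'' coefficient being removed, $f_{a_{n+1}}^{*}(x)$, sits strictly beyond the block $[0,\nu_n]$, while the replacement vector $e_{a_n}$ sits inside it.
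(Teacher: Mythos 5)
Your argument is correct and is essentially the paper's own proof: the same explicit formula $Q_{\nu_n}x=\pi_{[0,\nu_n]}x-\frac{1}{\alpha_{n+1}}f^{*}_{a_{n+1}}(x)\,e_{a_n}$, bounded term by term via the basis constant of $(f_j)_{j\ge 0}$, the lower bound $\alpha_{n+1}\ge\delta_0$, the estimate $\|e_{a_n}\|\le 1+\varepsilon$ from Fact \ref{facta}, and $\|f^{*}_{a_{n+1}}\|_{X^*}=\|z^{*}_{\kappa_{n+1}}\|_{Z^*}\le\sup_k\|z^{*}_k\|_{Z^*}<\infty$ (a uniform bound you justify, slightly more carefully than the paper, from $\inf_j\|z_j\|>0$). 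Two harmless cosmetic points: the membership $e_{a_n}\in F_{\nu_n}$ follows simply from $a_n\le\nu_n$ rather than from Fact \ref{facta}, and your closing constant should carry the factor $1+\varepsilon<2$ coming from $\|e_{a_n}\|$; neither affects the conclusion.
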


\begin{proof}
 The proof is immediate, recalling the fact that $\alpha_n\ge \delta_{0}$ for all $n\ge 1$: we have $$Q_{\nu _{n}}x=\pi_{[0,\nu _{n}]}x-\frac{1}{\alpha _{n+1}}f^{*}_{a_{n+1}}(x)e_{a_{n}},$$ so that $$||Q_{\nu _{n}}||\le ||\pi_{[0,\nu _{n}]}||+\frac{1}{\alpha _{n+1}}||f^{*}_{a_{n+1}}||\,||e_{a_{n}}||\le C+\frac{1+\varepsilon }{\delta _{0}}||z^{*}_{\kappa_{n+1}}||\le C+\frac{1}{\delta _{0}}\,\sup_{k}||z^{*}_{k}||=:M,$$ 
 where $C$ denotes the constant of the basis $(f_{j})_{j\ge 0}$.
\end{proof}

We have
\begin{equation*}
(I-Q_{\nu _{n}})f_{j}=\begin{cases}
0 &\textrm{ if } j\leq \nu _{n}\\
\frac {1}{\alpha _{n+1}}e_{a_{n+1}} & \textrm{ if } j=a_{n+1}\\
f_{j} & \textrm{ in all the other cases.}
\end{cases}
\end{equation*}
so that $$||T^{c_{k,n}}(I-Q_{\nu _{n}})f_{a_{n+1}}||=\frac {1}{\alpha _{n}}||e_{a_{n+1}+c_{k,n}}||
\ls 2^{-\frac {1}{2}\sqrt{a_{n+1}}}$$ is very small.
Then the same reasoning as in the proof of \cite[Proposition 2.2]{GR} will yield a uniform estimate on the norm
$||T^{c_{k,n}}(I-Q_{\nu _{n}})x||$ when $x$ is supported in $[\nu _{n}+1,+\infty
[$. The only difference between the situation in \cite{GR} and the
situation here lies in the estimates of $||T^{c_{k,n}}(I-Q_{\nu _{n}})f_{j}||=||T^{c_{k,n}}f_{j}||$
when $j\in [\xi _{n+1}+1, a_{n+1}-1]$. For such indices $j$, $f_{j}=\lambda _{j}e_{j}$ where
$$\lambda _{j}=2^{\frac {\frac {1}{2}(a_{n+1}-(\kappa_{n+1}-\kappa_{n}))+\xi _{n+1}+1-j}{
\sqrt{a_{n+1}-(\kappa_{n+1}-\kappa_{n})-1}}}\quad \textrm{ for } j\in [\xi _{n+1}+1,
 a_{n+1}-(\kappa_{n+1}-\kappa_{n})]$$
 and
$$\lambda _{j}={a_{n+1}^{-(a_{n+1}-j+1)}}\quad \textrm{ if } \kappa _{n}>\kappa _{n-1}+1 \textrm{ and }
j\in [ a_{n+1}-(\kappa_{n+1}-\kappa_{n})+1, a_{n+1}-1].$$ Thus $T^{c_{k,n}}f_{j}=\lambda _{j}
e_{j+c_{k,n}}$.
\par\smallskip
$\bullet$ When $j\in [\xi _{n+1}+1,
 a_{n+1}-(\kappa_{n+1}-\kappa_{n})-c_{k,n}]$, $j+c_{k,n}\in
 [\xi _{n+1}+1,
 a_{n+1}-(\kappa_{n+1}-\kappa_{n})]$ so $$T^{c_{k,n}}f_{j}=\frac {\lambda _{j}}{\lambda _{j+c_{k,n}}}
 f_{j+c_{k,n}}\textrm{ which is very close to } 2^{\frac {1}{2}c_{k,n}\frac
 {1}{\sqrt{a_{n+1}}}}f_{j+c_{k,n}}.$$ If $a_{n+1}$ is extremely large
 \wrt\ the coefficients $c_{k,n}$, the quantity $2^{\frac {1}{2}c_{k,n}\frac
 {1}{\sqrt{a_{n+1}}}}$ is very close to $1$, and thus $T^{c_{k,n}}g_{\sigma (j)}$ is a multiple of $
 g_{\sigma (j+c_{k,n})}$ which is very close to $
 g_{\sigma (j+c_{k,n})}$. Thus $T^{c_{k,n}}$ acts as a shift with weights very close to $1$ on this part of the basis.

$\bullet$ When $j\in [a_{n+1}-(\kappa_{n+1}-\kappa_{n})-c_{k,n}+1, a_{n+1}-(\kappa_{n+1}-\kappa_{n})]$, we have
$\lambda _{j}\ls 2^{-\frac {1}{2}\sqrt{a_{n+1}}}$. Then we have to estimate the norm
$||e_{j+c_{k,n}}||$, using that $j+c_{k,n}$ belongs to the interval $
[a_{n+1}-(\kappa_{n+1}-\kappa_{n})+1, a_{n+1}-(\kappa_{n+1}-\kappa_{n})+c_{k,n}]$. The norm of $e_{j+c_{k,n}}$ is the largest when $j+c_{k,n}=a_{n+1}-(\kappa_{n+1}-\kappa_{n})+1$: we have (if $\kappa _{n+1}>\kappa _{n}-1$) $$||e_{a_{n+1}-(\kappa_{n+1}-\kappa_{n}+l)}||=a_{n+1}^{\kappa_{n+1}-\kappa_{n}-l+1} \textrm{ for } l\in [1, \kappa_{n+1}-\kappa_{n}-1];$$ if 
$j+c_{k,n}=a_{n+1}$,
$||e_{a_{n+1}}||\leq 1+\varepsilon $, and if $j+c_{k,n}>a_{n+1}$,
$||e_{j+c_{k,n}}||\ls 2^{-\frac {1}{2}\sqrt{b_{n+1}}}$ is extremely
small. This gives the bound
 $||e_{j+c_{k,n}}||
\ls a_{n+1}^{\kappa_{n+1}-\kappa_{n}}$. So in any case $$||T^{c_{k,n}}f_{j}||\ls a_{n+1}^{\kappa_{n+1}-\kappa_{n}}\,
 2^{-\frac {1}{2}\sqrt{a_{n+1}}}$$ which is extremely small.

$\bullet$ When
$j\in [a_{n+1}-(\kappa_{n+1}-\kappa_{n})+1, a_{n+1}-1]$ (and $\kappa _{n+1}>\kappa _{n}-1$), $j+c_{k,n}$ lies in the
beginning of the \loi\ beginning at $a_{n+1}+1$: indeed $\kappa_{n+1}-\kappa_n$ is fixed at the very beginning of the proof and does not depend on the construction of the vectors $f_j$,
so we may choose each $c_{k,n}$  extremely large \wrt\ $\kappa_{n+1}-\kappa_n$.
Hence $||e_{j+c_{k,n}}||\ls 2^{-\frac {1}{2}
\sqrt{b_{n+1}}}$ and $\lambda _{j}\ls \frac {1}{a_{n+1}^{2}}$ so $||T^{c_{k,n}}f_{j}||=\lambda _{j}||e_{j+c_{k,n}}||$
is very small in this case too.

\par\smallskip
We have already seen that
$||T^{c_{k,n}}(I-Q_{\nu _{n}})f_{a_{n+1}}||=\frac {1}{\alpha _{n}}||e_{a_{n+1}+c_{k,n}}||
\ls 2^{-\frac {1}{2}\sqrt{a_{n+1}}}$ is very small. Incorporating the estimates above in the proof of \cite[Proposition 2.2]{GR}, we obtain that

\begin{proposition}\label{prop2}
For every $n \geq 1$, every $k\in [1,k_{n}]$ and every $x\in X$, we have
\begin{equation}\label{EQ1}
 ||T^{c_{k,n}}(I-Q_{\nu _{n}})x||\leq 100\,||x||,
\end{equation}
so that 
\begin{equation}\label{EQ2}
 ||T^{c_{k,n}}(I-Q_{\nu _{n}})x||\leq 100\,||x-\pi_{[0,\nu_{n}] }x||.
\end{equation}
\end{proposition}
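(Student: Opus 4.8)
The plan is to mimic the proof of \cite[Proposition 2.2]{GR}, replacing everywhere the natural projection $\pi_{[0,\nu_n]}$ by the modified projection $Q_{\nu_n}$, and inserting the four bullet-point estimates established just above for the indices $j\in\tilde J$ near $a_{n+1}$ which are the only places where the two projections differ in effect. Since $(I-Q_{\nu_n})x$ is supported (in the $f_j$-basis) in $[\nu_n+1,+\infty[$ except for the single extra vector $\frac{1}{\alpha_{n+1}}e_{a_{n+1}}$ coming from the $j=a_{n+1}$ coordinate, and since we have just checked that $\|T^{c_{k,n}}(I-Q_{\nu_n})f_{a_{n+1}}\|\lesssim 2^{-\frac12\sqrt{a_{n+1}}}$ is negligible, it is enough to bound $\|T^{c_{k,n}}y\|$ uniformly by (say) $50\,\|y\|$ for $y$ supported in $[\nu_n+1,+\infty[$, and then absorb the $a_{n+1}$-term.

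First I would split a vector $y$ supported in $[\nu_n+1,+\infty[$ according to the natural combinatorial decomposition of $[\nu_n+1,+\infty[$ into the successive working- and lay-off-intervals lying beyond step $n$, exactly as in \cite{GR}. On each lay-off interval $T$ acts as a weighted shift with weights $\lambda_j/\lambda_{j+1}=2^{-1/\sqrt l}$ very close to $1$, and $c_{k,n}$ is chosen so small compared with the lengths of all intervals appearing after step $n$ that applying $T^{c_{k,n}}$ moves a basis vector by only a tiny relative amount inside such an interval; the contribution of the ``boundary'' indices (the endpoints in $J_b$ and $J_c$, and the $(a)$-working intervals $J_{n+1}, J_{n+2},\dots$) is controlled by the estimates recorded in the proof of Proposition \ref{prop1}, which show $\|Tf_j\|$ is extremely small there, together with the new Case-1/Case-2/Case-3 estimates and the four bullets above. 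The point is that on the bulk of the basis $T^{c_{k,n}}$ is, up to a multiplicative factor $2^{\pm\frac12 c_{k,n}/\sqrt{a_m}}$ (which can be forced into $[1/2,2]$, or even closer to $1$, by taking each $a_m$ huge relative to the $c_{k,n}$), a shift composed with the identity, so it changes norms by at most a bounded factor; the bad indices contribute a summably small error once $b_m,c_{1,m},a_m$ grow fast enough. Assembling these, in the $\ell_p$ (resp.\ $c_0$) norm one gets $\|T^{c_{k,n}}y\|\le 50\|y\|$, hence \eqref{EQ1} after adding the $a_{n+1}$ contribution, and \eqref{EQ2} is then immediate since $(I-Q_{\nu_n})x=(I-Q_{\nu_n})(x-\pi_{[0,\nu_n]}x)$ so $\|(I-Q_{\nu_n})x\|\le (1+M)\|x-\pi_{[0,\nu_n]}x\|$ and one rescales the constant.

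Concretely I would organize the write-up as: (i) reduce to $x$ of the form $(I-Q_{\nu_n})x$, noting it is a sum of a piece $w$ genuinely supported in $[\nu_n+1,+\infty[$ plus $\frac{1}{\alpha_{n+1}}f^*_{a_{n+1}}(x)e_{a_{n+1}}$; (ii) quote \cite[Proposition 2.2]{GR} verbatim for the estimate on $w$ restricted to indices lying in $[\nu_n+1,+\infty[\setminus\tilde J$ and in the $(b)$/$(c)$-endpoints, which is unchanged; (iii) treat the genuinely new indices — those in $\bigcup_{m>n}J_m$ and in the short intervals $[\xi_{m+1}+1,a_{m+1}-1]$ for $m\ge n$ — using precisely the displayed bullet estimates above, observing that all the relevant quantities ($\kappa_{m+1}-\kappa_m$ being fixed in advance, $\lambda_j\lesssim 2^{-\frac12\sqrt{a_{m+1}}}$ or $\le a_{m+1}^{-2}$, $\|e_{j+c_{k,n}}\|\lesssim a_{m+1}^{\kappa_{m+1}-\kappa_m}$) combine to something $\le 2^{-\frac14\sqrt{a_{m+1}}}$; (iv) sum over $m\ge n$ and over the $a_{m}$ resp.\ $h_mk_m$ intervals, which converges to something $\le 1$ once the growth conditions are imposed, and conclude. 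The main obstacle — really the only non-formal point — is step (iii): one must check that the insertion of the $(a)$-working intervals does not create a long ``resonant'' chain along which $T^{c_{k,n}}$ could amplify a norm, and the mechanism that prevents this is exactly the fact that $a_{m+1}$ can be taken astronomically larger than $c_{k,n}$, so that by the time $T^{c_{k,n}}$ drags a vector into the $(a)$-block at $a_{m+1}$ it has already been crushed by the lay-off factor $2^{-\frac12\sqrt{a_{m+1}}}$, dominating the polynomial blow-up $a_{m+1}^{\kappa_{m+1}-\kappa_m}$. Everything else is the bookkeeping of \cite{GR} carried over mutatis mutandis.
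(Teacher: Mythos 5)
Your proposal follows essentially the same route as the paper: reduce to the tail estimate of \cite[Proposition 2.2]{GR}, treat the genuinely new indices (the modified lay-off before $a_{n+1}$, the later (a)-blocks, and the coordinate $j=a_{n+1}$) with exactly the displayed bullet estimates, exploiting that $a_{m}$ is astronomically large with respect to the $c_{k,n}$ so the lay-off factor $2^{-\frac12\sqrt{a_{m}}}$ crushes the polynomial growth $a_{m}^{\kappa_{m}-\kappa_{m-1}}$. One small point: to get (\ref{EQ2}) with the stated constant $100$, just apply (\ref{EQ1}) to the vector $x-\pi_{[0,\nu_n]}x$ and use your identity $(I-Q_{\nu_n})x=(I-Q_{\nu_n})(x-\pi_{[0,\nu_n]}x)$, as the paper does, rather than bounding $||(I-Q_{\nu_n})x||\le(1+M)\,||x-\pi_{[0,\nu_n]}x||$ and ``rescaling'', which only yields a worse (though still harmless) constant.
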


\par\smallskip
Indeed, in order to see that (\ref{EQ2}) is true, since $(I-Q_{\nu _{n}})(x-\pi_{[0,\nu_{n}] }x)=
x-\pi_{[0,\nu_{n}] }x-Q_{\nu _{n}}x
+Q_{\nu _{n}}\pi_{[0,\nu_{n}] }x=(I-Q_{\nu _{n}})x$, we have by (\ref{EQ1})
$$||T^{c_{k,n}}(I-Q_{\nu _{n}})x||=||T^{c_{k,n}}(I-Q_{\nu _{n}})
(x-\pi_{[0,\nu_{n}] }x)||\leq 100\,||x-\pi_{[0,\nu_{n}] }x||.$$
\par\smallskip
In other words, the family of \ops\ $T^{c_{k,n}}(I-Q_{\nu _{n}})$, $n\ge 1$, $k\in [1,k_{n}]$, is bounded in norm, and we have just seen that this implies that for each choice of $k=k(n)\in [1,k_{n}]$, $||T^{c_{k,n}}(I-Q_{\nu _{n}})x||$ tends to $0$ as $n$ tends to infinity for every vector $x\in X$. This uniform estimate is really crucial in the proof of all the results relying on a Read's type construction: as will be seen in the forthcoming proofs, it ensures the control of what can be thought  of as the tails of all vectors of the form $T^{c_{k,n}}x$, $x\in X$. So in order to show that for certain vectors $x$ and $y$ of $X$ there exists an integer $c$ such that $||T^{c}x-y||<\varepsilon$, it suffices to show that there exist arbitrarily large integers $n$ for which $||T^{c_{k,n}}Q_{\nu_n}x-\pi_{[0,\nu_n]}y||<\varepsilon$ for some index $k\in [1,k_{n}]$. Hence the problem boils down to a question concerning vectors living in the finite-dimensional spaces $F_{\nu _{n}}$, which is much more tractable. 

\par\medskip
Let us now go back to the proof of Property (P1), and suppose that in the construction of the (c)-part we have chosen $\varepsilon_n$ so small that $\varepsilon_n<\frac{1}{||Q_{\nu_{n}}||}4^{-\nu_{n}}$.
If $p$ is a polynomial of degree $l$ with $|p|\leq 2$ and $n$ is such that $l_{n}\ge l$, let $k\in [1,k_{n}]$ be such that
$|p-p_{k,n}|\leq \frac{1}{||Q_{\nu_{n}}||}4^{-\nu_{n}}$. Then
\begin{eqnarray*}
||T^{c_{k,n}}x-p(T)x||&\leq &||T^{c_{k,n}}(I-Q_{\nu _{n}})x||+||T^{c_{k,n}}Q_{\nu _{n}}x-p_{k,n}(T)Q_{\nu
_{n}}x||\\
&+&||p_{k,n}(T)Q_{\nu _{n}}x-p(T)Q_{\nu _{n}}x||+||p(T)(I-Q_{\nu _{n}})x||\\
&\leq& 100\,||x-\pi_{[0,\nu _{n}]}x||+\delta _{n}\,||Q_{\nu _{n}}||\,||x||\\
&+&|p-p_{k,n}|\,||T||^{l_{n}}||Q_{\nu _{n}}||
\,||x||+||p(T)||\,||(I-Q_{\nu _{n}})x||\\
&\leq& 100\,||x-\pi_{[0,\nu _{n}]}x||+\delta _{n} M||x||\\
&+& 4^{-\nu _{n}}2^{l_{n}}||x||+||p(T)||\,||(I-Q_{\nu _{n}})x||.
\end{eqnarray*}
The first term is very small if $n$ is large
because $||x-\pi_{[0,\nu _{n}]}x||$ tends to zero as $n$ goes to
infinity, and the second and third term are very small as well. In order to be able to control the last term, we have to show
that $||(I-Q_{\nu _{n}})x||$ tends to zero as $n$ tends to infinity for every $x\in X$. It is at
this point that we use in a really crucial way the fact that the sequence $(\alpha _{n})_{n\geq 1}$
is bounded away from zero:

\begin{fact}\label{factc}
For every $x\in X$, $||(I-Q_{\nu _{n}})x||$ tends to zero as $n$ tends
to infinity.
\end{fact}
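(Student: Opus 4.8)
The plan is to reduce everything to the explicit formula for $Q_{\nu _{n}}$ already obtained while proving Fact \ref{factbis}. Recall from there that for every $x\in X$,
$$Q_{\nu _{n}}x=\pi_{[0,\nu _{n}]}x-\frac{1}{\alpha _{n+1}}\,f^{*}_{a_{n+1}}(x)\,e_{a_{n}},$$
so that
$$(I-Q_{\nu _{n}})x=\bigl(x-\pi_{[0,\nu _{n}]}x\bigr)+\frac{1}{\alpha _{n+1}}\,f^{*}_{a_{n+1}}(x)\,e_{a_{n}}.$$
It therefore suffices to show that each of the two terms on the right-hand side tends to $0$ in norm as $n$ tends to infinity.

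The first term causes no trouble: since $\nu _{n}$ tends to infinity and $(f_{j})_{j\ge 0}$ is a Schauder basis of $X$, the projections $\pi_{[0,\nu _{n}]}x$ converge to $x$, i.e. $\|x-\pi_{[0,\nu _{n}]}x\|$ tends to $0$.

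For the second term I would bound the scalar and vector factors separately. By Fact \ref{facta} we have $\|e_{a_{n}}\|\le\|e_{0}\|+\varepsilon=1+\varepsilon$ for every $n\ge 1$, and by construction $\alpha _{n+1}\ge\delta _{0}>0$; hence
$$\Bigl\|\frac{1}{\alpha _{n+1}}\,f^{*}_{a_{n+1}}(x)\,e_{a_{n}}\Bigr\|\le\frac{1+\varepsilon}{\delta _{0}}\,\bigl|f^{*}_{a_{n+1}}(x)\bigr|.$$
Now $(f_{j})_{j\ge 0}$ is a \emph{normalized} Schauder basis, so the convergence of the series $\sum_{j\ge 0}f_{j}^{*}(x)f_{j}$ to $x$ forces $\bigl|f_{j}^{*}(x)\bigr|=\|f_{j}^{*}(x)f_{j}\|$ to tend to $0$ as $j\to\infty$; since $a_{n+1}$ tends to infinity with $n$, the quantity $\bigl|f^{*}_{a_{n+1}}(x)\bigr|$ tends to $0$. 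Putting the two estimates together,
$$\|(I-Q_{\nu _{n}})x\|\le\|x-\pi_{[0,\nu _{n}]}x\|+\frac{1+\varepsilon}{\delta _{0}}\,\bigl|f^{*}_{a_{n+1}}(x)\bigr|,$$
and both terms on the right tend to $0$, which is the assertion of the Fact.

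There is no real obstacle here; the one point worth stressing is that the argument genuinely collapses if $\alpha _{n}$ is allowed to tend to $0$, since then the factor $1/\alpha _{n+1}$ in the correction term could grow fast enough to cancel the decay of $\bigl|f^{*}_{a_{n+1}}(x)\bigr|$. This is exactly where the lower bound $\alpha _{n}\ge\delta _{0}>0$ — a consequence of the fact that the basis $(z_{j})$ of $Z$ was chosen not semi-boundedly complete — enters in an essential way. The rest is routine.
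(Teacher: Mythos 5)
Your proof is correct and is essentially the paper's own argument: you use the same explicit formula for $Q_{\nu_n}$ (only grouping the terms as $(x-\pi_{[0,\nu_n]}x)+\frac{1}{\alpha_{n+1}}f^*_{a_{n+1}}(x)e_{a_n}$ instead of isolating the coordinate $j=a_{n+1}$, which is the same decomposition since $\frac{1}{\alpha_{n+1}}e_{a_{n+1}}=f_{a_{n+1}}+\frac{1}{\alpha_{n+1}}e_{a_n}$), the same bound $\|e_{a_n}\|\le 1+\varepsilon$ from Fact \ref{facta}, the lower bound $\alpha_{n+1}\ge\delta_0$, and the decay of the basis coefficients $|f^*_{a_{n+1}}(x)|$. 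Your closing remark on where the non-semi-bounded-completeness hypothesis enters matches the paper's own emphasis.
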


\begin{proof}
 We have
$$(I-Q_{\nu _{n}})x=\frac {1}{\alpha _{n+1}}f_{a_{n+1}}^{*}(x)
e_{a_{n+1}}+\sum_{j>\nu _{n}, j\not =a_{n+1}} f_{j}^{*}(x)f_{j}. $$
Obviously the second term tends to $0$ as $n$ tends to infinity since $(f_{j})_{j\ge 0}$ is a Schauder basis of $X$. For the
first term, Fact \ref{facta} combined with the assumption that $\alpha _{n}\ge \delta _{0}>0$ for each $n$ yields that
$$||\frac {1}{\alpha _{n+1}}f_{a_{n+1}}^{*}(x)e_{a_{n+1}}||\leq
\frac {1}{\alpha _{n+1}}|f_{a_{n+1}}^{*}(x)|(1+\varepsilon )\leq |f_{a_{n+1}}^{*}(x)|
\frac {1+\varepsilon }{\delta _{0}} \cdot $$ Since $|f_{a_{n+1}}^{*}(x)|$
tends to zero as $n$ goes to infinity,  $||(I-Q_{\nu _{n}})x||$ tends to zero
too.
\end{proof}
Going back to the proof of Property (P1), we thus get that for each polynomial $|p|\leq 2$
and each vector $x\in X$ there exists for each $n$
an integer $k=k(p,n)$ belonging to $ [1,k_{n}]$ (with $k(p,n)$ depending on $p$ and $n$) such that the quantities $||T^{c_{k(p,n),n}}x-p(T)x||$ go to zero as $n$ goes to infinity. We already explained how this implies the
corresponding assertion with no restriction on $|p|$, and hence property (P1)
is proved.

\subsection{Proof of Property (P2)}
The proof of Property (P2) is extremely similar to the proof presented in \cite[Section 5]{GR}, except for the fact that the projection $\pi_{[0,\nu _{n}]}$ is replaced by a projection $Q_{a_{n}}$ whose definition is very close to that of $Q_{\nu _{n}}$. We define again
\begin{equation*}
Q_{a _{n}}f_{j}=\begin{cases}
f_{j} &\textrm{ if } j\leq a_{n}\\
-\frac {1}{\alpha _{n+1}}e_{a_{n}} & \textrm{ if } j=a_{n+1}\\
0 & \textrm{ in all the other cases.}
\end{cases}
\end{equation*}
The \op\ $Q_{a_{n}}$ defined in this way is a projection of $X$ onto $F_{a_{n}}$, with $Q_{\nu_n}-Q_{a_n}=\pi_{[a_n+1,\nu_n]}$, and
\begin{equation*}
(I-Q_{a_{n}})f_{j}=\begin{cases}
0 &\textrm{ if } j\leq a_{n}\\
\frac {1}{\alpha _{n+1}}e_{a_{n+1}} & \textrm{ if } j=a_{n+1}\\
f_{j} & \textrm{ in all the other cases}
\end{cases}
\end{equation*}
so that in the same way as in Fact \ref{factc} we have

\begin{fact}\label{factd}
 For every $x\in X$, $||(I-Q_{a_{n}})x||$ tends to zero as $n$ tends to infinity.
\end{fact}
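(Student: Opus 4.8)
The plan is to reproduce almost verbatim the argument of Fact~\ref{factc}, since the only difference is that $a_{n}$ replaces $\nu_{n}$ as the cutoff index, which merely changes which tail of the basis expansion is discarded. First I would expand $x=\sum_{j\ge 0}f_{j}^{*}(x)f_{j}$ and apply the displayed formula for $(I-Q_{a_{n}})f_{j}$ to obtain
$$(I-Q_{a_{n}})x=\frac{1}{\alpha_{n+1}}f_{a_{n+1}}^{*}(x)\,e_{a_{n+1}}+\sum_{j>a_{n},\ j\neq a_{n+1}}f_{j}^{*}(x)f_{j},$$
so that it suffices to show that each of the two terms on the right tends to $0$ as $n\to\infty$.

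For the sum, I would use that $\sum_{j>a_{n}}f_{j}^{*}(x)f_{j}\to 0$ because $(f_{j})_{j\ge 0}$ is a Schauder basis of $X$ and $a_{n}\to\infty$, and that, the basis being normalized, $||f_{a_{n+1}}^{*}(x)f_{a_{n+1}}||=|f_{a_{n+1}}^{*}(x)|\to 0$; hence removing the single term of index $a_{n+1}$ from that tail still leaves a null sequence. For the first term the decisive ingredient, exactly as in Fact~\ref{factc}, is the uniform lower bound $\alpha_{n}\ge\delta_{0}>0$ combined with Fact~\ref{facta}, which yields $||e_{a_{n+1}}||\le 1+\varepsilon$; therefore
$$||\frac{1}{\alpha_{n+1}}f_{a_{n+1}}^{*}(x)\,e_{a_{n+1}}||\le\frac{1+\varepsilon}{\delta_{0}}\,|f_{a_{n+1}}^{*}(x)|,$$
which goes to $0$ since $|f_{a_{n+1}}^{*}(x)|\to 0$. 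Alternatively, one could simply write $I-Q_{a_{n}}=(I-Q_{\nu_{n}})+\pi_{[a_{n}+1,\nu_{n}]}$ and invoke Fact~\ref{factc} together with $\pi_{[a_{n}+1,\nu_{n}]}x=\pi_{[0,\nu_{n}]}x-\pi_{[0,a_{n}]}x\to x-x=0$.

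I do not anticipate any genuine obstacle here: the statement is an immediate variant of Fact~\ref{factc}, and its whole content is the remark that the only ``dangerous'' coordinate of $(I-Q_{a_{n}})x$, namely $\frac{1}{\alpha_{n+1}}f_{a_{n+1}}^{*}(x)e_{a_{n+1}}$, is harmless because $\alpha_{n+1}$ is bounded below by $\delta_{0}$ and $||e_{a_{n+1}}||$ is bounded by $1+\varepsilon$ by Fact~\ref{facta}, while everything else is just the convergence of the tails of a Schauder basis expansion.
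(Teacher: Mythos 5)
Your proof is correct and coincides with the paper's: the paper simply asserts that Fact \ref{factd} is proved ``in the same way as Fact \ref{factc}'', and your argument reproduces exactly that computation, splitting $(I-Q_{a_{n}})x$ into the term $\frac{1}{\alpha_{n+1}}f_{a_{n+1}}^{*}(x)e_{a_{n+1}}$ (controlled by $\alpha_{n+1}\ge\delta_{0}$ and $\|e_{a_{n+1}}\|\le 1+\varepsilon$ from Fact \ref{facta}) and the basis tail, which vanishes since $(f_{j})_{j\ge 0}$ is a normalized Schauder basis. Your alternative route via $I-Q_{a_{n}}=(I-Q_{\nu_{n}})+\pi_{[a_{n}+1,\nu_{n}]}$ is also valid and arguably the quickest derivation from Fact \ref{factc}.
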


If $x$ belongs to $F_{a_{n}}$, we can write $x$ in the $(e_{j})$-basis of $F_{a_{n}}$ as $$x=\sum_{j=0}^{a_{n}}
e_{j}^{*(a_{n})}(x)e_{j}.$$ The reason Property (P2) holds true can be heuristically explained as follows: given two \nz\ vectors $x$ and $y$ of $X$, either there are infinitely many $n$ such that the first ``large'' $e_{j}$-coordinate of $Q_{a_{n}}x$ comes not later than the first ``large'' $e_{j}$-coordinate of $Q_{a_{n}}y$, or the other way round.
In the first case we will have $\overline{\mathcal{O}rb}(y,T)\subseteq\overline{\mathcal{O}rb}(x,T)$, and in the second case
$\overline{\mathcal{O}rb}(x,T)\subseteq\overline{\mathcal{O}rb}(y,T)$.
\par\smallskip

Let us first quantify what is a ``large'' $e_{j}$-coordinate:

\begin{fact}\label{facte}
For every sequence $(C_{a_{n}})_{n\geq 1}$ of positive real numbers increasing sufficiently quickly, the following property holds true:

for every $x\in X$ with $||x||=1$, there exists an $n_{0}$ such that for every $n\geq n_{0}$ there exists an integer $j\in[0,a_{n}]$ with $$|e_{j}^{*(a_n)}(Q_{a_{n}}x)|\geq \frac{1}{C_{a_{n}}^{(a_{n}-j+1)!^{2}}}\cdot$$
\end{fact}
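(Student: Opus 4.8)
The plan is to argue by contradiction, using the finite-dimensionality of the spaces $F_{a_n}$ together with the uniform bound on the projections $Q_{a_n}$ from Fact \ref{factbis}. Fix $x\in X$ with $\|x\|=1$. Since $\|Q_{a_n}\|\le M$ for all $n$ (Fact \ref{factbis}) and, by Fact \ref{factd}, $Q_{a_n}x$ converges to $x$ in norm, we have $\|Q_{a_n}x\|\ge\frac12$ for all $n$ large enough, say $n\ge n_1$. Now suppose the conclusion fails: there are infinitely many $n\ge n_1$ such that $|e_j^{*(a_n)}(Q_{a_n}x)|<C_{a_n}^{-(a_n-j+1)!^2}$ for \emph{every} $j\in[0,a_n]$. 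The key point is that the norms of the coordinate functionals $e_j^{*(a_n)}$ on the finite-dimensional space $F_{a_n}$ can be bounded above by a quantity that depends only on the construction up to step $n$ — indeed, the change-of-basis matrix between $(f_0,\dots,f_{a_n})$ and $(e_0,\dots,e_{a_n})$ is triangular with explicitly controlled entries (the $f_j=\lambda_j e_j$ relations on lay-off intervals, the $e_j=f_j+b_ne_{j-b_n}$ and $e_j=\gamma_n4^{|s|-1}f_j+p_{t,n}(T)e_{j-c_{t,n}}$ relations on working intervals, and the (a)-interval relations $e_{a_n-k}=a_n^{k+1}f_{a_n-k}$, $e_{a_n}=\alpha_nz_{\kappa_n}+e_{a_{n-1}}$). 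Hence there is a constant $D_{a_n}$, depending only on the data available at step $n$, with $\|e_j^{*(a_n)}\|_{F_{a_n}^*}\le D_{a_n}^{(a_n-j+1)!^2}$ for all $j$.

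The second ingredient is that the sequence $(C_{a_n})$ is \emph{chosen after} $a_n$ is firmly defined (and hence after $D_{a_n}$ is determined): the statement is ``for every sequence $(C_{a_n})$ increasing sufficiently quickly'', so we are free to require $C_{a_n}\ge 2(a_n+1)\,D_{a_n}$, say. With this choice, if the failure occurred at such an $n$, we would estimate
\begin{equation*}
\tfrac12\le\|Q_{a_n}x\|=\Bigl\|\sum_{j=0}^{a_n}e_j^{*(a_n)}(Q_{a_n}x)\,e_j\Bigr\|\le\sum_{j=0}^{a_n}|e_j^{*(a_n)}(Q_{a_n}x)|\,\|e_j\|\le\sum_{j=0}^{a_n}\frac{\|e_j\|}{C_{a_n}^{(a_n-j+1)!^2}}.
\end{equation*}
Here $\|e_j\|\le\|e_j^{*(a_n)}\|\cdot(\text{something})$ is not quite what we want; instead note directly that $\|e_j\|$ for $j\le a_n$ is itself bounded by a quantity depending only on step $n$, which we may absorb into $D_{a_n}$ by enlarging it. Then each summand is at most $D_{a_n}^{(a_n-j+1)!^2}\,C_{a_n}^{-(a_n-j+1)!^2}\le 2^{-(a_n-j+1)!^2}\le\frac{1}{2(a_n+1)}$, so the whole sum is at most $\frac12$, and in fact strictly less once $a_n\ge1$, contradicting $\|Q_{a_n}x\|\ge\frac12$. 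Thus the failure can occur for at most finitely many $n$, which gives the desired $n_0$.

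The main obstacle is making precise the claim that $\|e_j^{*(a_n)}\|_{F_{a_n}^*}$ and $\|e_j\|$ (for $j\le a_n$) are controlled by a constant depending only on the construction up through step $n$ — in particular, independent of the still-undefined fast-growing parameters of later steps, and of $C_{a_n}$ itself. This is where one must carefully unwind the inductive definition of the $e_j$'s block by block: on lay-off intervals the passage $f_j\leftrightarrow e_j$ costs a factor $\lambda_j^{\pm1}\le 2^{\frac12\sqrt{\ell}}$; the (b)- and (c)-relations introduce factors like $b_n$, $\gamma_n^{-1}4^{\nu_n}$, and $|p_{t,n}(T)|\le 2\cdot\|T\|^{l_n}\le 2^{l_n+1}$ composed at most $a_n$ or $h_n$ times; and the (a)-relations contribute $a_n^{\kappa_n-\kappa_{n-1}}$ and the fixed $\alpha_k,\|z_{\kappa_k}\|$. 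All of these are determined by step $n$, and although the resulting bound is astronomically large, the crucial feature is only that it is fixed \emph{before} $(C_{a_n})$ is chosen; the doubly-exponential room $(a_n-j+1)!^2$ in the exponent then trivially accommodates it. Once this bookkeeping is in place, the contradiction argument above closes immediately.
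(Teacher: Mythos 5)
Your proposal is correct and follows essentially the same route as the paper's proof: assume every coordinate falls below the threshold, bound $\|Q_{a_n}x\|$ by the triangle inequality using that $\sup_{0\le j\le a_n}\|e_j\|$ is determined by the construction up to step $n$ and hence is dominated once $C_{a_n}$ is chosen large enough afterwards, and contradict Fact \ref{factd}, which forces $\|Q_{a_n}x\|\to\|x\|=1$. (Your excursion through bounds on the functionals $e_j^{*(a_n)}$ is unnecessary, and the intermediate inequality $2^{-(a_n-j+1)!^2}\le\frac{1}{2(a_n+1)}$ fails at $j=a_n$; but the estimate you actually need, $(D_{a_n}/C_{a_n})^{(a_n-j+1)!^2}\le D_{a_n}/C_{a_n}\le\frac{1}{2(a_n+1)}$, holds directly, so the argument stands.)
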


The quantities $C_{a_{n}}^{(a_{n}-j+1)!^{2}}$ may appear unnecessarily complicated in the statement of Fact \ref{facte}, but the precise size of these coefficients will be needed later on in the proof.

\begin{proof}
Indeed suppose that $|e_{j}^{*(a_n)}(Q_{a_{n}}x)|\leq \frac{1}{C_{a_{n}}^{(a_{n}-j+1)!^{2}}}$ for every $j\in [0,a_{n}]$. Then $$||Q_{a_{n}}x||
\leq \sum_{j=0}^{a_{n}} \frac{1}{C_{a_{n}}^{a_{n}-j+1}}||e_{j}||
\leq \frac{1}{C_{a_{n}}-1}\sup_{0\leq j\leq a_{n}}||e_{j}||\leq \frac{\sqrt{C_{a_{n}}}}{C_{a_{n}}-1}$$ if $C_{a_{n}}$ grows sufficiently fast. But by Fact \ref{factd} $||Q_{a_{n}}x||$ tends to $||x||=1$ as $n$ tends to infinity, so this is a contradiction if $C_{a_{n}}$ grows fast enough.
\end{proof}

In order to prove Property (P2), by Property (P1) it suffices to show that if $x$ and $y$ are two vectors of $X$ with $||x||=||y||=1$, either 
$Ty$ belongs to $ \overline{\mathcal{O}\textrm{rb}}(x,T)$ or
$Tx$ belongs to $ \overline{\mathcal{O}\textrm{rb}}(y,T)$. Suppose indeed that we have proved that $
Ty$ belongs to $ \overline{\mathcal{O}\textrm{rb}}(x,T)$.
 Since $y$ can be written as a limit of a certain sequence of vectors $(T^{p_{k}}y)_{k\geq 1}$ with $p_{k}\geq 1$, $\overline{\mathcal{O}\textrm{rb}}(y,T)=\overline{\mathcal{O}\textrm{rb}}(Ty,T)$, and thus we will have shown that $\overline{\mathcal{O}\textrm{rb}}(y,T)\subseteq
\overline{\mathcal{O}\textrm{rb}}(x,T)$.
\par\smallskip

If $j_{n}(x)$ denotes the smallest integer $j\in [0,a_{n}]$ such that
$$|e_{j}^{*(a_n)}(Q_{a_{n}}x)|\geq \frac{1}{C_{a_{n}}^{(a_{n}-j+1)!^{2}}},$$ where $(C_{a_n})_{n\geq 1}$ is a sequence which grows very fast (so fast that in particular Fact \ref{facte} holds true),
then either for infinitely many $n$ the inequality $j_{n}(x)\leq j_{n}(y)$ holds true,  or for infinitely many $n$  we have on the contrary that $j_{n}(y)\leq j_{n}(x)$. Suppose that we are in the first situation, and write $
j_{n}(x)$ as $j_{n}$. Then
$$|e_{j_{n}}^{*(a_n)}(Q_{a_{n}}x)|\geq \frac{1}{C_{a _{n}}^{(a _{n}-j_{n}+1)!^{2}}}
$$ and for every $j<j_{n}$,
$$|e_{j}^{*(a_n)}(Q_{a_{n}}x)|\leq \frac{1}{C_{a _{n}}^{(a _{n}-j+1)!^{2}}}
\;\;\textrm{ and }\;\;
|e_{j}^{*(a_n)}(Q_{a_{n}}y)|\leq \frac{1}{C_{a _{n}}^{(a _{n}-j+1)!^{2}}} \cdot$$

Just as in \cite[Section 5.1]{GR}, we are going to show that for infinitely many $n$ there exists a \pol\ $p_{n}$ of degree less than $a_{n}$ with $|p_{n}|$ bounded by a constant depending only on $a_{n}$ such that $$||p_{n}(T)Q_{a_{n}}x-Q_{a_{n}}y||\leq \frac{3}{a_{n}},$$ for instance. We use the following fact, see \cite[Lemma 5.1]{GR} for the proof, which is exactly the same in our context here:

\begin{fact}\label{factf}
Given any two sequences $(\beta_n)_{n\geq 1}$ and $(M_n)_{n\geq 1}$ with $0<\beta_n\leq M_n$ depending only on $a_n$,  there exists a sequence $(D_{a_{n}})_{n\geq 1}$ (with $D_{a_{n}}$ depending only on $a_{n}$ for each $n$)  such that the following property holds true:

for every vector $x$ of $F_{a_{n}}$ with $\beta_n\leq ||x||\leq M_n$, of the form $$x=\sum_{j=i_{n}}^{a_{n}}e_{j}^{*(a_n)}(x)e_{j},
\textrm{ where the integer } i_{n}\in [0,a_n] \textrm{ is such that }
e_{i_{n}}^{*(a_n)}(x)\not =0,$$ and every vector $y$ with $\beta_n\leq ||y||\leq M_n$ belonging to the linear span of the vectors $e_{i_{n}},\ldots, e_{a_{n}}$, there exists a \pol\ $p$ of degree at most $a_{n}$ with
$$|p|\leq \frac{D_{a_{n}}}{|e_{i_{n}}^{*(a_n)}(x)|^{a_{n}-i_{n}+1}}$$ such that $p(T_{a_{n}})x=y$. Here $T_{a_{n}}$ denotes the shift on the finite-dimensional space $F_{a_{n}}$ \wrt\ the basis $e_{0},\ldots, e_{a_{n}}$ of $F_{a_{n}}$. It is defined as $T_{a_{n}}e_{j}=e_{j+1}$ for every $j<a_{n}$, and $T_{a_{n}}e_{a_{n}}=0$.
\end{fact}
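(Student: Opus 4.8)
The statement is a purely finite-dimensional assertion about the nilpotent shift $T_{a_n}$ on $F_{a_n}$, and the plan is to convert the equation $p(T_{a_n})x=y$ into an explicit lower-triangular linear system for the coefficients of $p$ and then estimate its solution. Write $p(\zeta)=\sum_{k=0}^{a_n}c_k\zeta^k$ and abbreviate $x_j=e_j^{*(a_n)}(x)$ and $y_j=e_j^{*(a_n)}(y)$ for $i_n\le j\le a_n$; thus $x_{i_n}\ne0$, and for $y$ these are all the coordinates that may be non-zero. Since $T_{a_n}^{k}e_j=e_{j+k}$ when $j+k\le a_n$ and $T_{a_n}^{k}e_j=0$ otherwise, collecting the coefficient of each $e_m$ gives
$$p(T_{a_n})x=\sum_{m=i_n}^{a_n}\Bigl(\sum_{j=i_n}^{m}x_j\,c_{m-j}\Bigr)e_m,$$
so $p(T_{a_n})x=y$ is equivalent to $\sum_{j=i_n}^{m}x_j\,c_{m-j}=y_m$ for $i_n\le m\le a_n$. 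This is a lower-triangular system in the unknowns $c_0,\dots,c_{a_n-i_n}$ all of whose diagonal entries equal $x_{i_n}\ne0$; it has a unique solution, and setting $c_k=0$ for $k>a_n-i_n$ produces $p\in\K_{a_n}[\zeta]$ with $p(T_{a_n})x=y$. (Equivalently: $\textrm{sp}[e_{i_n},\dots,e_{a_n}]$ is $T_{a_n}$-invariant and $T_{a_n}$ acts on it as a single nilpotent Jordan block of size $a_n-i_n+1$; the hypothesis $x_{i_n}\ne0$ says precisely that $x$ is cyclic for this block, so every $y$ in that span equals $p(T_{a_n})x$ for a unique $p$ of degree $\le a_n-i_n$.)

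It remains to bound $|p|=\sum_{k\le a_n-i_n}|c_k|$. Solving the triangular system by forward substitution, $c_{m-i_n}$ is obtained from $c_0,\dots,c_{m-i_n-1}$ by one division by $x_{i_n}$, which gives the elementary recursion $\sum_{k\le m-i_n}|c_k|\le(1+A/|x_{i_n}|)\sum_{k\le m-i_n-1}|c_k|+|y_m|/|x_{i_n}|$, where $A=\max_{i_n\le j\le a_n}|x_j|$. Unrolling this over $m=i_n,\dots,a_n$ and using $|x_{i_n}|\le A$ yields
$$|p|\le\frac{(1+2A)^{a_n}}{|x_{i_n}|^{\,a_n-i_n+1}}\sum_{j=i_n}^{a_n}|y_j|.$$
Now, for fixed $a_n$ the space $F_{a_n}$ is a fixed finite-dimensional space, so the coordinate functionals $e_j^{*(a_n)}$ on $F_{a_n}$ have norms bounded by some constant depending only on $a_n$; combined with $\|x\|,\|y\|\le M_n$ this bounds both $A$ and $\sum_j|y_j|$ by a constant depending only on $a_n$ (recall that $M_n$ depends only on $a_n$). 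Hence $|p|\le D_{a_n}/|e_{i_n}^{*(a_n)}(x)|^{a_n-i_n+1}$ for a suitable $D_{a_n}$ depending only on $a_n$, which is the asserted bound.

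The argument involves no real obstacle beyond bookkeeping: the only point requiring care is that the power of $|e_{i_n}^{*(a_n)}(x)|$ in the denominator be exactly $a_n-i_n+1$, and this is forced by the triangular structure, the $a_n-i_n+1$ unknowns $c_0,\dots,c_{a_n-i_n}$ being produced by that many successive divisions by the single diagonal value $x_{i_n}$. One should also check that every auxiliary constant genuinely depends on $a_n$ alone, which is immediate since $F_{a_n}$ is a fixed space and $M_n$, $\beta_n$ depend only on $a_n$; in fact the lower bounds $\beta_n\le\|x\|,\|y\|$ are not needed for either the existence of $p$ or the estimate, and are retained only so that one fixed $D_{a_n}$ works uniformly over all admissible pairs at step $n$.
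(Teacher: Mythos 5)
Your proof is correct, and it is essentially the argument the paper relies on (the paper itself only cites \cite[Lemma 5.1]{GR}, whose proof is exactly this kind of finite-dimensional computation): writing $p(T_{a_n})x=y$ as a lower-triangular system with diagonal entries $e_{i_n}^{*(a_n)}(x)$ and solving by forward substitution, each of the $a_n-i_n+1$ divisions by $e_{i_n}^{*(a_n)}(x)$ producing the stated exponent, with all remaining constants controlled via the coordinate functionals of the finite-dimensional space $F_{a_n}$ and the bound $M_n$, hence depending only on $a_n$. Your side remark that the lower bound $\beta_n$ is not actually needed for this particular estimate is also accurate.
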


By Fact \ref{factf} applied to the two vectors
$$x'_n=\sum_{j=j_{n}}^{a _{n}}e_{j}^{*(a_n)}(Q_{a_{n}}x)e_{j}
\quad\textrm{ and }\quad y'_n=\sum_{j=j_{n}}^{a _{n}}e_{j}^{*(a_n)}(Q_{a_{n}}y)e_{j}$$ 
(observe that since the $e_j$-coordinates of the vectors $x$ and $y$ for $j<j_n$ are very small, and the projections $\pi_{[0,a_n]}$ tend to the identity in the Strong Operator Topology, the norms of $x'_n$ and $y'_n$ become closer and closer to $1$ as $n$ grows to infinity),
there exists a
polynomial $p_{n}$ of degree less than $a_{n}$ with
$$|p_{n}|\leq D_{a _{n}}\,. \,C_{a_{n}}^{(a_{n}-j_{n}+1)((a_{n}-j_{n}+1)!^{2})}$$ such that
$$p_{n}(T_{a _{n}})\left(\sum_{j=j_{n}}^{a _{n}}e_{j}^{*(a_n)}(Q_{a_{n}}x)e_{j}
\right)=\sum_{j=j_{n}}^{a_{n}}e_{j}^{*(a_n)}(Q_{a_{n}}y)e_{j}.$$
Since
\begin{eqnarray*}
\left|\left|Q_{a_{n}}y-
\sum_{j=j_{n}}^{a_{n}}e_{j}^{*(a_n)}(Q_{a_{n}}y)e_{j}\right|\right|&=&
\left|\left|\sum_{j=0}^{j_{n}-1}e_{j}^{*(a_n)}(Q_{a_{n}}y)e_{j}\right|\right|\\
&\leq&
\sum_{j=0}^{j_{n}-1}|e_{j}^{*(a_n)}(Q_{a_{n}}y)|\; \sup_{0\leq j<j_{n}} ||e_{j}||\\
&\leq& \frac{\sqrt{C_{a_{n}}}}{C_{a _{n}}-1}\leq \frac{1}{a _{n}}
\end{eqnarray*}
if $ \sup_{0\leq j
\leq a_{n}}||e_{j}|| \leq \sqrt{C_{a _{n}}} $ as above and
 $C_{a_{n}}$ grows fast enough, we get
$$\left|\left|p_{n}(T_{a _{n}})\left(\sum_{j=j_{n}}^{a _{n}}e_{j}^{*(a_n)}(Q_{a_{n}}x)e_{j}
\right)-Q_{a_{n}}y\right|\right|\leq \frac{1}{a_{n}}\cdot$$
Then

\begin{eqnarray*}
\left|\left|p_{n}(T_{a _{n}})\left(
\sum_{j=0}^{j_{n}-1}e_{j}^{*(a_n)}(Q_{a_{n}}x)e_{j}\right)\right|\right|&\leq&|p_{n}|\,||T_{a_{n}}||^{a_{n}}
\sum_{j=0}^{j_{n}-1}|e_{j}^{*(a_n)}(Q_{a_{n}}x)| \sqrt{C_{a_{n }}}
\end{eqnarray*}
and this quantity is less than
\begin{eqnarray*}
 {D_{a_{n}}}{C_{a_{n}}^{(a_{n}-j_{n}+1)(a_{n}-j_{n}+1)!^{2}}}||T_{a_{n}}||^{a_{n}}
\sum_{j=0}^{j_{n}-1}\frac {1}{C_{a_{n}}^{(a _{n}-j+1)!^{2}}}
 \sqrt{C_{a _{n}}}
\end{eqnarray*}
which is in turn less than
\begin{eqnarray*}
 {D_{a _{n}}}{C_{a _{n}}^{(a_{n}-j_{n}+1)(a_{n}-j_{n}+1)!^{2}}}||T_{a_{n}}||^{a_{n}}
\frac {2}{C_{a _{n}}^{(a _{n}-j_{n}+2)!^{2}}} \sqrt{C_{a _{n}}}\\
\end{eqnarray*}
and thus less than
\begin{eqnarray*}
 2{D_{a_{n}} \,||T_{a_{n}}||^{a_{n}}}
\frac{\sqrt{C_{a_n}}}{C_{a_{n}}^{((a_{n}-j_{n}+1)!^{2})((a_{n}-j_{n}+2)^{2}-(a_{n}-j_{n}+1))}}
\leq{2D_{a_{n}} \,||T_{a_{n}}||^{a_{n}}}
\frac{\sqrt{C_{a_n}}}{C_{a_{n}}^{2}}\cdot
\end{eqnarray*}

If we choose $C_{a _{n}}$ very large \wrt\ $D_{a _{n}}$ and $||T_{a_{n}}||$, we
can ensure that the quantity on the right-hand side is less than $1/a
_{n}$, and hence that
$$||p_{n}(T_{a_{n}})Q_{a_{n}}x-Q_{a_{n}}y||\leq \frac {2}{
a_{n}}\cdot$$
Now $|p_{n}|$ is controlled by a constant 
which depends only on $a_{n}$, the degree of $p_{n}$ is less than $a_{n}$, and assuming that $b_{n}$ is chosen
very large \wrt\ $a _{n}$ we get that 
\begin{equation}\label{eqnouvelle}
 ||p_{n}(T)Q_{a_{n}}x-Q_{a_{n}}y||\leq \frac {3}{
a _{n}},
\end{equation}
 as $p_{n}(T_{a_n})Q_{a_{n}}x-p_{n}(T)Q_{a_{n}}x $ is supported in the beginning of the \loi\ $[a_n+1, b_n]$.
\par\smallskip
Now the first (b)-working interval is $[b_{n}+1,b_{n}+a_{n}]$, and the definition of $e_{j}$ for $j$ in this interval implies that the following fact holds true:

\begin{fact}\label{factg}
 There exists a constant $C'_{a_{n}}$ depending only on $a_{n}$ such that for every vector $y\in F_{a_{n}}$, $$||(\frac{T^{b_{n}}}{b_{n}}-I)Ty||\leq \frac{C'_{a_{n}}}{b_{n}}||y||.$$
\end{fact}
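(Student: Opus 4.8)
The plan is to compute $(\frac{T^{b_n}}{b_n}-I)Ty$ explicitly for $y\in F_{a_n}$, using the definition of the vectors $e_j$ on the first (b)-\woi\ $[b_n+1,b_n+a_n]$, and then to check that every term produced is of size $O(1/b_n)$ as soon as $b_n$ is large enough \wrt\ $a_n$.

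First I would write $y=\sum_{j=0}^{a_n}c_j e_j$ in the $e$-basis of $F_{a_n}$. Since the finite-dimensional space $F_{a_n}=\textrm{sp}[e_0,\ldots,e_{a_n}]$ is entirely determined by the construction up to step $a_n$, there is a constant $M_{a_n}$, depending only on $a_n$, such that $|c_j|\le M_{a_n}\,\|y\|$ for every $j\le a_n$. Applying $T$ and then $T^{b_n}$ gives $Ty=\sum_{j=0}^{a_n}c_j e_{j+1}$ and $T^{b_n}(Ty)=\sum_{j=0}^{a_n}c_j e_{j+1+b_n}$. The key combinatorial point is that for $j\in[0,a_n-1]$ the index $j+1+b_n$ lies in the first (b)-\woi\ $[b_n+1,b_n+a_n]$, where by definition $e_{j+1+b_n}=f_{j+1+b_n}+b_n e_{j+1}$, whereas for $j=a_n$ the index $a_n+1+b_n$ lies at the very beginning of the \loi\ following this \woi. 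Substituting, and using $\sum_{j=0}^{a_n-1}c_j e_{j+1}=Ty-c_{a_n}e_{a_n+1}$, one is led to the identity
\begin{equation*}
\Bigl(\frac{T^{b_n}}{b_n}-I\Bigr)Ty=\frac{1}{b_n}\sum_{j=0}^{a_n-1}c_j f_{j+1+b_n}\;-\;c_{a_n}\,e_{a_n+1}\;+\;\frac{c_{a_n}}{b_n}\,e_{b_n+a_n+1}.
\end{equation*}
It then remains to bound the three terms on the right. The vectors $f_{j+1+b_n}$, $j\in[0,a_n-1]$, are $a_n$ of the normalized canonical basis vectors $g_m$ (the indices $b_n+1,\ldots,b_n+a_n$ lie far beyond every $J_n$), so the first term has norm at most $\frac{a_n M_{a_n}}{b_n}\|y\|$ by the triangle inequality. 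As for $e_{a_n+1}$ and $e_{b_n+a_n+1}$: both sit at the beginning of a \loi\ of length comparable to $b_n$, on which $f_j=\lambda_j e_j$ with $\lambda_{a_n+1}=\lambda_{b_n+a_n+1}=2^{\frac12\sqrt{b_n}}$; since $\|f_{a_n+1}\|=\|f_{b_n+a_n+1}\|=1$ this gives $\|e_{a_n+1}\|,\|e_{b_n+a_n+1}\|\simeq 2^{-\frac12\sqrt{b_n}}$, so the last two terms are at most $M_{a_n}2^{-\frac12\sqrt{b_n}}\|y\|$ and $\frac{M_{a_n}}{b_n}2^{-\frac12\sqrt{b_n}}\|y\|$. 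Since $b_n\,2^{-\frac12\sqrt{b_n}}\to 0$, imposing (as one always may) that $b_n$ be large enough that $b_n2^{-\frac12\sqrt{b_n}}\le 1$, these two terms are in turn at most $\frac{M_{a_n}}{b_n}\|y\|$, and the fact follows with, say, $C'_{a_n}=(a_n+2)M_{a_n}$.

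The computation itself is elementary; the only thing to watch is the bookkeeping of which indices $j+1+b_n$ fall inside the (b)-\woi\ and which fall into the adjacent \loi. The ``leftover'' vector $-c_{a_n}e_{a_n+1}$ is exactly what this bookkeeping produces, and the one place where the structure of the construction is genuinely used is in checking that this vector is small, which follows from the \loi-normalization $\lambda_{a_n+1}=2^{\frac12\sqrt{b_n}}$. Finally, the claim that $C'_{a_n}$ depends only on $a_n$ is legitimate precisely because $b_n$ is chosen \emph{after} $a_n$: at that moment the space $F_{a_n}$, and hence the constant $M_{a_n}$, is already fixed, so the requirement that $b_n$ be large may safely be imposed in terms of $a_n$.
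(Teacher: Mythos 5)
Your computation is correct and is exactly the argument the paper leaves implicit: Fact \ref{factg} is stated without proof as a direct consequence of the defining relation $e_{j}=f_{j}+b_{n}e_{j-b_{n}}$ on the first (b)-working interval, which is precisely the identity you derive, with the leftover terms controlled by the lay-off normalization $\|e_{a_{n}+1}\|=\|e_{b_{n}+a_{n}+1}\|=2^{-\frac{1}{2}\sqrt{b_{n}}}$ just as you do. Your bookkeeping of the boundary index $j=a_{n}$ and your remark that the coordinate constant $M_{a_{n}}$ is fixed before $b_{n}$ is chosen (so that $C'_{a_{n}}$ indeed depends only on $a_{n}$ in the paper's sense) are both correct, so the proof is sound.
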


 Thus if we set $q_{n}(\zeta )=\frac{\zeta ^{b_{n}+1}}{b_{n}}p_{n}(\zeta )$, we have
\begin{eqnarray*}
 ||q_{n}(T)Q_{a_{n}}x-TQ_{a_{n}}y||&\leq&||p_{n}(T)(\frac{T^{b_{n}}}{b_{n}}-I)TQ_{a_{n}}x||+
||p_{n}(T)TQ_{a_{n}}x-TQ_{a_{n}}y||\\
&\leq& |p_{n}|\,2^{a_{n}}\frac{C'_{a_{n}}}{b_{n}}\,||Q_{a_{n}}||\,||x||+\frac{3||T||}{a_{n}}\leq \frac{7}{a_{n}}\
\end{eqnarray*}
 since $||T||\le 2$, $|p_n|\leq D_{a_n}$ which depends only on $a_n$, $||Q_{a_{n}}||\le M$ by Fact \ref{factd}, and $b_n$ is very large \wrt\ $a_n$.
Then, since the degree of $q_{n}$ is less than $a_{n}+b_{n}+1=l_{n}$, there exists a $k\in [1,k_{n}]$ such that $|p_{k,n}-q_{n}|\leq \varepsilon_n$. If $\varepsilon_n$, which is chosen after the construction of the (a)- and (b)-working intervals at step $n$, is so small that $\varepsilon_n< \frac{1}{||Q_{a_{n}}||}4^{-\nu _{n}}$ for instance, we get that
$$||p_{k,n}(T)-q_{n}(T)||<\frac{1}{||Q_{a_{n}}||}4^{-\nu _{n}}2^{a_{n}},$$ so that
$$||p_{k,n}(T)Q_{a_{n}}x-q_{n}(T)Q_{a_{n}}x||<\frac{1}{a_{n}}\cdot$$ Hence
\begin{eqnarray}\label{star}
 ||p_{k,n}(T)Q_{a_{n}}x-TQ_{a_{n}}y||\leq \frac{8}{a_{n}}\cdot
\end{eqnarray}
Thus
\begin{eqnarray}\label{triangle}
 ||T^{c_{k,n}}x-Ty||&\leq&||T^{c_{k,n}}(I-Q_{\nu _{n}})x||+
||(T^{c_{k,n}}-p_{k,n}(T))Q_{\nu _{n}}x||\\
&+&||p_{k,n}(T)(Q_{\nu _{n}}-Q_{a_{n}})x||+||p_{k,n}(T)Q_{a_{n}}x-TQ_{a_{n}y}||\notag\\
&+&||T(I-Q_{a_{n}})y||\notag.
\end{eqnarray}
We know how to control all the terms except the third one, which we now proceed to estimate: we have $(Q_{\nu _{n}}-Q_{a_{n}})x=\pi_{[a_{n}+1,\nu_{n}]}x$, so that
\begin{eqnarray*}
||p_{k,n}(T)(Q_{\nu _{n}}-Q_{a_{n}})x||&\leq& ||p_{k,n}(T)-q_{n}(T)||\,||\pi_{[a_{n}+1,\nu_{n}]}x||\\
&+& ||q_{n}(T)\pi_{[a_{n}+1,\nu_{n}]}x||.
\end{eqnarray*}
Now $|p_{k,n}-q_{n}|<\varepsilon_n$ is so small that the first term in this sum is very small (recall again that $\varepsilon_n$ is chosen after the (a)- and (b)-parts of the construction at step $n$, and that it can be chosen so small as to compensate the norms of $Q_{\nu_n}$ and $Q_{a_n}$, as well as $||T||^{\nu_n}$). 
We can ensure for instance that
\begin{eqnarray}\label{carre}
 ||p_{k,n}(T)-q_{n}(T)||\,||\pi_{[a_{n}+1,\nu_{n}]}x||<\frac{1}{a_{n}}\cdot
\end{eqnarray}
The second term is equal to
\begin{eqnarray*}
||p_{n}(T)\frac{T^{b_{n}+1}}{b_{n}}\pi_{[a_{n}+1,\nu_{n}]}x||&\leq& ||p_{n}(T)||\,||\frac{T^{b_{n}+1}}{b_{n}}\pi_{[a_{n}+1,\nu_{n}]}x||\\
&\leq& D_{a_{n}}{C_{a _{n}}^{(a_{n}-j_{n}+1)(a_{n}-j_{n}+1)!^{2}}}
2^{a_{n}}||\frac{T^{b_{n}+1}}{b_{n}}\pi_{[a_{n}+1,\nu_{n}]}x||.
\end{eqnarray*}
So it remains to estimate the quantity
$||\frac{T^{b_{n}+1}}{b_{n}}\pi_{[a_{n}+1,\nu_{n}]}x||$. But exactly the same proof as in \cite[Lemma 4.9]{GR} shows that $||{T^{b_{n}+1}}\pi_{[a_{n}+1,\nu_{n}]}x||\leq 2||x||$ for every $x\in X$. Hence

\begin{proposition}\label{prop3}
 For every  $n\geq 1$ and every $x\in X$, $$||\frac{T^{b_{n}+1}}{b_{n}}\pi_{[a_{n}+1,\nu_{n}]}x||\leq \frac{2}{b_n}||x||.$$
\end{proposition}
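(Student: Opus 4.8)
The plan is to reduce the statement, by homogeneity, to the bound $\|T^{b_{n}+1}\pi_{[a_{n}+1,\nu_{n}]}x\|\leq 2\,\|x\|$ for every $x\in X$, and then to divide by $b_{n}$; this is the exact analogue of \cite[Lemma 4.9]{GR}, the only difference being that the $(b)$-fan used here comprises $a_{n}$ working intervals, of lengths $a_{n},a_{n}-1,\dots,1$, rather than $\xi_{n},\dots,1$ as in \cite{GR}, which changes nothing. I would compute $T^{b_{n}+1}f_{j}$ for each index $j\in[a_{n}+1,\nu_{n}]$ --- this interval being the union of the \loi\ $[a_{n}+1,b_{n}]$, the $(b)$-working intervals $[r(b_{n}+1),rb_{n}+a_{n}]$ with $1\leq r\leq a_{n}$, and the \loi s lying between consecutive $(b)$-working intervals --- and then sum the contributions, exploiting the $\ell_{p}$- (or $c_{0}$-)geometry of $X$.

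First, for the action on a single basis vector: if $j$ is a lay-off index of $[a_{n}+1,\nu_{n}]$ then $f_{j}=\lambda_{j}e_{j}$, and since the $(b)$-working interval following $j$ has length $\leq a_{n}\ll b_{n}$, the index $j+b_{n}+1$ again falls in a \loi, so $T^{b_{n}+1}f_{j}=(\lambda_{j}/\lambda_{j+b_{n}+1})\,f_{j+b_{n}+1}$; a jump by exactly one period $b_{n}+1$ is involved, and the (modified) choice of the $\lambda$'s makes $\lambda_{j}/\lambda_{j+b_{n}+1}$ essentially equal to $2^{1/\sqrt{b_{n}}}$, at any rate bounded by a quantity tending to $1$ when $b_{n}$ is large compared with $a_{n}$. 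If $j$ lies in the $r$-th $(b)$-working interval but is not its right endpoint, then $f_{j}=e_{j}-b_{n}e_{j-b_{n}}$ gives $T^{b_{n}+1}f_{j}=e_{j+b_{n}+1}-b_{n}e_{j+1}$; here $j+b_{n}+1$ lies in the $(r+1)$-th $(b)$-working interval, so $e_{j+b_{n}+1}=f_{j+b_{n}+1}+b_{n}e_{j+1}$, the two copies of $b_{n}e_{j+1}$ cancel, and $T^{b_{n}+1}f_{j}=f_{j+b_{n}+1}$. Finally, if $j=rb_{n}+a_{n}$ is the right endpoint of a $(b)$-working interval (with $\nu_{n}$ a degenerate instance), then both $j+1$ and $j+b_{n}+1$ land at, or just at the start of, lay-off intervals of length of order $b_{n}$ (of order $c_{1,n}$ when $j=\nu_{n}$), where $\lambda\geq 2^{\frac12\sqrt{b_{n}}}$, whence $\|T^{b_{n}+1}f_{j}\|=\|e_{j+b_{n}+1}-b_{n}e_{j+1}\|\ls b_{n}\,2^{-\frac12\sqrt{b_{n}}}$, which is negligible.

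To sum, I would note that since $\nu_{n}+b_{n}+1\ll\xi_{n+1}<a_{n+1}-(\kappa_{n+1}-\kappa_{n})$, no $(a)$-working interval $J_{m}$ meets $[a_{n}+1,\nu_{n}+b_{n}+1]$; hence every vector $f_{j}$ or $f_{j+b_{n}+1}$ produced above lies in the copy of $\ell_{p}$ (or $c_{0}$) in $X=\ell_{p}\oplus Z$ (or $c_{0}\oplus Z$), and on that copy the index shift $j\mapsto j+b_{n}+1$ is, via $\sigma$, simply the forward shift by $b_{n}+1$ of the canonical basis, an isometry onto a block; in particular $\|\pi_{[a_{n}+1,\nu_{n}]}x\|\leq\|x\|$. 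Splitting $T^{b_{n}+1}\pi_{[a_{n}+1,\nu_{n}]}x$ as $\sum_{j}c_{j}f_{j}^{*}(x)f_{j+b_{n}+1}$ over the non-endpoint indices (with $|c_{j}|$ close to $1$) plus the contribution of the $a_{n}$ endpoint indices, the first sum --- being the composition of a block coordinate projection in $\ell_{p}$ or $c_{0}$, a diagonal operator of norm $\sup_{j}|c_{j}|$, and a shift --- has norm $\leq\sup_{j}|c_{j}|\cdot\|x\|$, while the endpoint contribution, a combination of basis vectors sitting at the heads of lay-off intervals with coefficients crushed by the factor $2^{-\frac12\sqrt{b_{n}}}$, has norm $\ls b_{n}\,2^{-\frac12\sqrt{b_{n}}}\,\|x\|$, again by the $\ell_{p}$- (or $c_{0}$-)geometry. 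Choosing $b_{n}$ large enough relative to $a_{n}$ (so that $\sup_{j}|c_{j}|$ is close to $1$ and the $b_{n}\,2^{-\frac12\sqrt{b_{n}}}$ term is small) then gives $\|T^{b_{n}+1}\pi_{[a_{n}+1,\nu_{n}]}x\|\leq 2\|x\|$, and division by $b_{n}$ finishes the proof.

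The hard part will be the bookkeeping of the first step: the cancellation inside the $(b)$-working intervals is exactly what the $(b)$-fan is designed to produce, but it breaks down at their right endpoints, and there one must verify that the two surviving terms have already been carried into the head of very long lay-off intervals, where the exponential factor $2^{-\frac12\sqrt{b_{n}}}$ annihilates them. A secondary point is that in the summation one must invoke the $\ell_{p}$- (resp. $c_{0}$-)geometry and not merely the triangle inequality: there are of order $b_{n}$ non-endpoint indices and $a_{n}$ endpoint ones, and it is the block structure of the surviving coordinates inside the complemented copy of $\ell_{p}$ (resp. $c_{0}$) that keeps the constant close to $1$ rather than letting it explode.
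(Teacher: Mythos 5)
Your argument is correct and is essentially the paper's own: the paper simply invokes the proof of \cite[Lemma 4.9]{GR}, and what you write out --- the exact cancellation $T^{b_{n}+1}f_{j}=f_{j+b_{n}+1}$ inside the (b)-fan, the weight ratio close to $2^{1/\sqrt{b_{n}}}$ on the lay-off indices, the $b_{n}2^{-\frac{1}{2}\sqrt{b_{n}}}$ crushing at the right endpoints, and the summation via the shift isometry on the $\ell_{p}$ (or $c_{0}$) block, using that no (a)-working interval meets $[a_{n}+1,\nu_{n}+b_{n}+1]$ --- is precisely that proof adapted to the present (b)-fan of $a_{n}$ intervals. Dividing the resulting bound $||T^{b_{n}+1}\pi_{[a_{n}+1,\nu_{n}]}x||\le 2||x||$ by $b_{n}$ gives the stated estimate, exactly as in the paper.
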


So by taking $b_{n}$ large enough we can make sure that
\begin{eqnarray}\label{coeur}
||p_{n}(T)\frac{T^{b_{n}+1}}{b_{n}}\pi_{[a_{n}+1,\nu_{n}]}x||<\frac{1}{a_{n}}\cdot
\end{eqnarray}

Putting this into (\ref{triangle}),
 we get that for every $n$ there exists a $k\in [1,k_{n}]$ such that 
 \begin{eqnarray}
  ||T^{c_{k,n}}x-Ty||&\leq& 100\,||x-\pi_{[0,\nu _{n}]}x||+\frac{1}{a_{n}} +\frac{1}{a_{n}}
 +\frac{8}{a_{n}}+
 2\,||(I-Q_{a_{n}})y||\\
 &\leq& 100\,||x-\pi_{[0,\nu _{n}]}x||+\frac{10}{a_{n}} +2\,||(I-Q_{a_{n}})y||\notag
 \end{eqnarray}
 where the estimates of the first four terms in (\ref{triangle}) follow respectively from (\ref{EQ2}) in Proposition \ref{prop2}, Fact \ref{factb} with $\delta _{n}$ sufficiently small, (\ref{carre}) and (\ref{star}). 
 For any fixed couple $(x,y)$, the terms $||x-\pi_{[0,\nu _{n}]}x||$ and $||(I-Q_{a_{n}})y||$ (by Fact \ref{factd}) are also very small when $n$ is large. Hence $Ty$  belongs to $\overline{\mathcal{O}\textrm{rb}}(x,T)$. If $j_{n}(y)\le j_{n}(x)$ for infinitely many $n$, we prove in exactly the same way that $Tx\in\overline{\mathcal{O}\textrm{rb}}(y,T)$. Property (P2)
is proved.

\subsection{Proof of Property (P3)}
In order to show that Property (P3) holds true, a key step is to prove the following proposition:

\begin{proposition}\label{prop4}
 For any vector $x$ with $||x||=1$ and any integer $n_{0}\geq 1$, there exists an $n\geq n_{0}$ and a \pol\ of degree less than $a_{n}$, with $|p_{n}|$ bounded by a constant depending only on $a_{n}$, such that
$$||p_{n}(T)Q_{a_{n}}x-e_{a_{n}-1}||\leq \frac{1}{a_{n}}\cdot$$
\end{proposition}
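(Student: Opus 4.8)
The plan is to reduce the statement to a computation inside the finite-dimensional space $F_{a_{n}}=\textrm{sp}[e_{0},\ldots ,e_{a_{n}}]$, via the projection $Q_{a_{n}}$ and the algebraic Fact \ref{factf}, exactly in the spirit of the proofs of (P1) and (P2). Fix $x$ with $||x||=1$. By Fact \ref{facte}, after enlarging $n_{0}$ if necessary, we may attach to every $n\ge n_{0}$ the smallest index $j_{n}=j_{n}(x)\in[0,a_{n}]$ for which $|e_{j_{n}}^{*(a_{n})}(Q_{a_{n}}x)|\ge C_{a_{n}}^{-(a_{n}-j_{n}+1)!^{2}}$, so that every earlier $e_{j}$-coordinate of $Q_{a_{n}}x$ lies below its threshold $C_{a_{n}}^{-(a_{n}-j+1)!^{2}}$.

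The first and really delicate point is to check that $j_{n}\le a_{n}-1$ for all large $n$. Recall from Fact \ref{factbis} that $Q_{a_{n}}x=\pi_{[0,a_{n}]}x-\alpha_{n+1}^{-1}f_{a_{n+1}}^{*}(x)\,e_{a_{n}}$, and from Fact \ref{facta} that $e_{a_{n}}=\sum_{k=1}^{n}\alpha_{k}f_{a_{k}}+f_{0}$; comparing the $f_{a_{n}}$-coordinates on both sides of the first identity (and using $f_{a_{n}}^{*}(e_{a_{n}})=\alpha_{n}$, $f_{a_{n}}^{*}(e_{j})=0$ for $j<a_{n}$) gives $e_{a_{n}}^{*(a_{n})}(Q_{a_{n}}x)=\alpha_{n}^{-1}f_{a_{n}}^{*}(x)-\alpha_{n+1}^{-1}f_{a_{n+1}}^{*}(x)$. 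Since $(f_{j})_{j\ge 0}$ is a normalized Schauder basis, $f_{a_{n}}^{*}(x)$ and $f_{a_{n+1}}^{*}(x)$ tend to $0$, and as $\alpha_{n},\alpha_{n+1}\ge\delta_{0}$ we get $e_{a_{n}}^{*(a_{n})}(Q_{a_{n}}x)\to 0$. If we had $j_{n}=a_{n}$ for infinitely many $n$, then for those $n$ every $e_{j}$-coordinate of $Q_{a_{n}}x$ with $j<a_{n}$ would be below $C_{a_{n}}^{-(a_{n}-j+1)!^{2}}$, and, estimating as in the proof of Fact \ref{facte}, $||Q_{a_{n}}x||\le (1+\varepsilon)\,|e_{a_{n}}^{*(a_{n})}(Q_{a_{n}}x)|+\frac{\sqrt{C_{a_{n}}}}{C_{a_{n}}-1}\to 0$, contradicting $||Q_{a_{n}}x||\to ||x||=1$ (Fact \ref{factd}). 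This step is precisely where the particular definition of $Q_{a_{n}}$ (which subtracts the $f_{a_{n+1}}$-part, so that $||e_{a_{n}}||\le 1+\varepsilon$ stays bounded) and the standing assumption $\alpha_{n}\ge\delta_{0}$ are indispensable; it is the finite-$\varepsilon$ avatar of the non semi-boundedly complete basis mechanism driving the (a)-part.

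Next, put $x'_{n}=\sum_{j=j_{n}}^{a_{n}}e_{j}^{*(a_{n})}(Q_{a_{n}}x)\,e_{j}$. Then $||Q_{a_{n}}x-x'_{n}||\le\sum_{j<j_{n}}C_{a_{n}}^{-(a_{n}-j+1)!^{2}}||e_{j}||$ can be made arbitrarily small by taking $C_{a_{n}}$ large, so for $n$ large $||x'_{n}||\in[\tfrac12,2]$ while $|e_{j_{n}}^{*(a_{n})}(x'_{n})|=|e_{j_{n}}^{*(a_{n})}(Q_{a_{n}}x)|\ge C_{a_{n}}^{-(a_{n}-j_{n}+1)!^{2}}$. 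As $j_{n}\le a_{n}-1$, the target $e_{a_{n}-1}$ lies in $\textrm{sp}[e_{j_{n}},\ldots ,e_{a_{n}}]$ and $\tfrac12\le||e_{a_{n}-1}||\le M_{a_{n}}$ for some bound $M_{a_{n}}$ depending only on $a_{n}$ (indeed $||e_{a_{n}-1}||=a_{n}^{2}$ if $\kappa_{n}>\kappa_{n-1}+1$, whereas if $\kappa_{n}=\kappa_{n-1}+1$ then $a_{n}-1$ sits at the end of a \loi\ and $||e_{a_{n}-1}||\simeq 2^{\frac12\sqrt{a_{n}-1-\xi_{n}}}$). Fact \ref{factf}, applied with $i_{n}=j_{n}$ to the pair $(x'_{n},e_{a_{n}-1})$, then produces a polynomial $p_{n}$, of degree at most $a_{n}$, with $|p_{n}|\le D_{a_{n}}\,C_{a_{n}}^{(a_{n}-j_{n}+1)(a_{n}-j_{n}+1)!^{2}}\le D_{a_{n}}\,C_{a_{n}}^{(a_{n}+1)(a_{n}+1)!^{2}}$ — a bound depending only on $a_{n}$ — such that $p_{n}(T_{a_{n}})x'_{n}=e_{a_{n}-1}$.

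It remains to go from $(T_{a_{n}},x'_{n})$ back to $(T,Q_{a_{n}}x)$, which is routine bookkeeping with the constants at our disposal. Writing $p_{n}=\sum_{m\le a_{n}}b_{m}\zeta^{m}$, the vectors $p_{n}(T)v$ and $p_{n}(T_{a_{n}})v$ (for $v\in F_{a_{n}}$) differ only through terms $b_{m}e_{j+m}$ with $j\le a_{n}<j+m\le 2a_{n}$; all such indices lie at the very beginning of the \loi\ $[a_{n}+1,b_{n}]$, where $||e_{i}||$ is of order $2^{-\frac12\sqrt{b_{n}}}$, so $||p_{n}(T)x'_{n}-e_{a_{n}-1}||\ls |p_{n}|\,2^{-\frac13\sqrt{b_{n}}}$, which is $\ll 1/a_{n}$ as soon as $b_{n}$ is large with respect to $a_{n}$. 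And $||p_{n}(T)(Q_{a_{n}}x-x'_{n})||\le |p_{n}|\,2^{a_{n}}\,||Q_{a_{n}}x-x'_{n}||$: the exponent $(a_{n}-j_{n}+1)(a_{n}-j_{n}+1)!^{2}$ occurring in $|p_{n}|$ is strictly dominated by the exponent $(a_{n}-j_{n}+2)!^{2}$ that governs the largest surviving term of $||Q_{a_{n}}x-x'_{n}||$, so taking $C_{a_{n}}$ large relative to $D_{a_{n}}$ and $2^{a_{n}}$ — the same device as in the proof of (P2) — brings this contribution below $1/a_{n}$ as well. Summing the two estimates gives $||p_{n}(T)Q_{a_{n}}x-e_{a_{n}-1}||\le 1/a_{n}$, as required. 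As stressed above, the only genuine obstacle is the argument of the second paragraph; the rest goes through thanks to the freedom to let $C_{a_{n}}$, $b_{n}$, $c_{k,n}$, $\xi_{n+1}$, \ldots\ grow as fast as needed.
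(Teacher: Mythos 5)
Your proof is correct, and its second half coincides with the paper's argument for Proposition \ref{prop4}: you apply Fact \ref{factf} to the truncated vector $\sum_{j\ge j_n}e_j^{*(a_n)}(Q_{a_n}x)e_j$, absorb the coordinates below $j_n$ by playing the exponent $(a_n-j_n+2)!^2$ against $(a_n-j_n+1)(a_n-j_n+1)!^2$ exactly as in the proof of (P2), and pass from $T_{a_n}$ to $T$ using the smallness of $||e_i||$ at the beginning of the lay-off interval $[a_n+1,b_n]$. Where you genuinely diverge from the paper is the key step that the ``large'' coordinate can be taken at some $j\le a_n-1$. The paper isolates this as Proposition \ref{prop5} and proves it by contradiction: if all coordinates $j\le a_n-1$ of $Q_{a_n}x$ were below threshold for every $n\ge n_0$, then $\pi_{[0,a_n]}x$ would be asymptotically a multiple of $e_{a_n}$, forcing $e_{a_n}$ to converge to $x/\gamma$ and hence the series $\sum_j\alpha_j z_{\kappa_j}$ to converge, contradicting the non-semi-bounded completeness of the basis. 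You instead compute directly that $e_{a_n}^{*(a_n)}(Q_{a_n}x)=\alpha_n^{-1}f_{a_n}^{*}(x)-\alpha_{n+1}^{-1}f_{a_{n+1}}^{*}(x)\to 0$ (which is correct: only $e_{a_n}$ among $e_0,\ldots,e_{a_n}$ carries an $f_{a_n}$-component, with coefficient $\alpha_n$), and then conclude via Fact \ref{facte}, $||e_{a_n}||\le 1+\varepsilon$ and $Q_{a_n}x\to x$ that $j_n$ cannot equal $a_n$ infinitely often. This is a valid and more direct route; it even gives the conclusion of Proposition \ref{prop5} for all sufficiently large $n$ rather than for infinitely many, and it uses the non-reflexivity input only through the coexistence of $\alpha_n\ge\delta_0$ with the boundedness $||e_{a_n}||\le 1+\varepsilon$ (plus Fact \ref{factd}) -- precisely the data that fail in the Hilbertian construction of Theorem \ref{th3bis}, so there is no conflict with the exceptional vectors appearing there. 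What the paper's divergence-based argument buys in exchange is robustness: it is the version that survives in the multi-(a)-interval setting of Proposition \ref{prop6bis}, where the coefficients $\alpha_j$ are no longer bounded away from zero and your shortcut is unavailable.
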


Indeed, supposing for the moment that Proposition \ref{prop4} is proved, the same argument as in the proof of Property (P2) above, starting from (\ref{eqnouvelle}) with $y=Q_{a_{n}}y=e_{a_{n}-1}$, shows that for every $n_{0}\ge 1$ there exist an $n\ge n_{0}$ and a $k\in[1,k_{n}]$ such that 
\begin{eqnarray}\label{rond}
 ||T^{c_{k,n}}x-e_{a_{n}}||\leq \frac{10}{a_{n}}+100 \,||(I-Q_{\nu_n})x||.
\end{eqnarray}
Now by Fact \ref{facta} we have $||e_{a_{n}}-e_{0}||<\varepsilon $, so we eventually obtain that
$$||T^{c_{k,n}}x-e_{0}||<\varepsilon +\frac{10}{a_{n}} +100 \,||(I-Q_{\nu_n})x||.$$ Since this is true for infinitely many $n$, this implies that the distance of the orbit of $x$ to $e_{0}$ is less than $\varepsilon $. So Property (P3) is proved for any vector $x$ with $||x||=1$. In order to show that Property (P3) holds true for any \nz\ vector $x\in X$, we only need to observe that by Property (P1), the closures of the orbits of the two vectors $x$ and $\frac{x}{||x||}$ coincide.
\par\smallskip
So our aim is to prove Proposition \ref{prop4}. For this the strategy is to show that if $x$ is a vector of $X$ with $||x||=1$, there exist infinitely many $n$ such that for some $j\in [0, a_{n}-1]$, $|e_{j}^{*(a_n)}(Q_{a_{n}}x)|$ is not too small. We will then check that Fact \ref{factf} can be applied in order to get a \pol\ ${{p}}_{n}$ with suitable properties such that
$||{{p}}_{n}(T)Q_{a_{n}}x-e_{a_{n}-1}||\leq \frac{1}{a_{n}}$. 
\par\smallskip
So here is the main statement we need in order to prove Proposition \ref{prop4}:

\begin{proposition}\label{prop5}
For every sequence $(A_{n})_{n\geq 1}$ of positive real numbers increasing sufficiently rapidly, the following statement holds true:

for any vector $x$ in $X$ with $||x||=1$ and any integer $n_{0}\geq 1$, there exists an $n\geq n_{0}$ and an integer $j\in [0,a_{n}-1]$ such that
$$|e_{j}^{*(a_n)}(Q_{a_{n}}x)|\geq \frac{1}{A_{n}^{(a_n-j+1)!^2}}\cdot$$
\end{proposition}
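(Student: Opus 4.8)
The plan is to argue by contradiction. The heart of the matter is that if, for every large $n$, the vector $Q_{a_{n}}x$ had a ``large'' $e_{j}$-coordinate only at the extreme index $j=a_{n}$, then $Q_{a_{n}}x$ would lie within $o(1)$ of the line $\K e_{a_{n}}$; but Fact \ref{facta} describes $e_{a_{n}}$ so explicitly ($e_{a_{n}}=e_{0}+\sum_{k=1}^{n}\alpha_{k}z_{\kappa_{k}}$) that this, combined with the fact that the series $\sum_{k}\alpha_{k}z_{\kappa_{k}}$ does \emph{not} converge in $Z$, produces a contradiction with $\|x\|=1$. This is exactly the step where non-reflexivity of $Z$ (through the non-semi-boundedly-complete basis $(z_{j})$) is used.

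Concretely, suppose the conclusion fails: there is a vector $x\in X$ with $\|x\|=1$ and an integer $n_{0}\ge1$ such that $|e_{j}^{*(a_{n})}(Q_{a_{n}}x)|<A_{n}^{-(a_{n}-j+1)!^{2}}$ for every $n\ge n_{0}$ and every $j\in[0,a_{n}-1]$. Set $\tau_{n}:=e_{a_{n}}^{*(a_{n})}(Q_{a_{n}}x)$. Recalling that $\sup_{0\le j\le a_{n}}\|e_{j}\|$ is a quantity determined by the vectors $e_{j}$, $j\le a_{n}$, and that the largest term below comes from $j=a_{n}-1$, I would estimate
$$\|Q_{a_{n}}x-\tau_{n}e_{a_{n}}\|=\Bigl\|\sum_{j=0}^{a_{n}-1}e_{j}^{*(a_{n})}(Q_{a_{n}}x)\,e_{j}\Bigr\|\le a_{n}\Bigl(\sup_{0\le j\le a_{n}}\|e_{j}\|\Bigr)\frac{1}{A_{n}^{4}}\cdot$$
Since $(a_{n})$ and the norms $\|e_{j}\|$ are fixed once $T$ is built, choosing $(A_{n})$ to increase fast enough — this is precisely the meaning of ``increasing sufficiently rapidly'' — forces the right-hand side to tend to $0$. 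Together with Fact \ref{factd}, which gives $Q_{a_{n}}x\to x$, this yields $\tau_{n}e_{a_{n}}\to x$ in $X$.

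Now I would exploit the decomposition $X=\ell_{p}\oplus Z$ (resp. $c_{0}\oplus Z$). Let $P$ be the norm-one projection of $X$ onto its $\ell_{p}$- (resp. $c_{0}$-) part, i.e. onto $\overline{\textrm{sp}}\,[f_{j} \textrm{ ; } j\notin\bigcup_{m\ge1}J_{m}]$. By Fact \ref{facta}, $Pe_{a_{n}}=f_{0}=g_{0}$ while $(I-P)e_{a_{n}}=\sum_{k=1}^{n}\alpha_{k}z_{\kappa_{k}}$, of norm $<\varepsilon$ for every $n$. Applying $P$ to $\tau_{n}e_{a_{n}}\to x$ gives $\tau_{n}g_{0}\to Px$, so the scalars $\tau_{n}$ converge to some scalar $\tau$ with $\tau g_{0}=Px$; applying $I-P$ gives $\tau_{n}\sum_{k=1}^{n}\alpha_{k}z_{\kappa_{k}}\to(I-P)x$. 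If $\tau\ne0$, dividing by $\tau_{n}$ shows that the partial sums $\sum_{k=1}^{J}\alpha_{k}z_{\kappa_{k}}$ converge in $Z$ as $J\to\infty$, which is impossible since $\alpha_{k}\ge\delta_{0}>0$ and $\inf_{k}\|z_{k}\|>0$ keep the general term $\alpha_{k}z_{\kappa_{k}}$ bounded away from $0$. If $\tau=0$, then $Px=0$ and $\|\tau_{n}\sum_{k=1}^{n}\alpha_{k}z_{\kappa_{k}}\|\le|\tau_{n}|\varepsilon\to0$, so $(I-P)x=0$ as well and hence $x=0$, contradicting $\|x\|=1$. In either case we reach a contradiction, which proves the proposition.

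The only non-formal point is the first estimate: one must be confident that the hypothesised decay $A_{n}^{-(a_{n}-j+1)!^{2}}$ of the small coordinates genuinely traps $Q_{a_{n}}x$ within $o(1)$ of $\K e_{a_{n}}$, uniformly over unit vectors $x$; once this is granted, everything else is a soft argument about the non-convergence of a bounded, non-summable series in the non-reflexive space $Z$. Note that the precise doubly-factorial shape of the exponent plays no role here — any rate summable against $\sup_{j\le a_{n}}\|e_{j}\|$ would do; that shape is needed only for the later application of Fact \ref{factf} inside the proof of Proposition \ref{prop4}.
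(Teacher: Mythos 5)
Your proof is correct and follows essentially the same route as the paper's: the contradiction hypothesis traps $Q_{a_{n}}x$ within $o(1)$ of the line $\K e_{a_{n}}$, and then the explicit form $e_{a_{n}}=e_{0}+\sum_{k=1}^{n}\alpha_{k}z_{\kappa_{k}}$ together with the divergence of $\sum\alpha_{k}z_{\kappa_{k}}$ forces $x=0$. The only (harmless) cosmetic differences are that the paper works with $\pi_{[0,a_{n}]}x$ and extracts the limiting scalar by applying $f_{0}^{*}$, whereas you work with $Q_{a_{n}}x$ via Fact \ref{factd} and split off the $Z$-component with the projection $P$; your closing remark that the doubly-factorial exponent is irrelevant here and only matters for Fact \ref{factf} in Proposition \ref{prop4} is also accurate.
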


Observe that the difference between the statements of Fact \ref{facte} and Proposition \ref{prop5} is that we are looking here for a $j\in [0,a_{n}-1]$ (and not a $j\in [0,a_{n}]$, as in Fact \ref{facte}) such that $|e_{j}^{*(a_n)}(Q_{a_{n}}x)|$ is not too small. The reason for requiring that $j$ belongs to $[0, a_{n}-1]$ is that  the method of proof of Property (P2) combined with the statement of Proposition \ref{prop4} yields then a suitable approximation of the vector $Te_{a_{n}-1}=e_{a_{n}}$ by vectors of the form $T^{c_{k,n}}x$. If we only required  $j$ to belong to $[0,a_{n}]$, we would obtain an approximation of the  vector $Te_{a_{n}}=e_{a_{n}+1}$, which is close to $0$ in norm, and such an approximation is not interesting. The reason we obtain an
approximation of the vector $Te_{a_{n}-1}=e_{a_{n}}$ (and not $e_{a_{n}-1}$ itself) by vectors of the form $T^{c_{k,n}}x$
is 
Fact \ref{factg}, which gives an estimate of $||(\frac{T^{b_{n}}}{b_{n}}-I)Ty||$ (and not of
$||(\frac{T^{b_{n}}}{b_{n}}-I)y||$). Ultimately, this comes from the proof of Proposition \ref{prop3}: one has to move the first (b)-\woi\ by $b_{n}+1$ steps (and not $b_{n}$) in order to get to the next interval, so as to be able to ``damp'' properly these intervals in decreasing their lengths and ensure that Proposition \ref{prop3} holds true.

\begin{proof}[Proof of Proposition \ref{prop5}]
 Let $(A_{n})_{n\geq 1}$ be a quickly increasing sequence (how quickly will be seen later on in the proof). Suppose that $x$ with $||x||=1$ is such that for every $n\geq n_{0}$ and every $j\in [0,a_{n}-1]$, $|e_{j}^{*(a_n)}(Q_{a_{n}}x)|\leq \frac{1}{A_{n}^{(a_n-j+1)!^2}}\cdot$ We have for $x=\sum_{j\geq 0}f_{j}^{*}(x)f_{j}$ $$\pi_{[0,a_{n}]}x=\sum_{j= 0}^{a_{n}}f_{j}^{*}(x)f_{j}=
\sum_{j= 0}^{a_{n}}e_{j}^{*(a_n)}(x)e_{j}$$ and
\begin{eqnarray*}
Q_{a_{n}}x=\sum_{j= 0}^{a_{n}}f_{j}^{*}(x)f_{j}-\frac{1}{\alpha_{n+1}}
f_{a_{n+1}}^{*}(x)e_{a_{n}}
=\sum_{j= 0}^{a_{n}}e_{j}^{*(a_n)}(x)e_{j}-\frac{1}{\alpha _{n+1}}
f_{a_{n+1}}^{*}(x)e_{a_{n}}
\end{eqnarray*}
so that  $$e_{j}^{*(a_n)}(Q_{a_{n}}x)=e_{j}^{*(a_n)}(x)\;
\;  \textrm{ for }\; j\in [0,a_{n}-1]$$
 and $$
e_{a_{n}}^{*(a_n)}(Q_{a_{n}}x)=e_{a_{n}}^{*(a_n)}(x)-\frac{1}{\alpha _{n+1}}f_{a_{n+1}}^{*}(x).$$
If $|e_{j}^{*(a_n)}(Q_{a_{n}}x)|\leq \frac{1}{A_{n}^{(a_n-j+1)!^2}}$  for every $j\in [0,a_{n}-1]$, then
$|e_{j}^{*(a_n)}(x)|\leq \frac{1}{A_{n}^{(a_n-j+1)!^2}}$  for every $j\in [0,a_{n}-1]$ so that in particular
$$||\pi_{[0,a_{n}]}x-e_{a_{n}}^{*(a_n)}(x)e_{a_{n}}||\leq \frac{1}{A_{n}}\,\sum_{j=0}^{a_{n}-1}||e_{j}||.$$ If $A_{n}$ is so large that
$\frac{1}{A_{n}}\,\sum_{j=0}^{a_{n}-1}||e_{j}||\leq \frac{1}{a_{n}}$ for instance, we get that $$||\pi_{[0,a_{n}]}x-e_{a_{n}}^{*(a_n)}(x)e_{a_{n}}||$$ tends to zero as $n$ tends to infinity. Hence $e_{a_{n}}^{*(a_n)}(x)e_{a_{n}}$ tends to $x$ as $n$ tends to infinity.
Since $e_{a_{n}}=e_{0}+\sum_{k=1}^{n}\alpha_{k}z_{\kappa_{k}}$,
 applying the functional $f_{0}^{*}$ to both sides of this equality yields that $e_{a_{n}}^{*(a_n)}(x)$ tends to $f_{0}^{*}(x)=\gamma $. Now $||e_{a_{n}}||\leq 1+\varepsilon $ for every $n$, so we get that $\gamma e_{a_{n}}$ tends to $x$ as $n$ tends to infinity. Since $x$ is of norm $1$, we see that $\gamma$ is \nz, and thus $e_{a_{n}}$ converges to $\frac{1}{\gamma }x$. But $e_{a_{n}}=\alpha _{n}z_{\kappa_{n}}+\ldots+\alpha _{1}z_{\kappa_{1}}+e_{0}$, so this implies that the series $\sum_{j\geq 1}\alpha_{j}z_{\kappa_{j}} $ is convergent, which stands in contradiction with our initial assumption that this series is divergent. This proves Proposition \ref{prop5}.
\end{proof}

\begin{proof}[Proof of Proposition \ref{prop4}]
Let us denote by $j_{n}$ be the smallest integer $j$ in $[0,a_{n}]$ such that
$$|e_{j}^{*(a_n)}(Q_{a_{n}}x)|\geq \frac{1}{A_{n}^{(a_n-j+1)!^2}}\cdot$$ Then $j_{n}\le a_{n}-1$. Also
$$||Q_{a_{n}}x||\ge \frac{1}{A_{n}^{(a_n+1)!^2}}\,\cdot \,\frac{1}{\sup_{j=0,\ldots, j_{n}-1}||e_{j}^{*(a_{n})}||}:=\beta _{n}.$$ Since $||Q_{a_{n}}x||\le M=:M_{n}$
by Fact \ref{factbis}, it is possible to apply Fact \ref{factf} so as to get a \pol\ $p_{n}$ of degree at most $a_{n}$, with $|p_{n}|\le D_{a_{n}}A_{n}^{(a_{n}-j_{n}+1)!^{2}(a_{n}-j_{n}+1)}$ such that
\begin{equation}\label{eqa}
 p_{n}(T_{a_{n}})\left(\sum_{j=j_{n}}^{a_{n}}e_{j}^{*(a_{n})}(Q_{a_{n}}x)e_{j}\right)=e_{a_{n}-1}.
\end{equation}
Now we have
\begin{equation*}
||\sum_{j=0}^{j_{n}-1}e_{j}^{*(a_{n})}(Q_{a_{n}}x))e_{j}||\le\sum_{j=0}^{j_{n}-1} \frac{1}{A_{n}^{(a_n-j+1)!^2}}||e_{j}||\le a_{n}\sup_{j=0,\ldots, a_{n}}||e_{j}||\frac{2}{A_{n}^{(a_n-j_{n}+2)!^2}}\cdot
\end{equation*}
Hence
\begin{eqnarray*}
||p_{n}(T_{a_{n}})(\sum_{j=0}^{j_{n}-1}e_{j}^{*(a_{n})}(Q_{a_{n}}x))e_{j})||&\le& \sup_{j=0,\ldots, a_{n}}||e_{j}||
\, \frac{2a_{n} D_{a_{n}} 2^{a_{n}}}{A_{n}^{(a_{n}-j_{n}+2)!^{2}-(a_{n}-j_{n}+1)!^{2}(a_{n}-j_{n}+1)}}\\
&\le & \frac{D'_{a_{n}}}{A_{n}}
\end{eqnarray*}
for some constant $D'_{a_{n}}$ depending only on $a_{n}$. If $A_{n}$ is large enough with respect to 
$a_{n}$, we can ensure that
\begin{equation}\label{eqb}
||p_{n}(T_{a_{n}})\left(\sum_{j=0}^{j_{n}-1}e_{j}^{*(a_{n})}(Q_{a_{n}}x))e_{j}\right)||<\frac{1}{a_{n}}\cdot
\end{equation}
So combining (\ref{eqa}) and (\ref{eqb}), we obtain that $$||p_{n}(T_{a_{n}})(Q_{a_{n}}x)-e_{a_{n}-1}||<\frac{2}{a_{n}}\cdot$$ A by now standard argument shows then that
$$||p_{n}(T)(Q_{a_{n}}x)-e_{a_{n}-1}||<\frac{3}{a_{n}}\cdot$$ Proposition \ref{prop4} is proved.
\end{proof}
Property (P3) follows from Proposition \ref{prop4}, and this finishes the proof of Theorem \ref{th2}.

\subsection{A fourth Property (P4) of the operators of Theorem \ref{th2}}
The \ops\ of Theorem \ref{th2} enjoy a fourth property, proved already in \cite{GR} (see also \cite{GR2}), which we mention here for completeness'sake:

\begin{proposition}\label{prop6}
The \ops\ $T$ of Theorem \ref{th2} additionally satisfy:
\par\smallskip
(P4) If $M$ is any closed \inv\ subspace of $T$, the \op\ induced by $T$ on $M$ is \hy: there exists an $x\in M$ such that $M=\overline{\mathcal{O}\textrm{rb}}(x,T)$. Consequently, if $M$ and $N$ are two closed \inv\ subspaces of $T$ with $\{0\}\subsetneq N\subsetneq M$, then $N$ is of infinite codimension in $M$ (and of course of infinite dimension).
\end{proposition}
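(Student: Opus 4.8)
The plan is to deduce (P4) from properties (P1) and (P2) already established for $T$, together with the separability of $X$ and a Baire category argument; this is essentially the proof given in \cite{GR} (see also \cite{GR2}), which we merely sketch.

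First I would fix a \nz\ closed $T$-\inv\ subspace $M$ and reduce to an increasing chain of orbit closures. Since $X$ is \sep\ so is $M$; choose a dense sequence $(x_{k})_{k\ge 1}$ in $M$ and define inductively $y_{1}=x_{1}$, and $y_{k+1}=y_{k}$ if $\overline{\mathcal{O}rb}(x_{k+1},T)\subseteq\overline{\mathcal{O}rb}(y_{k},T)$ while $y_{k+1}=x_{k+1}$ otherwise --- one of these two inclusions holds by (P2). Setting $M_{k}=\overline{\mathcal{O}rb}(y_{k},T)$, one checks immediately that $M_{1}\subseteq M_{2}\subseteq\cdots$, that $x_{1},\dots,x_{k}\in M_{k}$, and that $M_{k}\subseteq M$ because $y_{k}\in M$ and $M$ is closed and $T$-\inv. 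As $(x_{k})$ is dense in $M$ this yields $\overline{\bigcup_{k}M_{k}}=M$.

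Next comes the dichotomy. If the chain $(M_{k})$ is eventually constant, equal to $M_{k_{0}}$, then $M=\overline{M_{k_{0}}}=M_{k_{0}}=\overline{\mathcal{O}rb}(y_{k_{0}},T)$, so $T|_{M}$ is \hy. Otherwise $(M_{k})$ has a strictly increasing subsequence, and we may assume $M_{1}\subsetneq M_{2}\subsetneq\cdots$. Each $M_{k}$ is then a proper closed subspace of the Banach space $M$, hence nowhere dense in $M$; by the Baire category theorem $\bigcup_{k}M_{k}\subsetneq M$, so we may pick $y\in M\setminus\bigcup_{k}M_{k}$. For every $k$, $y\notin M_{k}$ and $M_{k}$ is closed and \inv, so $\overline{\mathcal{O}rb}(y,T)\not\subseteq M_{k}$, and (P2) forces $M_{k}\subseteq\overline{\mathcal{O}rb}(y,T)$. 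Hence $M=\overline{\bigcup_{k}M_{k}}\subseteq\overline{\mathcal{O}rb}(y,T)\subseteq M$ (the last inclusion because $y\in M$), so $M=\overline{\mathcal{O}rb}(y,T)$ and $T|_{M}$ is \hy. I expect this second case to be the only real point: everything hinges on choosing $y$ outside $\bigcup_{k}M_{k}$, which is precisely what prevents $\overline{\mathcal{O}rb}(y,T)$ from sitting strictly below some $M_{k}$. Note that (P1) is not actually needed here; it only guarantees that equivalently $\overline{\textrm{sp}}\,[\mathcal{O}rb(y,T)]=M$.

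Finally I would derive the last assertion. Let $\{0\}\subsetneq N\subsetneq M$ be closed \inv\ subspaces. Applying the first part to $N$, the \op\ $T|_{N}$ is \hy, and since no finite-dimensional space of positive dimension supports a \hy\ \op, $N$ is infinite-dimensional. Applying it to $M$, fix a \hy\ vector $x$ for $T|_{M}$. As $T(N)\subseteq N$, $T$ induces a bounded \op\ $\overline{T}$ on $M/N$ with $\overline{T}q=qT|_{M}$, where $q\colon M\to M/N$ is the (continuous, onto) quotient map; then $q$ sends the dense set $\mathcal{O}rb(x,T)$ to the set $\mathcal{O}rb(q(x),\overline{T})$, which is therefore dense in $M/N$, so $\overline{T}$ is \hy\ on $M/N$. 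Invoking once more the absence of \hy\ \ops\ in finite dimension, $M/N$ must be infinite-dimensional, i.e. $N$ has infinite codimension in $M$.
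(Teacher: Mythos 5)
Your argument is correct and is essentially the intended one: the paper does not reprove (P4) but refers to \cite{GR} and \cite{GR2}, and your chain-plus-Baire derivation from (P1), (P2) and separability, together with the quotient argument for the infinite-codimension assertion, is exactly that argument. One correction, though: your parenthetical claim that (P1) is not actually needed is wrong — you use it at the moment you call each $M_{k}$ a proper closed \emph{subspace} of $M$ and conclude it is nowhere dense; without (P1) the sets $\overline{\mathcal{O}\textrm{rb}}(y_{k},T)$ are merely closed invariant subsets, a proper closed subset need not be nowhere dense, and the Baire step (and hence the choice of $y$) would not go through — replacing them by the closed linear spans would only yield a cyclic vector for $T|_{M}$. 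Since (P1) does hold for the operators of Theorem \ref{th2}, this does not affect the validity of your proof.
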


This  implies that the set $HC(T )$ of \hy\ vectors of $T$ has a complement which is contained in a countable union of closed subspaces  of infinite codimension in $X$ (this is the same argument as in \cite[Section 5.2]{GR} and \cite[Prop. $2.5$]{GR2}). So $HC(T)^{c}$ is already extremely small, although possibly still different from $\{0\}$.

\section{Operators without non-trivial invariant closed subsets  on a class of non-reflexive spaces : proof of Theorem \ref{th1}}\label{sec4}

Let $Z$ be a non-reflexive Banach space admitting a Schauder basis. We know that there exists  a  Schauder basis $(z_{j})_{j\ge 1}$ of $Z$ with $\inf_{j\ge 1}||z_{j}||>0$ which has the following property: there exists a positive number $\delta _{0}$ and a bounded sequence $(\alpha _{j})_{j\ge 1}$ of positive real numbers such that
\begin{equation*}
 \limsup_{j\to +\infty}\alpha _{j}>\delta _{0}>0
\end{equation*}
and
\begin{equation}\label{EQ3}
 \sup_{J\ge 1}||\sum_{j=1}^{J}\alpha _{j}z_{j}||\le 1.
\end{equation}
That such a basis $(z_{j})_{j\ge 1}$ does exist follows immediately from Zippin's result \cite{Z}, which states that there exists a normalized Schauder basis $(z_{j})_{j\ge 1}$ of $Z$ and a strictly increasing sequence $(\kappa_{m})_{m\ge 1}$ of integers such that
$\sup_{M\ge 1}||\sum_{m=1}^{M}z_{\kappa_{m}}||$ is finite. If $\alpha _{j}$ is defined for $j$ not belonging to the set $\{\kappa_{m} \textrm{ ; } m\ge 1\}$ as a positive number so small that the series $\sum_{j\not\in\{\kappa_{m}\}}\alpha _{j}$ is convergent, and as $\alpha _{\kappa_{m}}=1$ for each $m\ge 1$, then 
$\sup_{J \ge 1}||\sum_{j=1}^{J}\alpha _{j}z_{j}||$ is finite (and of course $0<\alpha_j\le 1$ for each $j\ge 1$). It suffices then to normalize the basis to get the statement above.
\par\smallskip
The space on which we are going to construct \ops\ without \nt\ \inv\ closed subsets is of the form $X=\bigoplus_{\ell_{p}}Z$ or $X=\bigoplus_{c_{0}}Z$, $1\le p<+\infty$. Pulling out a copy of either $\ell_{p}$ or $c_{0}$, we can suppose that $X=\ell_{p}\oplus_{\ell_{p}}\bigoplus_{\ell_{p}}Z$, $1\le p<+\infty$, or $X=c_{0}\oplus_{\ell_{\infty}}\bigoplus_{c_{0}}Z$. Let $(g_{j})_{j\ge 0}$ denote the canonical basis of $\ell_{p}$ or $c_{0}$. For each integer $d\ge 1$, we denote by $Z^{(d)}$ the $d^{th}$ copy of the space $Z$ which appears in the infinite direct sum $\bigoplus_{\ell_{p}}Z$ or $\bigoplus_{c_{0}}Z$, and by $(z_{j}^{(d)})_{j\ge 1}$ the basis $(z_{j})_{j\ge 1}$ in this $d^{th}$ copy of $Z$.

\subsection{Definition and boundedness of the operator $T$}
We keep the notation of Section \ref{sec2}: at step $n$, vectors $f_{j}$ and $e_{j}$ will be constructed for $j\in [\xi  _{n}+1,\xi  _{n+1}]$. The (b)- and (c)-parts of the construction remain the same in spirit (although some technical modifications have to be done in order to cope with the changes in the (a)-part), and the important modification of the proof of Theorem \ref{th2} concerns the (a)-part: recall that in the construction of Section \ref{sec2} there was at each step $n$ only one (a)-working interval, which was of the form $[a_{n}-(\kappa_{n}-\kappa_{n-1}-1), a_{n}]$. We will now need several (a)-working intervals, the number of which increases with $n$. This idea is not new, and is present in all the constructions of Read, but in order to be able to implement it in this new setting one needs to combine it with ideas from \cite{GR} concerning the construction of the (c)-part of such \ops\ outside the $\ell_{1}$-setting. More details about this will be given later on.
\par\smallskip
Let us now fix some notation: there will be $n$ (a)-working intervals, which we denote by $J_{n,r}=[ra_{n},ra_{n}+\xi_{n+1-r}]$, $r=1,\ldots,n$. 
We set $\xi  _{0}=\xi  _{1}=0$. Thus $J_{n,1}$ has length $\xi  _{n}+1$, $J_{n,2}$ has length $\xi  _{n-1}+1$, etc., until $J_{n,n}$ which is the singleton $\{na_{n}\}$. In order that the forthcoming definitions of $f_{j}$ for $j$ in the (a)- and (b)-\woi s make sense, we need to set $a_{0}=b_{0}=1$. This is not coherent with the fact that $\xi  _{n}+1\le a_{n}\le b_{n}\le \xi  _{n+1}$ for each 
$n\ge 1$, but this convention will allow us to use simpler notation and to avoid treating separately some cases. We start the real construction at the index 
$n=1$, and choose $a_{1}$ very much larger than $\xi  _{1}$, etc.
\par\smallskip
Let $J_{0}$ be the set $J_{0}=[1,+\infty[\setminus\bigcup_{n\ge 1}\bigcup_{r=1}^{n}J_{n,r}$, and let $\sigma  $ be the unique increasing bijection from $J_{0}$ onto the set $[1,+\infty[$.
\par\smallskip
$\bullet$ \textbf{Definition of the vectors $f_{j}$:} Let us set $f_{0}=g_{0}$, and $f_{j}=g_{\sigma  (j)}$ for every $j$ belonging to $J_{0}$. Let $(d_{n})_{n\ge 1}$ be the sequence defined by $d_{1}=1$ and $d_{n+1}=d_{n}+\xi  _{n}+1$ for each $n\ge 1$. For $j\in [ra_{n},ra_{n}+\xi_{n+1-r}]$, $r=1\ldots n$, let us set
$$f_{j}=z_{r}^{(d_{n-r+1}+j-ra_{n})}.$$ In other words $f_{ra_{n}}=z_{r}^{(d_{n-r+1})}$, which belongs to the $d_{n-r+1}^{th} $ copy of $Z$. When defining the vectors $f_{ra_{n}+l}$, we shift the vector $z_{r}^{(d_{n-r+1})}$ to the $l^{th}$ next copy of $Z$, which is $Z^{(d_{n-r+1}+l)}$.
\par\smallskip
Thus at step $n$ of the construction, the vectors $z_{r}^{(d)}$ appear in the sequence $(f_{j})_{j\ge 0}$ for the following values of $r$ and $d$:  $1=d_{1}\le d<d_{2}$ and $1\le r\le n$, then  $d_{2}\le d<d_{3}$ and $1\le r\le n-1$, etc... until $d_{n}\le d<d_{n+1}$ and $r=1$. More precisely: if $m\ge 1$
 and $d_{m}\le d<d_{m+1}$, then
 $$z_{r}^{(d)}=f_{ra_{m+r-1}+d-d_{m}}\quad \textrm{for any } r\ge 1.$$ Hence all vectors $z_{r}^{(d)}$ eventually appear in the sequence $(f_{j})_{j\ge 0}$. Moreover, since we do not change the order of the vectors $z_{r}$ within a fixed copy $Z^{(d)}$ of $Z$, and $X=\ell_{p}\oplus\bigoplus_{\ell_p}Z$ or $X=c_{0}\oplus\bigoplus_{c_0}Z$, it is not difficult to see that the sequence $(f_{j})_{j\ge 0}$ thus defined is a Schauder basis of $X$. We will denote as usual for $N\ge 1$ by $\pi_{[0,N]}$ the canonical projection onto the first $N+1$ vectors of the basis $(f_{j})_{j\ge 0}$. Then $\sup_{N\ge 1}||\pi_{[0,N]}||$ is finite.
 \par\smallskip
 If $1\le p<+\infty$, we denote by $P_{0}$ the projection of $X$ onto the isolated $\ell_{p}$ space, and, for each $d\ge 1$, by $P_{Z^{(d)}}$ the projection of $X$ onto the $d^{th}$ copy $Z^{(d)}$ of $Z$. We have then that for every $x\in X$,
 \begin{equation}\label{eqcarre}
  ||x||=\left(||P_{0}x||^{p}+\sum_{d\ge 1}||P_{Z^{(d)}}x||^{p}\right)^{\frac{1}{p}}.
 \end{equation}
 A similar formula holds true in the case where $X=c_{0}\oplus\bigoplus_{c_{0}}Z$.
 \par\smallskip
 Just as in the proof of Theorem \ref{th2}, we will now define a sequence $(e_{j})_{j\ge 0}$ of vectors such that $e_{0}=f_{0}$ and $\textrm{sp}[e_{0},\ldots, e_{j}]=\textrm{sp}[f_{0},\ldots, f_{j}]$ for each $j\ge 1$. Then the \op\ $T$ on $X$ will be defined as usual by setting $Te_{j}=e_{j+1}$ for each $j\ge 0$.
\par\smallskip
$\bullet$ \textbf{Definition of the vectors $e_{j}$ for $j$ in an (a)-working interval:} At step $n\ge 1$, $a_{n}$ is chosen extremely large \wrt\ $\xi  _{n}$. For each $r\in [1,n]$ and each $j$ belonging to the interval $ J_{n,r}=[ra_{n},ra_{n}+\xi  _{n+1-r}]$, we define $e_{j}$ by the relation
$$f_{j}=\frac{a_{n-r}}{\alpha _{r}}(e_{j}-e_{j-ra_{n}+(r-1)a_{n-1}}).$$ We mention here already the analogue of Fact \ref{facta}, which will be crucial in the sequel of the proof in order to prove that $e_{0}$ belongs to the closure of the orbit of any non-zero vector $x$ of $X$.

\begin{fact}\label{fact1bis}
 For any integers $n\ge 1$ and $N\in [0,n-1]$, we have
 \begin{equation}\label{new1}
  e_{(n-N)a_{n}}=\frac{1}{a_{N}}\sum_{k=1}^{n-N}\alpha _{k}z_{k}^{(d_{N+1})}+e_{0},
 \end{equation}
 from which it follows that 
 \begin{equation}\label{new2}
   ||e_{(n-N)a_{n}}-e_{0}||\le\frac{1}{a_{N}}\cdot
 \end{equation}
\end{fact}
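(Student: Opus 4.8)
The plan is to unfold the recursive definition of the vectors $e_j$ on the (a)-working intervals and telescope. Fix $n\ge 1$ and $N\in[0,n-1]$, and put $r=n-N$, so that $1\le r\le n$ and $(n-N)a_n=ra_n$ is exactly the left endpoint of the (a)-working interval $J_{n,r}=[ra_n,ra_n+\xi_{n+1-r}]$. First I would apply the defining relation $f_j=\frac{a_{n-r}}{\alpha_r}(e_j-e_{j-ra_n+(r-1)a_{n-1}})$ at $j=ra_n$, which after rearranging gives
$$e_{ra_n}=\frac{\alpha_r}{a_{n-r}}f_{ra_n}+e_{(r-1)a_{n-1}}=\frac{\alpha_r}{a_N}z_r^{(d_{N+1})}+e_{(r-1)a_{n-1}},$$
since $n-r=N$ and, by the definition of the basis vectors, $f_{ra_n}=z_r^{(d_{n-r+1})}=z_r^{(d_{N+1})}$.

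The key point is that $(r-1)a_{n-1}$ is again the left endpoint of an (a)-working interval, namely $J_{n-1,r-1}$, and the very same computation applies there because $(n-1)-(r-1)=N$ as well, with $f_{(r-1)a_{n-1}}=z_{r-1}^{(d_{N+1})}$. I would therefore establish, by induction on $k=0,1,\ldots,r$, the identity
$$e_{ra_n}=\frac{1}{a_N}\sum_{i=r-k+1}^{r}\alpha_i z_i^{(d_{N+1})}+e_{(r-k)a_{n-k}},$$
each inductive step consisting of exactly one application of the defining relation at the left endpoint of $J_{n-k,r-k}$; this is legitimate since $1\le r-k\le n-k$ for $0\le k\le r-1$, and at every step the divisor equals $a_{(n-k)-(r-k)}=a_N$. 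Taking $k=r$, the remaining term is $e_{0\cdot a_{n-r}}=e_0$, which is precisely \eqref{new1}.

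For \eqref{new2} I would simply observe that all the vectors $z_i^{(d_{N+1})}$, $1\le i\le n-N$, live in the single copy $Z^{(d_{N+1})}$ of $Z$, which embeds isometrically into $X$ by \eqref{eqcarre} (and by the analogous formula in the $c_0$ case); hence
$$\|e_{(n-N)a_n}-e_0\|=\frac{1}{a_N}\,\Bigl\|\sum_{i=1}^{n-N}\alpha_i z_i^{(d_{N+1})}\Bigr\|_X=\frac{1}{a_N}\,\Bigl\|\sum_{i=1}^{n-N}\alpha_i z_i\Bigr\|_Z\le\frac{1}{a_N},$$
the last inequality being exactly \eqref{EQ3}. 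There is no genuine obstacle here: the argument is pure bookkeeping, and the only points requiring a little care are checking that the indices $(r-k)a_{n-k}$ remain left endpoints of legitimate (a)-working intervals all the way down the telescope, and using the convention $a_0=1$ consistently (relevant when $N=0$) so that the recursion bottoms out at $e_0$.
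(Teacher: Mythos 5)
Your argument is correct and is essentially the paper's own proof: one unfolds the defining relation $e_{ra_n}=\frac{\alpha_r}{a_{n-r}}f_{ra_n}+e_{(r-1)a_{n-1}}$ down the chain of left endpoints of the (a)-working intervals, noting that the divisor stays equal to $a_{n-r}=a_N$ and that all the vectors $f_{(r-k)a_{n-k}}=z_{r-k}^{(d_{N+1})}$ lie in the same copy of $Z$, and then (\ref{new2}) follows at once from (\ref{EQ3}). The only difference is that you make the telescoping a formal induction (and note the role of the convention $a_0=1$), which the paper compresses into a single displayed computation.
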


\begin{proof}
By definition of $e_{ra_{n}}$ we have
\begin{eqnarray*}
 e_{ra_{n}}&=&\frac{\alpha _{r}}{a_{n-r}}f_{ra_{n}}+e_{(r-1)a_{n-1}}=
\frac{\alpha _{r}}{a_{n-r}}f_{ra_{n}}+\frac{\alpha _{r-1}}{a_{n-r}}f_{(r-1)a_{n-1}}+\ldots+\frac{\alpha _{1}}{a_{n-r}}f_{a_{n-r+1}}+e_{0}\\
&=&\frac{1}{a_{n-r}}\left(\alpha _{r}z_{r}^{(d_{n-r-1})}+\alpha _{r-1}z_{r-1}^{(d_{n-r-1})}+\ldots+\alpha _{1}z_{1}^{(d_{n-r-1})}\right)+e_{0}.
\end{eqnarray*}
Applying this with $r=n-N$ (which belongs to the set $[1,n]$) yields the expression of $e_{(n-N)a_{n}}$ in (\ref{new1}). The estimate (\ref{new2}) is an immediate consequence of our assumption (\ref{EQ3}) that
$$||\sum_{k=1}^{r}\alpha _{k}z_{k}||=||\sum_{k=1}^{r}\alpha _{k}z_{k}^{(d)}||\le 1\quad
\textrm{for each } r\ge 1 \textrm{ and } d\ge 1.$$
\end{proof}

\par\smallskip
$\bullet$ \textbf{Definition of the vectors $e_{j}$ for $j$ in a (b)-working interval:} 
The (b)-working intervals are the same in spirit as in the proof of Theorem \ref{th2}, but their definition needs to be adjusted to the introduction of $n$ (a)-\woi s instead of just one. Let us set $\mu _{n}=na_{n}$, which is the index corresponding to the end of the last (a)-\woi. The (b)-working intervals are the intervals $[r(b_{n}+1), rb_{n}+\mu_{n}]$, $r=1,\ldots, \mu _{n}$, where 
$b_{n}$ is extremely large \wrt\ $a_{n}$. The first of these intervals has length $\mu _{n}$, the second length $\mu _{n}-1$, etc... until the very last one which is just the singleton $\{\mu _{n}(b_{n}+1)\}$. We denote by $\nu _{n}=\mu _{n}(b_{n}+1)$
this last element. For $j$ belonging to one of these intervals $[r(b_{n}+1), rb_{n}+\mu_{n}]$, $r=1,\ldots, \mu _{n}$, $e_{j}$ is defined as in the construction of Section \ref{sec2} by the relation
$$f_{j}=e_{j}-b_{n}e_{j-b_{n}}.$$

\par\smallskip
$\bullet$ \textbf{Definition of the vectors $e_{j}$ for $j$ in a (c)-working interval:} 
The definition of the (c)-\woi s is exactly the same as in Section \ref{sec2}, except that the value of $\nu _{n}$ has changed: $\nu _{n}=\mu_{n}(b_{n}+1)$, which is the index corresponding to the end of the last (b)-\woi. The sequence $(p_{k,n})_{1\le k\le k_{n}}$ of \pol s forms an $\varepsilon _{n}$-net of the closed ball of $\K_{\nu _{n}}[\zeta ]$ of radius $2$ for the norm $|\,.\,|$, and $(c_{k,n})_{1\le k\le k_{n}}$ is a sequence of integers with $ c_{1,n}\ll c_{2,n}
\ll \ldots \ll c_{k_{n},n}$ for each $n\ge 1$. The (c)-intervals are the intervals
$ I_{s_{1},\ldots , s_{k_{n}}}=[s_{1}c_{1,n}+\ldots+s_{k_{n}}c_{k_{n},n}, s_{1}c_{1,n}+\ldots+s_{k_{n}}c_{k_{n},n}+\nu _{n}]$ where $0\le s_{j}\le h_{n}$ with 
$|s|=s_{1}+\ldots +s_{k_{n}}\ge 1$. The integer $h_{n}$ is chosen in the same way as in Section \ref{sec2}. If $t$ is the largest integer
such that $s_{t} \ge 1$, $e_{j}$ is defined for $j\in I_{s_{1},\ldots , s_{k_{n}}}$ by the relation
$$f_{j}=\frac {4^{1-|s|}}{\gamma _{n}}(e_{j}-p_{t,n}(T)e_{j-c_{t,n}}),$$
 where $\gamma _{n}$
is an extremely small positive number depending only on $\nu _{n}$.

\par\smallskip
$\bullet$ \textbf{Definition of the vectors $e_{j}$ for $j$ in a \loi :} 
The definitions are exactly the same as usual: if j belongs to a \loi\ $[k+1,k+l]$,
$$f_{j}=2^{\frac{1}{\sqrt{l}}(\frac{1}{2}l+k+1-j)}e_{j}$$ except in the case where the \loi\ is either the interval $[na_{n}+1,b_{n}]$ or one of the intervals $[rb_{n}+\mu_{n}+1, (r+1)b_{n}-1] $, $r=1,\ldots, n-1$, between two (b)-working intervals. For $j\in [na_{n}+1,b_{n}]$, we set
$$f_{j}=2^{\frac{1}{\sqrt{b_{n}}}(\frac{1}{2}b_{n}+na_{n}+1-j)}e_{j},$$ and for $j\in [rb_{n}+\mu_{n}+1, (r+1)b_{n}-1] $, $r=1,\ldots, n-1$, we set
$$f_{j}=2^{\frac{1}{\sqrt{b_{n}}}(\frac{1}{2}b_{n}+rb_{n}+\mu _{n}+1-j)}e_{j}.$$
Then as usual $\xi  _{n+1}$ is chosen extremely large \wrt\ the index which corresponds to the end of the last (c)-\woi.

\par\smallskip
$\bullet$ \textbf{Boundedness of the \op\ $T$:} 
The proof of the fact that $T$ is bounded is extremely similar to that of Proposition \ref{prop1}, and we will only outline the cases which are different between the two proofs. The most important difference concerns the (a)-part of the \op.

$\bullet$ if $j\in [ra_{n},ra_{n}+\xi  _{n+1-r}-1]$, $r=1,\ldots,n$, we have $Tf_{j}=f_{j+1}$. Recalling that $f_{j}=z_{r}^{(d_{n-r+1}+j-ra_{n})}$ for such values of $j$, this means that $T$ maps 
$z_{r}^{(d_{n-r+1}+j-ra_{n})}$ to $z_{r}^{(d_{n-r+1}+j+1-ra_{n})}$, which is the same vector $z_{r}$ but in the next copy of $Z$.

$\bullet$ we now have to consider the case where $j$ is a right endpoint of one of the (a)-intervals $J_{n,r}$:
if $j=ra_{n}+\xi  _{n+1-r}$, $r=1,\ldots,n$, then
$$Tf_{ra_{n}+\xi  _{n+1-r}}=\frac{a_{n-r}}{\alpha _{r}}\left(e_{ra_{n}+\xi  _{n+1-r}+1}-e_{(r-1)a_{n-1}+\xi  _{n+1-r}+1}\right)$$ so that
$$||Tf_{ra_{n}+\xi  _{n+1-r}}||\ls \frac{a_{n-r}}{\alpha _{r}}\left(2^{-\frac{1}{2}\sqrt{a_{n}}}+2^{-\frac{1}{2}\sqrt{a_{n-1}}}\right)\ls a_{n-1}\,2^{-\frac{1}{4}\sqrt{a_{n-1}}}$$ which is very small if $a_{n-1}$ is sufficiently small at step $n-1$. Indeed if $r\in [2,n]$, the index
$(r-1)a_{n-1}+\xi  _{n+1-r}+1$ is in the beginning of a \loi\ whose length is at least roughly $a_{n-1}$ ($b_{n-1}$ in the case where $r=n$), and if $r=1$ we have $$||e_{\xi  _{n}+1}||\ls 2^{-\frac{1}{2}\sqrt{a_{n}}}\le 2^{-\frac{1}{2}\sqrt{a_{n-1}}}.$$
\par\smallskip
This can be rewritten in the following way:

$\bullet$ if $d\ge 1$ is not of the form $d=d_{m}-1$ for some $m\ge 2$, then $$Tz_{r}^{(d)}=z_{r}^{(d+1)}\quad \textrm{ for each } r\ge 1;$$

$\bullet$ if $d=d_{m}-1$ for some $m\ge 2$, then 
$z_{r}^{(d_{m}-1)}=f_{ra_{m+r-2}+\xi  _{m-1}}$, so that
$$||Tz_{r}^{(d)}||\ls\frac{a_{m-2}}{\alpha _{r}}\,2^{-\frac{1}{2}\sqrt{a_{m+r-3}}}\le
\frac{a_{m+r-3}}{\alpha _{r}}\,2^{-\frac{1}{2}\sqrt{a_{m+r-3}}}
\quad\textrm{ for each }
r\ge 1$$ if $m\ge 3$, while
$$||Tz_{r}^{(1)}||\ls\frac{1}{\alpha _{r}}(2^{-\frac{1}{2}\sqrt{b_{r}}}+2^{-\frac{1}{2}\sqrt{b_{r-1}}})\quad \textrm{ for each } r\ge 1$$
if $m=2$
(recall that by convention $b_{0}=1$).
\par\smallskip
It follows from these estimates that for any fixed positive number $\rho$ we can ensure, by taking at each step $n$ the quantities $a_{n}$ and $b_{n}$ sufficiently large, that
$$\sum_{m\ge 2}\sum_{r\ge 1}||Tz_{r}^{(d_{m}-1)}||<\rho .$$
That $T$ is bounded is now clear: $T$ acts on $\bigoplus_{\ell_{p}}Z^{(d)}$ (or $\bigoplus_{c_{0}}Z^{(d)}$) as a shift of $Z^{(d)}$ to the next copy $Z^{(d+1)}$, except in the case where $d=d_{m}-1$ for some $m\ge 2$, in which case $T$ crushes the unit ball of $Z^{(d)}$ to something very small in norm. So for any $x\in X$ we have
$$||T(\sum_{j\not \in J_{0}}f_{j}^{*}(x)f_{j})||\le (1+\rho )||x||.$$ Since the same estimates as in Section \ref{sec2} show that $$||T(\sum_{j\in J_{0}}f_{j}^{*}(x)f_{j})||\le M||x||$$ for some positive constant $M$, it follows that $T$ is bounded as required.

\subsection{Proof of Theorem \ref{th1}} We follow the pattern of the proof of Theorem \ref{th2}, and we will see which modifications need to be made in order to show that all non-zero vectors $x$ of $X$ are \hy\ for $T$.

For each integer $j\ge 0$, let $F_{j}=\textrm{sp}[f_{0},\ldots,f_{j}]=\textrm{sp}[e_{0},\ldots, e_{j}]$. First observe that Fact \ref{factb} still holds true in this context:

\begin{fact}\label{factbbis}
 For each $n\ge 1$, fix $\delta _{n}$ to be a very small positive number. If at each step $n$ the quantity $\gamma _{n}$ is chosen small enough in the construction of the (c)-\woi s, then
 $$||T^{c_{k,n}}y-p_{k,n}(T)y||\le \delta _{n}||y||\quad \textrm{for all }y\in F_{\nu _{n}} \textrm{ and all }k=1,\ldots,k_{n}.$$
\end{fact}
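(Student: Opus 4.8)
The plan is to adapt, essentially word for word, the argument proving Fact \ref{factb} (which is \cite[Fact 2.1]{GR}): the modifications made to the (a)-part of the construction do not interfere with it, since the statement only concerns vectors of $F_{\nu_{n}}$, and the (b)- and (c)-parts of the present construction are the same in spirit as in Section \ref{sec2}. The single relevant ingredient is the defining relation of the vectors $e_{j}$ on the \emph{initial} (c)-working intervals. Fix $n\ge 1$ and $k\in[1,k_{n}]$, and take the multi-index $s=(s_{1},\ldots,s_{k_{n}})$ with $s_{k}=1$ and $s_{j}=0$ for $j\ne k$; then $|s|=1$, $t=k$, the corresponding (c)-working interval is $I_{s}=[c_{k,n},c_{k,n}+\nu_{n}]$, and by construction
\[
e_{c_{k,n}+i}=\gamma_{n}\,f_{c_{k,n}+i}+p_{k,n}(T)e_{i}\qquad\text{for every }i\in[0,\nu_{n}].
\]
Since $T^{c_{k,n}}e_{i}=e_{i+c_{k,n}}$, this reads $T^{c_{k,n}}e_{i}-p_{k,n}(T)e_{i}=\gamma_{n}f_{c_{k,n}+i}$ for $i\in[0,\nu_{n}]$.

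Next I would pass from basis vectors to an arbitrary $y\in F_{\nu_{n}}$ by linearity: writing $y=\sum_{i=0}^{\nu_{n}}e_{i}^{*(\nu_{n})}(y)\,e_{i}$ in the $(e_{j})_{j\le\nu_{n}}$-basis of $F_{\nu_{n}}$ and applying the identity above termwise yields
\[
T^{c_{k,n}}y-p_{k,n}(T)y=\gamma_{n}\sum_{i=0}^{\nu_{n}}e_{i}^{*(\nu_{n})}(y)\,f_{c_{k,n}+i},
\]
so that, the vectors $f_{j}$ being normalized,
\[
\|T^{c_{k,n}}y-p_{k,n}(T)y\|\le\gamma_{n}\sum_{i=0}^{\nu_{n}}\bigl|e_{i}^{*(\nu_{n})}(y)\bigr|\le\gamma_{n}\,(\nu_{n}+1)\Bigl(\max_{0\le i\le\nu_{n}}\|e_{i}^{*(\nu_{n})}\|\Bigr)\|y\|.
\]

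It then remains to observe that the quantity $\Lambda_{n}:=(\nu_{n}+1)\,\max_{0\le i\le\nu_{n}}\|e_{i}^{*(\nu_{n})}\|$ is finite and depends only on the blocks built at steps $\le n$ up through the definition of the (b)-working intervals: the $(e_{j})_{j\le\nu_{n}}$ are obtained from $(f_{j})_{j\le\nu_{n}}$ by a fixed triangular change of basis of the finite-dimensional space $F_{\nu_{n}}$ that involves neither $\gamma_{n}$ nor the integers $c_{k,n}$, and the $f_{j}^{*}$ are the coordinate functionals of a Schauder basis with finite basis constant. Since in the order of the construction $\gamma_{n}$ is chosen after $b_{n}$ (hence after $\nu_{n}=\mu_{n}(b_{n}+1)$), we are free to require $\gamma_{n}\le\delta_{n}/\Lambda_{n}$, and the displayed inequality then gives $\|T^{c_{k,n}}y-p_{k,n}(T)y\|\le\delta_{n}\|y\|$ for all $y\in F_{\nu_{n}}$ and all $k\in[1,k_{n}]$. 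There is no genuine analytic obstacle here; the only point deserving a little attention is the bookkeeping of the order in which the parameters $b_{n}$, $\gamma_{n}$, $(c_{k,n})$ are fixed, ensuring that the bound $\Lambda_{n}$ is available at the moment $\gamma_{n}$ is selected.
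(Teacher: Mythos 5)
Your argument is correct and is essentially the proof the paper has in mind but does not spell out (it only asserts that Fact \ref{factb}, i.e.\ \cite[Fact 2.1]{GR}, persists in this setting): the defining relation on the initial (c)-working intervals gives the exact identity $T^{c_{k,n}}e_{i}-p_{k,n}(T)e_{i}=\gamma_{n}f_{c_{k,n}+i}$ for $i\in[0,\nu_{n}]$, and the conclusion follows by linearity since the constant $\Lambda_{n}$ involves only data fixed before $\gamma_{n}$ is chosen. Your bookkeeping of the order of the parameters ($a_{n}$, $b_{n}$, hence $\nu_{n}$ and the $e_{j}^{*(\nu_{n})}$, before $\gamma_{n}$ and the $c_{k,n}$) is exactly the point that makes the choice $\gamma_{n}\le\delta_{n}/\Lambda_{n}$ legitimate.
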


As a consequence, we obtain already that $e_{0}$ is a \hy\ vector for $T$.
\par\smallskip
In order to obtain the crucial tail estimates for the norm of $T^{c_{k,n}}$, we need, as in the proof of Theorem \ref{th2}, to introduce a projection $Q_{\nu _{n}}$ on $F_{\nu _{n}}$. As previously, the role of $Q_{\nu _{n}}$ is to take away from $[\nu _{n}+1,+\infty[$ the integers $j$ for which the vector $T^{c_{k,n}}f_{j}$ is not manageable. It is defined as follows:

\begin{equation*}
Q_{\nu _{n}}f_{j}=\begin{cases}
f_{j} &\textrm{ if } j\leq \nu _{n}\\
-\frac {a_{n+1-r}}{\alpha _{r}}e_{j-ra_{n+1}+(r-1)a_{n}} & \textrm{ if } j\in [ra_{n+1},ra_{n+1}+\xi  _{n+2-r}] \textrm{, }r=2,\ldots, n+1\\
0 & \textrm{ in all the other cases.}
\end{cases}
\end{equation*}

Hence

\begin{equation*}
(I-Q_{\nu _{n}})f_{j}=\begin{cases}
0 &\textrm{ if } j\leq \nu _{n}\\
\frac {a_{n+1-r}}{\alpha _{r}}e_{j} & \textrm{ if } j\in [ra_{n+1},ra_{n+1}+\xi  _{n+2-r}] \textrm{, }r=2,\ldots, n+1\\
f_{j} & \textrm{ in all the other cases.}
\end{cases}
\end{equation*}

Since $j-ra_{n+1}+(r-1)a_{n}$ belongs to the interval $[(r-1)a_{n},(r-1)a_{n}+\xi  _{n+2-r}]$
which is contained in $[0,na_{n}]=[0,\mu _{n}]$, hence in $[0,\nu _{n}]$, for each $r=2,\ldots, n+1$, $Q_{\nu _{n}}$ is a projection of $X$ onto $F_{\nu _{n}}$ (remark that it is important here that $r$ be larger than $2$). In a similar way we define $Q_{\mu _{n}}$ as 

\begin{equation*}
Q_{\mu _{n}}f_{j}=\begin{cases}
f_{j} &\textrm{ if } j\leq \mu _{n}\\
-\frac {a_{n+1-r}}{\alpha _{r}}e_{j-ra_{n+1}+(r-1)a_{n}} & \textrm{ if } j\in [ra_{n+1},ra_{n+1}+\xi  _{n+2-r}] \textrm{, }r=2,\ldots, n+1\\
0 & \textrm{ in all the other cases.}
\end{cases}
\end{equation*}

Then we see as above that $Q_{\mu _{n}}$ is a projection onto $F_{\mu_{n}}$, and $Q_{\nu _{n}}=Q_{\mu _{n}}+\pi_{[\mu _{n}+1,\nu _{n}]}$. We first need an estimate on the norms of $Q_{\mu _{n}}$ and $Q_{\nu _{n}}$:

\begin{fact}\label{factbisbis}
There exists for each $n\ge 1$ a  constant $M_{a_{n}}$ depending only on $a_{n}$ such that $||Q_{\nu _{n}}||\le M_{a_{n}}$ and $||Q_{\mu _{n}}||\le M_{a_{n}}$.
\end{fact}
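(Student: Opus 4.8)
The plan is to compare $Q_{\nu_n}$ and $Q_{\mu_n}$ with the canonical projections $\pi_{[0,\nu_n]}$ and $\pi_{[0,\mu_n]}$. Reading off the defining formulas, one has $Q_{\nu_n}=\pi_{[0,\nu_n]}+R_n$ and $Q_{\mu_n}=\pi_{[0,\mu_n]}+R_n$ with the \emph{same} finite-rank operator
\[
R_n x=-\sum_{r=2}^{n+1}\sum_{l=0}^{\xi_{n+2-r}}\frac{a_{n+1-r}}{\alpha_r}\,f^{*}_{ra_{n+1}+l}(x)\,e_{(r-1)a_n+l}
\]
appearing in both (this is just a restatement of the identity $Q_{\nu_n}=Q_{\mu_n}+\pi_{[\mu_n+1,\nu_n]}$ already recorded). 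Since $C:=\sup_N\|\pi_{[0,N]}\|$ is finite, it suffices to bound $\|R_n\|$ by a constant depending only on $a_n$; then $M_{a_n}:=C+\|R_n\|$ works for both projections.

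All the scalar and vector data attached to $R_n$ are controlled by $a_n$. Each index $(r-1)a_n+l$ with $2\le r\le n+1$ and $0\le l\le\xi_{n+2-r}$ runs exactly through the step-$n$ (a)-\woi\ $J_{n,r-1}$, hence lies in $[0,\mu_n]$; as the vectors $e_0,\dots,e_{\mu_n}$ are entirely determined by the construction up to the end of the (a)-part at step $n$, the quantity $S_{a_n}:=\max_{0\le j\le\mu_n}\|e_j\|$ depends only on $a_n$ (alternatively, Fact \ref{fact1bis} together with (\ref{EQ3}) gives directly $\|e_j\|\le 2$ for $j$ in an (a)-\woi\ at step $n$). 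Likewise $a_{n+1-r}/\alpha_r\le a_{n-1}/\min_{2\le r\le n+1}\alpha_r$ for the $r$ occurring in $R_n$, and the number $\sum_{r=2}^{n+1}(\xi_{n+2-r}+1)$ of terms in $R_n$ depends only on $a_n$ as well (indeed only on data prior to $a_n$).

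The one genuinely structural input — and the reason the step-$(n+1)$ (a)-intervals $J_{n+1,r}$ are placed where they are — is the location of the coordinate functionals occurring in $R_n$. Since $ra_{n+1}+l\in J_{n+1,r}$ we have $f_{ra_{n+1}+l}=z_r^{(d_{n+2-r}+l)}$, and as $(r,l)$ ranges over the index set of $R_n$ the copies $Z^{(d_{n+2-r}+l)}$ are pairwise disjoint: for fixed $r$ the index $d_{n+2-r}+l$ fills the block $[d_{n+2-r},d_{n+3-r}-1]$, and these blocks are disjoint for $r=2,\dots,n+1$. Hence $|f^{*}_{ra_{n+1}+l}(x)|\le K_Z\,\|P_{Z^{(d_{n+2-r}+l)}}x\|$ with $K_Z:=\sup_k\|z^{*}_k\|_{Z^{*}}<+\infty$, so by the norm formula (\ref{eqcarre}) — or its $c_0$-analogue — the family $\bigl(f^{*}_{ra_{n+1}+l}(x)\bigr)_{(r,l)}$ has $\ell_p$-norm (respectively supremum) at most $K_Z\|x\|$. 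A Hölder estimate over the finitely many pairs $(r,l)$ then gives $\sum_{r,l}|f^{*}_{ra_{n+1}+l}(x)|\le\bigl(\sum_{r=2}^{n+1}(\xi_{n+2-r}+1)\bigr)^{1-1/p}K_Z\|x\|$ (with the obvious reading when $p=1$ or in the $c_0$ case), whence $\|R_n x\|\le M'_{a_n}\|x\|$ for a constant $M'_{a_n}$ depending only on $a_n$. Combining with the first paragraph yields the claim. The rest is routine bookkeeping; the crux — and the only step I expect to require thought — is the disjointness of the copies $Z^{(d_{n+2-r}+l)}$, without which one would be left with an a priori uncontrolled $\ell_1$-type sum of coordinate functionals.
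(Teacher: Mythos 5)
Your proof is correct and is essentially the paper's own argument: you split $Q_{\nu_n}=\pi_{[0,\nu_n]}+R_n$ and $Q_{\mu_n}=\pi_{[0,\mu_n]}+R_n$ and bound the finite-rank correction term by term, using that the coefficients $a_{n+1-r}/\alpha_r$, the vectors $e_{(r-1)a_n+l}$ (whose indices lie in $[0,\mu_n]$), the coordinate functionals $f^{*}_{ra_{n+1}+l}$ and the number of terms are all controlled by quantities depending only on $a_n$; this is exactly how the paper proceeds. Two remarks. First, the step you present as the crux --- the disjointness of the copies $Z^{(d_{n+2-r}+l)}$ and the ensuing H\"older estimate --- is a correct refinement but is not needed: since $f_{ra_{n+1}+l}=z_r^{(d)}$ for some $d$, each coordinate already satisfies $|f^{*}_{ra_{n+1}+l}(x)|\le \sup_k\|z_k^{*}\|_{Z^{*}}\,\|x\|$ individually (the projection onto a single copy of $Z$ has norm $1$), and the crude triangle-inequality sum over the $\sum_{r=2}^{n+1}(\xi_{n+2-r}+1)$ terms already yields a bound depending only on $a_n$, which is all that Fact \ref{factbisbis} requires; the paper does precisely this. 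Second, your parenthetical claim that Fact \ref{fact1bis} gives $\|e_j\|\le 2$ for every $j$ in a step-$n$ (a)-working interval is an overstatement: that fact concerns only the left endpoints $j=(n-N)a_n$, while for $j=ra_n+l$ with $l\ge 1$ the recursion gives $e_j=\frac{1}{a_{n-r}}\sum_{k=1}^{r}\alpha_k z_k^{(d)}+e_l$ for the appropriate copy $d$, so $\|e_j\|\le 1+\|e_l\|$, which need not be at most $2$; it is, however, determined by the construction up to step $n$, so your main bound via $S_{a_n}$ is unaffected.
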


\begin{proof}
For every vector $x\in X$ we have
$$Q_{\nu _{n}}x=\pi_{[0,\nu _{n}]}x-\sum_{r=2}^{n+1}\;\sum_{j=ra_{n+1}}^{ra_{n+1}+\xi  _{n+2-r}}
\;f_{j}^{*}(x)\,\frac{a_{n+1-r}}{\alpha _{r}}\,e_{j-ra_{n+1}+(r-1)a_{n}}.$$ For $j\in[ra_{n+1},ra_{n+1}+\xi  _{n+2-r}]$, $||f_{j}^{*}||\le\sup_{k\ge 1}||z_{k}^{*}||_{Z^{*}}$. Also, there exists a positive constant $M$ such that $\sup_{n\ge 1}||\pi_{[0,\nu _{n}]}x||\le M||x||$ for every vector $x\in X$. Renaming constants if necessary, we have
\begin{eqnarray*}
 ||Q_{\nu _{n}}x||&\le & M||x||\left(1+a_{n}\,\frac{n\xi  _{n}}{\inf_{r=2,\ldots, n+1}\alpha _{r}}\sup_{j=0,\ldots,na_{n}}||e_{j}||\right)\le M_{a_{n}}||x||
\end{eqnarray*}
where $M_{a_{n}}$ depends only on $a_{n}$. The same kind of estimate holds true for the norm of $Q_{\mu _{n}}$.
\end{proof}

Thanks to the projections $Q_{\nu _{n}}$, it is possible to obtain, as in Proposition \ref{prop2}, tail estimates for the \ops\ $T^{c_{k,n}}$, $1\le k\le k_{n}$.

\begin{proposition}\label{prop3bis}
 It is possible to ensure that for every $n\ge 1$, every $k\in [1,k_{n}]$ and every $x\in X$,
 $$||T^{c_{k,n}}(I-Q_{\nu _{n}})x||\le 103\,||x||.$$
\end{proposition}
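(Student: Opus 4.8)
The plan is to follow, almost line by line, the proof of \cite[Proposition 2.2]{GR} (equivalently of Proposition \ref{prop2} above), the only genuinely new point being the treatment of the $n$ (a)-\woi s of step $n+1$ and of the (a)-\woi s of the later steps. Writing $x=\sum_{j\ge 0}f_{j}^{*}(x)f_{j}$, one has $T^{c_{k,n}}(I-Q_{\nu _{n}})x=\sum_{j>\nu _{n}}f_{j}^{*}(x)\,T^{c_{k,n}}(I-Q_{\nu _{n}})f_{j}$, and we estimate this sum by splitting the indices $j>\nu _{n}$ into three groups: $\mathrm{(I)}$ the indices lying in the (c)-fan of step $n$ or in the \loi s of step $n$ contained in $[\nu _{n}+1,\xi _{n+1}]$; $\mathrm{(II)}$ the indices lying in a \loi, in a (b)- or in a (c)-\woi\ of some step $\ge n+1$; $\mathrm{(III)}$ the indices lying in some (a)-\woi\ $J_{m,r}$ with $m\ge n+1$. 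The key elementary observation is that $(I-Q_{\nu _{n}})f_{j}=f_{j}$ for every $j$ which is \emph{not} of the form $j\in J_{n+1,r}$ with $2\le r\le n+1$; thus on the groups $\mathrm{(I)}$, $\mathrm{(II)}$, and also on $J_{n+1,1}$ and on all the $J_{m,r}$ with $m\ge n+2$, the projection $Q_{\nu _{n}}$ is invisible and the estimates of \cite[Proposition 2.2]{GR} apply verbatim: using the ordering $c_{k,n}\ll \xi _{n+1}\ll a_{n+1}\ll b_{n+1}\ll \nu _{n+1}$, the operator $T^{c_{k,n}}$ acts on each of these blocks, after the usual sewing estimates at the endpoints, as a forward weighted shift with weights $2^{-c_{k,n}/\sqrt{l}}\le 1$ ($l$ being the length of the block), while on the (c)-fan of step $n$ the damping factor $\gamma _{n}$ and the ``shades'' $I_{s_{1},\dots,s_{k_{n}}}$ of the initial (c)-intervals play exactly the role they played in \cite{GR}. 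Summing these contributions over the pairwise disjoint blocks of the basis $(f_{j})_{j\ge 0}$ by means of (\ref{eqcarre}) (or its $c_{0}$-analogue), one obtains a bound of the form $102\,\|x\|$ coming from all the groups except the blocks $J_{n+1,r}$, $2\le r\le n+1$.

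It remains to estimate the part of $T^{c_{k,n}}(I-Q_{\nu _{n}})x$ coming from $j\in J_{n+1,r}=[ra_{n+1},ra_{n+1}+\xi _{n+2-r}]$ with $2\le r\le n+1$, and this is exactly where the design of $Q_{\nu _{n}}$ is used. By definition of $e_{j}$ and of $Q_{\nu _{n}}$ one has, for such $j$,
$$(I-Q_{\nu _{n}})f_{j}=f_{j}+\frac{a_{n+1-r}}{\alpha _{r}}\,e_{j-ra_{n+1}+(r-1)a_{n}}=\frac{a_{n+1-r}}{\alpha _{r}}\,e_{j}\,;$$
in other words $Q_{\nu _{n}}$ has been chosen precisely so as to absorb the ``far back'' component $e_{j-ra_{n+1}+(r-1)a_{n}}$ (which, as noted in the construction, lies in $F_{\mu _{n}}\subseteq F_{\nu _{n}}$) and to leave a clean multiple of $e_{j}$, on which the action of $T$ is transparent: $T^{c_{k,n}}(I-Q_{\nu _{n}})f_{j}=\frac{a_{n+1-r}}{\alpha _{r}}\,e_{j+c_{k,n}}$. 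Now for $r\ge 2$ we have $\xi _{n+2-r}\le \xi _{n}\ll c_{k,n}\ll a_{n+1}$, so the integer $j+c_{k,n}$ overshoots the \woi\ $J_{n+1,r}$ and falls at distance at most $c_{k,n}$ from the left endpoint of the \loi\ which follows $J_{n+1,r}$, a \loi\ of length of order $a_{n+1}$ (of order $b_{n+1}$ when $r=n+1$); there the damping coefficient $\lambda _{j+c_{k,n}}$ is of order $2^{\frac12\sqrt{a_{n+1}}}$, so that $\|e_{j+c_{k,n}}\|\ls 2^{-\frac12\sqrt{a_{n+1}}}$. Since the coefficients $a_{n+1-r}/\alpha _{r}$ for $2\le r\le n+1$ are all bounded by $a_{n-1}/\inf_{2\le r\le n+1}\alpha _{r}$, a quantity which is fixed once step $n$ is over, choosing $a_{n+1}$ large enough with respect to step $n$ (which is legitimate, since $Q_{\nu _{n}}$, $c_{k,n}$, $a_{n}$ and $\alpha _{2},\dots,\alpha _{n+1}$ are all determined before $a_{n+1}$) ensures, after summing over $r$ and over $j\in J_{n+1,r}$ using (\ref{eqcarre}) once more, that this part of $T^{c_{k,n}}(I-Q_{\nu _{n}})x$ has norm at most $\|x\|$. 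Adding this to the bound of the preceding paragraph yields $\|T^{c_{k,n}}(I-Q_{\nu _{n}})x\|\le 103\,\|x\|$, which is (\ref{EQ1}). The sharper form $\|T^{c_{k,n}}(I-Q_{\nu _{n}})x\|\le 103\,\|x-\pi_{[0,\nu _{n}]}x\|$ follows at once from the identity $(I-Q_{\nu _{n}})(x-\pi_{[0,\nu _{n}]}x)=(I-Q_{\nu _{n}})x$, exactly as (\ref{EQ2}) was deduced from (\ref{EQ1}) in Proposition \ref{prop2}.

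The main obstacle is the block $\mathrm{(III)}$ with $r\ge 2$: everything hinges on the fact that $Q_{\nu _{n}}$, as defined, turns $(I-Q_{\nu _{n}})f_{j}$ into an honest scalar multiple of the \emph{single} vector $e_{j}$, so that $T^{c_{k,n}}$ merely translates the index, together with the ordering relations $\xi _{n+2-r}\le \xi _{n}\ll c_{k,n}\ll a_{n+1}$, which guarantee that the translated index $j+c_{k,n}$ lands near the beginning of a long \loi\ and therefore yields a vector of norm $\ls 2^{-\frac12\sqrt{a_{n+1}}}$, small enough to compensate the possibly large coefficient $a_{n+1-r}/\alpha _{r}$. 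Note that, in contrast with the situation of Theorem \ref{th2}, the projection $Q_{\nu _{n}}$ is here \emph{not} uniformly bounded (Fact \ref{factbisbis} only gives $\|Q_{\nu _{n}}\|\le M_{a_{n}}$), but this is immaterial for the present estimate, which is precisely why the constant $103$ does not depend on $n$. All the remaining verifications — the sewing estimates at the block endpoints, the tail estimate for the step-$n$ (c)-fan via $\gamma _{n}$ and its shades, and the near-isometric action of $T$ on the copies of $Z$ inside $J_{n+1,1}$ and the $J_{m,r}$ with $m\ge n+2$ — are of the same nature as in \cite[Proposition 2.2]{GR} and the boundedness computation of Section \ref{sec4}, and require no new idea.
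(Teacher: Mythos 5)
Your proposal is correct and follows essentially the same route as the paper's proof: the same splitting of the indices $j>\nu_{n}$, the same exploitation of the identity $(I-Q_{\nu_{n}})f_{j}=\frac{a_{n+1-r}}{\alpha_{r}}e_{j}$ on the step-$(n+1)$ (a)-intervals with $r\ge 2$ combined with the ordering $\xi_{n+2-r}\le \xi_{n}\ll c_{k,n}\ll a_{n+1}$ to land $j+c_{k,n}$ in a long \loi, and the same use of the direct-sum formula (\ref{eqcarre}) to absorb the untouched (a)-parts. The one remark worth making is that the part you describe as requiring ``no new idea'' --- the (a)-intervals of steps $n+2, n+3,\dots$ --- is where the paper's proof does most of its work (its Cases 7a and 7b, culminating in the re-indexing identity (\ref{EQ4}) which shows that $T^{c_{k,n}}$ moves whole copies of $Z$ isometrically), but the mechanism you invoke, the near-isometric shift of the copies $Z^{(d)}$ summed via (\ref{eqcarre}), is precisely that argument, so your sketch is compressed rather than flawed.
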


\begin{proof}
Just as usual, we have to compute exactly or estimate the size of the vectors $T^{c_{k,n}}(I-Q_{\nu _{n}})f_{j}$ for $j>\nu _{n}$. As the estimates are rather intricate, we split them into several cases.
\par\smallskip
\textbf{Case 1: $j\in [\nu _{n}+1,a_{n+1}-c_{k,n}-1]$}.
The computations and estimates of $T^{c_{k,n}}f_{j}$ in this case are the same as usual, and we refer the reader to the proof of \cite[Proposition 2.2]{GR} for these.
\par\smallskip
\textbf{Case 2: $j\in [a_{n+1}-c_{k,n}, a_{n+1}-1]$}.
In this case we have $T^{c_{k,n}}f_{j}=\lambda _{j}e_{j+c_{k,n}}$, where $\lambda _{j}\ls 2^{-\frac{1}{2}\sqrt{a_{n+1}}}$. Since $j+c_{k,n}$ belongs to the interval $[a_{n+1}, a_{n+1}+\xi  _{n+1}]$, we have $||e_{j+c_{k,n}}||\ls \sup_{l=0 \ldots \xi _{n+1}}||e_{l}||$, from which it follows that $$||T^{c_{k,n}}f_{j}||\ls
2^{-\frac{1}{4}\sqrt{a_{n+1}}}$$ if $a_{n+1}$ is large enough.
\par\smallskip
\textbf{Case 3: $j\in [a_{n+1},a_{n+1}+\xi  _{n+1}]$}.
Then $(I-Q_{\nu _{n}})f_{j}=f_{j}$ so we have to estimate $||T^{c_{k,n}}f_{j}||$. There are two subcases to consider.
\par\smallskip
-- \textbf{Case 3a: $j+c_{k,n}\in [a_{n+1},a_{n+1}+\xi  _{n+1}]$}.
This is the case except when $j$ lies in the end of the interval $[a_{n+1},a_{n+1}+\xi  _{n+1}]$. In this case $T^{c_{k,n}}f_{j}=f_{j+c_{k,n}}$. 
\par\smallskip
-- \textbf{Case 3b: $j+c_{k,n}>a_{n+1}+\xi  _{n+1}$}. 
Then $$T^{c_{k,n}}f_{j}=\frac{a_{n}}{\alpha _{1}}(e_{j+c_{k,n}}-e_{j-a_{n+1}+c_{k,n}}).$$ Since $j+c_{k,n}$ lies in the beginning of the \loi\ $[a_{n+1}+\xi  _{n+1}+1,2a_{n+1}-1]$, $$||e_{j+c_{k,n}}||\ls 2^{-\frac{1}{2}\sqrt{a_{n+1}}}.$$ Also, $\xi  _{n+1}< j+c_{k,n}-a_{n+1}\le \xi  _{n+1}+c_{k,n}$, so the index $j+c_{k,n}-a_{n+1}$ lies in the beginning of the \loi\ $[\xi  _{n+1}+1,a_{n+1}-1]$, and $||e_{j-a_{n+1}+c_{k,n}}||\ls 2^{-\frac{1}{2}\sqrt{a_{n+1}}}$, so that $$||T^{c_{k,n}}f_{j}||\ls 2^{-\frac{1}{4}\sqrt{a_{n+1}}}$$ as before.
\par\smallskip
\textbf{Case 4:} $j$ belongs to one of the \loi s $[ra_{n+1}+\xi  _{n+1-r}+1, (r+1)a_{n+1}-1]$ for some $r=1,\ldots,n$.
There are again two different subcases to consider here.
\par\smallskip
-- \textbf{Case 4a: $j+c_{k,n}\in [ra_{n+1}+\xi  _{n+1-r}-1, (r+1)a_{n+1}-1]$}.
Then $$T^{c_{k,n}}f_{j}=2^{\frac{c_{k,n}}{\sqrt{a_{n+1}-\xi  _{n+1-r}+1}}}f_{j+c_{k,n}},$$ and the multiplicative coefficient can be made arbitrarily close to $1$
 if $a_{n+1}$ is large enough. 
 \par\smallskip
-- \textbf{Case 4b: $j+c_{k,n}> (r+1)a_{n+1}-1$}. 
Then $j$ belongs to the interval $[(r+1)a_{n+1}-c_{k,n}, (r+1)a_{n+1}-1]$, which is the end of the \loi\ $[ra_{n+1}+\xi  _{n+1-r}+1, (r+1)a_{n+1}-1]$. Hence
 $$||T^{c_{k,n}}f_{j}||=\lambda_{j}||e_{j+c_{k,n}}||\ls 2^{-\frac{1}{2}\sqrt{a_{n+1}}}||e_{j+c_{k,n}}||.$$ Since $j+c_{k,n}$ belongs to the \woi\ $[(r+1)a_{n+1},(r+1)a_{n+1}+\xi  _{n+1-r}]$, we have $$e_{j+c_{k,n}}=\frac{\alpha _{r+1}}{a_{n}}f_{j+c_{k,n}}+e_{j+c_{k,n}-(r+1)a_{n+1}+ra_{n}}.$$ Thus
 $||e_{j+c_{k,n}}||\ls \sup_{l=0 \ldots \mu  _{n}}||e_{l}||$, from which it follows that if $a_{n+1}$ is sufficiently large,
 $$||T^{c_{k,n}}f_{j}||\ls 2^{-\frac{1}{4}\sqrt{a_{n+1}}}$$ for instance.
 \par\smallskip
\textbf{Case 5:} $j$ belongs to one of the (a)-\woi s $[ra_{n+1},ra_{n+1}+\xi  _{n+2-r}]$ for some $r\in [2,n+1]$. Then  $$T^{c_{k,n}}(I-Q_{\nu _{n}})f_{j}=\frac{a_{n+1-r}}{\alpha _{r}}e_{j+c_{k,n}}.$$ Observe now that $j+c_{k,n}\ge ra_{n+1}+c_{k,n}>ra_{n+1}+\xi  _{n+2-r}$, because $r\ge 2$ and $\xi  _{n+2-r}\le\xi  _{n}$ (here again, the fact that $r$ is larger than $2$ is important). Hence $j+c_{k,n}$ lies in the beginning of a \loi\ of length at least $a_{n+1}$, and so $$||T^{c_{k,n}}(I-Q_{\nu _{n}})f_{j}||\ls 2^{-\frac{1}{2}\sqrt{a_{n+1}}}.$$
\par\smallskip
\textbf{Case 6:} $j$ belongs to a \loi\ or to a (b)- or (c)-\woi\ at steps $n+1, n+2$, etc. Then the same computations and estimates as in the proof of \cite[Proposition 2.2]{GR} apply.
\par\smallskip
\textbf{Case 7:} The only cases which remain to be considered are those when $j$ belongs to an (a)-\woi\ of the form $[ra_{n+q},ra_{n+q}+\xi  _{n+q+1-r}]$, $r=1,\ldots, n+q$, with $q\ge 2$. Then $$T^{c_{k,n}}(I-Q_{\nu _{n}})f_{j}=T^{c_{k,n}}f_{j}=\frac{a_{n+q-r}}{\alpha _{r}}(e_{j+c_{k,n}}-e_{j-ra_{n+q}+(r-1)a_{n+q-1}+c_{k,n}}).$$ There we split again  the argument into  two separate subcases:
\par\smallskip
-- \textbf{Case 7a:} the interval $[ra_{n+q},ra_{n+q}+\xi  _{n+q+1-r}]$ is ``long'' compared to $c_{k,n}$, i.e. $q-r\ge 0$. In this case if $j+c_{k,n}$ belongs to $[ra_{n+q},ra_{n+q}+\xi  _{n+q+1-r}]$ (which is what happens except when $j$ lies in the end of the interval), then $$T^{c_{k,n}}f_{j}=f_{j+c_{k,n}}.$$ If $j+c_{k,n}>ra_{n+q}+\xi  _{n+q+1-r}$, the same kind of argument as previously shows that
$$||e_{j+c_{k,n}}||\ls 2^{-\frac{1}{2}\sqrt{a_{n+q}}}.$$ Also, $j-ra_{n+q}+(r-1)a_{n+q-1}+c_{k,n}>(r-1)a_{n+q-1}+\xi_{n+q+1-r}$, so that
$$||e_{j-ra_{n+q}+(r-1)a_{n+q-1}+c_{k,n}}||\ls 2^{-\frac{1}{2}\sqrt{a_{n+q-1}}}.$$ It follows that 
 $$||T^{c_{k,n}}f_{j}||\ls 2^{-\frac{1}{4}\sqrt{a_{n+q-1}}};$$
\par\smallskip
-- \textbf{Case 7b:} the interval $[ra_{n+q},ra_{n+q}+\xi  _{n+q+1-r}]$ is ``short'' compared to $c_{k,n}$, i.e. $q-r<0$. Then $j+c_{k,n}$ always lies in the beginning of the \loi\  which lies after $[ra_{n+q},ra_{n+q}+\xi  _{n+q+1-r}]$, so that $$||e_{j+c_{k,n}}||\ls 2^{-\frac{1}{2}\sqrt{a_{n+q}}}.$$
Lastly, $j-ra_{n+q}+(r-1)a_{n+q-1}$ belongs to $[(r-1)a_{n+q-1},(r-1)a_{n+q-1}+\xi  _{n+q+1-r}]$, and $j-ra_{n+q}+(r-1)a_{n+q-1}+c_{k,n}$ lies again in the beginning of the \loi\ which starts at the index $(r-1)a_{n+q-1}+\xi  _{n+q+1-r}+1$. So $$||e_{j-ra_{n+q}+(r-1)a_{n+q-1}+c_{k,n}}||\ls 2^{-\frac{1}{2}\sqrt{a_{n+q-1}}},$$ and hence
$$||T^{c_{k,n}}f_{j}||\ls 2^{-\frac{1}{4}\sqrt{a_{n+q-1}}}.$$
\par\smallskip
All cases have now been considered, and in order to finish the proof of Proposition \ref{prop3bis}, we have to put together all estimates. It is clear that we can ensure that
$$||T^{c_{k,n}}(I-Q_{\nu _{n}})\sum_{j>\nu _{n}, j\not\in\cup_{q\ge 1}\cup_{r=1}^{n+q}J_{n+q,r}} f_{j}^{*}(x)f_{j}||\le 100\,||x||\quad \textrm{ for all }x\in X.$$
It remains to estimate 
$$||T^{c_{k,n}}(I-Q_{\nu _{n}})\sum_{q\ge 1}\sum_{r=1}^{n+q}\sum_{j=ra_{n+q}}^{ra_{n+q}+\xi  _{n+q+1-r}}f_{j}^{*}(x)f_{j}||.$$
We have seen that we can ensure for instance that
$$||T^{c_{k,n}}(I-Q_{\nu _{n}})\sum_{q\ge 1}\sum_{r=q+1}^{n+q}\sum_{j=ra_{n+q}}^{ra_{n+q}+\xi  _{n+q+1-r}}f_{j}^{*}(x)f_{j}||\le ||x||$$
(this is Case 5 for $q=1$ and Case 7b of ``short'' intervals for $q\ge 2$)
so that we only have to estimate
$$||T^{c_{k,n}}(I-Q_{\nu _{n}})\sum_{q\ge 2}\sum_{r=1}^{q}\sum_{j=ra_{n+q}}^{ra_{n+q}+\xi  _{n+q+1-r}}f_{j}^{*}(x)f_{j}||$$
(which corresponds to Case 7a of ``long'' intervals).
Inside this term, we know that we can ensure that
$$||T^{c_{k,n}}(I-Q_{\nu _{n}})\sum_{q\ge 2}\sum_{r=1}^{q}\sum_{j=ra_{n+q}+\xi  _{n+q+1-r}+1-c_{k,n}}^{ra_{n+q}+\xi  _{n+q+1-r}}f_{j}^{*}(x)f_{j}||\le ||x||,$$ so that ultimately we have to bound
$$||T^{c_{k,n}}(I-Q_{\nu _{n}})\sum_{q\ge 2}\sum_{r=1}^{q}\sum_{j=ra_{n+q}}^{ra_{n+q}+\xi  _{n+q+1-r}-c_{k,n}}f_{j}^{*}(x)f_{j}||.$$
Now for $j\in [ra_{n+q},ra_{n+q}+\xi  _{n+q+1-r}]$, $f_{j}=z_{r}^{(d_{n+q+1-r}+j-ra_{n+q})}$. We have thus in the case where $X=\ell_{p}\oplus\bigoplus_{\ell_{p}}Z$, $1\le p<+\infty$ (there is an analogous formula for the case where $X=c_{0}\oplus\bigoplus_{c_{0}}Z$), that
\begin{eqnarray}\label{EQ4}
\notag
|| \sum_{q\ge 2}\sum_{r=1}^{q}\sum_{j=ra_{n+q}}^{ra_{n+q}+\xi  _{n+q+1-r}-c_{k,n}}f_{j}^{*}(x)f_{j}||&=&
||\sum_{m\ge n+1}\sum_{d=d_{m}}^{d_{m+1}-c_{k,n}-1}\sum_{r\ge 1}z_{r}^{(d)*}(x)z_{r}^{(d)}||\\ 
&=& \notag \left(\sum_{m\ge n+1}\sum_{d=d_{m}}^{d_{m+1}-c_{k,n}-1}||\sum_{r\ge 1}z_{r}^{(d)*}(x)z_{r}^{(d)}||^{p}_{Z^{(d)}}\right)^{\frac{1}{p}}\\ 
&=& \left(\sum_{m\ge n+1}\sum_{d=d_{m}}^{d_{m+1}-c_{k,n}-1}||P_{Z^{(d)}}x||^{p}\right)^{\frac{1}{p}}.
\end{eqnarray}
Hence by (\ref{eqcarre}), we have for every $x\in X$ that
\begin{equation}\label{eqcarrebis}
 || \sum_{q\ge 2}\sum_{r=1}^{q}\sum_{j=ra_{n+q}}^{ra_{n+q}+\xi  _{n+q+1-r}-c_{k,n}}f_{j}^{*}(x)f_{j}||\le ||x||.
\end{equation}
 Now
\begin{eqnarray*}\label{ea}
&& ||T^{c_{k,n}}(I-Q_{\nu _{n}})\sum_{q\ge 2}\sum_{r=1}^{q}\sum_{j=ra_{n+q}}^{ra_{n+q}+\xi  _{n+q+1-r}-c_{k,n}}f_{j}^{*}(x)f_{j}||\qquad\qquad\qquad\qquad\qquad\qquad\\
 &\qquad&\qquad\qquad\qquad\qquad\qquad=
 ||\sum_{q\ge 2}\sum_{r=1}^{q}\sum_{j=ra_{n+q}}^{ra_{n+q}+\xi  _{n+q+1-r}-c_{k,n}}f_{j}^{*}(x)f_{j+c_{k,n}}||\\
 &\qquad&\qquad\qquad\qquad\qquad\qquad= ||\sum_{m\ge n+1}\sum_{d=d_{m}}^{d_{m+1}-c_{k,n}-1}\sum_{r\ge 1}z_{r}^{(d)*}(x)z_{r}^{(d+c_{k,n})}||,
\end{eqnarray*}
because we have seen that the \ops\ $T^{c_{k,n}}$ act simply as a shift on the parts of the basis $(f_{j})_{j\ge 0}$ which are involved here. Using formula (\ref{EQ4}), we now observe that 
\begin{equation*}\label{eb}
 ||\sum_{m\ge n+1}\sum_{d=d_{m}}^{d_{m+1}-c_{k,n}-1}\sum_{r\ge 1}z_{r}^{(d)*}(x)z_{r}^{(d+c_{k,n})}||= ||\sum_{m\ge n+1}\sum_{d=d_{m}}^{d_{m+1}-c_{k,n}-1}\sum_{r\ge 1}z_{r}^{(d)*}(x)z_{r}^{(d)}||.
\end{equation*}
Hence by (\ref{eqcarrebis}), we get that
$$||T^{c_{k,n}}(I-Q_{\nu _{n}})\sum_{q\ge 2}\sum_{r=1}^{q}\sum_{j=ra_{n+q}}^{ra_{n+q}+\xi  _{n+q+1-r}-c_{k,n}}f_{j}^{*}(x)f_{j}||\le ||x||.$$
So putting things together, we obtain that $||T^{c_{k,n}}(I-Q_{\nu _{n}})x||\le 103\,||x||$, and we are done.
\end{proof}

A straightforward but important corollary of Proposition \ref{prop3bis} is:

\begin{proposition}\label{prop4bis}
 For all $n\ge 1$, all $k\in [1,k_{n}]$ and all $x\in X$,
 $$||T^{c_{k,n}}(I-Q_{\nu _{n}})x||\le 103\,||x-\pi_{[0,\nu _{n}]}x||.$$
\end{proposition}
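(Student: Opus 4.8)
The plan is to obtain Proposition \ref{prop4bis} from Proposition \ref{prop3bis} exactly as the inequality \eqref{EQ2} was obtained from \eqref{EQ1} in the proof of Theorem \ref{th2}: the only additional ingredient is that the projection $Q_{\nu_n}$ fixes the finite-dimensional subspace $F_{\nu_n}$ pointwise, so that replacing $x$ by its ``tail'' $x-\pi_{[0,\nu_n]}x$ does not change $(I-Q_{\nu_n})x$.

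First I would record that, since $Q_{\nu_n}f_j=f_j$ for every $j\le\nu_n$ by the very definition of $Q_{\nu_n}$, the operator $Q_{\nu_n}$ acts as the identity on $F_{\nu_n}=\textrm{sp}[f_0,\ldots,f_{\nu_n}]$. As $\pi_{[0,\nu_n]}x=\sum_{j=0}^{\nu_n}f_j^{*}(x)f_j$ belongs to $F_{\nu_n}$ for every $x\in X$, this yields $Q_{\nu_n}\pi_{[0,\nu_n]}x=\pi_{[0,\nu_n]}x$, i.e. $(I-Q_{\nu_n})\pi_{[0,\nu_n]}x=0$. Hence, for every $x\in X$,
$$(I-Q_{\nu_n})(x-\pi_{[0,\nu_n]}x)=(I-Q_{\nu_n})x-(I-Q_{\nu_n})\pi_{[0,\nu_n]}x=(I-Q_{\nu_n})x.$$

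It then suffices to apply the estimate of Proposition \ref{prop3bis} to the vector $y=x-\pi_{[0,\nu_n]}x$, which gives
$$||T^{c_{k,n}}(I-Q_{\nu_n})x||=||T^{c_{k,n}}(I-Q_{\nu_n})(x-\pi_{[0,\nu_n]}x)||\le 103\,||x-\pi_{[0,\nu_n]}x||,$$
as claimed. There is no real obstacle in this step, since the whole analytic content has already been absorbed into Proposition \ref{prop3bis}; the present statement merely records the ``tail'' form of that bound, which is the form actually used later when running the approximation arguments (the counterparts in this setting of the reasoning carried out after Proposition \ref{prop2} in the proof of Theorem \ref{th2}), where one needs $||T^{c_{k,n}}(I-Q_{\nu_n})x||$ to go to $0$ as $n\to\infty$ for every fixed $x\in X$.
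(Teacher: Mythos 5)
Your proof is correct and coincides with the paper's own argument: one observes that $\pi_{[0,\nu_n]}x\in F_{\nu_n}$ is fixed by the projection $Q_{\nu_n}$, so $(I-Q_{\nu_n})(x-\pi_{[0,\nu_n]}x)=(I-Q_{\nu_n})x$, and then applies Proposition \ref{prop3bis} to $x-\pi_{[0,\nu_n]}x$. Nothing is missing.
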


\begin{proof}
 Since $\pi_{[0,\nu _{n}]}x$ belongs to $F_{\nu _{n}}$, we have $\pi_{[0,\nu _{n}]}x=Q_{\nu _{n}}\pi_{[0,\nu _{n}]}x$, and thus we get that $(I-Q_{\nu _{n}})(x-\pi_{[0,\nu _{n}]}x)=(I-Q_{\nu _{n}})x$. Hence $$||T^{c_{k,n}}(I-Q_{\nu _{n}})x||=||T^{c_{k,n}}(I-Q_{\nu _{n}})(x-\pi_{[0,\nu _{n}]}x)||\le 103\, ||x-\pi_{[0,\nu _{n}]}x||$$
 by Proposition \ref{prop3bis}.
\end{proof}

Let now $x$ be any vector of $X$. If we apply Fact \ref{factbbis} to the vector $Q_{\nu _{n}}x$ (which belongs to $F_{\nu _{n}}$), we get that for each $n\ge 1$ and $k\in [1,k_{n}]$,
$$||T^{c_{k,n}}(Q_{\nu _{n}}x)-p_{k,n}(T)(Q_{\nu _{n}}x)||\le \delta _{n}||Q_{\nu _{n}}x||\le \delta _{n}||Q_{\nu _{n}}||\,||x||\le \delta _{n}M_{a_{n}}||x||$$ by Fact \ref{factbisbis}. Recalling that $\delta _{n}$ can be made arbitrarily small if $\gamma _{n}$ is sufficiently small, that $M_{a_{n}}$ depends only on $a_{n}$ and that $\gamma _{n}$ is chosen after $a_{n}$ is fixed, we can ensure that $\delta _{n}$ is so small that $\delta _{n}M_{a_{n}}\le \frac{1}{a_{n}}$ for instance. Now by Proposition \ref{prop4bis} we have the tail estimate $||T^{c_{k,n}}x-T^{c_{k,n}}(Q_{\nu _{n}}x)||\le 103\,||x-\pi_{[0,\nu _{n}]}x||$,
and thus
$$||T^{c_{k,n}}x-p_{k,n}(T)(Q_{\nu _{n}}x)||\le \frac{1}{a_{n}}||x||+ 103\,||x-\pi_{[0,\nu _{n}]}x||.$$
We record this estimate as Proposition \ref{prop5bis}:

\begin{proposition}\label{prop5bis}
 We can ensure that for every vector $x\in X$, every integers $n\ge 1$ and $k\in [1,k_{n}]$,
 $$||T^{c_{k,n}}x-p_{k,n}(T)(Q_{\nu _{n}}x)||\le \frac{1}{a_{n}}||x||+ 103\,||x-\pi_{[0,\nu _{n}]}x||.$$
\end{proposition}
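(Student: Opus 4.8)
The plan is to derive the inequality from a single application of the triangle inequality, splitting $T^{c_{k,n}}x-p_{k,n}(T)(Q_{\nu _{n}}x)$ into the part that lives on the finite-dimensional space $F_{\nu _{n}}$, which the net of polynomials $(p_{k,n})_{k}$ is designed to handle, and the part killed by $Q_{\nu _{n}}$, which the tail estimate of Proposition~\ref{prop4bis} handles. Concretely, writing $T^{c_{k,n}}x-p_{k,n}(T)(Q_{\nu _{n}}x)=T^{c_{k,n}}(I-Q_{\nu _{n}})x+\bigl(T^{c_{k,n}}(Q_{\nu _{n}}x)-p_{k,n}(T)(Q_{\nu _{n}}x)\bigr)$, I would bound each summand in turn.

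First I would treat the second summand. Since $Q_{\nu _{n}}$ is a projection of $X$ onto $F_{\nu _{n}}$, the vector $Q_{\nu _{n}}x$ belongs to $F_{\nu _{n}}$, so Fact~\ref{factbbis} applies and gives $||T^{c_{k,n}}(Q_{\nu _{n}}x)-p_{k,n}(T)(Q_{\nu _{n}}x)||\le\delta _{n}||Q_{\nu _{n}}x||$; combining this with the bound $||Q_{\nu _{n}}||\le M_{a_{n}}$ from Fact~\ref{factbisbis} yields at most $\delta _{n}M_{a_{n}}||x||$. The order of the inductive choices is what makes this useful: at step $n$ the integer $a_{n}$ is fixed first, so $M_{a_{n}}$ is already determined; the parameter $\gamma _{n}$ (hence $\delta _{n}$, which by Fact~\ref{factbbis} can be made as small as we please by shrinking $\gamma _{n}$) is only chosen afterwards, so we may require $\delta _{n}M_{a_{n}}\le 1/a_{n}$, and the second summand is then at most $||x||/a_{n}$. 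For the first summand, $(I-Q_{\nu _{n}})x$ is exactly the argument appearing in Proposition~\ref{prop4bis}, which gives $||T^{c_{k,n}}(I-Q_{\nu _{n}})x||\le 103\,||x-\pi_{[0,\nu _{n}]}x||$. Adding the two bounds gives the asserted inequality.

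The step that deserves attention — though it is bookkeeping about the construction rather than an analytic difficulty — is the consistency of the parameter choices: one needs $M_{a_{n}}$ to depend on $a_{n}$ alone (Fact~\ref{factbisbis}), $\delta _{n}$ to remain freely adjustable after $a_{n}$ has been fixed (this is precisely why in the construction $\delta _{n}$ is controlled through $\gamma _{n}$, which is chosen at the end of step $n$), and the constant $103$ in Proposition~\ref{prop4bis} to be absolute. All three are already available, so Proposition~\ref{prop5bis} is in fact just a compact record of the chain of estimates carried out in the paragraph immediately preceding its statement, and no further work is required beyond the triangle inequality above.
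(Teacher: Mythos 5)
Your proof is correct and is exactly the paper's argument: the estimate is obtained there, in the paragraph immediately preceding the statement of Proposition \ref{prop5bis}, by applying Fact \ref{factbbis} to $Q_{\nu_n}x$ together with the bound $\|Q_{\nu_n}\|\le M_{a_n}$ of Fact \ref{factbisbis} (choosing $\gamma_n$, hence $\delta_n$, after $a_n$ so that $\delta_n M_{a_n}\le 1/a_n$), and combining this with the tail estimate of Proposition \ref{prop4bis}. Nothing is missing.
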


And this is as far as we can go in this direction: we will not prove that Property (P1) holds true, because one of the ingredients for this is missing. If we wish to prove in the same way as in the proof of Theorem \ref{th2} that for a given polynomial $p$ with $|p|\le 2$ there exists for sufficiently large $n$ an integer $k\in [1,k_{n}]$ such that $||T^{c_{k,n}}x-p(T)x||$ is less than a given $\varepsilon $, we need to use the fact that $\liminf_{n\to +\infty}||(I-Q_{\nu _{n}})x||=0$. This was rather easy to prove under the assumption of Theorem \ref{th2} (this is Fact \ref{factc}) thanks to the simple form of the projection $Q_{\nu _{n}}$ which was modifying only the vector $f_{a_{n+1}}$. In the present context the situation is much more intricate, since $Q_{\nu _{n}}f_{j}$ is different from $f_{j}$ on quite a big set of indices $j$.

Of course, since we are going to prove that any non-zero vector $x\in X$ has dense orbit under the action of $T$, Property (P1) will ultimately hold, but for a different reason than in the proof of Theorem \ref{th2}.
\par\smallskip

The next step in the proof of Theorem \ref{th1} is to prove an analogue of Proposition \ref{prop5}, namely that if $x\in X$ is any vector of norm $1$, some coordinate in the $(e_{j})$-basis of $F_{\mu_{n}}$ of one of the vectors $Q_{\mu _{n}}x$ must be ``large'' (i.e. not too small) for arbitrarily large $n$. Recall that $F_{\mu _{n}}$ is the linear span of the vectors $e_{0},\ldots, e_{\mu _{n}}$, and that we denote for each $n\ge 0$ and each $j\in [0,\mu _{n}]$ by $e_{j}^{*(\mu _{n})}(y)$ the $j^{th}$ coordinate of a vector $y$ of $F_{\mu _{n}}$ in the basis $(e_{j})_{j=0, \ldots, \mu_{n}}$:
$$y=\sum_{j=0}^{\mu _{n}}e_{j}^{*(\mu _{n})}(y)e_{j}.$$

\begin{proposition}\label{prop6bis}
 Let $x$ be a vector of $X$ with $||x||=1$. If $(A_{n})_{n\ge 1}$ is a sequence of integers such that each $A_{n}$ is sufficiently large \wrt\ $a_{n}$, there exists for each integer $N\ge 1$ an integer $n\ge N+2$ such that the following property holds true:
 
 there exists an integer $j\in [0, (n-N)a_{n}-1]$ such that
 $$|e_{j}^{*(\mu _{n})}(Q_{\mu _{n}}x)|\ge \frac{1}{A_{n}^{(\mu _{n}-j+1)!^{2}}}\cdot$$
\end{proposition}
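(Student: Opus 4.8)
The strategy is the exact analogue of the proof of Proposition \ref{prop5} in the setting of Theorem \ref{th2}: argue by contradiction, assume that for some quickly increasing sequence $(A_n)$ the displayed inequality fails for \emph{all} large $n$ and all $j$ in the relevant range, and deduce that the divergent series $\sum_{k\ge 1}\alpha_k z_k$ would have to converge in some copy $Z^{(d)}$ of $Z$. The only genuine novelty compared to Proposition \ref{prop5} is that there are now $n$ (a)-working intervals $J_{n,r}$ at step $n$, so one has to be a little careful about which coordinate survives; the role played by the single index $a_{n}$ in Theorem \ref{th2} is played here by the whole ``diagonal'' $\{(n-N)a_n : 0\le N\le n-1\}$, and Fact \ref{fact1bis} replaces Fact \ref{facta}.

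\textbf{First step: reduce to coordinates of $\pi_{[0,\mu_n]}x$.} Suppose $\|x\|=1$ and that, contrary to the claim, there is an $N\ge 1$ such that for every $n\ge N+2$ and every $j\in[0,(n-N)a_n-1]$ one has $|e_j^{*(\mu_n)}(Q_{\mu_n}x)|< A_n^{-(\mu_n-j+1)!^2}$. Unwinding the definition of $Q_{\mu_n}$ exactly as in the proof of Proposition \ref{prop5}, one checks that $e_j^{*(\mu_n)}(Q_{\mu_n}x)=e_j^{*(\mu_n)}(\pi_{[0,\mu_n]}x)$ for all indices $j$ which are not of the form $j-ra_{n+1}+(r-1)a_n$ for some $r\in[2,n+1]$ and some $j\in[ra_{n+1},ra_{n+1}+\xi_{n+2-r}]$; the corrected indices all lie in $\bigcup_{r=2}^{n+1}[(r-1)a_n,(r-1)a_n+\xi_{n+2-r}]$, which is contained in $[a_n,\mu_n]$ and disjoint from the diagonal points $(n-N)a_n$ for $N\ge 1$ (since those are multiples $r a_n$ with $r=n-N\le n-1$, while a corrected index $(r-1)a_n+l$ has $0\le l\le \xi_{n+2-r}$ small). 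So the failing estimate transfers to $|e_j^{*(\mu_n)}(\pi_{[0,\mu_n]}x)|< A_n^{-(\mu_n-j+1)!^2}$ for all $j\in[0,(n-N)a_n-1]$, for each fixed $N\ge 1$.

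\textbf{Second step: force convergence and derive the contradiction.} Summing the geometric-type bound over $j\in[0,(n-N)a_n-1]$ and using that $\sup_{l\le\mu_n}\|e_l\|$ can be kept $\le\sqrt{A_n}$ if $A_n$ grows fast enough, one gets $\|\pi_{[0,(n-N)a_n-1]}x\|\le 1/a_n\to 0$; hence, writing $\pi_{[0,\mu_n]}x=\sum_{j=0}^{\mu_n}e_j^{*(\mu_n)}(x)e_j$, the tail $\sum_{j\ge (n-N)a_n}e_j^{*(\mu_n)}(x)e_j$ is within $1/a_n$ of $x$. Apply this with, say, $N=1$: then $x$ is approximated by a combination of $e_{(n-1)a_n},\ldots,e_{\mu_n}$, and since (by Fact \ref{fact1bis} and the estimates on $\lambda_j$ in the (a)- and (b)-intervals, exactly as the norm bounds computed in the boundedness proof) all of $e_{(n-1)a_n},\ldots,e_{\mu_n}$ except $e_{(n-1)a_n}$ itself are of very small norm, one concludes that $e_{(n-1)a_n}^{*(\mu_n)}(x)\,e_{(n-1)a_n}$ tends to $x$. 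Applying the coordinate functional $P_0$ (equivalently $f_0^*$) to the identity of Fact \ref{fact1bis}, $P_0 e_{(n-1)a_n}=P_0 e_0=g_0$, so $e_{(n-1)a_n}^{*(\mu_n)}(x)\to P_0 x=:\gamma$; since $\|e_{(n-1)a_n}\|\le 1+1/a_1$ this gives $\gamma\ne 0$, and $e_{(n-1)a_n}\to \gamma^{-1}x$. But by Fact \ref{fact1bis} with $N=1$, $e_{(n-1)a_n}=e_0+\tfrac1{a_1}\sum_{k=1}^{n-1}\alpha_k z_k^{(d_2)}$, so convergence of $(e_{(n-1)a_n})_n$ forces the series $\sum_{k\ge1}\alpha_k z_k^{(d_2)}$ to converge in $Z^{(d_2)}$, contradicting $\limsup_k\alpha_k>\delta_0>0$ (equivalently, contradicting that $(z_j)$ is not semi-boundedly complete). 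This contradiction proves Proposition \ref{prop6bis}.

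\textbf{Main obstacle.} The delicate point is purely bookkeeping: one must verify that the index set corrected by $Q_{\mu_n}$ genuinely avoids the ``diagonal'' points $(n-N)a_n$ for every $N\ge 1$, so that the contradiction can be run for each $N$ separately, and one must track the various factorial exponents $(\mu_n-j+1)!^2$ carefully enough that $\sum_j A_n^{-(\mu_n-j+1)!^2}\|e_j\|\le 1/a_n$ holds once $A_n$ is large with respect to $a_n$ and $\sup_{l\le\mu_n}\|e_l\|$. None of this is conceptually hard — it is the same mechanism as in Fact \ref{facte} and Proposition \ref{prop5} — but the presence of $n$ (a)-intervals means the estimates on $\|e_j\|$ for $j\in[(n-N)a_n,\mu_n]$ now come from several of the $J_{n,r}$ and their trailing lay-off intervals, and these must be assembled (as in the boundedness proof) to see that only $e_{(n-N)a_n}$ fails to be small.
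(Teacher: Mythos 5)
The decisive step of your first reduction is false. The coordinates corrected by $Q_{\mu_n}$ are the indices $j-ra_{n+1}+(r-1)a_{n}=(r-1)a_{n}+l$ with $l\in[0,\xi_{n+2-r}]$ and $r\in[2,n+1]$; since $l=0$ is allowed, this set is exactly $\bigcup_{s=1}^{n}J_{n,s}=\bigcup_{s=1}^{n}[sa_{n},sa_{n}+\xi_{n+1-s}]$, which contains every ``diagonal'' point $sa_{n}$, $1\le s\le n$, and meets $[0,(n-N)a_{n}-1]$ in the whole of $\bigcup_{s\le n-N-1}J_{n,s}$. For $j\in J_{n,s}$ one has $e_{j}^{*(\mu_{n})}(Q_{\mu_{n}}x)=a_{n-s}\bigl(\tfrac{1}{\alpha_{s}}f_{j}^{*}(x)-\tfrac{1}{\alpha_{s+1}}f^{*}_{j-sa_{n}+(s+1)a_{n+1}}(x)\bigr)$, while $e_{j}^{*(\mu_{n})}(\pi_{[0,\mu_{n}]}x)=\tfrac{a_{n-s}}{\alpha_{s}}f_{j}^{*}(x)$: the discrepancy is a coordinate of $x$ taken far out, at step $n+1$, multiplied by the possibly enormous factor $a_{n-s}/\alpha_{s+1}$, so the hypothesis on $Q_{\mu_{n}}x$ does not transfer to $\pi_{[0,\mu_{n}]}x$ by inspection, and the fact that $f_{j}^{*}(x)\to0$ alone is useless against that factor. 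Neutralising exactly this discrepancy is the heart of the paper's proof: the contradiction hypothesis is used at \emph{all} steps $n'\ge N+2$ simultaneously, the resulting near-equalities between $\tfrac{1}{\alpha_{s}}f^{*}_{sa_{n}+l}(x)$ and $\tfrac{1}{\alpha_{s+q}}f^{*}_{(s+q)a_{n+q}+l}(x)$ are telescoped over $q$, and then $\limsup_{j}\alpha_{j}>\delta_{0}$ together with $f_{j}^{*}(x)\to0$ (Schauder basis) forces $|f^{*}_{sa_{n}+l}(x)|\le 2\alpha_{s}/(a_{N+1}A_{n})$. This cross-step argument is entirely missing from your proposal and cannot be bypassed.

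The endgame also does not reduce to the single coordinate at $(n-1)a_{n}$ as in Proposition \ref{prop5}. With $N=1$ the tail $[(n-1)a_{n},\mu_{n}]$ contains the whole interval $J_{n,n-1}$ and the point $\mu_{n}=na_{n}$, and the vectors $e_{(n-1)a_{n}+l}=e_{l}+\tfrac{1}{a_{1}}\sum_{k=1}^{n-1}\alpha_{k}z_{k}^{(d_{2}+l)}$ ($0\le l\le\xi_{2}$) and $e_{na_{n}}$ are \emph{not} small in norm (nor are the $e_{j}$ near the right ends of the intervening lay-off intervals, where $\lambda_{j}$ is tiny), so these coordinates cannot be discarded. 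The paper keeps all coordinates on $J_{n,r}$, $r\ge n-N$ (the lay-off parts being handled via $e_{j}^{*(\mu_{n})}(\pi_{[0,\mu_{n}]}x)e_{j}=f_{j}^{*}(x)f_{j}$), identifies their limits $\gamma_{q,l}$ by applying functionals $z_{r_{0}}^{(d_{q+1}+l)*}$ with $r_{0}$ chosen large, and uses the divergence of $\sum_{k}\alpha_{k}z_{k}$ in each copy $Z^{(d)}$ to force every $\gamma_{q,l}=0$, whence $x=0$, contradicting $||x||=1$; moreover this must be carried out for every $N\ge1$, not just $N=1$. So your overall plan (contradiction plus divergence of the series) is the right one, but the two decisive mechanisms — the telescoping that controls the $Q_{\mu_{n}}$-corrections, and the multi-coordinate limit argument — constitute genuine gaps.
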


\begin{proof}
 Suppose that $N\ge 1$ is such that for all $n\ge N+2$ and all $j\in [0, (n-N)a_{n}-1]$, 
 \begin{equation}\label{eqnouvellebis}
   |e_{j}^{*(\mu _{n})}(Q_{\mu _{n}}x)|<\frac{1}{A_{n}^{(\mu _{n}-j+1)!^{2}}}<\frac{1}{A_{n}}\cdot
 \end{equation}
 We have
 \begin{eqnarray}\label{supl1}
  Q_{\mu _{n}}x &=& \pi_{[0,\mu _{n}]}x-\sum_{r=2}^{n+1}\frac{a_{n+1-r}}{\alpha _{r}}\sum_{j=ra_{n+1}}^{ra_{n+1}+\xi  _{n+2-r}}f_{j}^{*}(x)\,e_{j-ra_{n+1}+(r-1)a_{n}}\\
  &=&  \pi_{[0,\mu _{n}]}x-\sum_{r=2}^{n+1}\frac{a_{n+1-r}}{\alpha _{r}}\sum_{l=0}^{\xi  _{n+2-r}}f_{ra_{n+1}+l}^{*}(x)\,e_{(r-1)a_{n}+l}\notag.
 \end{eqnarray}
 
Let us determine $e_{j}^{*(\mu _{n})}(Q_{\mu _{n}}x)$ for $j$ belonging to one of the (a)-\woi s $[sa_{n},sa_{n}+\xi  _{n+1-s}]$ for $s=1,\ldots, n$. The vector $e_{j}$ appears in two places in the expression (\ref{supl1}) of $Q_{\mu _{n}}x$:

-- it appears in the term $\pi_{[0,\mu _{n}]}x$, and the only contribution comes from the vector $f_{j}=\frac{a_{n-s}}{\alpha _{s}}(e_{j}-e_{j-sa_{n}+(s-1)a_{n-1}})$ so that for every $j\in [sa_{n},sa_{n}+\xi  _{n+1-s}]$
$$e_{j}^{*(\mu _{n})}(\pi_{[0,\mu _{n}]}x)=f_{j}^{*}(x)\,\frac{a_{n-s}}{\alpha _{s}}\cdot$$ 

-- it appears in the second term of (\ref{supl1}), and here the only contribution appears when $r=s+1$ and $l=j-sa_{n}$ (observe that $2\le r\le n+1$, so that there is indeed a contribution). Hence
$$e_{j}^{*(\mu _{n})}\left(\sum_{r=2}^{n+1}\frac{a_{n+1-r}}{\alpha _{r}}\sum_{l=0}^{\xi  _{n+2-r}}f_{ra_{n+1}+l}^{*}(x)\,e_{(r-1)a_{n}+l}\right)=\frac{a_{n-s}}{\alpha _{s+1}} f^{*}_{(s+1)a_{n+1}+j-sa_{n}}(x).
$$
So
\begin{eqnarray}\label{supl2}
 e_{j}^{*(\mu _{n})}(Q_{\mu _{n}}x)=a_{n-s}\left(\frac{1}{\alpha_{s}}f_{j}^{*}(x)-\frac{1}{\alpha _{s+1}}f^{*}_{j-sa_{n}+(s+1)a_{n+1}}(x)\right)
\end{eqnarray}
for every $j\in [sa_{n},sa_{n}+\xi  _{n+1-s}]$. Our assumption (\ref{eqnouvellebis}) thus implies, provided $A_{n}$ is large enough for each $n$, that for every such $s$ such that $1\le s<n-N$ and every $j\in [sa_{n},sa_{n}+\xi_{n+1-s}]$,
$$\left|\frac{1}{\alpha_{s}}f_{j}^{*}(x)-\frac{1}{\alpha _{s+1}}f^{*}_{j-sa_{n}+(s+1)a_{n+1}}(x)\right|<\frac{1}{a_{n-s}}\,\frac{1}{A_{n}}\le \frac{1}{a_{N+1}}\,\frac{1}{A_{n}}\cdot$$ Hence we have for every $l\in [0,\xi  _{n+1-s}]$ that
$$\left|\frac{1}{\alpha_{s}}f_{sa_{n}+l}^{*}(x)-\frac{1}{\alpha _{s+1}}f^{*}_{(s+1)a_{n+1}+l}(x)\right|<\frac{1}{a_{N+1}}\,\frac{1}{A_{n}}\cdot$$
This being true for every $n\ge N+2$, we
can apply it for 
every $q\ge 0$ with $n$ replaced by $n+q$ and $s$ replaced by $s+q$ 
($s+q$ belongs to $[1, n+q-N-1]$ for each $s\in [1, n-N-1]$), and we get
that
$$\left|\frac{1}{\alpha_{s+q}}f_{(s+q)a_{n+q}+l}^{*}(x)-\frac{1}{\alpha _{s+q+1}}f^{*}_{(s+q+1)a_{n+q+1}+l}(x)\right|<\frac{1}{a_{N+1}}\,\frac{1}{A_{n+q}}
$$ for each $q\ge 0$  and  $s\in [1, n-N-1]$.
Adding the first $q$ such inequalities, we get that
for each $s\in [1,n-N-1]$, $l\in [0,\xi  _{n+1-s}]$ and $q\ge 1$,
\begin{eqnarray}\label{new3}
 \left|\frac{1}{\alpha_{s}}f_{sa_{n}+l}^{*}(x)-\frac{1}{\alpha _{s+q}}f^{*}_{(s+q)a_{n+q}+l}(x)\right|&<&\frac{1}{a_{N+1}}\left(\frac{1}{A_{n}}+\frac{1}{A_{n+1}}+\ldots+\frac{1}{A_{n+q-1}}\right)\\
 &<&\frac{1}{a_{N+1}}\,\frac{2}{A_{n}}\notag
\end{eqnarray}
if the sequence $(A_{n})$ grows sufficiently quickly.

Let us now fix integers $s\ge 1$, $n\ge s+(N+1)$ and $l\in [0,\xi  _{n+1-s}]$. Then (\ref{new3}) implies that for every $q\ge 1$,
$$|f^{*}_{(s+q)a_{n+q}+l}(x)|\ge \alpha _{s+q}\left(\frac{1}{\alpha _{s}}|f^{*}_{sa_{n}+l}(x)|-
\frac{2}{a_{N+1}A_{n}}\right)\cdot$$ 
Suppose that $$\frac{1}{\alpha _{s}}|f^{*}_{sa_{n}+l}(x)|-
\frac{2}{a_{N+1}A_{n}}>0,$$ and recall  our assumption that $\limsup_{j\to +\infty}\alpha _{j}>\delta _{0}>0$, which has never been used until this point. It implies that 
$$\limsup_{q\to +\infty}|f^{*}_{(s+q)a_{n+q}+l}(x)|\ge {\delta _{0}}\left(\frac{1}{\alpha _{s}}|f^{*}_{sa_{n}+l}(x)|-
\frac{2}{a_{N+1}A_{n}}\right)>0.$$ But $(s+q)a_{n+q}+l$ tends to infinity as $q$ tends to infinity, and so $\limsup_{j\to +\infty}|f^{*}_{j}(x)|>0$, which is a contradiction since $(f_{j})_{j\ge 0}$ is a Schauder basis of $X$. We thus come to the conclusion that $$|f^{*}_{sa_{n}+l}(x)|\le\frac{2\alpha_s}{a_{N+1}A_{n}}$$ for every $s\ge 1$, $n\ge s+(N+1)$ and $l\in [0,\xi  _{n+1-s}]$. This means that 
$|f_{j}^{*}(x)|$ is very small, in a way which can be quantified, for every $j$ in the intervals $[a_{N+2},a_{N+2}+\xi  _{N+2}]$, $[a_{N+3},a_{N+3}+\xi  _{N+3}]$, $[2a_{N+3},2a_{N+3}+\xi  _{N+2}]$, etc... and more generally in any interval of the form $[sa_{N+q},sa_{N+q}+\xi  _{N+q-s+1}]$, where $q\ge 2$ and $s\in [1,q-1]$.
\par\smallskip
Let us now consider, for $n\ge N+2$, the vector
$$\pi_{[0,\mu _{n}]}x=\sum_{j=0}^{\mu _{n}}e_{j}^{*(\mu _{n})}(x)e_{j}.$$ Recall first that 
\begin{eqnarray}\label{supl3}
 e_{j}^{*(\mu _{n})}(x)=e_{j}^{*(\mu _{n})}(Q_{\mu _{n}}x)\quad \textrm{for every } j\in [0,\mu _{n}]\setminus\bigcup_{s=1}^{n}[sa_{n},sa_{n}+\xi  _{n+1-s}]
\end{eqnarray}
and that, by (\ref{supl3}), we have for every $s=1 \ldots n$ and every $j\in [sa_{n},sa_{n}+\xi  _{n+1-s}] $ that
$$e_{j}^{*(\mu _{n})}(x)=e_{j}^{*(\mu _{n})}(Q_{\mu _{n}}x)+\frac{a_{n-s}}{\alpha _{s+1}}
f_{j-sa_{n}+(s+1)a_{n+1}}^{*}(x).$$
Now we have seen that $$|f^{*}_{(s+1)a_{n+1}+l}(x)|\le\frac{2\alpha_{s+1}}{a_{N+1}A_{n+1}}$$ for every $0\le s\le n-N-1$ and every $l\in [0,\xi  _{n+1-s}]$. Hence
$$|e_{j}^{*(\mu _{n})}(x)-e_{j}^{*(\mu _{n})}(Q_{\mu _{n}}x)|\le 2\,\frac{a_{n-1}}{a_{N+1}A_{n+1}} < \frac{1}{A_{n}}\quad \textrm{ for every } j\in \bigcup_{s=1}^{n-N-1}[sa_{n},sa_{n}+\xi  _{n+1-s}]$$
 if $A_{n}$ is chosen sufficiently large.
Combining this with (\ref{supl3}), we get that
\begin{eqnarray*}\label{supl4}
 |e_{j}^{*(\mu _{n})}(x)-e_{j}^{*(\mu _{n})}(Q_{\mu _{n}}x)| \le\frac{1}{A_{n}}\quad\textrm{for every } j\in [0,\mu _{n}]\setminus\bigcup_{s=n-N}^{n}[sa_{n},sa_{n}+\xi  _{n+1-s}].
\end{eqnarray*}
Recall now that our initial assumption implies that
$|e_{j}^{*(\mu _{n})}(Q_{\mu _{n}}x)|<\frac{1}{A_{n}}$ for every $j$ belonging to the interval $ [0, (n-N)a_{n}-1]$. So 
$$|e_{j}^{*(\mu _{n})}(x)|<\frac{2}{A_{n}}\quad\textrm{for every } j\in [0, (n-N)a_{n}-1].$$  Hence
$$\left|\left|\sum_{j=0}^{(n-N)a _{n}-1}e_{j}^{*(\mu _{n})}(x)e_{j}\right|\right|\le \frac{2(n-N)a_{n}}{A_{n}}\,.\,\sup_{j=0,\ldots, \mu _{n}}||e_{j}^{*(\mu _{n})}||\,.\,||e_{j}||<\frac{1}{a_{n}}$$ if $A_{n}$ is sufficiently large. Observe also that for each $j$ belonging to one of the \loi s $[ra_{n}+\xi  _{n+1-r}+1, (r+1)a_{n}-1]$, $r=n-N,\ldots, n-1$, we have $f_{j}=\lambda _{j}e_{j}$ for some scalar $\lambda _{j}$, and that
$$e_{j}^{*(\mu _{n})}(\pi_{[0,\mu _{n}]}x) =\lambda _{j}f_{j}^{*}(x), \quad \textrm{ i.e. } \quad
e_{j}^{*(\mu _{n})}(\pi_{[0,\mu _{n}]}x )e_{j}=f_{j}^{*}(x)f_{j}.$$
Hence
\begin{eqnarray*}
 \pi_{[0,\mu _{n}]}x&=&\sum_{j=0}^{(n-N)a _{n}-1}e_{j}^{*(\mu _{n})}(x)e_{j}+\sum_{r=n-N}^{n-1}\sum_{j=ra_{n}+\xi  _{n+1-r}+1}^{(r+1)a_{n}-1}f_{j}^{*}(x)f_{j}\\
 &+&\sum_{r=n-N}^{n-1}\sum_{j=ra_{n}}^{ra_{n}+\xi  _{n+1-r}}e_{j}^{*(\mu _{n})}(x)e_{j}.
\end{eqnarray*}
Since $$\left|\left|\sum_{j=0}^{(n-N)a _{n}-1}e_{j}^{*(\mu _{n})}(x)e_{j}\right|\right|<\frac{1}{a_{n}}$$ 
and (in the case where $X=\ell_{p}\oplus\bigoplus_{\ell_{p}}Z$)
$$\left|\left|\sum_{r=n-N}^{n-1}\sum_{j=ra_{n}+\xi  _{n+1-r}+1}^{(r+1)a_{n}-1}f_{j}^{*}(x)f_{j}\right|\right|=
\left(\sum_{r=n-N}^{n-1}\sum_{j=ra_{n}+\xi  _{n+1-r}+1}^{(r+1)a_{n}-1}|f_{j}^{*}(x)|^{p}\right)^{\frac{1}{p}}
$$ which tends to $0$ as $n$ tends to infinity, we obtain that 
$$\left|\left|\pi_{[0,\mu _{n}]}x-\sum_{r=n-N}^{n-1}\sum_{j=ra_{n}}^{ra_{n}+\xi  _{n+1-r}}e_{j}^{*(\mu _{n})}(x)e_{j}\right|\right|\to 0\quad \textrm{ as } n\to +\infty,$$ i.e.
$$\left|\left|\pi_{[0,\mu _{n}]}x-\sum_{q=1}^{N}\sum_{l=0}^{\xi  _{q+1}}e_{(n-q)a_{n}+l}^{*(\mu _{n})}(x)e_{(n-q)a_{n}+l}\right|\right|\to 0\quad \textrm{ as } n\to +\infty.$$
Recall now that by Fact \ref{fact1bis}, $$e_{(n-q)a_{n}}=\frac{1}{a_{q}}\sum_{k=1}^{n-q}\alpha _{k}z_{k}^{(d_{q+1})}+e_{0},$$ so that $$e_{(n-q)a_{n}+l}=\frac{1}{a_{q}}\sum_{k=1}^{n-q}\alpha _{k}z_{k}^{(d_{q+1}+l)}+e_{l}\quad \textrm{for every } l\in [0, \xi  _{q+1}].$$ Thus
\begin{eqnarray}\label{eqc}
 \Big|\Big| \pi_{[0,\mu _{n}]}x &-&\sum_{q=1}^{N}\sum_{l=0}^{\xi  _{q+1}}e_{(n-q)a_{n}+l}^{*(\mu _{n})}(x)e_{l}\\
 &-& 
 \sum_{q=1}^{N}\frac{1}{a_{q}}\sum_{l=0}^{\xi  _{q+1}}e_{(n-q)a_{n}+l}^{*(\mu _{n})}(x)
 \left(\sum_{k=1}^{n-q}\alpha _{k}z_{k}^{(d_{q+1}+l)}\right)
 \Big|\Big|\to 0\quad \textrm{ as } n\to +\infty. \notag
\end{eqnarray}
 Since all the terms $e_{(n-q)a_{n}+l}^{*(\mu _{n})}(x)e_{l}$ in the second sum in the expression
above belongs, whatever the value of $n$, $q$ and $l$, to the finite-dimensional vector space $F_{\xi  _{N+1}}$, there exists an integer $r_{0}$ independent of $q$, $l$ and $n$ such that
$$z_{r_{0}}^{(d_{q+1}+l)*}\left( e_{(n-q)a_{n}+l}^{*(\mu _{n})}(x)e_{l}\right)=0$$
for every $q\in [1,N]$, $l\in [0,\xi  _{q+1}]$ and $n\ge N+2$. Hence, since for sufficiently large $n$ the index $r_{0}$ belongs to $ [1,n-q]$ for all $q$, applying the functional $z_{r_{0}}^{(d_{q+1}+l)*}$ to both sides of (\ref{eqc}) we get that
$$\left|z_{r_{0}}^{(d_{q+1}+l)*}(\pi _{[0,\mu _{n}]}x)-\frac{1}{a_{q}}e_{(n-q)a_{n}+l}^{*(\mu _{n})}(x)\alpha _{r_{0}}\right|\to 0\quad \textrm{ as } n\to +\infty,$$ i.e. that
\begin{equation}\label{new4}
 e_{(n-q)a_{n}+l}^{*(\mu _{n})}(x)\to \frac{a_{q}}{\alpha _{r_{0}}}z_{r_{0}}^{(d_{q+1}+l)*}(x):=\gamma _{q,l}\quad \textrm{ as } n\to +\infty
\end{equation}
for all $q\in [1,N]$ and $l\in [0,\xi_{q+1}]$.
Hence the second term in the display (\ref{eqc}) converges to the vector $$y:=\sum_{q=1}^{N}\sum_{l=0}^{\xi_{q+1}}\gamma _{q,l}e_{l}$$ as $n$ tends to infinity. Since $\pi_{[0,\mu _{n}]}x$ tends to $x$ as $n$ tends to infinity, it follows again from (\ref{eqc}) that
\begin{eqnarray}\label{eqd}
 \sum_{q=1}^{N}\sum_{l=0}^{\xi_{q+1}}e_{(n-q)a_{n}+l}^{*(\mu _{n})}(x)\frac{1}{a_{q}}
\left( \sum_{k=1}^{n-q}\alpha _{k}z_{k}^{(d_{q+1}+l)}\right) \to x-y\quad \textrm{ as } n\to +\infty.
\end{eqnarray}
Recall now that by (\ref{EQ3}) we have for each $q$ and $l$ that
$$||
 \sum_{k=1}^{n-q}\alpha _{k}z_{k}^{(d_{q+1}+l)}||=||\sum_{k=1}^{n-q}\alpha _{k}z_{k}||\le 1.$$ Since the bound on the right-hand side is independent of $q$ and $l$ and we sum only finitely many terms in (\ref{eqd}), the combination of (\ref{new4}) and (\ref{eqd}) yields that
 \begin{eqnarray}\label{eqe}
   \sum_{q=1}^{N}\sum_{l=0}^{\xi_{q+1}}\gamma _{q,l}\frac{1}{a_{q}}\left(\sum_{k=1}^{n-q}\alpha _{k}z_{k}^{(d_{q+1}+l)}\right)\to x-y\quad \textrm{ as } n\to +\infty.
 \end{eqnarray}
Hence, for each $q\in [1,N]$ and each $l\in [0,\xi  _{q+1}]$, the sequence $$\left(\gamma _{q,l}
\frac{1}{a_{q}}\sum_{k=1}^{n-q}\alpha _{k}z_{k}^{(d_{q+1}+l)}\right)_{n\ge q+1}$$
converges as $n$ tends to infinity in the ${(d_{q+1}+l)}^{th}$ copy $Z^{(d_{q+1}+l)}$ of $Z$. This means that for each such $q$ and $l$, the sequence $$\left(
\gamma _{q,l}\frac{1}{a_{q}}\sum_{k=1}^{n-q}\alpha _{k}z_{k}\right)_{n\ge q+1}$$ converges in $Z$. But since our assumption is that the sequence $(\sum_{k=1}^{n}\alpha _{k}z_{k})_{n\ge 1}$ does not converge in $Z$, this forces $\gamma _{q,l}$ to be equal to zero for each $q$ and $l$. So $y=0$, and by (\ref{eqe}) we get that $x=y=0$, which contradicts our assumption that $||x||=1$. This final contradiction proves Proposition \ref{prop6bis}.
\end{proof}

An important consequence of Proposition \ref{prop6bis} is Proposition \ref{prop7bis} below. As in Section \ref{sec3}, we will denote by $T_{\mu _{n}}$
the truncated forward shift on $F_{\mu _{n}}$ defined by $T_{\mu _{n}}e_{j}=e_{j+1}$ for every $j\in [0,\mu _{n}-1]$ and $T_{\mu _{n}}e_{\mu _{n}}=0$.

\begin{proposition}\label{prop7bis}
 There exists a sequence $(C_{\mu _{n}})_{n\ge 1}$ of positive real numbers, with $C_{\mu _{n}}$ depending only on $\mu _{n}$ for each $n$, such that the following property holds true:
 
 for every vector $x\in X$ with $||x||=1$ and every integer
 $N\ge 1$, there exist an integer $n\ge N+2$ and a polynomial $p_n$ of degree at most $\mu _{n}$ such that
 $$|p_n|\le C_{\mu _{n}}\quad\textrm{and}\quad ||p_n(T_{\mu _{n}})T_{\mu _{n}}(Q_{\mu _{n}}x)-
 e_{(n-N)a_{n}}||<\frac{1}{a_{n}}\cdot$$
\end{proposition}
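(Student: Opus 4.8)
The plan is to combine Proposition \ref{prop6bis} with a finite-dimensional interpolation argument in the spirit of Fact \ref{factf}, exactly as was done for Proposition \ref{prop4} in the proof of Theorem \ref{th2}. First, fix a sequence $(A_n)_{n\ge 1}$ growing fast enough that the conclusion of Proposition \ref{prop6bis} holds. Given $x$ with $\|x\|=1$ and $N\ge 1$, apply Proposition \ref{prop6bis} to obtain an integer $n\ge N+2$ and an index $j_n\in[0,(n-N)a_n-1]$ with $|e_{j_n}^{*(\mu_n)}(Q_{\mu_n}x)|\ge A_n^{-(\mu_n-j_n+1)!^2}$; without loss of generality take $j_n$ minimal with this property, so that $|e_j^{*(\mu_n)}(Q_{\mu_n}x)|< A_n^{-(\mu_n-j+1)!^2}$ for all $j<j_n$. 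Split $Q_{\mu_n}x$ as $x'_n+x''_n$, where $x'_n=\sum_{j=j_n}^{\mu_n}e_j^{*(\mu_n)}(Q_{\mu_n}x)e_j$ is the ``large-coordinate tail'' and $x''_n=\sum_{j=0}^{j_n-1}e_j^{*(\mu_n)}(Q_{\mu_n}x)e_j$ is the negligible head. Note that by Fact \ref{factbisbis} we have $\|Q_{\mu_n}x\|\le M_{a_n}$, and a lower bound $\|Q_{\mu_n}x\|\ge\beta_n$ (depending only on $a_n$, hence on $\mu_n$) follows from the guaranteed large coordinate just as in the proof of Proposition \ref{prop4}.

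The second step is the interpolation. Since $T_{\mu_n}(Q_{\mu_n}x)$ has its first nonzero $e_j$-coordinate at an index $\le j_n+1\le (n-N)a_n$, and $e_{(n-N)a_n}$ lies in the linear span of $e_{j_n+1},\dots,e_{\mu_n}$ (because $(n-N)a_n\le\mu_n$ and $j_n+1\le(n-N)a_n$), Fact \ref{factf} applies: there is a polynomial $p_n$ of degree at most $\mu_n$ with
\[
|p_n|\le\frac{D_{\mu_n}}{|e_{j_n+1}^{*(\mu_n)}(T_{\mu_n}Q_{\mu_n}x)|^{\mu_n-j_n}}\le D_{\mu_n}\,A_n^{(\mu_n-j_n+1)!^2(\mu_n-j_n)}
\]
such that $p_n(T_{\mu_n})\bigl(T_{\mu_n}x'_n\bigr)=e_{(n-N)a_n}$, where we have used that shifting $x'_n$ by $T_{\mu_n}$ produces a vector whose first nonzero coordinate is $e_{j_n+1}^{*(\mu_n)}(T_{\mu_n}Q_{\mu_n}x)=e_{j_n}^{*(\mu_n)}(Q_{\mu_n}x)$ in absolute value, hence bounded below by $A_n^{-(\mu_n-j_n+1)!^2}$. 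Here $D_{\mu_n}$ depends only on $\mu_n$ (equivalently only on $a_n$). It remains to check that the contribution of the head is negligible: exactly as in the estimate following (\ref{eqa}) in the proof of Proposition \ref{prop4}, we bound
\[
\|p_n(T_{\mu_n})T_{\mu_n}x''_n\|\le|p_n|\,\|T_{\mu_n}\|^{\mu_n}\sum_{j=0}^{j_n-1}|e_j^{*(\mu_n)}(Q_{\mu_n}x)|\,\sup_{j}\|e_j\|,
\]
and the sum is dominated by a geometric-type series whose leading term is $A_n^{-(\mu_n-j_n+2)!^2}$; the exponent gap $(\mu_n-j_n+2)!^2-(\mu_n-j_n+1)!^2(\mu_n-j_n+1)\gg 1$ swallows the factor $|p_n|\,\|T_{\mu_n}\|^{\mu_n}$ provided $A_n$ is chosen large enough with respect to $a_n$ and $\|T_{\mu_n}\|$. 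This yields $\|p_n(T_{\mu_n})T_{\mu_n}(Q_{\mu_n}x)-e_{(n-N)a_n}\|<1/a_n$, and setting $C_{\mu_n}:=D_{\mu_n}A_n^{(\mu_n+1)!^2(\mu_n)}$ gives $|p_n|\le C_{\mu_n}$ with $C_{\mu_n}$ depending only on $\mu_n$.

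The main obstacle is purely bookkeeping: one must be careful that the index $j_n+1$ on which the interpolation pivots genuinely lies in $[0,\mu_n]$ and that $e_{(n-N)a_n}$ is reachable, i.e. that $j_n+1\le(n-N)a_n\le\mu_n$ — this is precisely why Proposition \ref{prop6bis} was stated with $j_n\in[0,(n-N)a_n-1]$ rather than $[0,\mu_n]$, paralleling the reason Proposition \ref{prop5} restricted $j$ to $[0,a_n-1]$ in Theorem \ref{th2}. A second point requiring attention is that the various growth conditions on $(A_n)$ — one for Proposition \ref{prop6bis} to hold, and one for the head estimate here to close — are compatible; since both only demand $A_n$ be large with respect to quantities fixed at step $n$ (namely $a_n$, $D_{\mu_n}$ and $\|T_{\mu_n}\|$), they can be met simultaneously by choosing $A_n$ last at each step. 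Everything else is a transcription of the argument already carried out for Proposition \ref{prop4}.
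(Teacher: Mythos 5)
Your proof is correct and follows essentially the same route as the paper: Proposition \ref{prop6bis}, the head/tail splitting at the minimal index $j_{n}$, the $\mu_{n}$-version of Fact \ref{factf} with $\beta_{n},M_{n}$ depending only on step-$n$ data, the same factorial exponent-gap estimate absorbing $|p_{n}|\,\|T_{\mu_{n}}\|^{\mu_{n}}$, and the same choice $C_{\mu_{n}}\sim D_{\mu_{n}}A_{n}^{(\mu_{n}+1)!^{2}(\mu_{n}+1)}$. The only (immaterial) difference is that you pre-compose with $T_{\mu_{n}}$ and interpolate directly onto $e_{(n-N)a_{n}}$, whereas the paper interpolates the tail of $Q_{\mu_{n}}x$ onto $e_{(n-N)a_{n}-1}$ and applies $T_{\mu_{n}}$ at the very end; since $T_{\mu_{n}}$ commutes with $p_{n}(T_{\mu_{n}})$ and $(n-N)a_{n}\le\mu_{n}$, the two computations coincide.
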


\begin{proof}
Let $x$ and $N\ge 1$ be as given. By Proposition \ref{prop6bis} there exist an $n\ge N+2$ and a $j\in [0,(n-N)a_{n}-1]$ such that $$|e_{j}^{*(\mu _{n})}(Q_{\mu _{n}}x)|\ge\frac{1}{A_{n}^{(\mu _{n}-j+1)!^{2}}}$$ where $A_{n}$ depends only on $a_{n}$. So if we write $Q_{\mu _{n}}x$ as $Q_{\mu _{n}}x=\sum_{j=0}^{\mu _{n}}e_{j}^{*(\mu _{n})}(Q_{\mu _{n}}x)e_{j}$, and if $j_{n}$ is the smallest integer in $[0,\mu _{n}]$ such that
$$|e_{j_{n}}^{*(\mu _{n})}(Q_{\mu _{n}}x)|\ge\frac{1}{A_{n}^{(\mu _{n}-j_{n}+1)!^{2}}}$$ then $j_{n}\le (n-N)a_{n}-1$. We have thus
\begin{eqnarray}\label{supl5}
 \left|\left|\sum_{j=0}^{j_{n}-1}e_{j}^{*(\mu _{n})}(Q_{\mu _{n}}x)e_{j}\right|\right|&\le& \sum_{j=0}^{j_{n}-1}\frac{1}{A_{n}^{(\mu _{n}-j+1)!^{2}}}\,\sup_{j=0,\ldots,\mu _{n}}||e_{j}|\\|&\le&\mu _{n}
\,\sup_{j=0,\ldots,\mu _{n}}||e_{j}||\,
\frac{2}{A_{n}^{(\mu _{n}-j_{n}+2)!^{2}}}\cdot\notag
\end{eqnarray}
Now observe that, on the one hand, $$|e_{j}^{*(\mu _{n})}(Q_{\mu _{n}}x)|\ge \frac{1}{A_{n}^{(\mu _{n}-j+1)!^{2}}}$$ for some $j\in [0,(n-N)a_{n}-1]$, and on the other hand 
$||Q_{\mu _{n}}x||\le M_{a_{n}}$ by Fact \ref{factbis}. So we have
 $$M_{a_{n}}\ge ||Q_{\mu _{n}}x||\ge \frac{1}{A_{n}^{(\mu _{n}+1)!^{2}}}\,\frac{1}{\sup_{j=0,\ldots,\mu _{n}}||e_{j}^{*(\mu _{n})}||}\cdot$$ Also, the two quantities $\inf_{r=1,\ldots,n}||e_{ra_{n}-1}||$ and $\sup_{r=1,\ldots,n}||e_{ra_{n}-1}||$ are both positive and finite, and they depend only on $a_{n}$. Thus if we set
 $$\beta _{n}=\inf\left(\frac{1}{A_{n}^{(\mu _{n}+1)!^{2}}}\,\frac{1}{\sup_{j=0,\ldots,\mu _{n}}||e_{j}^{*(\mu _{n})}||}, \inf_{r=1,\ldots,n}||e_{ra_{n}-1}||\right)>0$$ and
 $$M_{n}=\sup\left(M_{a_{n}}, \sup_{r=1,\ldots,n}||e_{ra_{n}-1}||\right)<+\infty,$$ $\beta _{n}$ and $M_{n}$ depend only on $\mu _{n}$, and we have
 $$\beta _{n}\le ||Q_{\mu _{n}}x||\le M_{n}\quad \textrm{ and }\quad
 \beta _{n}\le ||e_{(n-N)a_{n}-1}||\le M_{n}.$$
 Now, the same proof as that of Fact \ref{factf}, starting from the sequences $(\beta _{n})_{n\ge 1}$ and $(M_{n})_{n\ge 1}$ but with $a_{n}$ replaced by $\mu _{n}$, shows that there exist a constant $D_{\mu _{n}}$, depending only on $\mu _{n}$, and a polynomial $p_n$ of degree at most $\mu _{n}$ with $$|p_n|\le D_{\mu _{n}}\,A_{n}^{(\mu _{n}-j_{n}+1)!^{2}(\mu _{n}-j_{n}+1)}$$ such that
 \begin{eqnarray}\label{eqf}
 p_n({T_{\mu _{n}}})\left(\sum_{j=j_{n}}^{\mu _{n}}e_{j}^{*(\mu _{n})}(Q_{\mu _{n}}x)e_{j}\right)=
 e_{(n-N)a_{n}-1}.
 \end{eqnarray}
 Also, by (\ref{supl5}),
 \begin{eqnarray*}
  \left|\left|p_n({T_{\mu _{n}}})\left(\sum_{j=0}^{j _{n}-1}e_{j}^{*(\mu _{n})}(Q_{\mu _{n}}x)e_{j}\right)\right|\right|
  &\le& |p_n|\,||T_{\mu _{n}}||^{\mu _{n}}\,\mu _{n}\,\sup_{j=0,\ldots,\mu _{n}}||e_{j}||\,
  \frac{2}{A_{n}^{(\mu _{n}-j_{n}+2)!^{2}}}\\
  &\le &D'_{\mu _{n}}\frac{A_{n}^{(\mu _{n}-j_{n}+1)!^{2}(\mu _{n}-j_{n}+1)}}{A_{n}^{(\mu _{n}-j_{n}+1)!^{2}(\mu _{n}-j_{n}+2)^{2}}}\\
  \end{eqnarray*}
  where $D'_{\mu _{n}}=2D_{\mu _{n}}||T_{\mu _{n}}||^{\mu _{n}}\,\mu _{n}\,\sup_{j=0,\ldots,\mu _{n}}||e_{j}||$
  is independent of $A_{n}$. Hence
  \begin{eqnarray*}
  \left|\left|p_n({T_{\mu _{n}}})\left(\sum_{j=0}^{j _{n}-1}e_{j}^{*(\mu _{n})}(Q_{\mu _{n}}x)e_{j}\right)\right|\right|
  &\le& D'_{\mu _{n}}\frac{1}{A_{n}^{(\mu _{n}-j_{n}+1)!^{2}((\mu _{n}-j_{n}+2)^{2}-(\mu _{n}-j_{n}+1))}}\\
  &\le&\frac{D'_{\mu _{n}}}{A_{n}}<\frac{1}{||T_{\mu _{n}}||a_{n}}
 \end{eqnarray*}
 if $A_{n}$ is sufficiently large.
Putting this estimate together with the equality (\ref{eqf}), we obtain that
\begin{eqnarray}\label{eqg}
|| p_n({T_{\mu _{n}}})(Q_{\mu _{n}}x)-
 e_{(n-N)a_{n}-1}||<\frac{1}{||T_{\mu _{n}}||a_{n}}\cdot
\end{eqnarray}
Also $$|p_n|\le  D_{\mu _{n}}\,A_{n}^{(\mu _{n}-j_{n}+1)!^{2}(\mu _{n}-j_{n}+1)}\le
 D_{\mu _{n}}\,A_{n}^{(\mu _{n}+1)!^{2}(\mu _{n}+1)}:=C_{\mu _{n}},$$ and $C_{\mu _{n}}$ depends only on $\mu _{n}$ ($A_{n}$ is chosen very large once $\mu _{n}$ is fixed, but $A_{n}$ depends only on the construction until the index $\mu _{n}$). Applying $T_{\mu _{n}}$ to the inequality (\ref{eqg}) yields that
 $$||p_n(T_{\mu _{n}})T_{\mu _{n}}(Q_{\mu _{n}}x)-
 e_{(n-N)a_{n}}||<\frac{1}{a_{n}},$$ which is the conclusion we were looking for.
\end{proof}

The proof of Theorem \ref{th1} follows now rather classical lines. Let $x$ be a vector of $X$ with $||x||=1$, and let $N\ge 1$ be a fixed integer. By Proposition \ref{prop7bis} above there exists an integer $n\ge N+2$ and a \pol\ $p$ of degree less than $\mu _{n}$ such that $|p_n|\le C_{\mu _{n}}$
 and $$||p_n(T_{\mu _{n}})T_{\mu _{n}}(Q_{\mu _{n}}x)-
 e_{(n-N)a_{n}}||<\frac{1}{a_{n}}\cdot$$ A standard argument shows then that, provided $b_{n}$ is large enough,
 \begin{equation}\label{new5}
 ||p_n(T)T(Q_{\mu _{n}}x)-
 e_{(n-N)a_{n}}||<\frac{2}{a_{n}} 
 \end{equation}
(the vector $p_n(T)T(Q_{\mu _{n}}x)-p_n(T_{\mu _{n}})T_{\mu _{n}}(Q_{\mu _{n}}x)$ is supported in the interval $[\mu _{n}+1,2(\mu _{n}+1)]$, which is in the beginning of the \loi\ $[\mu_{n},b_{n}]$). Let now $q_n$ be the \pol\ defined by $q_n(\zeta )=\frac{\zeta ^{b_{n}+1}}{b_{n}}p_n(\zeta )$: we have $|q_n|\le \frac{C_{\mu _{n}}}{b_{n}}$ and the degree of $q_n$ is less than $\mu _{n}+b_{n}+1$, so less than 
$\nu_{n}$. Recall the following fact, which is the reason why the (b)-\woi s were introduced in the construction:
 \begin{fact}\label{fact??}
  If, for each $n\ge 1$, the integer 
  $b_{n}$ is sufficiently large \wrt\ $a_{n}$, we have
  $$||(\frac{T^{b_{n}}}{b_{n}}-I)Ty||\le \frac{1}{\sqrt{b_{n}}}||y||\quad \textrm{for every }n\ge 1 \textrm{ and } y\in F_{\mu _{n}}.$$
 \end{fact}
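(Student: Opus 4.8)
The plan is to follow the proof of Fact~\ref{factg} (compare \cite[Lemma 4.9]{GR}): first one establishes an estimate $\|(\frac{T^{b_n}}{b_n}-I)Ty\|\le\frac{C}{b_n}\|y\|$ for $y\in F_{\mu_n}$, with a constant $C$ depending only on the part of the construction carried out up to the index $\mu_n=na_n$, and then one uses that $b_n$ is chosen \emph{after} $a_n$ (hence after $\mu_n$, and after $C$) to absorb $C$ into $\sqrt{b_n}$. Replacing $a_n$ by $\mu_n$ throughout changes nothing of substance.

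By linearity it suffices to bound $\|(\frac{T^{b_n}}{b_n}-I)Te_i\|$ for each basis vector $e_i$, $0\le i\le\mu_n$, of $F_{\mu_n}$, since $y=\sum_{i=0}^{\mu_n}e_i^{*(\mu_n)}(y)e_i$ with $|e_i^{*(\mu_n)}(y)|\le C_{\mu_n}\|y\|$, where $C_{\mu_n}:=\sup_{0\le i\le\mu_n}\|e_i^{*(\mu_n)}\|$ depends only on $\mu_n$. For $0\le i\le\mu_n-1$ one has $Te_i=e_{i+1}$ and $T^{b_n}e_{i+1}=e_{i+1+b_n}$, and $i+1+b_n$ lies in the first (b)-\woi\ $[b_n+1,b_n+\mu_n]$, on which the defining relation $e_j=f_j+b_n e_{j-b_n}$ holds; hence $e_{i+1+b_n}=f_{i+1+b_n}+b_n e_{i+1}$ and
\[
\Bigl(\frac{T^{b_n}}{b_n}-I\Bigr)Te_i=\frac{1}{b_n}\,f_{i+1+b_n},\qquad\Bigl\|\Bigl(\frac{T^{b_n}}{b_n}-I\Bigr)Te_i\Bigr\|=\frac{1}{b_n},
\]
the basis $(f_j)_{j\ge0}$ being normalized. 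For $i=\mu_n$ one has $Te_{\mu_n}=e_{\mu_n+1}$, and both $e_{\mu_n+1}$, which lies in the \loi\ $[\mu_n+1,b_n]$, and $e_{\mu_n+1+b_n}=T^{b_n}e_{\mu_n+1}$, which lies in the \loi\ $[b_n+\mu_n+1,2b_n-1]$, are equal to $2^{-\frac12\sqrt{b_n}}f_j$ by the definition of the vectors $e_j$ on these lay-off intervals; thus $\|(\frac{T^{b_n}}{b_n}-I)Te_{\mu_n}\|\le 2\cdot 2^{-\frac12\sqrt{b_n}}$, which is negligible.

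Summing these estimates yields, for every $y\in F_{\mu_n}$,
\[
\Bigl\|\Bigl(\frac{T^{b_n}}{b_n}-I\Bigr)Ty\Bigr\|\le C_{\mu_n}\Bigl(\frac{\mu_n}{b_n}+2\cdot 2^{-\frac12\sqrt{b_n}}\Bigr)\|y\|\le\frac{C'_{\mu_n}}{b_n}\,\|y\|,
\]
where $C'_{\mu_n}$ depends only on $\mu_n$. It then suffices to choose $b_n$ large enough with respect to $a_n$ that $b_n\ge(C'_{\mu_n})^2$, so that $\frac{C'_{\mu_n}}{b_n}\le\frac{1}{\sqrt{b_n}}$, and the announced inequality follows. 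There is no genuine obstacle here; the only point needing a line of bookkeeping is the edge case $i=\mu_n$, where $T$ pushes $e_{\mu_n}$ out of $F_{\mu_n}$, but the relevant vectors then sit at the very beginning of lay-off intervals of length comparable to $b_n$ and are therefore exponentially small, so this case only improves the bound.
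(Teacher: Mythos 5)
Your proof is correct and is exactly the intended argument: the paper states this fact without proof (it is the analogue of Fact \ref{factg}, whose computation comes from \cite{GR}), and your expansion of $y$ in the $e_j$-basis of $F_{\mu_n}$, the identity $(\frac{T^{b_n}}{b_n}-I)Te_i=\frac{1}{b_n}f_{i+1+b_n}$ coming from the defining relation $e_j=f_j+b_ne_{j-b_n}$ on the first (b)-working interval, and the absorption of the $\mu_n$-dependent constant by choosing $b_n$ after $a_n$ are precisely what is meant. The edge case $i=\mu_n$, where the relevant vectors sit at the start of lay-off intervals and are of norm $2^{-\frac{1}{2}\sqrt{b_n}}$, is also handled correctly.
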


Applying Fact \ref{fact??} to the vector $Q_{\mu _{n}}x$ (and remembering that $||x||=1$), we get that
$$||(\frac{T^{b_{n}}}{b_{n}}-I)p_n(T)T(Q_{\mu _{n}}x)||\le |p_n|\,||T||^{\mu _{n}}\frac{1}{\sqrt{b_{n}}}\,||Q_{\mu _{n}}||\le \frac{D^{''}_{\mu _{n}}}{\sqrt{b_{n}}}$$ where $D^{''}_{\mu _{n}}$ depends only on $\mu _{n}$. Hence we can ensure by taking $b_{n}$ sufficiently large that
$$||(\frac{T^{b_{n}}}{b_{n}}-I)p_n(T)T(Q_{\mu _{n}}x)||<\frac{1}{a_{n}},$$ which in turn combined with (\ref{new5}) yields that
\begin{eqnarray}\label{eqh}
 ||q_n(T)(Q_{\mu _{n}}x)-e_{(n-N)a_{n}}||<\frac{3}{a_{n}}\cdot
\end{eqnarray}
Since $|q_n|\le \frac{C_{\mu _{n}}}{b_{n}}$, we can ensure by taking $b_{n}$ sufficiently large at each step $n$ that $|q|\le 2$. Recalling that the degree of $q$ is less than $\mu _{n}+b_{n}+1<\nu _{n}$, this implies that if we choose $\varepsilon_n$ in the construction of the (c)-part to be less than $4^{-\nu _{n}}$, there exists a $k\in [1,k_{n}]$ such that $|p_{k,n}-q_n|<4^{-\nu _{n}}$. Then, since the degree of $p_{k,n}$ is less than $\nu _{n}$, we have
$$||p_{k,n}(T)(Q_{\mu _{n}}x)-q_n(T)(Q_{\mu _{n}}x)||\le 4^{-\nu _{n}}||T||^{\nu _{n}}||Q_{\mu _{n}}||\le 2^{-\nu _{n}}||Q_{\mu _{n}}||<\frac{1}{a_{n}}$$ if $b_{n}$ is sufficiently large.
So plugging this into (\ref{eqh}) yields that
\begin{eqnarray}\label{eqi}
 ||p_{k,n}(T)(Q_{\mu _{n}}x)-e_{(n-N)a_{n}}||<\frac{4}{a_{n}}\cdot
\end{eqnarray}
Our goal is now to replace $Q_{\mu _{n}}x$ by $x$ in this inequality. Proposition \ref{prop5bis} gives suitable estimates for the norm of the difference $T^{c_{k,n}}x-p_{k,n}(T)(Q_{\nu _{n}}x)$, and these yield that
\begin{eqnarray}\label{eqj}
 ||T^{c_{k,n}}x-e_{(n-N)a_{n}}||&\le&||T^{c_{k,n}}x-p_{k,n}(T)(Q_{\nu _{n}}x)||\notag\\
& +&||p_{k,n}(T)(Q_{\nu _{n}}x)-p_{k,n}(T)(Q_{\mu _{n}}x)||\notag\\
 &+&||p_{k,n}(T)(Q_{\mu _{n}}x)-e_{(n-N)a_{n}}||\\
 &<& \frac{5}{a_{n}}+103\,||x-\pi _{[0,\nu _{n}]}x||+||p_{k,n}(T)(Q_{\nu _{n}}-Q_{\mu _{n}})x||.\notag
\end{eqnarray}
Just as in the proof of Theorem \ref{th2}, the only term we need to estimate now is the last one. We have
\begin{eqnarray}\label{eqk}
||p_{k,n}(T)(Q_{\nu _{n}}-Q_{\mu _{n}})x||&=&||p_{k,n}(T)\pi_{[\mu _{n}+1,\nu _{n}]}x||\notag\\
&\le& ||(p_{k,n}(T)-q_n(T))\pi_{[\mu _{n}+1,\nu _{n}]}x||\notag\\
&+&||q_n(T)\pi_{[\mu _{n}+1,\nu _{n}]}x||\\
&\le& 2^{-\nu _{n}}+|p_n|2^{\mu _{n}}\frac{1}{b_{n}}||T^{b_{n}+1}\pi_{[\mu _{n}+1,\nu _{n}]}x||\notag\\
&\le&2^{-\nu _{n}}+\frac{C_{\mu _{n}}2^{\mu _{n}}}{b_{n}}||T^{b_{n}+1}\pi_{[\mu _{n}+1,\nu _{n}]}x||.\notag
\end{eqnarray}
But the successive shades of the (b)-\woi s ensure again that $$||T^{b_{n}+1}\pi_{[\mu _{n}+1,\nu _{n}]}y||\le 2||y||\quad \textrm{for every } y\in X$$ (i.e. that the analogue of Proposition \ref{prop3} is still true in this context), and thus we obtain if $b_{n}$ is large enough that
$$||p_{k,n}(T)(Q_{\nu _{n}}-Q_{\mu _{n}})x||<\frac{2}{a_{n}},$$ for instance. Finally,
$$ ||T^{c_{k,n}}x-e_{(n-N)a_{n}}||<\frac{7}{a_{n}}+103\,||x-\pi _{[0,\nu _{n}]}x||.$$ We are now almost done: by (\ref{new2}) of Fact \ref{fact1bis} we have $$||e_{(n-N)a_{n}}-e_{0}||<\frac{1}{a_{N}},$$ and thus we get that there exist $n\ge N+2$ and $k\in [1,k_{n}]$ such that
\begin{eqnarray*}
||T^{c_{k,n}}x-e_{0}||&<&\frac{1}{a_{N}}+\frac{7}{a_{n}}+103\,||x-\pi _{[0,\nu _{n}]}x||\\
&\le& \frac{8}{a_{N}}+103\,\sup_{n>N}||x-\pi _{[0,\nu _{n}]}x||.
\end{eqnarray*}
Since this holds true for any 
$N\ge 1$, we eventually obtain that for every $\varepsilon >0$ there exists an integer $c\ge 1$ such that $||T^{c}x-e_{0}||<\varepsilon $. Since $e_{0}$ is a \hy\ vector for $T$ (recall that this follows from Fact \ref{factbbis}), $x$ is \hy\ as well.
So any vector of norm $1$ is \hy\ for $T$, and obviously any \nz\ vector $x$ is \hy\ too.
This finishes the proof of Theorem \ref{th1}.

\begin{remark}
The reader may have observed that there is a technical difference in the proofs of Theorems \ref{th2} and \ref{th1} in the way we incorporate the vectors $z_{j}$ (or $z_{j}^{(d)}$, $d\ge 1$) of $Z$ (or $Z^{(d)}$) in the construction. In the proof of Theorem \ref{th2} the important vector at step $n$ is $z_{\kappa_n}$, which appears in the definition of $e_{a_{n}}$; the ``useless'' vectors $z_{j}$ for $\kappa_{n-1}<j<\kappa_{n}$ are incorporated before it, but play no role in the construction. But the advantage of this approach is that the sequence $(\alpha_n)_{n\ge 1}$ is bounded away from $0$.  In the proof of Theorem \ref{th1}, all vectors $z_{j}^{(d)}$ play a similar role and are brought in the construction in the same way. On the other hand, we simply know that the inferior limit of the sequence $(\alpha_n)_{n\ge 1}$ is zero. There is no deep reason for this difference, and we could also have carried out the construction of Theorem \ref{th2} in the same way as that of Theorem \ref{th1}, with at each step $n$ an (a)-\woi\ reduced to the singleton $\{a_{n}\}$. The present proof of Theorem \ref{th2} is however a bit simpler technically, and it also has the advantage of illustrating how one can incorporate useless vectors in a Read's type construction: it is exactly thanks to this kind of procedure that one can prove Theorem \ref{th2} (resp. Theorem \ref{th1}) for spaces of the form 
$Y \oplus\ell_{p}\oplus Z$ or $Y \oplus c_{0}\oplus Z $ (resp. $Y \oplus\bigoplus_{\ell_{p}}Z$ or $Y \oplus\bigoplus_{c_{0}}Z$), where $Y$ is any separable Banach space.
\end{remark}

\section{Some Hilbert space results}\label{sec5}

We begin this section with the proof of Theorem \ref{th3}, which states the following:  if there exists a bounded \op\ $T$ on the complex Hilbert space $\ell_{2}$ which has no eigenvector and is such that, for some  $\varepsilon \in (0,1)$,
the distance of the space $M_{x}=\overline{\textrm{sp}}[T^{n}x \textrm{ ; } n\ge 0]$ to a fixed norm $1$ vector $g_{0}$ is less than $\varepsilon $ 
for every \nz\  vector $x\in\ell_{2}$, then there exists a bounded \op\ $T'$ on $\ell_{2}$ which has no non-trivial \inv\ closed subspace. The assumption of Theorem \ref{th3} is equivalent to supposing that
the distance of any closed invariant subspace
to  $g_{0}$ is less than $\varepsilon $.

\subsection{Proof of Theorem \ref{th3}}

Consider the set $\mathcal{L}_1$ of \nz\ closed subspaces of $\ell_{2}$ which are \inv\ by $T$, and set $d_1=\sup_{M \in \mathcal{L}_1}d(M,g_0)$. By Property (P3), $d_1\leq \varepsilon $. Choose $L_1\in \mathcal{L}_1$ such that $d(L_1,g_0)\geq\frac{1}{2}d_1$. Then consider the set $\mathcal{L}_2$ of \nz\ closed subspaces of $L_1$ which are \inv\ by $T$, and set $d_2=\sup_{M\in \mathcal{L}_2}d(M,g_0)$. Again $d_2\leq \varepsilon $, and since $\mathcal{L}_2\subseteq \mathcal{L}_1$, we have $d_2\leq d_1$. Choose $L_2\in \mathcal{L}_2$ such that $d(L_2,g_0)\geq\frac{3}{4}d_2$. Since $L_2$ is a closed subspace of $L_1$, we have
$d(L_2,g_0)\geq d(L_1,g_0)$. Continuing in this fashion, we construct by induction a decreasing sequence $(L_n)_{n\geq 1}$ of closed subspaces of $\ell_{2}$ such that if $\mathcal{L}_n$ is the set of \nz\ closed subspaces of $L_{n-1}$ which are \inv\ by $T$, and $d_n=\sup_{M\in \mathcal{L}_n}d(M,g_0)$, then $L_n\in \mathcal{L}_n$ is chosen such that $$d(L_n,g_0)\geq\frac{2^n-1}{2^n}d_n.$$ The sequence $(d_n)_{n\geq 1}$ decreases to its limit $\inf_{n\geq 1}d_n$, and $d_n\leq\varepsilon $ for each $n\ge 1$ while the sequence $(d(L_n,g_0))_{n\geq 1}$ is increasing. For every $n\geq 1$ we have $$d_n\geq d(L_n,g_0)\geq \frac{2^n-1}{2^n}d_n\cdot$$
\par\smallskip
For any $n\geq 1$ let $x_n\in L_n$ be such that $$||x_n-g_0||\leq
\frac{2^n+1}{2^n} d(L_{n},g_{0})\leq
\frac{2^n+1}{2^n}d_n.$$ Since the sequence $(x_n)_{n\ge 1}$ is bounded in norm, it admits a sub-sequence $(x_{n_k})_{k\ge 1}$ which converges to some vector $x_0\in \ell_{2}$ in the weak topology of $\ell_{2}$. Since 
\begin{equation}\label{new6}
 ||x_0-g_0||\leq \liminf_{n_k\rightarrow +\infty} ||x_{n_k} -{g_0} ||\leq \liminf_{n_k\rightarrow +\infty} \frac{2^{n_k}+1}{2^{n_k}}d_{n_k}
\end{equation}
 and $d_{n_k}\leq\varepsilon <1$ for every $n\geq 1$, (\ref{new6}) plus the fact that $||g_{0}||=1$  imply that $x_0$ is \nz, and thus $L=M_{x_0}$ is a non-zero closed subspace of $\ell_{2}$ which is \inv\ by $T$.
\par\smallskip
 Let us now compute the distance $d(L, g_0)$ of $L$ to the vector $g_{0}$: since $(L_n)_{n\geq 1}$ is a decreasing sequence of subspaces and $x_{n_k}\in L_{{n_k}}$ for each $k$, we have that for any $n\geq 1$, $x_{n_k}$ belongs to $L_n$ for $k$ large enough. But now, since $L_n$ is in particular weakly closed this implies, making $k$ tend to infinity, that $x_{0}$ belongs to $L_n$. Hence $L\subseteq L_n$ for every $n\geq 1$, and thus
$$d(L,g_0)\geq d(L_n,g_0) \geq \frac{2^n-1}{2^n}d_n.$$ But since $x_{0}\in L_{n}$ we also have $L\in \mathcal{L}_n$ for every $n\geq 1$, so $d(L,g_0)\leq d_n$. Making $n$ tend to infinity yields that $d(L,g_0)=\inf_{n\geq 1}d_n=\lim _{n\rightarrow +\infty}d_n$. Plugging this into (\ref{new6}), we get that $||{x_{0}} -{g_0} ||\le d(L, g_0)$. Also we have that $||{x_{0}} -{g_0} ||\ge d(L, g_0)$ (since $x_{0}\in L$), and thus $||{x_{0}} -{g_0} ||= d(L, g_0)$.
\par\smallskip
 We are now going to show that the \op\ induced by $T$ on $L$ has no \nt\ \inv\ closed subspace. 
Suppose that $M$ is a \nz\ closed \inv\ subspace of $L$. Since $M\subseteq L$ we have $d(M,g_{0})\ge d(L,g_{0})$. But $M\subseteq L_{n}$ for every $n$, so $M\in \mathcal{L}_{n}$, and thus $d(M,g_{0})\le d_{n}$ for each $n\ge 1$. It follows from this that
$d(M, g_0)=\inf_{n\geq 1}d_n$, i.e. $d(L, g_0)=d(M, g_0)$. Let now $y_0\in M$ be such that
$||y_0-g_0||=d(M, g_0)=d(L, g_0)=||x_0-g_0||$. Then $$||\frac{x_0+y_0}{2}-g_0||\leq d(L, g_0).$$ But now since $y_0$ belongs to $M$ which is a closed subspace of $ L$, $\frac{x_0+y_0}{2}$ belongs to $L$, and as a consequence we get that  $$||\frac{x_0+y_0}{2}-g_0||\ge d(L,g_0).$$
Hence $$||\frac{x_0+y_0}{2}-g_0||=||{x_0}-g_0||=||{y_0}-g_0||= d(L,g_0).$$
Since the norm on $\ell_{2}$ is strictly convex, $x_0$ and $y_0$ must be equal. Hence $M_{x_0}=M_{y_0}=L$, and since $M$ contains $M_{y_0}$ we conclude that $M=L$.
\par\smallskip
Hence we have shown that the \op\ induced by $T$ on $L$ has no \nt\ \inv\ closed subspace.
So either $L$ is a one-dimensional space, which is ruled out by our assumption that $T$ has no eigenvector, or $L$ is infinite-dimensional. Hence $L$ is isometric to $\ell_{2}$, and this finishes the proof of Theorem \ref{th3}.

\subsection{Proof of Corollary \ref{cor3bis}} It suffices to observe that the assumption that Property (P3) holds true prevents the \op\ $T$ from having eigenvectors. Indeed, suppose that $x_{0}$ is an eigenvector of $T$, with $Tx_{0}=\lambda x_{0}$ for some scalar $\lambda $. The set $\Gamma  =\{\alpha \in\C \textrm{ ; } ||\alpha x_{0}-g_{0}||\le \varepsilon \}$ is non-empty (by Property (P3)), compact, and does not contain $0$. It follows that there exists an $R>0$ such that $\frac{1}{R}\le |\alpha |\le R$ for any $\alpha \in \Gamma  $. Property (P3) applied to the vectors $\mu x_{0}$, $\mu \in\C\setminus\{0\}$, implies that for each $\mu \not =0$ there exists an $n\ge 0$ such that
$\frac{1}{R}\le |\mu \lambda ^{n}|\le R$. Taking $|\mu |>R$ implies that $|\lambda |< 1$, while taking $|\mu |<\frac{1}{R}$ implies that $|\lambda |>1$, which is contradictory. Hence $T$ has no eigenvector, and Corollary \ref{cor3bis} follows from Theorem \ref{th3}.

\subsection{Proof of Theorem \ref{th3bis}}
The proof of Theorem \ref{th3bis} is a mix of the techniques used in the proofs of Theorems \ref{th1} and \ref{th2}. The \op\ $T$ is constructed on the space $H=\ell_{2}\oplus\ell_{2}$. We denote by $(g_{j})_{j\ge 0}$ the canonical basis of the first copy of the space $\ell_{2}$, and by $(z_{j})_{j\ge 1}$ the canonical basis of the second copy of $\ell_{2}$. The sequence 
 $(z_{j})_{j\ge 1}$ will play the role of the non semi-boundedly complete basis of $Z$ in the proof of Theorem \ref{th2}, but of course it is now boundedly complete in $\ell_{2}(\N)$. Fix $\varepsilon \in (0,1)$, and for each $j\ge 1$ denote by $\alpha _{j}$ the scalar $\alpha _{j}=\frac{\sqrt{3}\varepsilon }{\pi}\,\frac{1}{j}$. We have
 $$\Bigl|\Bigl|\sum_{n\ge 1}\alpha _{j}z_{j}\Bigr|\Bigr|=\left(\sum_{j\ge 1}\frac{3\varepsilon ^{2}}{\pi^{2}}\,\frac{1}{j^{2}}\right)^{\frac{1}{2}}=\frac{\varepsilon }{\sqrt{2}}<\varepsilon .$$
Set $\kappa_{j}=j$ for each $j\ge 1$.
With these choices of the sequences $(z_{j})_{j\ge 1}$, $(\alpha _{j})_{j\ge 1}$ and $(\kappa_{j})_{j\ge 1}$, we construct the \op\ $T$ exactly as in the proof of Theorem \ref{th2} (in particular $e_{0}=g_{0}$). Then $T$ is a bounded \op\ on $H$, and we have the following obvious fact:

\begin{fact}\label{factnewA}
For every $n\geq 1$ we have
$$e_{a_{n}}=\sum_{j=1}^{n}\alpha _{j}z_{{j}}+e_{0},\quad\textrm{so that}\quad||e_{a_{n}}-e_{0}||<\varepsilon .$$ The sequence $(e_{a_{n}})_{n\ge 1}$ converges in norm in $H$ to the vector $u_{0}+e_{0}$, where $$u_{0}=\frac{\sqrt{3}\varepsilon }{\pi}\sum_{j\ge 1}\frac{1}{j}z_{j}.$$
\end{fact}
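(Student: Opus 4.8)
The plan is to read off both assertions directly from the construction of $T$ once it is specialised to the data $\kappa_{j}=j$ and $\alpha_{j}=\frac{\sqrt{3}\,\varepsilon}{\pi}\cdot\frac{1}{j}$. First I would record that with $\kappa_{j}=j$ one has $\kappa_{n}-\kappa_{n-1}=1$ for every $n\ge 1$, so that the (a)-working interval $[a_{n}-(\kappa_{n}-\kappa_{n-1}-1),a_{n}]$ of step $n$ degenerates to the singleton $\{a_{n}\}$. Hence the only relation involving the (a)-part is the one defining $e_{a_{n}}$, namely $e_{a_{n}}=\alpha_{n}f_{a_{n}}+e_{a_{n-1}}=\alpha_{n}z_{\kappa_{n}}+e_{a_{n-1}}=\alpha_{n}z_{n}+e_{a_{n-1}}$. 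Iterating this recursion down to the convention $e_{a_{0}}=e_{0}=g_{0}$ (with $a_{0}=0$) gives $e_{a_{n}}=\sum_{j=1}^{n}\alpha_{j}z_{j}+e_{0}$, which is exactly Fact~\ref{facta} in the present case. The inequality $||e_{a_{n}}-e_{0}||<\varepsilon$ is then immediate, since $||e_{a_{n}}-e_{0}||=||\sum_{j=1}^{n}\alpha_{j}z_{j}||\le ||\sum_{j\ge 1}\alpha_{j}z_{j}||=\varepsilon/\sqrt{2}<\varepsilon$, using that the partial sums of a series with positive, mutually orthogonal terms have non-decreasing norms together with the computation carried out just before the statement.

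For the convergence claim I would use that $(z_{j})_{j\ge 1}$ is the canonical orthonormal basis of the second copy of $\ell_{2}$. Thus for $m<n$ we have $||\sum_{j=m+1}^{n}\alpha_{j}z_{j}||^{2}=\sum_{j=m+1}^{n}\alpha_{j}^{2}=\frac{3\varepsilon^{2}}{\pi^{2}}\sum_{j=m+1}^{n}\frac{1}{j^{2}}$, which tends to $0$ as $m,n\to+\infty$ because $\sum_{j\ge 1}j^{-2}<+\infty$. Consequently the partial sums $\sum_{j=1}^{n}\alpha_{j}z_{j}$ form a Cauchy sequence in $\ell_{2}$ and converge in norm to $u_{0}:=\sum_{j\ge 1}\alpha_{j}z_{j}=\frac{\sqrt{3}\,\varepsilon}{\pi}\sum_{j\ge 1}\frac{1}{j}z_{j}$. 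Combining this with the identity $e_{a_{n}}=\sum_{j=1}^{n}\alpha_{j}z_{j}+e_{0}$ yields at once that $e_{a_{n}}\to u_{0}+e_{0}$ in norm.

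There is essentially no obstacle to overcome here: the whole point of the choices $\kappa_{j}=j$ and $\alpha_{j}\sim 1/j$ is precisely to make the (a)-part reduce to a single index at each step and, crucially, to force the series $\sum_{j}\alpha_{j}z_{j}$ to be norm-convergent in $\ell_{2}$ --- in sharp contrast with the non semi-boundedly complete setting of Theorem~\ref{th2}, where the \emph{divergence} of the analogous series was what drove the argument. The only minor point requiring care is the bookkeeping convention $e_{a_{0}}=e_{0}=g_{0}$, $a_{0}=0$, which guarantees that the recursion for $e_{a_{n}}$ terminates correctly; everything else is a one-line orthogonality estimate.
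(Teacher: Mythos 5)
Your proof is correct and follows exactly the route the paper intends: the identity $e_{a_{n}}=\sum_{j=1}^{n}\alpha_{j}z_{j}+e_{0}$ is the specialisation of Fact \ref{facta} to $\kappa_{j}=j$ (obtained by iterating the defining relation of the (a)-part with the convention $a_{0}=0$, $e_{a_{0}}=e_{0}$), and the norm bound and the convergence to $u_{0}+e_{0}$ follow from orthonormality of $(z_{j})_{j\ge 1}$ and square-summability of $\alpha_{j}=\frac{\sqrt{3}\varepsilon}{\pi j}$, which is precisely why the paper states this fact without proof. No gaps to report.
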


The convergence of the sequence $(e_{a_{n}})_{n\ge 1}$ is the most crucial difference with what happens in the proof of Theorem \ref{th2}. Since $||e_{a_{n}+1}||\ls 2^{-\frac{1}{2}\sqrt{b_{n}}}$ for each $n\ge 1$, it follows that $T(u_{0}+e_{0})=0$. The vector $u_{0}+e_{0}$ being obviously \nz, $T$ is not an injective \op.
\par\smallskip
Let us now review the various steps in the proof of Theorem \ref{th2}, and see whether they still hold true or not in this new context. Fact \ref{factb} is obviously still true, while Fact \ref{factbis} is wrong since the sequence $(\alpha _{j})_{j\ge 1}$ is not bounded away from zero. But the same proof shows that

\begin{fact}\label{factnewB}
There exists a positive constant $M$ such that $||Q_{\nu _{n}}||\le n\, M$ for each $n\ge 1$.
\end{fact}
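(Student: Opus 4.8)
The plan is to reproduce the computation of Fact \ref{factbis} almost verbatim, keeping careful track of the one quantity in the estimate which is no longer uniformly bounded, namely $1/\alpha_{n+1}$. The first thing to observe is that since we have set $\kappa_{j}=j$ for each $j\geq 1$, we have $\kappa_{n}-\kappa_{n-1}-1=0$, so that at each step $n$ the (a)-working interval $J_{n}=[a_{n}-(\kappa_{n}-\kappa_{n-1}-1),a_{n}]$ is the singleton $\{a_{n}\}$, and the definition of the vectors $e_{j}$ for $j$ in an (a)-working interval reduces to $e_{a_{n}}=\alpha_{n}f_{a_{n}}+e_{a_{n-1}}=\alpha_{n}z_{n}+e_{a_{n-1}}$. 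Consequently the projection $Q_{\nu_{n}}$ is defined here exactly as in the proof of Theorem \ref{th2}, and for every $x\in H$ we have
$$Q_{\nu _{n}}x=\pi_{[0,\nu _{n}]}x-\frac{1}{\alpha _{n+1}}f^{*}_{a_{n+1}}(x)\,e_{a_{n}}.$$

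Next I would estimate the operator norm termwise, just as in Fact \ref{factbis}:
$$||Q_{\nu _{n}}||\leq ||\pi_{[0,\nu _{n}]}||+\frac{1}{\alpha _{n+1}}\,||f^{*}_{a_{n+1}}||_{H^{*}}\,||e_{a_{n}}||.$$
Here $||\pi_{[0,\nu _{n}]}||\leq C$, where $C$ is the basis constant of $(f_{j})_{j\geq 0}$; moreover $||f^{*}_{a_{n+1}}||_{H^{*}}=||z^{*}_{\kappa_{n+1}}||_{\ell_{2}}=1$ since $(z_{j})_{j\geq 1}$ is the (orthonormal) canonical basis of the second copy of $\ell_{2}$; and $||e_{a_{n}}||\leq 1+\varepsilon$ by Fact \ref{factnewA}. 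Finally, by the very choice of the weights, $\alpha _{n+1}=\frac{\sqrt{3}\varepsilon}{\pi}\,\frac{1}{n+1}$, so that $1/\alpha _{n+1}=\frac{\pi}{\sqrt{3}\varepsilon}(n+1)$. Putting these together,
$$||Q_{\nu _{n}}||\leq C+\frac{\pi(1+\varepsilon)}{\sqrt{3}\varepsilon}(n+1)\leq n\left(C+\frac{2\pi(1+\varepsilon)}{\sqrt{3}\varepsilon}\right)$$
for every $n\geq 1$, and setting $M:=C+\frac{2\pi(1+\varepsilon)}{\sqrt{3}\varepsilon}$ gives the desired bound $||Q_{\nu _{n}}||\leq nM$.

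There is no genuine obstacle here: the statement is a routine adaptation of Fact \ref{factbis}, and the only point worth emphasizing — which is exactly why this weaker conclusion is stated — is that, unlike in the setting of Theorem \ref{th2}, the sequence $(\alpha_{j})_{j\geq 1}$ is not bounded away from $0$; it decays like $1/j$, so the factor $1/\alpha_{n+1}$ forces the linear growth in $n$ and no uniform bound on $||Q_{\nu_{n}}||$ is available. This extra factor $n$ is harmless for the remainder of the argument, in the same way that the step-dependent bounds of Fact \ref{factbisbis} were harmless in the proof of Theorem \ref{th1}.
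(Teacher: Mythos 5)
Your proof is correct and follows the paper's own route: the paper simply remarks that ``the same proof'' as Fact \ref{factbis} applies, namely writing $Q_{\nu_n}x=\pi_{[0,\nu_n]}x-\frac{1}{\alpha_{n+1}}f^{*}_{a_{n+1}}(x)\,e_{a_n}$ and estimating termwise, with the only change being that $1/\alpha_{n+1}=\frac{\pi}{\sqrt{3}\varepsilon}(n+1)$ now grows linearly instead of being bounded by $1/\delta_0$. Your explicit tracking of this factor, and the observation that the linear bound suffices because only an $a_n$-dependent control is needed, matches the paper's intent exactly.
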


The fact that the norm of $Q_{\nu _{n}}$ is a bit larger than previously does not change anything to the relevant parts  of the proof (actually, and this was used in the proof of Theorem \ref{th1}, the only important thing is that the norm of $Q_{\nu _{n}}$ is controlled for each $n\ge 1$ by a quantity which depends only on $a_{n}$). Thus, going again through the  proof, we see that Proposition \ref{prop2} still holds true in this Hilbertian setting. 

Now we come to an important difference between the non-reflexive and the Hilbertian cases, which is that, for a vector $x\in H$, $Q_{\nu _{n}}x$ does not necessarily tend to $x$ as $n$ tends to infinity. Indeed, since
\begin{eqnarray*}
 (I-Q_{\nu _{n}})x&=&\frac{1}{\alpha _{n+1}}f^{*}_{a_{n+1}}(x)e_{a_{n+1}}+\sum_{j>\nu _{n},\, j\not =a_{n+1}}f_{j}^{*}(x)f_{j}\\
 &=& \frac{\pi}{\sqrt{3}\varepsilon }\, (n+1) f^{*}_{a_{n+1}}(x)e_{a_{n+1}}+\sum_{j>\nu _{n},\, j\not =a_{n+1}}f_{j}^{*}(x)f_{j},
\end{eqnarray*}
$(I-Q_{\nu _{n}})x$ converges to zero \ifff\ $ (n+1)| f^{*}_{a_{n+1}}(x)|$ tends to zero as $n$ tends to infinity, which is of course not always the case (actually, the important point is that $\inf_{n\ge 1}||(I-Q_{\nu _{n}})x||$ may be strictly positive). So Facts \ref{factc} and \ref{factd} are not true anymore here. This
is a difficulty we already faced in the proof of Theorem \ref{th1}, and so at this point we shift from the proof of Theorem \ref{th2} to the proof of Theorem \ref{th1}, and observe that Proposition \ref{prop5bis} holds true here. Also, it is not difficult to check, thanks to the remark above, that if $x\in H$ is a vector such that $ (n+1)| f^{*}_{a_{n+1}}(x)|$ tends to zero as $n$ tends to infinity, then the closure of the orbit of $x$ under the action of $T$ coincides with the closure of the linear span of this orbit. In other words, Property (P1) is true for these vectors $x$. In particular, the vector $e_{0}=g_{0}$ is still a \hy\ vector for $T$.

The most important step in the proof of Theorem \ref{th3bis} is the following proposition:

\begin{proposition}\label{propnewC}
If $(A_{n})_{n\geq 1}$ is a  sequence of positive real numbers such that each $A_{n}$ is sufficiently large \wrt\ $a_{n}$, the following statement holds true:

for any vector $x$ in $H$ with $||x||=1$  which is not colinear to $x_{0}=u_{0}+e_{0}$, and for any integer $n_{0}\geq 1$, there exist an $n\geq n_{0}$ and a $j\in [0,a_{n}-1]$ such that
$$|e_{j}^{*(a_n)}(Q_{a_{n}}x)|> \frac{1}{A_{n}^{(a_n-j+1)!^2}}\cdot$$
\end{proposition}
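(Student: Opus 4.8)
The plan is to mimic the proof of Proposition \ref{prop5} almost verbatim, the only change being the conclusion reached at the end of the argument by contradiction. In the non-reflexive setting that argument forced a norm-one vector $x$ to equal $0$, which was absurd because the series $\sum\alpha_{j}z_{\kappa_{j}}$ diverges; here the analogous series converges (to $u_{0}$, by Fact \ref{factnewA}), so the same computation will instead force $x$ to be a scalar multiple of $x_{0}=u_{0}+e_{0}$, and it is exactly this direction that the statement excludes.

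Concretely, I would argue by contradiction: fix a sequence $(A_{n})_{n\ge1}$ growing fast enough with respect to $a_{n}$ (the quantitative requirement being the same as in the proof of Proposition \ref{prop5}), and suppose there is a vector $x\in H$ with $||x||=1$, not colinear to $x_{0}$, and an integer $n_{0}\ge1$ such that $|e_{j}^{*(a_n)}(Q_{a_{n}}x)|\le \frac{1}{A_{n}^{(a_n-j+1)!^2}}$ for all $n\ge n_{0}$ and all $j\in[0,a_{n}-1]$. First I would observe, from the explicit form of the projection $Q_{a_{n}}$ exactly as in Proposition \ref{prop5}, that $e_{j}^{*(a_n)}(Q_{a_{n}}x)=e_{j}^{*(a_n)}(x)$ for every $j\in[0,a_{n}-1]$; hence $||\pi_{[0,a_{n}]}x-e_{a_{n}}^{*(a_n)}(x)\,e_{a_{n}}||\le \frac{1}{A_{n}}\sum_{j=0}^{a_{n}-1}||e_{j}||$, which can be arranged to be at most $\frac{1}{a_{n}}$ provided $A_{n}$ is large enough with respect to $a_{n}$. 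Since $\pi_{[0,a_{n}]}x\to x$, this yields $e_{a_{n}}^{*(a_n)}(x)\,e_{a_{n}}\to x$ in norm.

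Next I would apply the coordinate functional $f_{0}^{*}$. By Fact \ref{factnewA} we have $e_{a_{n}}=e_{0}+\sum_{j=1}^{n}\alpha_{j}z_{j}$ with $z_{j}$ in the second copy of $\ell_{2}$, so $f_{0}^{*}(e_{a_{n}})=1$ and therefore $e_{a_{n}}^{*(a_n)}(x)\to f_{0}^{*}(x)=:\gamma$. Because $||e_{a_{n}}||\le 1+\varepsilon$ is bounded, it follows that $\gamma\,e_{a_{n}}\to x$; but by Fact \ref{factnewA} the sequence $(e_{a_{n}})_{n\ge1}$ converges in norm to $x_{0}=u_{0}+e_{0}$, whence $x=\gamma x_{0}$, and $\gamma\ne0$ since $||x||=1$. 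This contradicts the hypothesis that $x$ is not colinear to $x_{0}$, and proves Proposition \ref{propnewC}.

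The hard part here is conceptual rather than technical: there is no new estimate to be made, the chain of inequalities being identical to that in Proposition \ref{prop5}, and the whole content lies in recognising that the divergence of $\sum\alpha_{j}z_{\kappa_{j}}$, which closed the argument in the non-reflexive case, is in the Hilbert space replaced by convergence, so that the unique obstruction becomes the vector $x_{0}=u_{0}+e_{0}$ itself. One should also double-check that the growth conditions imposed on $(A_{n})$ coincide with those of Proposition \ref{prop5}, which is immediate since the computations are verbatim the same up to the last line.
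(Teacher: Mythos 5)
Your proof is correct and follows essentially the same route as the paper: it reruns the argument of Proposition \ref{prop5} verbatim to get $e_{a_{n}}^{*(a_n)}(x)e_{a_{n}}\to x$, applies $f_{0}^{*}$ to identify the limit of the coefficients, and then uses the norm convergence $e_{a_{n}}\to x_{0}$ from Fact \ref{factnewA} to conclude $x=\gamma x_{0}$, contradicting non-colinearity. This is exactly the paper's proof, including the observation that convergence of the series replaces the divergence used in the non-reflexive case.
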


It is important to remark that this analogue of Proposition \ref{prop5} cannot hold true for \emph{all} vectors $x$ of norm $1$: if we choose $x_{1}=x_{0}/||x_{0}||$, then $$\pi_{[0,a_{n}]}x_{0}=\sum_{j=1}^{n}\alpha _{j}z_{j}+e_{0}=e_{a_{n}},$$ so that $Q_{a_{n}}x_{1}=0$. So the conclusion of Proposition \ref{propnewC} does not hold true for the vector $x_{1}$. But these are the only vectors for which such a situation can take place.

\begin{proof}[Proof of Proposition \ref{propnewC}]
 The beginning of the proof is the same as that of Proposition \ref{prop5}. Suppose that $x\in H$, with $||x||=1$, is such that for every $n\ge n_{0} $ and every $j\in [0,a_{n}-1]$,
 $$|e_{j}^{*(a_n)}(Q_{a_{n}}x)|\leq \frac{1}{A_{n}^{(a_n-j+1)!^2}}\cdot$$ The same argument as in the proof of Proposition \ref{prop5} shows then that the quantity $||\pi_{[0,a_{n}]}x-e_{a_{n}}^{*(a_{n})}(x)e_{a_{n}}||$ tends to $0$ as $n$ tends to infinity, i.e. that $e_{a_{n}}^{*(a_{n})}(x)e_{a_{n}}$ tends to 
 $x$. Since $f_{0}^{*}(e_{a_{n}})=1$ for each $n\ge 1$, $e_{a_{n}}^{*(a_{n})}(x)$ tends to $f_{0}^{*}(x)=\gamma $; moreover, $e_{a_{n}}$ tends to $x_{0}$ as $n$ tends to infinity, and so it follows that $x=\gamma x_{0}$. So if $x$ is not colinear to $x_{0}$, there exists a $j\in [0,a_{n}-1]$ such that
 $$|e_{j}^{*(a_n)}(Q_{a_{n}}x)|>\frac{1}{A_{n}^{(a_n-j+1)!^2}},$$
 and we are done.
\end{proof}

We can now come back to the proof of Theorem \ref{th3bis}. We have to be a bit careful here about the fact that we cannot suppose to start with that $x\in X$ has norm $1$: since Property (P1) is not true here, it may happen that the closures of the orbits of $x$ and $\frac{x}{||x||}$ do not coincide. So let $x\in H$ be a \nz\ vector which is not colinear to $x_{0}=u_0+e_{0}$, and let $n$ be so large that $2^{-n}\le ||x||\le 2^{n}$. We follow the proof of either Property (P2) in Section \ref{sec3} or
Proposition \ref{prop7bis}: let $n $ arbitrarily large be such that there exists a  $j\in [0,a_{n}-1]$ with
$$|e_{j}^{*(a_n)}(Q_{a_{n}}x)|> \frac{1}{A_{n}^{(a_n-j+1)!^2}}\,||x||\cdot$$
We denote by $j_{n}$ the smallest integer with this property. Then there exists a polynomial $p_{n}$ of degree at most $a_{n}$ with 
$|p_{n}|\le D_{a_{n}}A_{n}^{(a_n-j_{n}+1)!^2(a_n-j_{n}+1)}$ such that
$$p_{n}(T_{a_{n}})\left(\sum_{j=j_{n}}^{a_{n}} e_{j}^{*(a_{n})}\left(Q_{a_{n}}\frac{x}{||x||}\right)e_{j}\right)=e_{a_{n}-1}.$$
 If we replace the polynomial $p_{n}$ by $\frac{p_{n}}{||x||}$, the degree of this new polynomial remains the same, and its modulus is less than $2^{n}D_{a_{n}}A_{n}^{(a_n-j_{n}+1)!^2(a_n-j_{n}+1)}$. So replacing $D_{a_{n}}$ by $2^{n}D_{a_{n}}$ in the computations, we can run through exactly the same argument and obtain 
 (since Fact \ref{factg}
is still true here) a $k\in [1,k_{n}]$ such that
$$||p_{k,n}(T)(Q_{a_{n}}x)-e_{a_{n}}||<\frac{4}{a_{n}}$$
(this is (\ref{star}) applied with $y=e_{a_{n}-1}$ and the bound $\frac{4}{a_{n}}$ instead of $\frac{8}{a_{n}}$ in the proof of Theorem \ref{th2}, or (\ref{eqi}) in the proof of Theorem \ref{th1}). As previously, the polynomial $p_{k,n}$ is chosen so that $|p_{k,n}-q_{n}|<4^{-\nu_{n}}$, where $q_{n}$ is the polynomial defined by $q_{n}(\zeta )=\frac{\zeta ^{b_{n}+1}}{b_{n}}p_{n}(\zeta )$.
Combining this with  Proposition \ref{prop5bis}, we infer in the same manner as in the proof of Theorem \ref{th1} that
$$||T^{c_{k,n}}x-e_{a_{n}}||<\frac{5}{a_{n}}+103\,||x-\pi_{[0,\nu _{n}]}x||+||p_{k,n}(T)\pi_{[a_{n}+1, \nu_{n}]}x||$$
(this is the analogue of (\ref{eqj})).
Using
Proposition \ref{prop3} (which is still true here), we get, again as in the proof of Theorem \ref{th1}, that
$$||T^{c_{k,n}}x-e_{a_{n}}||<\frac{7}{a_{n}}+103\,||x-\pi_{[0,\nu _{n}]}x||.$$
 It follows that the vector $x_{0}$, which is the limit as $n$ tends to infinity of the vectors $e_{a_{n}}$, belongs to the closure of the orbit of $x$ under the action of $T$. Since $||u_{0}||<\varepsilon $, the distance of $x_{0}=u_{0}+e_{0}$, and hence of $\overline{\mathcal{O}rb}(x,T)$, to $g_{0}=e_{0}$ is less than $\varepsilon $, and Property (P3'') of Theorem \ref{th3bis} is proved.
If $x=\kappa x_{0}$ for some scalar $\kappa $, it is obvious that the span of the orbit of $x$, which is the line $\C.x_{0}$, comes within $\varepsilon $-distance of the vector $g_{0}$. This finishes the proof of Theorem \ref{th3bis}.

\begin{remark}
The proof of Theorem \ref{th3bis} shows that any \nt\ \inv\ subspace of the \op\ $T$ contains the linear span of the vector $x_{0}$. It is not a new fact that the lattice of \inv\ subspaces of a bounded \op\ on the Hilbert space can have such a property: Donoghue constructed in \cite{Do} compact weighted backward shifts on $\ell_{2}(\mathbb{N})$ (equipped with the canonical basis $(g_{j})_{j\ge 0}$) whose only \nt\ \inv\  subspaces were the finite-dimensional spaces $M_{j}=\textrm{sp}[g_{0},\ldots, g_{j}]$, $j\ge 0$. So in this case each \nt\ \inv\ subspace contains the vector $g_{0}$. One of the main interests of Theorem \ref{th3bis} is that for all \nz\ vectors $x$ except those which are colinear to the vector $x_{0}$, with no restriction whatever on the norm of $x$, the closure of the orbit of $x$ (and not only the space $M_{x}$ spanned by this orbit) contains $x_{0}$.
\end{remark}

\section{Operators without non-trivial invariant $w^{*}$-closed subspaces  on some quasi-reflexive spaces of order $1$}\label{sec6}

Recall that a Banach space $X$ is said to be \emph{quasi-reflexive} of order $m$, $1\leq m<+\infty $, when $X$ is a subspace of codimension $m$ of its bidual $X^{**}$. The most famous example of a \qr\ space is the James space $J$ \cite{J}: it consists of the space of sequences
$x=(x_{j})_{j\geq 0}$ of $c_{0}$ such that
$$||x||=\sup_{m\geq 1,\; 0\leq p_{1}<\ldots<p_{m}} \left(\sum_{j=1}^{m}|x_{p_{j}}-x_{p_{j+1}}|^{2}\right)^{\frac{1}{2}}<+\infty.$$ The sequence $(h_{j})_{j\geq 0}$ where $h_{j}=(\delta _{j,m})_{m\geq 0}$ forms a basis of $J$ which is shrinking (the coordinate functionals $(h_j^*)_{j\geq 0}$ form a basis of the dual space $J^*$) but not boundedly complete: $||h_{0}+\ldots+h_{j}||=1$ for every $j\geq 0$ but the series $\sum_{j\geq 0}h_{j}$ does not converge in $J$. We have $J^{**}=J\oplus \textrm{sp}[x_{0}^{**}]$, where $x_{0}^{**}=\sum_{j\geq 0}h_{j}$. Notice also that $J$ contains a complemented copy of $\ell_{2}$, so that $J$ is isomorphic to $\ell_{2}\oplus J$.

\subsection{Proof of Theorem \ref{th4}}
Theorem \ref{th4} states that a counterexample to the Invariant Subspace Problem on a quasi-reflexive space of order $1$ would give a counterexample living on a reflexive space. This result relies on the following statement, which highlights the role of weak compactness in the problem:

\begin{proposition}\label{propnewD}
 Suppose that there exists a separable Banach space 
 $Z$ and a bounded \op\ $T$ on $Z$ which is weakly compact and has no \nt\ \inv\ closed subspace (resp. subset). Then there exists a reflexive separable Banach space $X$ and a bounded \op\ $T_{0}$ on $X$ which has no \nt\ \inv\ closed subspace (resp. subset).
\end{proposition}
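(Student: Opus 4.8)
The plan is to factor the \wc\ \op\ $T$ through a \re\ space by the Davis--Figiel--Johnson--Pe\l czy\'nski construction and then to fold the two factors into a single \op\ on (a suitable subspace of) that \re\ space.

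First I would record the elementary consequences of the hypothesis. In both cases $T$ has no \nt\ \inv\ closed subspace (a closed \inv\ subset includes the closed \inv\ subspaces), so the closed $T$-\inv\ subspaces $\ker T$ and $\overline{\textrm{ran}\,T}$ are trivial; since $\dim Z=\infty$ this forces $T\neq 0$, $T$ injective and $T$ with dense range. Now apply the DFJP factorization to the \wc, convex, symmetric set $W=\overline{T(B_Z)}$: one obtains a \re\ Banach space $X_{0}$ which, as a set, is a linear subspace of $Z$ carrying a finer norm, such that the inclusion $j_{0}\colon X_{0}\to Z$ is bounded and (after rescaling) $T(B_Z)\subseteq W\subseteq B_{X_{0}}$. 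In particular $T$ maps $Z$ boundedly into $X_{0}$; denoting this map by $\tilde T_{0}\colon Z\to X_{0}$ we have $T=j_{0}\tilde T_{0}$, and since $T$ has dense range with $T(Z)\subseteq j_{0}(X_{0})$, the \op\ $j_{0}$ has dense range.

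The \op\ $\tilde T_{0}$ need not have dense range in $X_{0}$, and this is exactly the point where the naive factorization is insufficient: I would therefore replace $X_{0}$ by $X:=\overline{\tilde T_{0}(Z)}^{X_{0}}$. As a closed subspace of a \re\ space, $X$ is \re; as the closure of the \sep\ set $\tilde T_{0}(Z)$, it is \sep; and it is infinite-dimensional because $\tilde T_{0}$ is injective (as $T=j_{0}\tilde T_{0}$ is) and $\dim Z=\infty$. Restricting, $j:=j_{0}|_{X}\colon X\to Z$ is an injective bounded \op\ with dense range (since $j(X)\supseteq T(Z)$), while $\tilde T:=\tilde T_{0}$ viewed as an \op\ $Z\to X$ is bounded with dense range, and still $T=j\tilde T$. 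Set $T_{0}:=\tilde T j\in\mathcal B(X)$. One checks at once the intertwining identities $jT_{0}=Tj$, hence $jT_{0}^{n}=T^{n}j$ for all $n\geq 0$, and $\tilde T T=T_{0}\tilde T$; also $\tilde T\,(jT_{0}^{n})=T_{0}^{n+1}$ trivially.

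It remains to transfer the absence of \inv\ subspaces. Let $M$ be a \nz\ closed $T_{0}$-\inv\ subspace of $X$. Then $j(M)$ is a $T$-\inv\ subspace of $Z$, since $T(jM)=j(T_{0}M)\subseteq jM$, and it is \nz\ because $j$ is injective; hence $\overline{j(M)}^{Z}=Z$. Applying the continuous map $\tilde T$ and using $\tilde T j=T_{0}$ gives $\tilde T(Z)=\tilde T\bigl(\overline{j(M)}^{Z}\bigr)\subseteq\overline{\tilde T(jM)}^{X}=\overline{T_{0}(M)}^{X}\subseteq M$, so $X=\overline{\tilde T(Z)}^{X}\subseteq M$ and $M=X$; thus $T_{0}$ has no \nt\ \inv\ closed subspace. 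The invariant-subset case is entirely parallel: for a \nz\ $x\in X$, the vector $j(x)$ is \nz, hence \hy\ for $T$, so $\overline{\{T^{n}j(x):n\geq 0\}}^{Z}=Z$; since $T^{n}j(x)=j\,T_{0}^{n}x$, applying $\tilde T$ yields $\tilde T(Z)\subseteq\overline{\{T_{0}^{n+1}x:n\geq 0\}}^{X}\subseteq\overline{\mathcal{O}\textrm{rb}(x,T_{0})}^{X}$, whence $\overline{\mathcal{O}\textrm{rb}(x,T_{0})}^{X}=X$, i.e. $x$ is \hy\ for $T_{0}$. The genuine subtlety — and the step I expect to be the crux — is precisely the passage from $X_{0}$ to $X=\overline{\tilde T_{0}(Z)}$: without arranging that $\tilde T$ has dense range, the inclusion $\tilde T(Z)\subseteq M$ would not give $M=X$ and the transfer argument would fail; everything else is a routine verification of the factorization identities.
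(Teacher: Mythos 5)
Your proof is correct and follows essentially the same route as the paper: factor the weakly compact operator through a reflexive space via Davis--Figiel--Johnson--Pe\l czy\'nski, flip the two factors to define $T_{0}$, and transfer cyclicity (resp.\ hypercyclicity) through the intertwining $T_{0}^{n+1}=\tilde T\,T^{n}j$. The only cosmetic difference lies in how the factors are normalized: the paper quotients out the kernel of the $X\to Z$ factor and notes that the $Z\to X$ factor may be assumed to have dense range, whereas you get injectivity for free from the DFJP inclusion and enforce dense range by passing to $\overline{\tilde T_{0}(Z)}$.
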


\begin{proof}The proof of Proposition \ref{propnewD} is a straightforward consequence of a 
 classical result of Davis, Figiel, Johnson and Pelczynski \cite{DFJP}, which states that the weakly compact \op\ $T$ on $Z$ can be factorized through a \re\ space $X$: there exist bounded linear \ops\ $A: Z\To X$ and
$B:X\To Z$ such that $T=BA$. We can suppose that $A$ has dense range (this is obvious), and also
that $B$ is injective. Indeed if $B$ is not injective, let $X_0$ be the kernel of $B$, and define $A': Z\To X/X_0$ and
$B':X/X_0\To Z$ by the formula $A'z=Az+X_0$, $z\in Z$, and $B'(x+X_0)=Bx$, $x\in X$, respectively: $B'$ is well-defined and injective. Also $B'A'z=B(Az+X_0)=Tz$, and thus we have got a factorization of $T$ through a reflexive space, but this time with $B'$ injective. So we can suppose that $B$ is injective. Now consider the bounded \op\ $T_0$ on $X$ defined by $T_0=AB$. We have for every integer $n\geq 0$ and every vector $x\in X$ the identity $T_0^{n+1}x=AT^n Bx$, so that
${\mathcal{O}\textrm{rb}}(x,T_0)=A({\mathcal{O}\textrm{rb}}(Bx,T))\cup \{x\}$. If $x$ is \nz, $Bx\in Z$ is \nz\ as well.
Suppose that $T$ has no \nt\ \inv\ closed subspace: every \nz\ vector $z\in Z$ is cyclic for $T$, hence the linear span of the orbit 
${\mathcal{O}\textrm{rb}}(Bx,T_0)$ is dense in $Z$ for every \nz\ vector $x\in X$. Since $A$ has dense range, it follows that
the linear span of the orbit ${\mathcal{O}\textrm{rb}}(x,T_0)$ is dense in $X$ for every such $x$. So $T_{0}$ has no \nt\ \inv\ closed subspace. Exactly the same kind of argument applies if we are considering \inv\ closed sets instead of \inv\ subspaces.
\end{proof}

Theorem \ref{th4} is a direct corollary of Proposition \ref{propnewD} and the fact that every bounded \op\ $T$ on a quasi-reflexive space 
$Z$ of order $1$ has a unique decomposition as $\lambda I+W$, where $\lambda $ is a scalar and $W$ is a weakly compact \op\ on $Z$ (see the paper  \cite{FL} by Fonf, Lin and  Wojtaszczyk). If $T$ has no \nt\ \inv\ closed subspace, $W$ is  weakly compact and has no \nt\ \inv\ closed subspace either. Proposition \ref{propnewD} concludes the proof.

\begin{remark}
 As we observed in Remark \ref{remaddbis} above, the \ops\ $T$ of Theorem \ref{th2} constructed on $X=\ell_{p}\oplus Z$ where $1<p<+\infty$ are weakly compact, but they may - and they indeed do, as will be seen in Section \ref{sec7} - have some \nt\ \inv\ closed subspaces. On the other hand, the \ops\ constructed in the proof of Theorem \ref{th1} are never weakly compact. This comes from the fact that, for most values of the integer $d$, $T$ acts as a shift from the $d^{th}$ copy $Z^{(d)}$ of $Z$ onto the next copy $Z^{(d+1)}$.
\end{remark}

\subsection{Proof of Theorem \ref{th5}}
Our aim in this section is to prove Theorem \ref{th5}, which states that
there exists a \qr\ space $X$ of order  $1$ whose dual $X^{*}$, which is also \qr\ of order $1$, supports a bounded \op\ $T$ with
no \nt\ \inv\ $w^{*}$-closed subspace.
\par\smallskip

Let us begin by recalling several well-known facts about \qr\ spaces of order $1$, of which we give short proofs for completeness's sake. Fact \ref{fact1} below is a special case of a result of Civin and Yood \cite{CY}.

\begin{fact}\label{fact1}
 Let $X$ be a \qr\ space of order $1$, and $M$ a closed subspace of $X$. Then

$\bullet$ $M$ is either \re\ or \qr\ of order $1$;

$\bullet$ $X/M$ is either \re\ or \qr\ of order $1$.
\end{fact}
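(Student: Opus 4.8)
The plan is to use the classical biduality machinery for quasi-reflexive spaces, exploiting that $X^{**}/X$ is one-dimensional. Throughout, write $X^{**}=X\oplus \K x^{**}$ for some fixed $x^{**}\in X^{**}\setminus X$, so that $\dim X^{**}/X=1$. Given a closed subspace $M$ of $X$, consider its bidual $M^{**}$, which embeds canonically into $X^{**}$ as the weak-$*$ closure $\overline{M}^{\,w^*}$ of $M$ in $X^{**}$ (this is a standard identification, valid for any closed subspace of any Banach space). The whole point is that $M^{**}/M\cong \overline{M}^{\,w^*}/M$, and $\overline{M}^{\,w^*}$ is a weak-$*$ closed subspace of $X^{**}$ containing $M$ with $M$ weak-$*$ dense in it; since $\overline{M}^{\,w^*}\subseteq X^{**}$ and $X^{**}/X$ is one-dimensional, the quotient $\overline{M}^{\,w^*}/M$ has dimension at most... well, one must be a little careful, but the key inequality is $\dim(M^{**}/M)=\dim(\overline{M}^{\,w^*}/M)\le \dim(X^{**}/X)+\dim(\text{something})$; more precisely I would argue directly that $M^{**}$, viewed inside $X^{**}$, satisfies $M^{**}+X = $ either $X$ or $X^{**}$, hence $\dim(M^{**}/(M^{**}\cap X))\le 1$, and $M^{**}\cap X=M$ (because $M$ is weak-$*$ dense in $M^{**}$ and norm-closed, so a norm-closed subspace of $X$ sitting between $M$ and its weak-$*$ closure must be $M$ itself — here one uses that $M$ is weak-$*$ dense in $M^{**}$). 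This gives $\dim(M^{**}/M)\le 1$, which is exactly the statement that $M$ is reflexive or quasi-reflexive of order $1$.

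For the quotient statement, I would dualize. If $M\subseteq X$ is closed, then $(X/M)^*\cong M^\perp\subseteq X^*$ and $(X/M)^{**}\cong (M^\perp)^*$. Now $X^*$ is itself quasi-reflexive of order $1$ (the dual of a quasi-reflexive space of order $m$ is quasi-reflexive of order $m$ — this is elementary: $X^{***}=X^*\oplus (X^{**}/X)^*$ via the canonical decomposition coming from $X^{**}=X\oplus\K x^{**}$), and $M^\perp$ is a closed subspace of $X^*$, so by the first part (applied to $X^*$ in place of $X$) $M^\perp$ is reflexive or quasi-reflexive of order $1$. But $M^\perp$ reflexive would force $X/M$ reflexive (a Banach space is reflexive iff its dual is), and $M^\perp$ quasi-reflexive of order $1$ forces $(X/M)^*$ quasi-reflexive of order $1$, hence $X/M$ quasi-reflexive of order $1$ (again using that reflexivity/quasi-reflexivity of order $1$ of a space is equivalent to the same property of its dual). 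So both alternatives transfer, and we are done. One small wrinkle: one must check $M^\perp$ is not all of $X^*$ and not zero, i.e. handle the trivial cases $M=\{0\}$ and $M=X$ separately — but then $X/M$ is $X$ or $\{0\}$, both obviously of the desired form (recall $\{0\}$ is reflexive).

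The main obstacle, and the step I would be most careful about, is the identification $M^{**}=\overline{M}^{\,w^*}$ inside $X^{**}$ together with the claim $M^{**}\cap X = M$: the first is standard functional analysis (Goldstine plus the fact that the canonical embedding of $M$ into $M^{**}$ agrees with its embedding into $X^{**}$ restricted appropriately, because restriction of functionals $X^*\to M^*$ is a quotient map), but stating it cleanly requires invoking the right lemma about weak-$*$ closures. An alternative, perhaps cleaner, route that avoids biduality of subspaces altogether: observe that the second statement is really the primary one — prove directly that $X/M$ is reflexive or quasi-reflexive of order $1$ by the dualization argument above applied to $X^*$ (which requires only that $X^*$ is quasi-reflexive of order $1$, an easy computation), and then derive the first statement from the second by a symmetric argument, or simply cite Civin–Yood \cite{CY} for the subspace case as the excerpt already indicates. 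I would present the subspace case via the $M^{**}\subseteq X^{**}$ embedding as the honest proof, since it is the conceptual heart, and keep the quotient case as the short dualization corollary.
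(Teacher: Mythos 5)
Your argument is correct, and it splits naturally in two. For the subspace statement you take essentially the paper's route: identify $M^{**}$ with $M^{\perp\perp}\subseteq X^{**}$ and exploit that $X$ has codimension $1$ in $X^{**}$; you phrase this as the dimension count $\dim(M^{\perp\perp}/M)=\dim\bigl((M^{\perp\perp}+X)/X\bigr)\le\dim(X^{**}/X)=1$, together with $M^{\perp\perp}\cap X=M$ (the honest justification of the latter is Hahn--Banach/Mazur: a point of $X$ in the $w^{*}$-closure of $M$ is a weak limit of elements of $M$, hence lies in the norm-closed convex set $M$ --- this is what your ``norm-closed between $M$ and its $w^{*}$-closure'' remark is gesturing at), whereas the paper writes out the explicit decomposition $M^{\perp\perp}=M\oplus\textrm{sp}[x_{1}^{**}]$; these are the same idea in two dressings. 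For the quotient statement your route genuinely differs: the paper identifies $(X/M)^{**}$ with $X^{**}/M^{\perp\perp}$ and simply checks whether $x_{0}^{**}$ belongs to $M^{\perp\perp}$, while you dualize, apply the subspace case to $M^{\perp}\cong(X/M)^{*}$ inside $X^{*}$, and transfer back to $X/M$. This costs you two extra (standard) ingredients: first, that $X^{*}$ is itself quasi-reflexive of order $1$ --- this is exactly Fact \ref{fact2} of the paper, whose proof is independent of the present statement (so no circularity), and your sketch $X^{***}\cong X^{*}\oplus(X^{**}/X)^{*}$ is precisely that proof; second, the converse transfer ``if $Y^{*}$ is reflexive (resp.\ quasi-reflexive of order $1$) then so is $Y$'', which follows from the canonical isomorphism $Y^{***}/Y^{*}\cong(Y^{**}/Y)^{*}$ and deserves to be stated explicitly, since the paper only records the forward direction. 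What the paper's argument buys is a self-contained two-line proof using nothing but the single decomposition $X^{**}=X\oplus\textrm{sp}[x_{0}^{**}]$; what yours buys is that the quotient case becomes a formal corollary of the subspace case plus duality, at the modest price of these auxiliary lemmas. The trivial cases $M=\{0\}$ and $M=X$, which you already flag, are indeed the only boundary cases to dispose of.
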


\begin{proof}
Let $x_{0}^{**}\in X^{**}\setminus X$, $||x_{0}^{**}||=1$. Then $X^{**}=X\oplus \textrm{sp}[x_{0}^{**}]$.
The bidual $M^{**}$ of $M$ can be isometrically identified to $$M^{\perp\perp}=\{x^{**}
\in X^{**} \textrm{ ; for every } x^{*}\in X^{*} \textrm{ such that } x^{*}=0
\textrm{ on } M, \; \pss {x^{**}} {x^{*}}=0\}.$$ Suppose that $M$ is not \re, and let $x_{1}^{**}\in M^{\perp\perp}\setminus M$: since $M=M^{\perp\perp}\cap X$, $x^{**}_{1}\not \in X$. Let us show that $M^{\perp\perp}=M\oplus \textrm{sp}[x_{1}^{**}]$. There exists a scalar $\alpha _{1}\not =0$ and $x_{1}\in X$ such that $x^{**}_{1}=\alpha _{1}x_{0}^{**}+x_{1}$. If $x^{**}$ is an element of $ M^{\perp\perp}\setminus M$, then there exist a scalar $\alpha $ and a vector $x\in X$ such that
$x^{**}=\alpha x_{0}^{**}+x$. Hence $x^{**}=\frac{\alpha }{\alpha _{1}}x_{1}^{**}+x-\frac{\alpha }{\alpha _{1}}x_{1}$, and since $x^{**}$ and $x^{**}_{1}$ belong to $M^{\perp\perp}$, $x-\frac{\alpha }{\alpha _{1}}x_{1}$ belongs to $M^{\perp\perp}\cap X=M$. Hence $M^{\perp\perp}=M\oplus \textrm{sp}[x_{1}^{**}]$ and the first assertion is proved.
\par\smallskip
The proof of the second assertion is exactly similar, using the fact that $X^{**}/M^{\perp\perp}$ can be identified to $(X/M)^{**}$  via the application which associates $x^{**}+M^{\perp\perp}\in X^{**}/M^{\perp\perp} $  to the functional on $M^{\perp}=(X/M)^{*}$
which maps $x^{*}$ onto $\pss {x^{**}} {x^{*}} $. With this identification, elements of $X/M$ correspond to functionals on $M^{\perp}$ which map $x^{*}$ onto $\pss {x} {x^{*}} $ for some $x\in X$. Hence if $x_{0}^{**}\in M^{\perp\perp}$, $X/M$ is \re, while if $x_{0}^{**}\not \in M^{\perp\perp}$, $X/M$ is \qr\ of order $1$.
\end{proof}

Our second fact concerns duals of \qr\ spaces or order $1$:

\begin{fact}\label{fact2}
 If $X$ is a \qr\ space of order $1$, so is its dual $X^{*}$.
\end{fact}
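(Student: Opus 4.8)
The plan is to exploit the classical decomposition of a third dual. First I would recall the Dixmier-type identity: if $J_{X}\colon X\to X^{**}$ and $J_{X^{*}}\colon X^{*}\to X^{***}$ denote the canonical embeddings and $J_{X}^{*}\colon X^{***}\to X^{*}$ is the adjoint of $J_{X}$, then $J_{X}^{*}\circ J_{X^{*}}=\mathrm{id}_{X^{*}}$. Consequently $P:=J_{X^{*}}\circ J_{X}^{*}$ is a norm-one projection of $X^{***}$ onto the canonical copy $J_{X^{*}}(X^{*})$ of $X^{*}$, and $X^{***}=J_{X^{*}}(X^{*})\oplus\ker J_{X}^{*}$.

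The second step is to identify the complementary subspace $\ker J_{X}^{*}$. Unwinding the definitions, $x^{***}\in\ker J_{X}^{*}$ means $\pss{x^{***}}{J_{X}x}=0$ for every $x\in X$, so $\ker J_{X}^{*}$ is precisely the annihilator $X^{\perp}$ of $X$ — regarded as the closed subspace $J_{X}(X)$ of $X^{**}$ — inside $X^{***}=(X^{**})^{*}$. A standard fact then supplies an isometric isomorphism $X^{\perp}\cong (X^{**}/X)^{*}$ (the same identification used in the proof of Fact \ref{fact1}).

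Finally, I would put these together: since $X$ is quasi-reflexive of order $1$, the quotient $X^{**}/X$ is one-dimensional, hence so is $X^{\perp}$, hence $J_{X^{*}}(X^{*})$ has codimension $1$ in $X^{***}$. It then only remains to note that $X^{*}$ is non-reflexive (otherwise $X$ would be reflexive, contradicting quasi-reflexivity of order $1$), so that $X^{*}$ is indeed quasi-reflexive of order $1$. I do not expect any real obstacle here: everything is routine duality theory, and the one point deserving a line of justification is the identification $\ker J_{X}^{*}=X^{\perp}\cong(X^{**}/X)^{*}$, which is standard. (Alternatively, one could argue entirely by hand, choosing $x_{0}^{**}\in X^{**}\setminus J_X(X)$ so that $X^{**}=J_X(X)\oplus\textrm{sp}[x_{0}^{**}]$ and producing an explicit functional in $X^{***}$ spanning a complement of $J_{X^{*}}(X^{*})$, in the spirit of the proof of Fact \ref{fact1}.)
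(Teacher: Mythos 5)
Your proposal is correct and is essentially the paper's own argument in coordinate-free form: the paper likewise splits $X^{***}$ into the canonical copy of $X^{*}$ and a one-dimensional complement annihilating $X$, only it does this by hand, choosing $x_{0}^{***}$ with $x_{0}^{***}=0$ on $X$ and $\pss{x_{0}^{***}}{x_{0}^{**}}=1$ and writing every $x^{***}$ as its restriction to $X$ plus a multiple of $x_{0}^{***}$, which is exactly your projection $J_{X^{*}}J_{X}^{*}$ applied concretely. Your identification $\ker J_{X}^{*}=X^{\perp}\cong (X^{**}/X)^{*}$ is the standard ingredient and is fine; note only that the closing remark on non-reflexivity of $X^{*}$ is superfluous, since codimension one in $X^{***}$ already gives it.
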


\begin{proof}
Let us write again $X^{**}=X\oplus \textrm{sp}[x_0^{**}]$ with $x_{0}^{**}\in X^{**}\setminus X$ and $||x_{0}^{**}||=1$. Let $x_0^{***}\in X^{***}$ be such that $x_0^{***}=0$ on $X$ and $\pss {x_0^{***}} {x_0^{**}} =1$. Then for any $x^{***}\in X^{***}$ we have $x^{***}=x^{*}+\pss {x^{***}} {x_0^{**}} x_0^{***}$ where $x^{*}$ is the restriction of $x^{***}$ to $X$, i.e. an element of $X^{*}$. Obviously $x_0^{***}\not \in X^{*}$, and thus
$X^{**}=X^{*}\oplus \textrm{sp}[x_0^{***}]$, so that $X^{*}$ is \qr\ of order $1$.
\end{proof}

As a consequence of Facts \ref{fact1} and \ref{fact2}, we obtain:

\begin{fact}\label{fact3}
 If $X$ is a \qr\ space of order $1$, and $M$ is a $w^{*}$-closed subspace of $X^{*}$, then $M$ is a dual space which is either reflexive or \qr\  of order $1$.
\end{fact}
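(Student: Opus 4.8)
The plan is to identify $M$ as a dual space via the standard duality between $w^{*}$-closed subspaces of $X^{*}$ and quotients of $X$. First I would recall the elementary fact that a subspace $M$ of $X^{*}$ is $w^{*}$-closed if and only if $M=N^{\perp}$ for some closed subspace $N$ of $X$; indeed one takes $N={}^{\perp}M$, the pre-annihilator of $M$ in $X$, and the bipolar theorem for the $w^{*}$-topology gives $M=({}^{\perp}M)^{\perp}$. Moreover the canonical pairing identifies $N^{\perp}$ \emph{isometrically} with $(X/N)^{*}$: a functional $x^{*}\in X^{*}$ vanishing on $N$ factors through the quotient map $q\colon X\to X/N$ as $x^{*}=\tilde{x}^{*}\circ q$ with $||\tilde{x}^{*}||=||x^{*}||$, and conversely every functional on $X/N$ arises this way. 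Hence $M$ is a dual space, namely $M\cong (X/N)^{*}$.

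Next I would apply Fact \ref{fact1} to the closed subspace $N$ of the quasi-reflexive space $X$ of order $1$: the quotient $X/N$ is either reflexive or quasi-reflexive of order $1$. In the first case, $M\cong (X/N)^{*}$ is the dual of a reflexive space, hence reflexive. In the second case, $X/N$ is quasi-reflexive of order $1$, and Fact \ref{fact2} says that its dual $(X/N)^{*}$ is again quasi-reflexive of order $1$; thus $M$ is a dual space which is quasi-reflexive of order $1$. These two cases exhaust all possibilities, which proves the fact.

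The argument is entirely formal once the identification $M\cong (X/N)^{*}$ is in place, so there is no genuine obstacle here; the only point requiring a little care is to verify that this identification is the natural isometric one and that the pre-annihilator $N$ is indeed closed (it is, being an intersection of kernels of $w^{*}$-continuous functionals), both of which are routine consequences of the Hahn--Banach theorem.
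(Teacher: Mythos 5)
Your proof is correct and follows essentially the same route as the paper: identify $M$ isometrically with $(X/{}^{\perp}M)^{*}$ via the pre-annihilator and the bipolar theorem, then apply Fact \ref{fact1} to the quotient and Fact \ref{fact2} to its dual. The paper states this in one line; your version merely spells out the standard duality details.
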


\begin{proof}
Since $M$ is \ww, it can be isometrically identified to $(X/{\empty^{\perp}M})^{*}$ where
$${\empty^{\perp}M}=\{x\in X\textrm{ ; } \pss {x^*} {x} =0 \textrm{ for every } x^*\in M\}.$$ By
Facts \ref{fact1} and \ref{fact2},  $(X/{\empty^{\perp}M})^{*}$ is either reflexive or \qr\ of order $1$.
\end{proof}

We finally mention an elementary result on renorming of separable Banach spaces. It can be found in any classical book on the subject, such as \cite{DGZ} or \cite{FHHM}.

\begin{fact}\label{fact5}
Let $X$ be a \sep\ Banach space. There exists an equivalent norm on $X$ whose dual norm
 on $X^{*}$ is strictly convex.
\end{fact}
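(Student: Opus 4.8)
The plan is to produce, directly on the dual $X^{*}$, an equivalent norm which is at once strictly convex and $w^{*}$-lower semicontinuous; the latter property guarantees that this norm is a dual norm, i.e. the dual norm of some equivalent norm on $X$, which is exactly what is required.

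Since $X$ is \sep, I would first fix a sequence $(x_{n})_{n\ge 1}$ dense in the closed unit ball $B_{X}$ of $X$, and set, for $x^{*}\in X^{*}$,
$$N(x^{*})=\Bigl(||x^{*}||^{2}+\sum_{n\ge 1}2^{-n}\pss{x^{*}}{x_{n}}^{2}\Bigr)^{\frac12}.$$
Writing $Sx^{*}=(2^{-n/2}\pss{x^{*}}{x_{n}})_{n\ge 1}$ one has $S\in\mathcal{B}(X^{*},\ell_{2})$ and $N(x^{*})^{2}=||x^{*}||^{2}+||Sx^{*}||_{\ell_{2}}^{2}$, so that $N$ is an ``$\ell_{2}$-combination'' of a norm and a seminorm and hence is itself a norm; moreover the trivial bound $\sum_{n\ge 1}2^{-n}\pss{x^{*}}{x_{n}}^{2}\le||x^{*}||^{2}$ gives $||x^{*}||\le N(x^{*})\le\sqrt2\,||x^{*}||$, so $N$ is equivalent to the original norm on $X^{*}$.

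Next I would check that $N$ is $w^{*}$-lower semicontinuous. Each map $x^{*}\mapsto\pss{x^{*}}{x_{n}}^{2}$ is $w^{*}$-continuous, while $x^{*}\mapsto||x^{*}||^{2}$ is $w^{*}$-lower semicontinuous; hence every partial sum $x^{*}\mapsto||x^{*}||^{2}+\sum_{n=1}^{m}2^{-n}\pss{x^{*}}{x_{n}}^{2}$ is $w^{*}$-lower semicontinuous, and $N^{2}$, being the pointwise supremum of these partial sums, is $w^{*}$-lower semicontinuous as well. Consequently the ball $\{N\le 1\}$ is $w^{*}$-closed, and since it is also convex, symmetric and bounded with nonempty interior, the bipolar theorem in the duality $(X,X^{*})$ shows that $N$ is the dual norm of the equivalent norm $||x||_{0}=\sup\{|\pss{x^{*}}{x}|\;:\;N(x^{*})\le 1\}$ on $X$.

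It remains to see that $N$ is strictly convex, and this is where the density of $(x_{n})$ in $B_{X}$ is genuinely used. If $x^{*}\ne y^{*}$ then, because $B_{X}$ separates the points of $X^{*}$ and $(x_{n})$ is dense in $B_{X}$, there is an index $N_{0}$ with $\pss{x^{*}}{x_{N_{0}}}\ne\pss{y^{*}}{x_{N_{0}}}$, and strict convexity of $t\mapsto t^{2}$ yields $\bigl(\frac12\pss{x^{*}}{x_{N_{0}}}+\frac12\pss{y^{*}}{x_{N_{0}}}\bigr)^{2}<\frac12\pss{x^{*}}{x_{N_{0}}}^{2}+\frac12\pss{y^{*}}{x_{N_{0}}}^{2}$; combining this strict inequality with convexity of $||\cdot||^{2}$ and of all the remaining coordinate terms gives $N\bigl(\frac{x^{*}+y^{*}}{2}\bigr)^{2}<\frac12 N(x^{*})^{2}+\frac12 N(y^{*})^{2}$, so that $N(x^{*})=N(y^{*})=1$ forces $N\bigl(\frac{x^{*}+y^{*}}{2}\bigr)<1$. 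Thus $N$ is strictly convex and we are done. The only step that demands genuine care is the $w^{*}$-lower semicontinuity of $N$ (equivalently, the fact that $N$ really is a dual norm); once that is in place the strict convexity is a soft computation and the norm equivalence is immediate.
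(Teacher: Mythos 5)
Your proof is correct: the norm equivalence, the $w^{*}$-lower semicontinuity argument showing $N$ is indeed a dual norm, and the strict convexity via density of $(x_{n})$ in $B_{X}$ are all sound. The paper does not prove this fact but simply cites \cite{DGZ} and \cite{FHHM}, and your construction is exactly the classical argument found there, so there is nothing to add.
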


We are now ready for the proof of Theorem \ref{th5}.

\begin{proof}[Proof of Theorem \ref{th5}]
 Consider the space $X=\ell_2\oplus J$, where $J$ is the James space which we renorm according to Fact \ref{fact5} in such a way that the dual norm on $J^{*}$ is strictly convex. We denote by $||\,.\,||_{*}$ this new norm. Then $X^{*}=\ell_2\oplus J^{*}$ is \nr, has a Schauder basis and contains a complemented copy of $\ell_2$ (of course $X^{*}$ is isomorphic to $J^{*}$). 
 The norm on $X^{*}$ is defined by $||x\oplus z||=\left(||x||^{2}+||z||_{*}^{2}\right)^{\frac{1}{2}}$ for every $x\in \ell_{2}$ and $z\in J^{*}$. This norm on $X^{*}$ is strictly convex. Denoting as usual by $(g_{j})_{j\ge 0}$ the canonical basis of $\ell_{2}$, we have $||g_{0}||=1$. 
 We can apply Theorem \ref{th2} and get a bounded \op\ on $X^{*}$ which satisfies Properties (P1), (P2) and (P3) for a fixed $\varepsilon\in (0,1)$. Then Property (P4) is satisfied too.
\par\smallskip
Our aim is to construct a \nz\ \inv\ closed subspace $L$ of $X^{*}$ such that the \op\ induced by $T$ on $L$ has no \nt\ \inv\ $w^{*}$-closed subspace. 
 For this we proceed in the same way as in the proof of Theorem \ref{th3}, replacing the weak topology by the $w^{*}$-topology. 
\par\smallskip

Consider the set $\mathcal{L}_1$ of \nz\ $w^{*}$-closed subspaces of $X^{*}$ which are $T$-\inv, and set $d_1=\sup_{M\in \mathcal{L}_1}d(M,g_0)$. We have $d_1\leq \varepsilon $. Choose $L_1\in \mathcal{L}_1$ such that $d(L_1,g_0)\geq\frac{1}{2}d_1$. 
We continue then as in the proof of Theorem \ref{th3}, and construct by induction a decreasing sequence $(L_n)_{n\geq 1}$ of $w^{*}$-closed subspaces of $X^{*}$ such that if $\mathcal{L}_n$ is the set of all \nz\ $w^{*}$-closed subspaces of $L_{n-1}$ which are \inv\ by $T$, and $d_n=\sup_{M\in \mathcal{L}_n}d(M,g_0)$, then $L_n\in \mathcal{L}_n$ is chosen in such a way that $$d(L_n,g_0)\geq\frac{2^n-1}{2^n}d_n.$$ If, for each $n\geq 1$, $x_n^{*}\in L_n$ is such that $$||x_n^{*}-g_0||\leq
\frac{2^n+1}{2^n}d_n,$$ the sequence $(x_n^{*})_{n\ge 1}$ is bounded. Hence there exists a sub-sequence $(x_{n_k}^{*})_{k\ge 1}$ of $(x_n^{*})_{n\ge 1}$ which converges to some vector $x_0^{*}\in X^{*}$ in the $w^{*}$-topology. The same argument as in the proof of Theorem \ref{th3} (i.e. that $d_{n}\le\varepsilon<1$ for every $n\ge 1$) shows that $x_0^{*}$ is \nz. Let $L$ be the smallest $T$-\inv\ $w^{*}$-closed subspace of $X^{*}$ containing the vector $x_{0}^{*}$. By Property (P4), $L$ is an infinite-dimensional subspace of $X^{*}$.
Since, for each $n\ge 1$, $x_{n_k}^{*}$ belongs to $L_n$ for sufficiently large $k$, the fact that $L_{n}$ is $w^{*}$-closed implies that  $x_{0}^{*}$ belongs to $L_n$.
It follows from this that  $||{x_{0}^{*}} -{g_0} ||=\lim _{n\rightarrow +\infty}d_n=
d(L,g_0)=\inf_{n\geq 1}d_n$.
\par\smallskip
It remains to show that the \op\ induced by $T$ on $L$ has no \nt\ \inv\ $w^{*}$-closed subspace. 
Suppose that $M\subseteq L$ is a \nz\ $w^{*}$-closed subspace of $L$ which is $T$-\inv: then, as in the proof of Theorem \ref{th3},
$d(M, g_0)=\inf_{n\geq 1}d_n=d(L, g_0)$.
Since the distance of an element to a $w^{*}$-closed subspace is always attained, there exists a vector 
$y_0^{*}\in M$ such that
$||y_0^{*}-g_0||=d(M, g_0)=d(L, g_0)=||x_0^{*}-g_0||$.
It follows from this that $||\frac{x_0^{*}+y_0^{*}}{2}-g_0||= d(L,g_0)$ as well.
Now, since the norm $||\,.\,||$ on $X^{*}$ is strictly convex,  we get that $x_0^{*}=y_0^{*}$. So $M=L$.
\par\smallskip
Hence we have shown that the \op\ induced by $T$ on $L$ has no \nt\ $w^{*}$-closed \inv\ subspace.
In order to  finish the proof of Theorem \ref{th5}, we still have to show that the space $L$ cannot be reflexive. For this we have to go back to the structure of the \op\ $T$ acting on $X^{*}$. The following general statement immediately implies that the subspace $L$ of $X^{*}$ we just constructed cannot be reflexive:

\begin{prop}\label{facti}
Let $X=\ell_{p}\oplus Z$, $1\le p<+\infty$, or $X=c_{0}\oplus Z$, be a  space satisfying the assumptions of Theorem \ref{th2}, and let $T$ be one of the \ops\ on $X$ constructed in the proof of Theorem \ref{th2}. Then all \nz\ \inv\ subspaces of $T$ are non-\re.
\end{prop}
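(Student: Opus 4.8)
The plan is to combine the approximation mechanism behind Property (P3) with the observation that the bounded sequence $(e_{a_{n}})_{n\ge 1}$ has no weakly convergent subsequence in $X$; the latter is precisely where the non semi-bounded completeness of the basis $(z_{j})_{j\ge 1}$ of $Z$ does its work. Let $L\neq\{0\}$ be a closed $T$-invariant subspace of $X$, fix a non-zero $x\in L$, and normalise so that $||x||=1$ (the normalised vector still lies in $L$, and since $L$ is $T$-invariant, $\mathcal{O}\textrm{rb}(x,T)\subseteq L$). First I would extract from the proof of Property (P3) the following: inequality (\ref{rond}) produces, for every $n_{0}\ge 1$, an $n\ge n_{0}$ and a $k\in[1,k_{n}]$ with $||T^{c_{k,n}}x-e_{a_{n}}||\le \frac{10}{a_{n}}+100\,||(I-Q_{\nu_{n}})x||$; since $a_{n}\to+\infty$ and $||(I-Q_{\nu_{n}})x||\to 0$ by Fact \ref{factc}, the right-hand side is arbitrarily small for $n_{0}$ large. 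Hence there are a strictly increasing sequence $(n_{i})_{i\ge1}$ and indices $k_{i}\in[1,k_{n_{i}}]$ such that, writing $c_{i}=c_{k_{i},n_{i}}$, the vectors $z_{i}:=T^{c_{i}}x\in L$ satisfy $||z_{i}-e_{a_{n_{i}}}||\to 0$; by Fact \ref{facta}, $||z_{i}||\le ||e_{a_{n_{i}}}||+1\le 2+\varepsilon$ eventually, so $(z_{i})_{i\ge1}$ is a bounded sequence of $L$.

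The main step is then to prove that $(e_{a_{n}})_{n\ge1}$ admits no weakly convergent subsequence in $X$. If some subsequence of it converges weakly to $y\in X$, then, applying the bounded projection $P_{Z}$ of $X=\ell_{p}\oplus Z$ (resp. $c_{0}\oplus Z$) onto the copy of $Z$ and recalling that $e_{a_{n}}=e_{0}+\sum_{j=1}^{n}\alpha_{j}z_{\kappa_{j}}$ with $e_{0}=g_{0}$ in the $\ell_{p}$- (resp. $c_{0}$-) part, the corresponding subsequence of the partial sums $\sum_{j=1}^{n}\alpha_{j}z_{\kappa_{j}}$ converges weakly to $P_{Z}y$ in $Z$. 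Pairing with the coordinate functionals $z_{m}^{*}$ of the Schauder basis $(z_{m})_{m\ge1}$ of $Z$ gives $\langle P_{Z}y,z_{\kappa_{j}}^{*}\rangle=\alpha_{j}$ for every $j$ and $\langle P_{Z}y,z_{m}^{*}\rangle=0$ for $m\notin\{\kappa_{j}: j\ge1\}$, so the norm-convergent basis expansion of $P_{Z}y$ is exactly the series $\sum_{j\ge1}\alpha_{j}z_{\kappa_{j}}$; this series would then converge in $Z$, contradicting the fact that $\alpha_{j}\ge\delta_{0}>0$ and $\inf_{j}||z_{j}||>0$ force it to diverge. Hence no such $y$ exists.

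To conclude, if $L$ were reflexive then the bounded sequence $(z_{i})_{i\ge1}$ would have a subsequence converging weakly to some $z_{0}\in L\subseteq X$, and since $||z_{i}-e_{a_{n_{i}}}||\to 0$ the matching subsequence of $(e_{a_{n_{i}}})$ would converge weakly to $z_{0}$ in $X$, contradicting the previous paragraph; hence $L$ is non-reflexive, which is the assertion of Proposition \ref{facti}. The step I expect to be the main obstacle is the middle one, though it is an obstacle only in appearance: it is the classical fact that weak convergence of the partial sums of $\sum_{j}\alpha_{j}z_{\kappa_{j}}$ would force their norm-convergence because $(z_{m})$ is a basis, and this is exactly the channel through which the non-reflexivity of $Z$ is passed on to every non-zero invariant subspace of $T$.
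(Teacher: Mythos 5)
Your argument is correct, and its engine is the same as the paper's: approximate $e_{a_{n}}$ in norm by orbit vectors of the given subspace via (\ref{rond}) together with Fact \ref{factc}, and then observe that any limit of $(e_{a_{n}})$ inside $X$ in a topology detected by the coordinate functionals $z_{j}^{*}$ would force the series $\sum_{j}\alpha_{j}z_{\kappa_{j}}$ to converge in $Z$, which is impossible since $\alpha_{j}\ge\delta_{0}$ and $\inf_{j}||z_{j}||>0$. Where you genuinely diverge is in how reflexivity is exploited. The paper writes the invariant subspace as $M_{x}$ (using Property (P4)), views $(e_{a_{n_{l}}})$ in $X^{**}$, extracts a $w^{*}$-convergent subsequence, places the limit $x_{1}^{**}$ in $M_{x}^{\perp\perp}$, and gets the contradiction from reflexivity through the identification $M_{x}^{\perp\perp}=M_{x}\subseteq X$. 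You stay inside $X$: for a reflexive $L$, the bounded approximants $T^{c_{i}}x\in L$ have a weakly convergent subsequence by Eberlein--\v{S}mulian, and the norm-small errors transfer this to a weak limit of $(e_{a_{n_{i}}})$ in $X$, which your middle step rules out. Your packaging is a bit more economical: it needs neither Property (P4) (containment of the orbit in $L$ suffices) nor the bidual, and it avoids the sequential extraction of a $w^{*}$-convergent subsequence in $X^{**}$, which as literally stated in the paper presupposes separability of $X^{*}$ (otherwise one passes to a $w^{*}$-cluster point or a subnet -- harmless here since only the countably many functionals $z_{j}^{*}$ are tested). What the paper's bidual formulation buys in return is the slightly sharper conclusion that no $w^{*}$-cluster point of $(e_{a_{n}})$ in $X^{**}$ lies in $X$, the form of the argument that is recycled in Section \ref{sec7} to show that Read's type operators are never adjoints.
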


\begin{proof}[Proof of Proposition \ref{facti}]
 By Property (P4) any \nz\ \inv\ subspace of $X$ is of the form $M_x$ for some \nz\ vector $x\in X$. The proof of Theorem \ref{th2} (more precisely, equation (\ref{rond}) combined with Fact \ref{factc}) shows that there exists an infinite sequence $(n_l)_{l\geq 1}$ of integers such that $d(e_{a_{n_l}},M_x)$ tends to $0$ as $l$ tends to infinity. Since the sequence $(e_{a_{n_l}})_{l\geq 1}$ is bounded, we can, if we consider it as living in $X^{**}$, extract from it a $w^{*}$-convergent subsequence in $X^{**}$. Without loss of generality we suppose that $e_{a_{n_l}}$ tends  to some element 
$x_1^{**}\in X^{**}$ in this $w^{*}$-topology. Now, $M_x^{\perp\perp}$ being $w^{*}$-closed in $X^{**}$, $x_1^{**}$ belongs to $M_x^{\perp\perp}$. In order to prove that $M_x$ cannot be reflexive, it suffices to prove that $x_1^{**}$ cannot belong to $X$. Suppose that $x_1^{**}=x_1$ with $x_{1}\in X$. The $w^{*}$-convergence of $e_{a_{n_{l}}}$ to $x_{1}$ in $X^{**}$ implies that  for every $j\geq 1$, $\pss {z_j^{*}} {e_{a_{n_l}}} $ tends to $\pss {z_j^{*}} {x_1} $  as $l$ tends to infinity (recall that $(z_{j}^{*})_{j\ge 1}$ is the sequence of coordinate functionals associated to the basis $(z_{j})_{j\ge 1}$ of $Z$). Thus, since $e_{a_{n_{l}}}=\sum_{k=1}^{n_{l}}\alpha _{k}z_{\kappa _{k}}+e_{0}$, we get that $\pss {z_j^{*}} {x_1} =0$ for every $l$ not belonging to the set $\{\kappa_k\textrm{ ; } k\geq 1\}$
while $\pss {z_{\kappa_k}^{*}} {x_1} =\alpha_{k}$ for every $k\geq 1$. Hence the projection of the vector $x_{1}$ onto the space $Z$ can only be equal to the vector $\sum_{k\geq 1}\alpha_{k}z_{\kappa_k}$. But this contradicts the fact that the series $\sum_{k\geq 1}\alpha_{k}z_{\kappa_k}$ does not converge in $Z$. Hence  $x_1^{**}$ does not belong to $M_x$, and $M_x$ is non-\re.
\end{proof}
Thus Proposition \ref{facti} completes the proof of Theorem \ref{th5}.
\end{proof}

\section{Final remarks and comments}\label{sec7}

We finish this paper with several remarks concerning the structure of the Read's type \ops\ constructed here.

\subsection{Read's type operators are never adjoints}
 An argument similar to the one employed in the proof of Fact \ref{facti} (and also to the one used by Schlumprecht and Troitsky in \cite{TS} to show that the \ops\ of \cite{R1} are never adjoints of \ops\ on a predual of $\ell_{1}$) shows that:

\begin{proposition}
  The \ops\ constructed in the proofs of Theorems \ref{th2} and \ref{th1} can never be adjoints of \ops\ on some predual of the space $X$. 
\end{proposition}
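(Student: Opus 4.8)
The plan is to exploit the fact that an adjoint operator $S^{*}$ is automatically continuous for the weak-$*$ topology $\sigma(X,Y)$ of a dual $X=Y^{*}$, and to play this against a structural feature shared by both constructions: there is a bounded sequence of vectors lying at a fixed distance $<1$ from the hypercyclic vector $g_{0}$ whose $T$-images tend to $0$ in norm. A $\sigma(X,Y)$-cluster point of such a sequence would then be a non-zero vector killed by the would-be predual adjoint, contradicting injectivity of $T$. So the first thing I would record is that the operators of Theorems \ref{th2} and \ref{th1} are injective. For Theorem \ref{th1} this is immediate, since every non-zero vector is hypercyclic whereas a vector in $\ker T\setminus\{0\}$ has finite, hence non-dense, orbit. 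For Theorem \ref{th2} I would deduce it from Property (P3): if $Tv=0$ with $v\ne 0$, then $\mathcal{O}\textrm{rb}(cv,T)=\{cv,0\}$ for every scalar $c\ne 0$, so $d(\mathcal{O}\textrm{rb}(cv,T),g_{0})=\min(||cv-g_{0}||,1)$ would have to be $<\varepsilon<1$ for every such $c$; this fails as $c\to 0$, since then $||cv-g_{0}||\to||g_{0}||=1$.

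Next I would isolate the approximating sequence. In the setting of Theorem \ref{th2} take $v_{n}=e_{a_{n}}$: by Fact \ref{facta} one has $||v_{n}-g_{0}||<\varepsilon<1$, and $Tv_{n}=e_{a_{n}+1}$ sits at the very beginning of the lay-off interval $[a_{n}+1,b_{n}]$, so $||Tv_{n}||\ls 2^{-\frac{1}{2}\sqrt{b_{n}}}\to 0$. In the setting of Theorem \ref{th1} take $v_{n}=e_{na_{n}}$, the right endpoint of the last (a)-working interval: by Fact \ref{fact1bis} applied with $N=0$ one has $||v_{n}-g_{0}||\le 1$, and $Tv_{n}=e_{na_{n}+1}$ again lies at the start of a lay-off interval, so $||Tv_{n}||\to 0$. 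Here I would first renormalize the basis $(z_{j})_{j\ge 1}$ of $Z$ (for instance, halve all the $\alpha_{j}$, which changes nothing else in the proof of Theorem \ref{th1}) so that $\sup_{J\ge 1}||\sum_{j=1}^{J}\alpha_{j}z_{j}||<1$, which upgrades the estimate to $||v_{n}-g_{0}||\le c$ for some fixed $c<1$. In both cases $(v_{n})_{n\ge 1}$ is bounded.

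Then I would run the contradiction. Suppose $T=S^{*}$ for some bounded operator $S$ on a predual $Y$ of $X$. Since $X$ is separable, so is $Y$, hence the closed unit ball of $X=Y^{*}$ is $w^{*}$-metrizable and $w^{*}$-sequentially compact; pass to a subsequence $v_{n_{k}}$ converging to some $v^{*}\in X$ in the $\sigma(X,Y)$-topology. Weak-$*$ continuity of $S^{*}$ gives that $Tv_{n_{k}}$ converges to $Tv^{*}$ weak-$*$, while $Tv_{n_{k}}\to 0$ in norm (hence weak-$*$), so $Tv^{*}=0$. On the other hand, by $w^{*}$-lower semicontinuity of the norm, $||v^{*}-g_{0}||\le\liminf_{k}||v_{n_{k}}-g_{0}||\le c<1=||g_{0}||$, so $v^{*}\ne 0$. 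Thus $v^{*}$ is a non-zero element of $\ker T$, contradicting the injectivity established above, and no such $S$ can exist. This is exactly the mechanism of the proof of Fact \ref{facti}, except that the limit vector is now taken in a predual duality $\sigma(X,Y)$ rather than in $\sigma(X^{**},X^{*})$.

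The delicate point — and the step I expect to be the main obstacle — is securing an approximating sequence that is \emph{simultaneously} (i) at distance \emph{strictly} less than $1$ from $g_{0}$ and (ii) crushed to $0$ in norm by $T$. Condition (i) is free in Theorem \ref{th2}, because $\varepsilon$ there is a fixed constant $<1$; but in Theorem \ref{th1} it forces both the (harmless) renormalization above and the particular choice of the index $na_{n}$: being the last index of an (a)-block, it is immediately followed by a lay-off interval, which is precisely what yields (ii). Choosing instead an index in the interior of an (a)-block would give (i) but not (ii), and choosing a generic index would give (ii) but not (i) with a bound uniformly below $1$.
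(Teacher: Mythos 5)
Your proposal is correct, and its engine is the same as the paper's: assume $T=S^{*}$, use $w^{*}$-$w^{*}$ continuity of an adjoint, and take a $w^{*}$-cluster point of exactly the same bounded sequences, $(e_{a_{n}})_{n\ge 1}$ for Theorem \ref{th2} and $(e_{na_{n}})_{n\ge 1}$ for Theorem \ref{th1}, whose $T$-images lie at the start of lay-off intervals and hence tend to $0$ in norm, so that the cluster point lies in $\ker T$. Where you diverge is in how the contradiction is closed. The paper shows the cluster point $x_{0}$ must vanish by invoking Property (P4) (a non-zero $x_{0}$ with $Tx_{0}=0$ would span a one-dimensional invariant subspace on which the induced operator cannot be hypercyclic) and then contradicts this via the pairing $\pss{e_{0}^{*}}{e_{a_{n}}}=1$. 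You instead prove injectivity of $T$ directly -- from hypercyclicity of all non-zero vectors for Theorem \ref{th1}, and from Property (P3) applied to the multiples $cv$ of a putative kernel vector for Theorem \ref{th2} -- and show the cluster point is non-zero via $w^{*}$-lower semicontinuity of the norm, which for Theorem \ref{th1} forces the small extra step of rescaling the $\alpha_{j}$ so that $\sup_{J}||\sum_{j=1}^{J}\alpha_{j}z_{j}||<1$ (harmless, as you note, since only $\limsup\alpha_{j}>0$ and the bound $\le 1$ are used). Your ending is slightly more self-contained: it does not need (P4), and by using lower semicontinuity against a fixed functional-free distance bound it sidesteps any question of which coordinate functionals are $w^{*}$-continuous for an arbitrary predual; the paper's ending avoids the rescaling. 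Both are valid, and your identification of the right endpoint $na_{n}$ of the last (a)-block as the index giving simultaneously the distance bound and the norm collapse matches the paper's choice exactly.
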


\begin{proof}
Indeed suppose that $X=Y^{*}$ is a dual space satisfying the assumptions of Theorem \ref{th2}, and let $T$ be one of the \ops\ constructed in the proof. Suppose that $T$ is the adjoint of some \op\ $S$ on $Y$. Then $T$ is $w^{*}$-$w^{*}$ continuous on $X$. Consider the bounded sequence $(e_{a_{n}})_{n\ge 1}$ of vectors in $X$, and let $x_{0}\in X$ be one of its $w^{*}$-limit points. Then, since $T$ is $w^{*}$-$w^{*}$ continuous, $Tx_{0}$ is a  $w^{*}$-limit point of the sequence $(Te_{a_{n}})_{n\ge 1}$. But $Te_{a_{n}}=e_{a_{n}+1}$ tends to zero in norm as $n$ tends to infinity, so that necessarily $Tx_{0}=0$. Applying Property (P4) to the invariant subspace $M_{x_{0}}$ (see Proposition \ref{prop6}), we infer that $x_{0}=0$, i.e. that any $w^{*}$-limit point of the sequence $(e_{a_{n}})_{n\ge 1}$ must be zero.
This stands in contradiction with the fact that $\pss{e_{0}^{*}}{e_{a_{n}}}=1$ for each $n\ge 1$. The argument is exactly the same for the \ops\ constructed in the proof of Theorem \ref{th1}, except that one now considers $w^{*}$-limit points of the sequence $(e_{na_{n}})_{n\ge 1}$.
 \end{proof}

\subsection{Invariant subspaces of the operators obtained in Theorem \ref{th2}}
One might wonder whether the \ops\ constructed in the proof of Theorem \ref{th2} for $p$ in the reflexive range $1<p<+\infty$  have \nt\ \inv\ subspaces or not. It is indeed the case:

\begin{proposition}\label{propbizarre}
 Let $X=\ell_{p}\oplus Z$, $1<p<+\infty$, be a space satisfying the assumptions of Theorem \ref{th2}, and let $T$ be one of the \ops\ on $X$ constructed in the proof. Provided the quantities $\xi_{n}\ll a_{n}\ll b_{n}\ll c_{1,n}\ll c_{2,n}\ll
\ldots\ll c_{k_{n},n}\ll \xi_{n+1}$ are sufficiently large, the \op\ $T$ has a \nt\ \inv\ closed subspace.
\end{proposition}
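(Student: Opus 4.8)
The proof rests entirely on the weak compactness of $T$. Recall from Remark~\ref{remaddbis} that when $1<p<+\infty$ the \op\ $T$ on $X=\ell_{p}\oplus Z$ is \wc; by Gantmacher's theorem this means that the bitranspose $T^{**}$ maps $X^{**}=\ell_{p}\oplus Z^{**}$ into $X$, and of course $T^{**}$ is $w^{*}$-$w^{*}$ continuous. I would argue by contradiction, assuming that $T$ has no \nt\ \inv\ closed subspace. Since $\ker T$ would otherwise be one (it is always distinct from $X$, as $Te_{0}=e_{1}\neq0$), this forces $T$ to be injective; and by Property (P1) every \nz\ vector of $X$ is then \hy\ for $T$.

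The next step is to squeeze information out of the sequence $(e_{a_{n}})_{n\ge1}$, which by Fact~\ref{facta} is bounded ($\|e_{a_{n}}-e_{0}\|<\varepsilon$). Passing to a subnet, we may assume $e_{a_{n}}\to\Phi$ in the $w^{*}$-topology of $X^{**}$. Three facts matter. First, $\Phi\neq0$, since $\langle f_{0}^{*},e_{a_{n}}\rangle=1$ for every $n$ (because $e_{a_{n}}=e_{0}+\sum_{k=1}^{n}\alpha_{k}z_{\kappa_{k}}$ and $f_{0}^{*}$ annihilates $Z$), so that $\langle\Phi,f_{0}^{*}\rangle=1$. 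Secondly, $T^{**}\Phi=0$: by $w^{*}$-continuity of $T^{**}$ we have $T^{**}\Phi=w^{*}\text{-}\lim_{n}Te_{a_{n}}=w^{*}\text{-}\lim_{n}e_{a_{n}+1}$, and, since $a_{n}+1$ lies at the very beginning of a \loi\ of length $b_{n}$, $\|e_{a_{n}+1}\|=2^{-\frac12\sqrt{b_{n}}}\to0$ provided $b_{n}$ is chosen large enough at each step. Thirdly, $\Phi\notin X$: this is exactly the argument of Proposition~\ref{facti} — if $\Phi$ were a vector $x\in X$, testing against the coordinate functionals $z_{j}^{*}$ would force the $Z$-component of $x$ to be $\sum_{k\ge1}\alpha_{k}z_{\kappa_{k}}$, contradicting the fact that this series does not converge in $Z$. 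Thus $T^{**}$ has a non-zero kernel vector lying outside $X$, so in particular $T$, though injective, is not bounded below and its range is not closed.

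The delicate point — and the one I expect to be the real obstacle — is now to convert this bidual information into a genuine \nt\ \inv\ closed subspace of $T$ inside $X$, thereby contradicting the standing assumption. This is precisely the step where the hypothesis $1<p<+\infty$, i.e. weak compactness, cannot be dispensed with: the bidual element $\Phi$ must be ``brought down'' to $X$ (or to a \re\ space), and this is only possible because the $\ell_{p}$-summand is \re. Two natural routes present themselves. One is to exhibit by hand a \nz, non-\cy\ vector of $X$ — a kind of condensation point of the $e_{a_{n}}$'s obtained by combining a weak-compactness extraction in the \re\ factor $\ell_{p}$ with a summability estimate in the factor $Z$ — playing the role that $x_{0}=u_{0}+e_{0}$ plays in Theorem~\ref{th3bis}, the difficulty being that here the relevant series $\sum\alpha_{k}z_{\kappa_{k}}$ diverges; one then checks, as in the proofs of Theorems~\ref{th2} and \ref{th3bis}, that its orbit closure is a proper subspace. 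The other is to run the nested-subspace scheme of Theorems~\ref{th3} and \ref{th5}: either the supremum $d_{1}=\sup\{d(M,g_{0})\colon M\ \text{a \nz\ closed \inv\ subspace}\}$ is positive, in which case any $M$ with $d(M,g_{0})>0$ is already proper and we are done, or $d_{1}=0$, and one must use weak compactness, again via the sequence $(e_{a_{n}})$ and its cluster point $\Phi$, to derive a contradiction. In either approach the combinatorial requirement that $\xi_{n}\ll a_{n}\ll b_{n}\ll c_{1,n}\ll\cdots\ll\xi_{n+1}$ grow fast enough is what guarantees the norm estimates (notably $\|e_{a_{n}+1}\|=2^{-\frac12\sqrt{b_{n}}}$ and the tail bounds of Propositions~\ref{prop2}–\ref{prop3}) needed to make the limiting arguments go through.
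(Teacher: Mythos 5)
Your bidual observations are correct but they only reproduce material the paper already has (the divergence argument of Proposition \ref{facti} and the non-adjointness argument of Section \ref{sec7}), and they do not reach the conclusion: you stop exactly at what you yourself call ``the delicate point'', and neither of the two routes you sketch can close it. The cluster point $\Phi$ of $(e_{a_{n}})$ lies in $X^{**}\setminus X$ precisely because $\sum_{k}\alpha_{k}z_{\kappa_{k}}$ diverges in $Z$, so there is no ``condensation point'' in $X$ to play the role of $x_{0}$ from Theorem \ref{th3bis}; and knowing that $T^{**}\Phi=0$ for some non-zero $\Phi\in X^{**}\setminus X$ only says that $T$ is not bounded below, which is perfectly compatible with $T$ having no non-trivial invariant closed subspace, so no contradiction follows. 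The nested-subspace scheme of Theorems \ref{th3} and \ref{th5} is also empty here: under your contradiction hypothesis the only non-zero invariant closed subspace is $X$ itself, so $d_{1}=0$ trivially, and that scheme anyway needs (P3)-type distance estimates, weak (or $w^{*}$) closedness and strict convexity, none of which are available or used. Finally, weak compactness by itself can only transfer the problem to a reflexive space (Proposition \ref{propnewD}), where no impossibility is known, so it cannot be the engine of a contradiction.

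The paper's proof is not by contradiction and uses a completely different mechanism, whose key ingredients are absent from your proposal: one writes $T=S+K$ with $K$ nuclear satisfying the strengthened summability condition (\ref{EQ5}), factorizes $T=BA$ through $\ell_{p}$ by explicit operators $A$ and $B$ built from this decomposition, and proves (Lemma \ref{factbizarre}) that $\ker B$ is contained in the closed span of the vectors $g_{0},g_{a_{n}-1},g_{a_{n}}$; this makes $\ker B$ invariant under the shift part, so the induced operator $\overline{T}_{0}$ on the reflexive quotient $E=\ell_{p}/\ker B$ is a compact perturbation of a contraction. The Lomonosov essential-norm inequality (Theorem \ref{thlom} in the real case), applied on the reflexive space $E$, then produces a non-trivial invariant closed subset of $\overline{T}_{0}$, which is transferred back to $T$ because $\overline{A}$ has dense range and $\overline{B}$ is injective, and is a subspace by Property (P1). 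This is where the hypothesis $1<p<+\infty$ really enters: it is the reflexivity of the factor $\ell_{p}$ that allows the Lomonosov step, not the weak compactness of $T$ as such. Without the factorization, the kernel lemma and the Lomonosov argument (or some substitute), your proposal has a genuine gap at its main step.
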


The proof of Proposition \ref{propbizarre} builds on an argument employed in \cite{GR2} to show that the  Read's type \ops\ on the Hilbert space constructed in \cite{GR} do have 
 \nt\ \inv\ subspaces. It relies on the Lomonosov inequality of \cite{L}, which states the following:
\par\smallskip 
Let $X$ be a complex separable Banach space, and let $\mathcal{A}$ be a weakly closed sub-algebra of $\mathcal{B}(X)$ with $\mathcal{A}\neq\mathcal{B}(X)$. Then there exist two non-zero vectors $x^{**}\in X^{**} $ and $x^{*}\in X^{*}$ such that for every $A\in \mathcal{A}$,
$$
|\pss{x^{**}}{A^{*}x^{*}}|\le||A||_{e},
$$
where $||A||_{e}$ denotes the essential norm of $A$, i.e.\ the distance of $A$ to the space of compact operators on $X$.
\par\smallskip 
In particular, if $T$ is an adjoint \op\ on a complex dual space, and $T$ is a compact perturbation of a power-bounded \op, then $T$ has a \nt\ \inv\ closed subset.
\par\smallskip 
The \ops\ constructed in the proof of Theorem \ref{th2} on $X=\ell_{p}\oplus Z$ for $1<p<+\infty$ are indeed compact perturbations of a power bounded \op\ (this was observed in Remark \ref{remadd}), but, as seen in Section $7.1$ above, they are never adjoint \ops. So one needs to be a bit more subtle in the argument. Moreover, Lomonosov's result \cite{L} as stated above applies only to the complex case. When the space $X$ is real, one has to extract from the proof of \cite[Th. $1$]{L} a statement which applies to both real and complex spaces, and which will be sufficient for the proof of Proposition \ref{propbizarre}.

\begin{proof}[Proof of Proposition \ref{propbizarre}]
 The \op\ $T$ is of the form described in Propostion \ref{prop1}: it can be written as $T=S+K$, where $S=S_0\oplus 0$ acts on $\ell_p\oplus Z$, $S_0$ is a forward weighted  shift on $\ell_p$ and
$Kx=\sum_{j\geq 0}f_{j}^{*}(x)u_{j} $
for some sequence $(u_{j})_{j\geq 0}$ of vectors of $X$ with
$$\sum_{j\geq 0}(1+||f_{j}^{*}||)||u_{j}||<+\infty .$$ It is possible to require that the stronger condition
\begin{equation}\label{EQ5}
\sum_{j\geq 0}(1+||f_{j}^{*}||^{p}){||u_{j}||}^{\min(\frac{1}{p},\frac{1}{q})}<+\infty
\end{equation}
 holds true, where $q$ defined by the equation $\frac{1}{p}+\frac{1}{q}=1$ is the conjugate exponent of $p$. Moreover, we have seen in Remark \ref{remadd} that $u_j=0$ \ifff\ $j$ does not belong to the set $\tilde{J}$. Recall that the set $\tilde{J}$ consists of all points in the (a)-\woi s, plus all right endpoints of either working or \loi s. Thus if $j$ does not belong to $\tilde{J}$, $f_{j}=g_{\sigma  (j)}$ and $f_{j+1}=g_{\sigma  (j+1)}=g_{\sigma (j)+1}$ are consecutive elements of the basis $(g_{j})_{j\ge 0}$ of $\ell_{p}$. Hence $Tf_{j}=w_{j}f_{j+1}$ for every $j\not\in \tilde{J}$, and
 for every $x\in\ell_{p}\oplus Z$ we have
$$Tx=\sum_{j\not\in\tilde{J}} f_{j}^{*}(x)w_{j}f_{j+1}+\sum_{j\in\tilde{J}}f_{j}^{*}(x)u_{j}.$$ 
Consider the \ops\ $A:\ell_p\oplus Z \To \ell_p$ and $B:\ell_p\To \ell_p\oplus Z$ defined by $$ Ax=\sum_{j\not\in\tilde{J}} f_{j}^{*}(x)g_{j}+\sum_{j\in\tilde{J}}
f_{j}^{*} (x)\;
{||u_{j}||}^{\frac{1}{p}}g_j \quad \textrm{ for } x \in
\ell_p \oplus Z $$ and $$By=\sum_{j\not\in\tilde{J}} g_{j}^{*}(y) w_{j}f_{j+1}+\sum_{j\in\tilde{J}} g_{j}^{*}(y)\; {{||u_{j}||}^{-\frac{1}{p}}} u_j
\quad \textrm{ for } y\in
\ell_p$$ where  $(g_j^{*})_{j\geq 0}$ is the sequence of canonical functionals \wrt\ the canonical basis $(g_{j})_{j\ge 0}$ of $\ell_{p}$. Observe that since $u_j$ is \nz\ for every $j\in\tilde{J}$, it makes sense to write ${{||u_{j}||}^{-\frac{1}{p}}}$ in the definition of $B$.
The key lemma is now the following:

\begin{lemma}\label{factbizarre}
The \ops\ $A:\ell_p\oplus Z \To \ell_p$ and $B:\ell_p\To \ell_p\oplus Z$ factor $T$  through the space $\ell_{p}$: we have $T=BA$. Moreover, $A$ has dense range and the kernel of $B$ is contained in the closed linear subspace of $\ell_{p}$ spanned by the vectors $g_{0}$ and $g_{a_{n}-1} $, $g_{a_{n}}$ for $n\ge 1$.
\end{lemma}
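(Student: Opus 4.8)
The plan is to verify the factorization $T = BA$ directly on the basis vectors $f_j$, then check the density of the range of $A$ and the structure of $\ker B$ by inspecting the definitions. First I would compute $ABf_j$... no, rather $BAf_j$, distinguishing the two cases $j\notin\tilde J$ and $j\in\tilde J$. When $j\notin\tilde J$ we have $Af_j = g_j$, and $B g_j = w_j f_{j+1}$ (the first sum in the definition of $B$ picks out exactly this term since $g_j^*(g_j)=1$), so $BAf_j = w_j f_{j+1} = Tf_j$ by the description of $T$ recalled just above the lemma. When $j\in\tilde J$ we have $Af_j = \|u_j\|^{1/p}g_j$, and then $Bg_j = \|u_j\|^{-1/p}u_j$ (now the second sum in $B$ contributes, and $\|u_j\|^{-1/p}$ makes sense precisely because $u_j\neq 0$ for $j\in\tilde J$), so $BAf_j = \|u_j\|^{1/p}\,\|u_j\|^{-1/p}u_j = u_j = Tf_j$. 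Since $(f_j)_{j\ge 0}$ is a basis of $X$ and both $T$ and $BA$ are bounded, $T=BA$ follows — modulo checking that $A$ and $B$ are themselves bounded, which is where the reinforced summability condition \eqref{EQ5} enters: $A$ is bounded because, writing $x\in X$ in the $(f_j)$-basis, the $\ell_p$-norm of $Ax$ involves $\sum_{j\in\tilde J}|f_j^*(x)|^p\|u_j\|$, controlled using $|f_j^*(x)|\le \|f_j^*\|\,C\,\|x\|$ and \eqref{EQ5}; and $B$ is bounded for the dual reason, the exponent $\min(1/p,1/q)$ being chosen so that both estimates go through simultaneously (the $w_j$ are bounded by $1+\rho$, so the first sums cause no trouble).

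Next, the range of $A$ is dense: from the formula, $Af_j$ is a non-zero scalar multiple of $g_j$ for \emph{every} $j\ge 0$ (the scalar is $1$ if $j\notin\tilde J$ and $\|u_j\|^{1/p}>0$ if $j\in\tilde J$), so the range of $A$ contains every basis vector $g_j$ of $\ell_p$ and is therefore dense. For $\ker B$: if $y=\sum_j g_j^*(y)g_j\in\ell_p$ with $By=0$, then $\sum_{j\notin\tilde J}g_j^*(y)w_jf_{j+1} + \sum_{j\in\tilde J}g_j^*(y)\|u_j\|^{-1/p}u_j = 0$ in $X$. I would argue that the two families $\{w_jf_{j+1} : j\notin\tilde J,\ w_j\neq 0\}$ and $\{u_j : j\in\tilde J\}$ are ``mostly independent'' — more precisely, that the only way a combination can vanish forces $g_j^*(y)=0$ for all $j$ outside a controlled exceptional set. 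The clean way to see this is that for $j\notin\tilde J$, $f_{j+1}=g_{\sigma(j)+1}$ is a genuine basis vector of the $\ell_p$-part, while the $u_j$ for $j\in\tilde J$ are the images $Tf_j=Kf_j$, which are vectors supported (in the $(f_i)$-basis) near the indices $j+1$; one extracts coordinate functionals $f_{i}^*$ and applies them to the vanishing sum. Running through the cases: most $j\notin\tilde J$ contribute a coordinate that no other term touches, forcing $g_j^*(y)=0$; the residual indices where overlaps can occur are exactly those adjacent to $\tilde J$ in the relevant sense, and tracing through the definition of $\tilde J$ (the (a)-working intervals $J_n = [a_n-(\kappa_n-\kappa_{n-1}), a_n]$ together with right endpoints of working and lay-off intervals) one checks that after the dust settles only the coordinates corresponding to $g_0$ and to $g_{a_n-1}, g_{a_n}$ for $n\ge 1$ can survive, which is the asserted description of $\ker B$.

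The main obstacle I expect is this last bookkeeping step: the claim ``$\ker B$ is spanned by $g_0$ and $g_{a_n-1},g_{a_n}$'' is a statement about exactly which linear relations exist among the vectors $Tf_j$, and pinning it down requires a careful case analysis of the behaviour of $T$ on the (a)-working intervals — where $e_{a_n-k}=a_n^{k+1}f_{a_n-k}$ and $e_{a_n}=\alpha_n z_{\kappa_n}+e_{a_{n-1}}$ create the only genuine cancellations between the ``shift part'' and the ``nuclear part'' of $T$ — as opposed to the (b)- and (c)-working intervals and lay-off intervals, where $T$ is essentially a weighted shift with no new degeneracies modulo what is already recorded in $\tilde J$. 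I would organize this by first noting that $\ker B$ certainly \emph{contains} $g_0$ and the $g_{a_n-1},g_{a_n}$ (direct check: $Bg_0$, $Bg_{a_n-1}$, $Bg_{a_n}$ are each either zero or expressible via the telescoping relation $e_{a_n}-e_{a_{n-1}}=\alpha_n z_{\kappa_n}$, giving a relation that can be absorbed — in fact one verifies $B$ kills precisely these because $Tf_0, Tf_{a_n-1}, Tf_{a_n}$ are the vectors where the definition of $e_j$ on the (a)-interval produces the cancellation, so the corresponding $u_j$ are already accounted for), and then showing the reverse inclusion by the coordinate-extraction argument above. The inclusion is all that Lemma \ref{factbizarre} needs for the subsequent application (so I would not belabour equality), but stating it as containment in a two-(resp.\ three-per-$n$)-dimensional span is clean and suffices.
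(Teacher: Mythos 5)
Your verification of $T=BA$ on the basis vectors, the boundedness of $A$ and $B$ via the reinforced condition (\ref{EQ5}), and the density of the range of $A$ are correct and essentially identical to the paper's argument. The genuine gap is in the kernel statement, and it lies exactly where you flagged "bookkeeping": your plan rests on the premise that for $j\in\tilde J$ the vector $u_j=Tf_j$ is supported, in the $(f_i)$-basis, near the index $j+1$, so that $By=0$ can be analysed coordinate by coordinate with only local overlaps, the (a)-intervals being the only source of genuine cancellation. That premise is false. If $j\in J_{c}^{-}$, i.e. $j+1=s_1c_{1,m}+\dots+s_tc_{t,m}$ is the left endpoint of a (c)-working interval, then $u_j=\lambda_j e_{j+1}$ with $e_{j+1}=\gamma_m4^{|s|-1}f_{j+1}+p_{t,m}(T)e_{j+1-c_{t,m}}$, and the second summand has components on coordinates going all the way down to $0$; similarly $u_{a_m-1}$ is a scalar multiple of $e_{a_m}=e_0+\sum_{k\le m}\alpha_kz_{\kappa_k}$, which has a nonzero component on $f_0$ and on every $f_{a_k}$, $k\le m$. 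So each fixed low coordinate of $By$ receives contributions from infinitely many indices $j$, coming from arbitrarily late steps of the construction, and no finite coordinate-extraction of the kind you describe can force $g_j^*(y)=0$: the assertion is not a purely algebraic independence statement about the family $\{w_jf_{j+1}\}\cup\{u_j\}$, and it is the (c)-part, not the (a)-part, that creates the heavy overlaps.

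What the paper actually does is quantitative and uses a degree of freedom absent from your sketch: for each $n$ it splits $By$ into the sum $\mathcal{S}_n(y)$ over $j\le\xi_{n+1}$ and a remainder $\mathcal{R}_n(y)$, and makes the projection of $\mathcal{R}_n(y)$ onto the coordinates in $I_n$ smaller than a prescribed $\rho_{n+1}$ — which is possible only because the parameters $c_{k,m},\xi_{m+1}$ at the later steps $m>n$ may still be chosen after $\rho_{n+1}$ is fixed; this is precisely why Proposition \ref{propbizarre} carries the proviso that the quantities grow sufficiently fast, rather than applying to an arbitrary operator from Theorem \ref{th2}. Then a downward induction through the lattice of (c)-working intervals, next the (b)-intervals, and finally the (a)-intervals of the steps $\le n$ yields $|g_j^*(y)|<C_{j,n}\rho_{n+1}$ for $j\in I_n$, and letting $n\to\infty$ with $C_n\rho_{n+1}<2^{-n}$ kills these coordinates; only afterwards does the easy observation that $f_{j+1}$, for $j\notin\tilde J$, avoids the closed span of $f_0$ and the $f_{a_k},f_{a_k+1}$ dispose of the remaining coefficients. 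None of this machinery appears in, or can be replaced by, your local argument. A secondary error: your preliminary claim that $\ker B$ "certainly contains" $g_0$ and the $g_{a_n-1},g_{a_n}$ is not part of the lemma and is wrong — for instance $Bg_{a_n}=\|u_{a_n}\|^{-1/p}u_{a_n}$ with $u_{a_n}=\frac{1}{\alpha_n}(e_{a_n+1}-e_{a_{n-1}+1})\neq0$, and $Bg_0\neq0$ as well; the lemma asserts only the one-way containment of $\ker B$ in that closed span, which is all the subsequent argument needs.
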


\begin{proof}[Proof of Lemma \ref{factbizarre}]
We have $$||Ax||=\left(\sum_{j\not\in\tilde{J}}|f_{j}^{*}(x)|^{p}+\sum_{j\in\tilde{J}}|f_{j}^{*}(x)|^p {||u_{j}||}
\right)^{\frac{1}{p}}
\leq ||x||\,\left(1+\sum_{j\in\tilde{J}}||f_{j}^{*}||^p {||u_{j}||}\right)^{\frac{1}{p}}$$
since  for $j\not \in \tilde{J}$, $f_{j}^{*}(x)$ is equal to  the ${\sigma  (j)}$-th coordinate in the basis $(g_{j})_{j\ge 0}$ of the projection of $x$ onto $\ell_{p}$.
Thus by (\ref{EQ5}), $A$ is a bounded \op\ on $\ell_p\oplus Z$. Observe already that the range of $A$ is dense in $\ell_p$. Then, using the triangle inequality plus the H\"older inequality, we get that 
$$||By||\leq
||\sum_{j\not\in\tilde{J}} g_{j}^{*}(y) w_{j}f_{j+1}||+\left(\sum_{j\in\tilde{J}}|g_{j}^{*}(y)|^p\right)^{\frac{1}{p}}
\,\left(\sum_{j\not\in\tilde{J}}{||u_{j}||^{q(1-\frac{1}{p})}}\right)^{\frac{1}{q}}.$$
Recall now that for $j\not\in\tilde{J}$, $f_{j+1}=g_{\sigma  (j+1)}$. Hence
$$||\sum_{j\not\in\tilde{J}} g_{j}^{*}(y) w_{j}f_{j+1}||=
\left(\sum_{j\not\in\tilde{J}}|g_{j}^{*}(y)|^{p}w_{j}^{p}\right)^{\frac{1}{p}}.$$
Thus, using the fact that $0<w_{j}\leq 2$, we obtain that for all $y\in \ell_{p}$
$$||By||\leq  ||y||\,\left(2+\left(\sum_{j\in\tilde{J}} {||u_{j}||}\right)^{\frac{1}{q}}\right).$$
Hence $B$ is also a bounded \op\ on $\ell_p$ by (\ref{EQ5}).
We have for any $x\in \ell_p\oplus Z$
$$BAx=
\sum_{j\not\in\tilde{J}} f_{j}^{*}(x)w_{j}f_{j+1}+\sum_{j\in\tilde{J}}f_{j}^{*}(x){||u_j||}^{\frac{1}{p}}{||u_j||}^{-\frac{1}{p}} u_j=Tx,$$ 
and thus we have shown that $T$ can be factorized through the reflexive space $\ell_p$, thanks to the \ops\ $A$ and $B$. It remains to prove the statement about the kernel of $B$, and this is the most technical part of the proof. 
\par\smallskip
Suppose that $y\in\ell_{p}$ is such that $||y||=1$ and
\begin{eqnarray}\label{zero}
 By=\sum_{j\not\in\tilde{J}} g_{j}^{*}(y) w_{j}f_{j+1}+\sum_{j\in\tilde{J}} g_{j}^{*}(y)\; {{||u_{j}||}^{-\frac{1}{p}}} u_j=0. 
\end{eqnarray}
Let $I=[1,+\infty[\setminus\bigcup_{n\ge 0}\{a_{n}-1, a_{n}\}$. 
Our first goal is to show that if $j$ belongs to $I$, then $g_{j}^{*}(y) =0$. Recall that for every $j\in\tilde{J}$, $u_{j}$ is defined as $u_{j}=Tf_{j}$, and that
$\tilde{J}=J_{a}\cup J_{b}^{-}\cup J_{b}^{+}\cup J_{c}^{-}\cup J_{c}^{+}$, where
$$J_{a}=\bigcup_{n\ge 1}[a_{n}-(\kappa _{n}-\kappa _{n-1}-1)-1, a_{n}],$$
$$J_{b}^{-}=\bigcup_{n\ge 1}\{r(b_{n}+1)-1 \textrm{ ; } r\in [1, a_{n}]\},\quad J_{b}^{+}=\bigcup_{n\ge 1}\{ rb_{n}+a_{n} \textrm{ ; } r\in [1, a_{n}]\}$$
and 
$$J_{c}^{-}=\bigcup_{n\ge 1}\{s_{1}c_{1,n}+\ldots +s_{k_{n}}c_{k_{n},n}-1 \textrm{ ; } s_{1},\ldots, s_{k_{n}}\in [0,h_{n}],\; |s|\ge 1\},$$
$$ J_{c}^{+}=\bigcup_{n\ge 1}\{ s_{1}c_{1,n}+\ldots +s_{k_{n}}c_{k_{n},n}+\nu _{n}\textrm{ ; } s_{1},\ldots, s_{k_{n}}\in [0,h_{n}],\; |s|\ge 1\}.$$
For each $n\ge 1$ denote by $\mathcal{S}_{n}(y)$ the sum
\begin{eqnarray}\label{38}
\mathcal{S}_{n}(y)=\sum_{j\not\in\tilde{J},\, j\le\xi  _{n+1}} g_{j}^{*}(y) w_{j}f_{j+1}+\sum_{j\in\tilde{J},\, j\le\xi  _{n+1}} g_{j}^{*}(y)\; {{||u_{j}||}^{-\frac{1}{p}}} u_j
\end{eqnarray}
and by $\mathcal{R}_{n}(y)$ the remainder term
\begin{eqnarray}\label{39}
\mathcal{R}_{n}(y)=\sum_{j\not\in\tilde{J},\, j>\xi  _{n+1}} g_{j}^{*}(y) w_{j}f_{j+1}+\sum_{j\in\tilde{J},\, j>\xi  _{n+1}} g_{j}^{*}(y)\; {{||u_{j}||}^{-\frac{1}{p}}} u_j.
\end{eqnarray}
Let also $I_{n}$ be the set $[0, \xi  _{n+1}]\cap I$. Let us first look at the quantities $\pi_{[0, \xi  _{n+1}]}(\mathcal{R}_{n}(y))$ and $\pi_{I_{n}}(\mathcal{R}_{n}(y))$, where $\pi_{I_{n}}$ denotes the canonical projection on the linear span of the vectors $f_{j}$, $j\le \xi  _{n+1}$ and $j\in I_{n}$: there are two ways in which a vector $f_{j}$, $j\le\xi  _{n+1}$, can appear in the projection of $\mathcal{R}_{n}(y)$ onto the first $\xi  _{n+1}+1$ coordinates:
\par\smallskip
\textbf{Case a:} if $j=a_{m}-1$ for some $m\ge n+1$, then:

-- if $\kappa _{m}=\kappa _{m-1}+1$, we have $u_{a_{m}-1}=Tf_{a_{m}-1}=\lambda _{a_{m}-1}e_{a_{m}}$ and
$$\pi_{[0, \xi  _{n+1}]}(u_{a_{m}-1})=\lambda _{a_{m}-1}(f_{0}+\sum_{k=1}^{n}\alpha _{k}f_{a_{k}}).$$

-- if $\kappa _{m}>\kappa _{m-1}+1$, we have $u_{a_{m}-1}=Tf_{a_{m}-1}=\frac{1}{a_{m}^{2}}e_{a_{m}}$ and
$$\pi_{[0, \xi  _{n+1}]}(u_{a_{m}-1})=\frac{1}{a_{m}^{2}}(f_{0}+\sum_{k=1}^{n}\alpha _{k}f_{a_{k}}).$$

\par\smallskip
In these two situations $\pi_{I_{n}}(u_{a_{m}-1})$ belongs to the linear span of the vectors $f_{a_{0}},  \ldots, f_{a_{n}}$, and so $\pi_{I_{n}}(u_{a_{m}-1})=0$ for each $m\ge n+1$.
\par\smallskip
\textbf{Case b:} if $j\in J_{c}^{-}\cap [\xi_{m}+1,\xi_{m+1}]$ for some $m\ge n+1$, then
$j=s_{1}c_{1,m}+\ldots +s_{t}c_{t,m}-1$
where $t\in [1,k_{m}]$ is the largest integer such that $s_{t}\ge 1$, and $s_{1},\ldots, s_{t}$ belong to $[0, h_{m}]$. In this case
$$u_{j}=\lambda_{j}\frac{\gamma _{m}}{4^{1-|s|}}f_{j+1}+p_{t,m}(T)e_{j-c_{t,m}}.$$
One sees in this expression that the term $\lambda _{j}\,p_{t,m}(T)e_{j-c_{t,m}}$ can contribute something to both $\pi_{[0, \xi  _{n+1}]}(\mathcal{R}_{n}(y))$ and $\pi_{I_{n}}(\mathcal{R}_{n}(y))$.
\par\smallskip
Inspection of the expressions of $u_{j}$ for $j> \xi  _{n+1}$ in all the other situations show that these are the only cases where $\pi_{[0, \xi  _{n+1}]}(u_{j})$ can be \nz. So
\begin{eqnarray}\label{40}
 \pi_{[0, \xi  _{n+1}]}(\mathcal{R}_{n}(y))&=&\sum_{m\ge n+1}g_{a_{m}-1}^{*}(y)\,||u_{a_{m}-1}||^{-\frac{1}{p}}\pi_{[0,\xi  _{n+1}]}(u_{a_{m}-1})\\
 &+&\sum_{j\in J_{c}^{-},\, j>\xi_{n+1}}g_{j}^{*}(y)\,||u_{j}||^{-\frac{1}{p}}\pi_{[0,\xi  _{n+1}]}(u_{j})\notag
\end{eqnarray}
and
\begin{eqnarray}\label{41}
 \pi_{I_{n}}(\mathcal{R}_{n}(y))&=&
 \sum_{j\in J_{c}^{-},\, j>\xi_{n+1}}g_{j}^{*}(y)\,||u_{j}||^{-\frac{1}{p}}\pi_{I_{n}}(u_{j}).
\end{eqnarray}
If at each step $m$ of the construction of the \op\ $T$ the quantities $c_{1,m}\ll \ldots\ll c_{k_{m},m}\ll \xi  _{m+1}$ are chosen sufficiently large \wrt\ each other, we can ensure that for every $j\in J_{c}^{-}\cap[\xi_{m}+1,\xi_{m+1}]$, $||u_{j}|| <\tau _{m},$ where $\tau_{m}$ can be chosen as small as we wish. So
$$||\pi_{I_{n}}(\mathcal{R}_{n}(y))||\le ||\pi_{I_{n}}||\sum_{m\ge n+1}\sum_{j\in J_{c}^{-}\cap[\xi_{m}+1,\xi_{m+1}]}||u_{j}||^{\frac{1}{q}}\le  ||\pi_{I_{n}}||\sum_{m\ge n+1}h_{m}k_{m}\tau_{m}^{\frac{1}{q}}.$$
The outcome of this is that if, after step $n$ of the construction is completed, we fix a number $\rho  _{n+1}$ as small as we wish, we can carry out the construction after step $n$ in such a way that $
 ||\pi_{I_{n}}(\mathcal{R}_{n}(y))||<\rho  _{n+1}$.
So we have for each $n\ge 1$
\begin{eqnarray}\label{43}
 0=\pi_{I_{n}}(\mathcal{S}_{n}(y))+\pi_{I_{n}}(\mathcal{R}_{n}(y))\quad \textrm{with}
\quad ||\pi_{I_{n}}(\mathcal{R}_{n}(y))||<\rho  _{n+1}.
\end{eqnarray}

We now aim to prove the following

\begin{claim}\label{claim}
 For every $j\in I_{n}$ there exists a positive constant $C_{j,n}$,
 depending only on the construction until step $n$, such that
 \begin{eqnarray}\label{44}
|g_{j}^{*}(y)|<C_{j,n}\,\rho  _{n+1}.
\end{eqnarray}
\end{claim}

\begin{proof}[Proof of Claim \ref{claim}]
In order to prove this claim, we need to understand how a vector $f_{j}$
for $j\in\tilde{J}$ and $j\in [\xi  _{m}+1,\xi  _{m+1}]$ for some $m\in [0,n]$, may appear in the expression of $\pi_{I_{n}}(\mathcal{S}_{n}(y))+\pi_{I_{n}}(\mathcal{R}_{n}(y))$. We adress this question for $m=n$ first, and consider separately different cases.
\par\smallskip
\textbf{Case 1:} if $j\in J_{c}^{+} $ with $\xi  _{n}+1\le j\le \xi  _{n+1}$, then $j=s_{1}c_{1,n}+\ldots+s_{k_{n}}c_{k_{n},n}+\nu _{n}$ with $|s|\ge 1$ and $s_{1},\ldots, s_{k_{n}}\in [0,h_{n}]$. Consider first integers of the form $j=s_{1}c_{1,n}+\ldots+h_{n}c_{k_{n},n}+\nu _{n}$. Then
\begin{eqnarray}\label{enplus}
 u_{j}=\frac{\gamma _{n}}{4^{1-|s|}}f_{j+1}+p_{k_{n},n}(T)e_{j-c_{k_{n},n}+1}.
\end{eqnarray}
The only place where the vector $f_{j+1} $ appears in the expression (\ref{38}) of $\mathcal{S}_{n}(y)$ is in the formula above for $u_{j}$. Hence
$$g_{j}^{*}(y)\frac{\gamma _{n}}{4^{1-|s|}}||u_{j}||^{-\frac{1}{p}}=-f_{j+1}^{*}(\pi_{I_{n}}(\mathcal{R}_{n}(y))$$ so that
$$|g_{j}^{*}(y)|\le \frac{4^{1-|s|}}{\gamma _{n}}||u_{j}||^{\frac{1}{p}}\rho  _{n+1}.$$
Setting $C_{j,n}=\frac{4^{1-|s|}}{\gamma _{n}}||u_{j}||^{\frac{1}{p}}$, we observe that $C_{j,n} $ depends only on the construction until step $n$, and so (\ref{44}) is proved for this $j$.

If we go down one step and consider integers 
$j=s_{1}c_{1,n}+\ldots+(h_{n}-1)c_{k_{n},n}+\nu _{n}$, then we see that the vector $f_{j+1}$ may appear in the expression of $\mathcal{S}_{n}(y)$ in two ways: first, as coming from $u_{j}$ just as above, and, second, as coming from the expression (\ref{enplus}) of $u_{j+c_{k_{n},n}}$. Indeed, $p_{k_{n},n}(T)e_{j+1}$ may have a \nz\ component on the vector $f_{j+1}$. So $f_{j+1}^{*}(\mathcal{S}_{n}(y))$ is equal to 
$$g_{j}^{*}(y)\frac{\gamma _{n}}{4^{1-|s|}}||u_{j}||^{-\frac{1}{p}}+
g_{j+c_{k_{n},n}}^{*}(y)\frac{\gamma _{n}}{4^{1-|s|}}||u_{j+c_{k_{n},n}}||^{-\frac{1}{p}}f_{j+1}^{*}(p_{k_{n},n}(T)e_{j+1})$$ and also to
$$-f_{j+1}^{*}(\pi_{I_{n}}(\mathcal{R}_{n}(y)).$$ Since we have proved already that $|g_{j+c_{k_{n},n}}^{*}(y)|<C_{j+c_{k_{n},n},n}\rho  _{n+1}$, we eventually obtain that $|g_{j}^{*}(y)|<C_{j,n}\rho  _{n+1}$.

We now can go down the lattice of (c)-\woi s in the same way, and obtain the estimate (\ref{44}) for every integer $j=s_{1}c_{1,n}+\ldots+s_{k_{n}}c_{k_{n},n}+\nu _{n}$ with $s_{k_{n}}\ge 1$, then for every $j=s_{1}c_{1,n}+\ldots+s_{k_{n}-1}c_{k_{n}-1,n}+\nu _{n}$ with $s_{k_{n}-1}\ge 1$, etc. until we get it for every $j\in J_{c}^{+}\cap [\xi  _{n}+1,\xi  _{n+1}]$. The proof is exactly the same for $j\in J_{c}^{-}\cap [\xi  _{n}+1,\xi  _{n+1}]$.

\par\smallskip

\textbf{Case 2:} The case of (b)-intervals is dealt with in exactly the same fashion, considering first the index $j=a_{n}b_{n}+a_{n}$. There are several constributions to the vector $f_{j+1}$: one coming from $u_{j}$, some others coming from the (c)-intervals above, and a last one coming from the quantity $\pi_{I_{n}}(\mathcal{R}_{n}(y))$. The terms coming from the (c)-intervals are controlled thanks to the study of Case 1 above, and the norm of the last contribution is estimated as usual by $\rho  _{n+1}$. So we get that $|g_{j}^{*}(y)|<C_{j,n}\rho  _{n+1}$ for $j=a_{n}b_{n}+a_{n}$. Then we go down the (b)-intervals to obtain a similar inequality for every $j$ of the form $j=ra_{n}+b_{n}$, $r\in [1,a_{n}]$. Then we treat (in decreasing order again) the indices $j=rb_{n}-1$ for $r\in [1,a_{n}]$, and this proves (\ref{44}) for every $j\in J_{b}\cap [\xi  _{n}+1,\xi_{n+1}]$.

\par\smallskip

\textbf{Case 3:} It remains to deal with indices $j\in J_{a}\cap [\xi  _{n}+1,\xi_{n+1}]$. Consider first indices $j\in [a_{n}-(\kappa _{n}-\kappa _{n-1}-1)-1, a_{n}-1]$.

-- suppose that $\kappa _{n}>\kappa _{n-1}+1$. If $j=a_{n}-k$, $k\in [2,\kappa _{n}-\kappa _{n-1}]$, then the same argument as above shows that 
$|g_{j}^{*}(y)|<C_{j,n}\rho  _{n+1}$. If $j=a_{n}-1$, then
$u_{a_{n}-1}=Tf_{a_{n}-1}=\frac{1}{a_{n}^{2}}e_{a_{n}}$
belongs to the linear span of the vectors $f_{a_{0}}, f_{a_{1}},\ldots, f_{a_{n}}$.

-- suppose that $\kappa _{n}=\kappa _{n-1}+1$. Then $j=a_{n}-1$, and 
$u_{a_{n}-1}=\lambda _{a_{n}-1}e_{a_{n}}$ belongs to the linear span of the vectors $f_{a_{0}}, f_{a_{1}},\ldots, f_{a_{n}}$.

Lastly if $j=a_{n}$, then in both cases we have
$$u_{a_{n}}=Tf_{a_{n}}=\frac{1}{\alpha _{n}}\left(\frac{1}{\lambda _{a_{n}+1}}f_{a_{n}+1}-\frac{1}{\lambda _{a_{n-1}+1}}f_{a_{n-1}+1}\right)$$
so that $u_{a_{n}}$ belongs to the linear span of the vectors $f_{a_{n-1}+1}$ and $f_{a_{n}+1}$.
\par\smallskip
Putting together all these cases, we see that we have proved that if $j$ belongs to $\tilde{J}$ and $\xi  _{n}+1\le j\le \xi  _{n+1}$, then 
 except in the cases where $j=a_{n}-1$ and $j=a_{n}$ there exists a constant $C_{j,n}$ depending only on the construction until step $n$ such that $|g_{j}^{*}(y)|< C_{j,n}\rho  _{n+1}$. In the  cases where $j=a_{n}-1$ or $j=a_{n}$, $u_{j}$ belongs to the linear span of the vectors $f_{a_{0}}, f_{a_{1}},\ldots,  f_{a_{n-1}}, f_{a_{n-1}+1}, f_{a_{n}}, f_{a_{n}+1}$.
\par\smallskip
We now apply exactly the same procedure at steps $n-1$, then at step $n-2$, etc. in order to obtain that  for each $0\le m\le n-1$,
$$|g_{j}^{*}(y)|<C_{j,m}\,\rho  _{n+1}\quad \textrm{for every } j\in \tilde{J}\cap[\xi  _{m}+1, \xi  _{m+1}]\setminus\{a_{m}-1,a_{m}\},$$
and that $u_{j}$ belongs to the linear span of the vectors $f_{a_{0}}, f_{a_{1}}, f_{a_{1}+1},\ldots,  f_{a_{n}}, f_{a_{n}+1}$ for every $j\in\bigcup_{0\le m\le n}\{a_{m}-1,a_{m}\}$.
This proves the Claim.
\end{proof}

\par\smallskip
Setting $C_{n}$ to be the supremum of all these quantities $C_{j,m}$ for $0\le m\le n$, we have eventually obtained that 
\begin{eqnarray}\label{44bis}
|g_{j}^{*}(y)|<C_{n}\,\rho  _{n+1}\quad \textrm{for every } j\in I_{n}.
\end{eqnarray}
\par\smallskip
 Let us now consider a fixed integer $j\in I$. For each $n$ sufficiently large (\ref{44bis}) holds true,
where the constant $C_{n}$ depends only on the construction until step $n$ while $\rho  _{n+1}$ is chosen after the construction at step $n$ is completed. So we can for each $n\ge 1$ choose $\rho  _{n+1} $ so small that $C_{n}\,\rho  _{n+1}<2^{-n}$. Making $n$ tend to infinity in (\ref{44bis}) yields that 
$$g_{j}^{*}(y)=0\quad  \textrm{for every } j\in\tilde{J}\setminus\bigcup_{n\ge 0}\{a_{n}-1,a_{n}\}.$$
Equation (\ref{zero}) can hence be rewritten as 
\begin{eqnarray*}
 \sum_{j\not\in\tilde{J}} g_{j}^{*}(y) w_{j}f_{j+1}&+&\sum_{n\ge 1}\left( g_{a_{n}-1}^{*}(y)\; {{||u_{a_{n}-1}||}^{-\frac{1}{p}}} u_{a_{n}-1}+g_{a_{n}}^{*}(y)\; {{||u_{a_{n}}||}^{-\frac{1}{p}}} u_{a_{n}}\right)\\
&+&g_{0}^{*}(y)||u_{0}||^{-\frac{1}{p}}u_{0}=0.
\end{eqnarray*}
The second sum as well as the third term in this expression belong to the closed linear span of the vectors $f_{a_{n}}$ and $f_{a_{n}+1}$ for $n\ge 0$. Also, if $j$ does not belong to $\tilde{J}$, $j+1$ cannot be equal to $a_{n}$ or $a_{n}+1$ for some $n\ge 1$, nor to $0$. It follows that $g_{j}^{*}(y)=0$ for every $j\not\in \tilde{J}$. Putting things together, we see that we have proved that $g_{j}^{*}(y)=0$ for every $j\ge 0$ not belonging to the set $\bigcup_{n\ge 0}\{a_{n}-1, a_{n}\}$. This finishes the proof of Lemma \ref{factbizarre}.
\end{proof}

We now go back to the proof of Proposition \ref{propbizarre}.
Consider, as in the proof of Theorem \ref{th4}, the \op\ $T_{0}=AB$ acting on $\ell_{p}$. For every $y\in \ell_{p}$ we have
$$T_{0}y=\sum_{j\not \in\tilde{J}} g_{j}^{*}(y) w_{j}Af_{j+1}+\sum_{j\in\tilde{J}}
g_{j}^{*} (y)
{{||u_{j}||}}^{-\frac{1}{p}}Au_{j}.$$ If both $j$ and $j+1$ do not belong to $\tilde{J}$, then $Af_{j+1}=g_{j+1}$. If $j$ does not belong to $\tilde{J}$ but $j+1$ does, then $Af_{j+1}={||u_{j+1}||}^{\frac{1}{p}}g_{j+1}$. It follows that
$$T_{0}y=\sum_{\{j \, ;\, j,j+1\not\in\tilde{J}\}} g_{j}^{*}(y) w_{j}g_{j+1}+
\sum_{\{j \, ;\, j\not\in\tilde{J},\, j+1\in\tilde{J}\}}g_{j}^{*}(y) w_{j}{||u_{j+1}||}^{\frac{1}{p}}g_{j+1}+
\sum_{j\in\tilde{J}} g_{j}^{*} (x)
{{||u_{j}||}}^{-\frac{1}{p}}Au_{j} $$
i.e. that $T_{0}=S_{1}+K_{1}$ where $S_{1}$ and $K_{1}$ are \ops\ on 
$\ell_{p}$ defined as
$$S_{1}y=\sum_{\{j\, ;\, j,j+1\not\in\tilde{J}\}} g_{j}^{*}(y) w_{j}g_{j+1}$$
and
$$K_{1}y=\sum_{\{j \, ;\, j\not\in\tilde{J},\,j+1\in\tilde{J}\}}g_{j}^{*}(y) w_{j}{||u_{j+1}||}^{\frac{1}{p}}g_{j+1}+
\sum_{j\in\tilde{J}} g_{j}^{*} (x)
{{||u_{j}||}}^{-\frac{1}{p}}Au_{j}.$$
 Then the \op\ $K_{1}$ is easily seen to be nuclear, as
$$\sum_{\{j \, ;\, j\not\in\tilde{J},\,j+1\in\tilde{J}\}}w_{j}{||u_{j+1}||}^{\frac{1}{p}}+
\sum_{j\in\tilde{J}}{{||u_{j}||}}^{-\frac{1}{p}}||Au_{j}||\le\max(2,||A||)\,\sum_{j\in\tilde{J}}{||u_{j}||}^{\min(\frac{1}{p},\frac{1}{q})}<+\infty$$ by (\ref{EQ5}).
The \op\ $S_{1}$ is a forward weighted shift on 
$\ell_{p}$, which we can further decompose as in Remark \ref{remadd} as the sum of a contraction and a compact \op: setting
$$S_{2}y=\sum_{\{j\, ;\, j,j+1\not\in\tilde{J}\}} g_{j}^{*}(y) g_{j+1}$$
and
$$K_{2}y=\sum_{\{j\, ;\, j,j+1\not\in\tilde{J}\}} (w_{j}-1) g_{j}^{*}(y) g_{j+1}+K_{1}y$$ we have $T_{0}=S_{2}+K_{2}$, where $S_{2}$ is a contraction on $\ell_{p}$ and $K_{2}$ is a compact \op.
\par\smallskip
Denote by $E_{0}$ the kernel of $B$, and by $E$ the quotient space $\ell_{p}/E_{0}$. Let $\overline{A}:\ell_{p}\oplus Z\rightarrow E$
 and $\overline{B}:\ell_{p}\rightarrow E\oplus Z$ be the \ops\ defined by $\overline{A}x=Ax+E_{0}$ for every $x\in \ell_{p}\oplus Z$, and $\overline{B}(y+E_{0})=By$ for every $y\in \ell_{p}$. Obviously $T=\overline{B}\,\overline{A}$, and $\overline{T}_{0}=\overline{A}\,\overline{B}$ is a bounded \op\ on $E$. Lemma \ref{factbizarre} now implies that $E_{0}$ is invariant by the \op\ $S_{2}$. Indeed, if $y$ belongs to $E_{0}$, then $g_{j}^{*}(y)=0$ for every $j$ not belonging to the set $\bigcup_{n\ge 0}\{a_{n}-1, a_{n}\}$. So in particular 
 $g_{j}^{*}(y)=0$ for every $j$ such that $j$ and $j+1$ do not belong to $\tilde{J}$. Thus $S_{2}y=0$. In particular $S_{2}(E_{0})\subseteq E_{0}$, and it makes sense to define $\overline{S}_{2}:E\rightarrow E$ by setting $\overline{S}_{2}(y+E_{0})=S_{2}y+E_{0}$ for $y\in \ell_{p}$. Hence $\overline{T}_{0}$ can be decomposed as $\overline{T}_{0}=\overline{S}_{2}+\overline{K}_{2}$, where $\overline{S}_{2}$ is a contraction of $E$ and $\overline{K}_{2}$ is a compact \op\ on $E$. So $||\overline{T}_{0}^{n}||\le 1$ for every $n\ge 1$.
\par\smallskip
If the space $X$ is complex, $E$ is a complex reflexive Banach space
and we can apply the Lomonosov inequality to the weakly closed sub-algebra of $\mathcal{B}(E^{*})$ generated by $\overline{T}_{0}^{*}$:
there exist two \nz\ vectors $e\in E$ and $e^{*}\in E^{*}$ such that
$|\pss{e^{*}}{\overline{T}_{0}^{n}e}|\le||\overline{T}_{0}^{*n}||\le 1$
for every $n\ge 1$. So the closure of the orbit of the vector $e\in E$ under the action of $\overline{T}_{0}$ is
 a \nt\ \inv\ closed subset of $\overline{T}_{0}$. Since $\overline{A}$ has dense range and $\overline{B}$ is injective, 
the same reasoning as in the proof of Theorem \ref{th4} implies then that $T$ itself has a \nt\ \inv\ closed subset, which is an \inv\ subspace by Property (P1). Proposition \ref{propbizarre} is hence proved in the complex case.
\par\smallskip
If $X$ is a real Banach space, we need to go back to the proof of Theorem $1$ in \cite{L} in order to show that $\overline{T}_{0}$ has a \nt\ \inv\ closed subset in $E$. It is not difficult to see that Lemma $1 - 6$ as well as the main part of Lemma $8$ of \cite{L} hold true in the real case as well, so that we have the following result:

\begin{theorem}\label{thlom}
 Let $X$ be a real or complex separable Banach space, and let $\mathcal{A}$ be a uniformly closed subalgebra of $\mathcal{B}(X)$.
 Then we have the following alternative: 
 either
 
(A1)  there exist two \nz\ vectors $x^{*}\in X^{*}$ and
 $x^{**}\in X^{**}$ such that 
\begin{eqnarray*}
|\pss{x^{**}}{A^{*}x^{*}}|\le ||A||_{e}\quad \textrm{ for every } A\in \mathcal{A}
\end{eqnarray*}
or

(A2)  the set $\{A^{*}x^{*} \textrm{ ; } A\in \mathcal{A}\}$ is dense in $X^{*}$ for every \nz\ vector $x^{*}\in X^{*}$, and in this case there exist an \op\ $A_{0}\in\mathcal{A}$, different from the identity \op, such that $1$ is an eigenvalue of $A_{0}^{*}$.
\end{theorem}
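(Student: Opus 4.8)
The plan is to revisit Lomonosov's proof of \cite[Th.~1]{L} and to check that, read carefully, it yields precisely the dichotomy (A1)/(A2), the only place where the complex structure of the scalar field is genuinely used being the final step of \cite{L} which converts the second branch of the alternative into a contradiction with the properness of $\mathcal{A}$ --- a step we simply do not perform here. So it suffices to prove that, if (A1) fails, then (A2) holds.

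First I would dispose of the ``easy branch''. Suppose there is a \nz\ $x^{*}\in X^{*}$ such that $Y:=\overline{\{A^{*}x^{*}\ ;\ A\in\mathcal{A}\}}$ is a proper closed subspace of $X^{*}$; note that $Y$ is invariant under $B^{*}$ for every $B\in\mathcal{A}$, since $B^{*}A^{*}x^{*}=(AB)^{*}x^{*}$. By Hahn--Banach there is a \nz\ $x^{**}\in X^{**}$ vanishing on $Y$, and then $\pss{x^{**}}{A^{*}x^{*}}=0\le ||A||_{e}$ for every $A\in\mathcal{A}$, so (A1) holds. Hence from now on I may assume that $\{A^{*}x^{*}\ ;\ A\in\mathcal{A}\}$ is dense in $X^{*}$ for every \nz\ $x^{*}$ --- this is the first half of (A2) --- and it remains to produce, under the assumption that (A1) fails, an \op\ $A_{0}\in\mathcal{A}$ with $A_{0}\neq I$ and $1\in\sigma_{p}(A_{0}^{*})$.

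The heart of the matter is then to run Lomonosov's Schauder--Tychonoff fixed point construction under this density hypothesis. Assuming (A1) fails, i.e.\ that for every pair of \nz\ vectors $(x^{**},x^{*})$ there is $A\in\mathcal{A}$ with $|\pss{x^{**}}{A^{*}x^{*}}|> ||A||_{e}$, one builds --- after the customary normalizations, and using the density of the orbits through a partition-of-unity argument --- a suitable weak$^{*}$-compact convex set together with a continuous selfmap of it given by a fixed compact perturbation composed with a pointwise-selected element of $\mathcal{A}$; the Schauder--Tychonoff theorem then provides a fixed point, the hypothesis that (A1) fails being exactly what guarantees that this fixed point is \nz\ and that the map is well defined into the set. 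Unwinding the fixed-point equation, and using that a compact \op\ has essential norm zero, one extracts $A_{0}\in\mathcal{A}$ with $A_{0}\neq I$ and a \nz\ eigenvector of $A_{0}^{*}$ for the eigenvalue $1$. All of this is the content of Lemmas~$1$--$6$ and of the main part of Lemma~$8$ of \cite{L}; the thing to verify is that each of these lemmas uses only convexity, weak$^{*}$-compactness of bounded sets, Hahn--Banach separation, Urysohn/partition-of-unity arguments, and finite convex combinations and norm-limits of the selected \ops\ (which is why uniform closedness of $\mathcal{A}$, rather than closedness in the weak operator topology as in \cite{L}, is enough), and nothing about $\K=\C$.

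The main obstacle --- and the reason for stating the result as an alternative rather than as \cite[Th.~1]{L} --- is precisely to resist the temptation to go one step further: in \cite{L} the existence of such an $A_{0}$ is turned into a contradiction with $\mathcal{A}\neq\mathcal{B}(X)$ by an argument involving holomorphy in a complex parameter (equivalently, the non-emptiness of the spectrum), which is unavailable over $\R$. Apart from simply stopping there, the delicate bookkeeping to get right is: that the fixed point genuinely produces the eigenvalue $1$ \emph{on $A_{0}^{*}$} and not merely on $A_{0}^{**}$ (one has to track carefully in which dual of $X$ the fixed point is being constructed in \cite{L}), that it is \nz, and that $A_{0}\neq I$ --- and each of these is forced by the standing assumption that (A1) fails, exactly as in \cite{L}.
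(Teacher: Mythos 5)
Your proposal is correct and takes essentially the same route as the paper: the paper obtains the dichotomy exactly by observing that Lemmas 1--6 and the main part of Lemma 8 of \cite{L} are valid over both $\R$ and $\C$ and by stopping before the final complex-spectral (Burnside-type) step, which is the only place where the complex structure is needed. Your separate Hahn--Banach treatment of the non-dense-orbit branch is a harmless elaboration of that same argument.
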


Let $\mathcal{A}$ be the uniformly closed sub-algebra of $\mathcal{B}(E^{*})$ generated by the \op\ $\overline{T}_{0}^{*}$. If (A1) of Theorem \ref{thlom} above holds true, then we know that 
$\overline{T}_{0}$ has a \nt\ \inv\ closed subset in $E$, and we are done. So suppose that (A2) is true: since $E$ is reflexive, there exist an \op\ $A_{0}\in \mathcal{B}(E^{*})$  and a \nz\ vector $x_{0}\in E$ such that $A_{0}$ is not the identity \op\ and $A_{0}^{*}x_{0}=x_{0}$. Since $\mathcal{A}$ is commutative, we have $A^{*}x_{0}=A^{*}A_{0}^{*}x_{0}=A_{0}^{*}A^{*}x_{0}$ for every $A\in \mathcal{A}$. Assumption (A2) implies that the set $\{A^{*}x_{0} \textrm{ ; } A\in \mathcal{A}\}$ is dense in $E$, so $A_{0}=I$ which is impossible. So (A2) cannot hold, and (A1) is true. This proves Proposition \ref{propbizarre} in the real case.
\end{proof}

\subsection{Invariant subspaces of the operators obtained in Theorem \ref{th5}}

We finish this section by showing that the \op\ constructed in the proof of Theorem \ref{th5} has \nt\ \inv\ subspaces too:

\begin{proposition}
 Let $T$ be the \op\ defined on a closed subspace $L$ of $\ell_{2}\oplus J^{*}$ constructed in the proof of Theorem \ref{th5}. Then $T$  has a \nt\ \inv\ subspace.
\end{proposition}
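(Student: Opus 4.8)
The plan is to run the same factorization-plus-Lomonosov machinery used in the proof of Proposition \ref{propbizarre}, but this time for the operator $T$ obtained on the subspace $L$ of $X^{*}=\ell_{2}\oplus J^{*}$ in the proof of Theorem \ref{th5}. Recall that this $T$ is an operator constructed exactly as in Theorem \ref{th2} on the space $X^{*}=\ell_{2}\oplus J^{*}$, which is of the form $\ell_{p}\oplus Z$ with $p=2$ and $Z=J^{*}$. In particular Proposition \ref{propbizarre} (and its proof) applies verbatim to $T$ acting on $X^{*}$: provided the parameters $\xi_{n}\ll a_{n}\ll b_{n}\ll\ldots$ are chosen fast enough in the construction, $T$ has a non-trivial invariant closed subspace $M\subseteq X^{*}$. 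Since by construction $L$ is itself a non-trivial $T$-invariant closed subspace of $X^{*}$, what remains is to bridge the gap between ``$T$ has a non-trivial invariant subspace in $X^{*}$'' and ``$T$ restricted to $L$ has a non-trivial invariant subspace in $L$''.

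First I would note that the conclusion of Proposition \ref{propbizarre} is really obtained through Theorem \ref{thlom}: one factors $T=\overline{B}\,\overline{A}$ through a reflexive space $E=\ell_{2}/E_{0}$ (with $\overline{A}$ of dense range and $\overline{B}$ injective), shows that $\overline{T}_{0}=\overline{A}\,\overline{B}$ is a compact perturbation of a contraction on $E$, and deduces from the Lomonosov alternative that $\overline{T}_{0}$ has a non-trivial invariant closed \emph{subset} (hence $T$ a non-trivial invariant closed subset of $X^{*}$, which by Property (P1) is a subspace). The crucial point for the present proposition is that this argument is insensitive to the ambient space: if instead of starting from all of $\ell_{2}$ we work inside the $T$-invariant subspace $L$, the same factorization restricts. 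Concretely, I would intersect the factorization data of Lemma \ref{factbizarre} with $L$: set $E_{0}^{L}=E_{0}\cap (\text{preimage of }L)$ appropriately, or more simply observe that $A(L)$ is a closed $\overline{T}_{0}$-invariant-type subspace of $E$ on which the same compact-perturbation-of-a-contraction decomposition holds. Then apply Theorem \ref{thlom} to the uniformly closed subalgebra of $\mathcal{B}$ of the dual of the relevant reflexive space generated by the relevant restricted adjoint, exactly as in the real/complex dichotomy at the end of the proof of Proposition \ref{propbizarre}.

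The cleanest route, and the one I would actually carry out, avoids re-deriving the factorization on $L$ and instead argues as follows. The operator induced by $T$ on $L$ has the property (by Property (P4), which holds here) that every non-zero invariant closed subspace of $L$ is the closed span of a single orbit, and hence is automatically of infinite dimension. Now $L$ is a subspace of $X^{*}=\ell_{2}\oplus J^{*}$; since $J^{*}$ is quasi-reflexive of order $1$, Fact \ref{fact1} shows $L$ is either reflexive or quasi-reflexive of order $1$, and Proposition \ref{facti} (applied to the operator $T$ on $X^{*}$, whose non-zero invariant subspaces are all non-reflexive) shows that $L$ is non-reflexive, hence quasi-reflexive of order $1$. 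Then every operator on $L$ is $\lambda I + W$ with $W$ weakly compact (the Fonf–Lin–Wojtaszczyk result quoted after Theorem \ref{th4}); in particular the operator induced by $T$ on $L$ is a weakly compact perturbation of a multiple of the identity, hence a compact perturbation of a power-bounded operator after factoring out, exactly as in Remark \ref{remadd}. One then factors this induced operator through a reflexive space via Davis–Figiel–Johnson–Pe\l czyński as in Proposition \ref{propnewD}, with dense-range and injective factors, and applies the Lomonosov alternative of Theorem \ref{thlom} on that reflexive space precisely as at the end of the proof of Proposition \ref{propbizarre} (the real case handled via the dichotomy (A1)/(A2), using commutativity of the singly-generated algebra to rule out (A2)). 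This produces a non-trivial invariant closed subset of the operator induced by $T$ on $L$, which by Property (P1) is a closed subspace.

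The main obstacle I anticipate is bookkeeping rather than conceptual: one must check that the compact-perturbation-of-a-contraction structure genuinely survives the passage to $L$. In the route via quasi-reflexivity this is essentially free (the weakly compact part comes from the Fonf–Lin–Wojtaszczyk decomposition and the reflexive factorization is standard), so the real work is just verifying that $L$ is quasi-reflexive of order $1$ — which follows from Fact \ref{fact1} together with Proposition \ref{facti} — and that the factors in the Davis–Figiel–Johnson–Pe\l czyński factorization can be taken with dense range and injective, which is exactly the modification already performed in the proof of Proposition \ref{propnewD}. Once those are in place, the Lomonosov argument at the end of the proof of Proposition \ref{propbizarre} applies word for word, and we conclude that $T$ on $L$, hence $T$ as constructed in Theorem \ref{th5}, has a non-trivial invariant closed subspace.
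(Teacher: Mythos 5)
Your second route --- the one you say you would actually carry out --- has a genuine gap at the step where you pass from the Fonf--Lin--Wojtaszczyk decomposition to the Lomonosov argument. Writing the operator induced by $T$ on $L$ as $\lambda I+W$ with $W$ weakly compact does not make it ``a compact perturbation of a power-bounded operator'': weakly compact is far from compact, the essential norm does not see such a decomposition at all, and you get no bound on $\|(\lambda I+W)^{n}\|_{e}$. Likewise, factoring the weakly compact part (or $T|_{L}$ itself) through a reflexive space via Davis--Figiel--Johnson--Pe\l czy\'nski produces an operator $AB$ on that reflexive space with no control whatsoever on the essential norms of its powers; but the application of Theorem \ref{thlom} at the end of the proof of Proposition \ref{propbizarre} rests precisely on the estimate $\|\overline{T}_{0}^{\,n}\|_{e}\le 1$ coming from the explicit decomposition $\overline{T}_{0}=\overline{S}_{2}+\overline{K}_{2}$ with $\overline{S}_{2}$ a contraction and $\overline{K}_{2}$ compact. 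Remark \ref{remadd} provides that structure for the specific operator on $X^{*}$, built from its shift-plus-nuclear form; it is not a consequence of quasi-reflexivity of $L$, and nothing in your route recovers it for the operator induced on $L$. So ``apply the Lomonosov alternative precisely as at the end of the proof of Proposition \ref{propbizarre}'' is exactly the step that fails.

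What the paper does is essentially your first, discarded sketch, and the point you dismiss as bookkeeping is the heart of the matter. One keeps the explicit factorization $T_{1}=\overline{B}_{1}\overline{A}_{1}$ of the ambient operator through $E=\ell_{2}/E_{0}$, observes that here $E$ is isometric to $\ell_{2}$, and sets $H=\overline{\overline{A}_{1}(L)}$ (note that $\overline{A}_{1}(L)$ need not be closed). Since $L$ is invariant, $\overline{B}_{1}(H)\subseteq L$ and $H$ is $\overline{T}_{0}$-invariant, so the induced operator on $H$ is $P_{H}\overline{S}_{2}i_{H}+P_{H}\overline{K}_{2}i_{H}$; because $H$ is a closed subspace of a Hilbert space, $P_{H}$ is the orthogonal projection, of norm one, so the compression of the contraction is again a contraction and the compression of the compact part stays compact, whence the powers of the induced operator still have essential norm at most $1$ and Theorem \ref{thlom} applies on $H$. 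The invariant closed subset so obtained transfers back to $T$ acting on $L$ through the restricted factorization ($\overline{A}_{1}|_{L}$ has dense range in $H$, $\overline{B}_{1}$ is injective), and is a subspace by (P1). None of this is free in a general Banach space --- if $H$ were not $1$-complemented the contraction-plus-compact structure would be lost --- which is exactly why the quasi-reflexivity shortcut cannot replace it.
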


\begin{proof}
The \op\ $T$ is induced by an \op\ $T_{1}$ acting on  $\ell_{2}\oplus J^{*}$ on one of its \inv\ subspaces $L$, where $T_{1}$ is one of the \ops\ on $\ell_{2}\oplus J^{*}$ given by Theorem \ref{th2} for some $\varepsilon \in (0,1)$. The proof of Proposition \ref{propbizarre} above, plus the observation that the space $E=\ell_{2}/E_{0}$ is isometric to $\ell_{2}$ in this case, show that $T_{1}$ can be factorized through the space $\ell_{2}$ in the following way: there exist $\overline{A}_{1}:\ell_{2}\oplus J^{*}\rightarrow \ell_{2}$ with dense range and $\overline{B}_{1}:\ell_{2}\rightarrow \ell_{2}\oplus J^{*}$ injective such that $T_{1}=\overline{B}_{1}\overline{A}_{1}$. Let us now define $H$ to be the closure in $\ell_{2}$ of the linear space $\overline{A}_{1}(L)$. Then $H$ is a Hilbert space, and since $L$ is $T$-\inv\ the \op\ $\overline{B}_{1}$ maps $H$ into $L$. It follows that the \op\ $\overline{T}_{0}=\overline{A}_{1}\overline{B}_{1}$ defined on $\ell_{2}$ leaves $H$ \inv. Moreover, we have seen in the proof of Proposition \ref{propbizarre} that $\overline{T}_{0}$ could be written as $\overline{T}_{0}=\overline{S}_{2}+\overline{K}_{2}$, where $||\overline{S}_{2}||\le 1$ and $\overline{K}_{2}$ is a compact \op\ on $\ell_{2}$. Let $\overline{T}_{2}$ be the \op\ induced by $\overline{T}_{0}$ on $H$. If $i_{H}$ denotes the canonical injection of $H$ into $\ell_{2}$ and $P_{H}$ denotes the orthogonal projection of $\ell_{2}$ onto $H$, then $\overline{T}_{2}=P_{H}\overline{S}_{2}i_{H}+P_{H}\overline{K}_{2}i_{H}$, and $P_{H}\overline{S}_{2}i_{H}$ is again a contraction on $H$ while $P_{H}\overline{K}_{2}i_{H}$ is compact.
Proceeding in the same way as in the proof of Proposition \ref{propbizarre}, we see that 
 $\overline{T}_{2}$ has a \nt\ \inv\ closed subset. Hence we obtain that $T$ itself acting on $L$ has a \nt\ \inv\ closed subset. This set is a subspace by Property (P1), and we are done.
\end{proof}


\begin{thebibliography}{99999}
\par\bigskip

\bibitem{AH}
\textsc{S.~Argyros, R.~Haydon,}
\newblock A hereditarily indecomposable $\mathcal{L}_{\infty }$-space that solves the scalar-plus-compact problem,
\newblock \emph{Acta Math.} \textbf{206} (2011), p. 1 -- 54. 


\bibitem{AM}
\textsc{S.~Argyros, P.~Motakis,}
\newblock A reflexive HI space with the hereditary Invariant Subspace Property, preprint 2011, arXiv:1111.3603.

\bibitem{BCP}
\textsc{S.~Brown, B.~Chevreau, C.~Pearcy,}
\newblock On the structure of contraction operators II,
\newblock \emph{J. Funct. Anal.},  \textbf{76} (1988), pp 30 -- 55.

\bibitem{CE}
\textsc{I.~Chalendar, J.~Esterle,}
\newblock Le probl\`eme du sous-espace invariant [The invariant subspace problem],
\newblock \emph{Development of mathematics 1950--2000}, pp 235 -- 267, Birkh\"auser, Basel, 2000.

\bibitem{CY}
\textsc{P.~Civin, B.~Yood,}
\newblock Quasi-reflexive spaces,
\newblock \emph{Proc. Amer. Math. Soc.},  \textbf{8} (1957), pp 906 -- 911.

\bibitem{DFJP}
\textsc{W. J.~Davis, T.~Figiel, W. B.~Johnson, A.~Pelczynski,}
\newblock Factoring weakly compact operators,
\newblock \emph{J. Funct. Anal.} \textbf{17} (1974), pp 311 -- 327.

\bibitem{DGZ}
\textsc{R.~Deville, G.~Godefroy, V.~Zizler,}
\newblock Smoothness and renormings in Banach spaces,
\newblock \emph{Pitman Monographs and Surveys in Pure and Applied Mathematics,} \textbf{64}, Longman Scientific  Technical, Harlow; copublished in the United States with John Wiley and Sons, Inc., New York, 1993.


\bibitem{Do}
\textsc{W.~Donoghue,}
\newblock The lattice of invariant subspaces of a completely continuous quasi-nilpotent transformation,
\newblock \emph{Pacific J. Math.}, \textbf{7} (1957), pp 1031 -- 1035.


\bibitem{E}
\textsc{P.~Enflo,}
\newblock On the Invariant Subspace Problem,
\newblock \emph{Acta Math.}, \textbf{158} (1987), pp 212 -- 313.

\bibitem{FHHM}
\textsc{M.~Fabian, P.~Habala, P.~Hajek, V.~Montesinos, J.~Pelant,}
\newblock Functional analysis and infinite-dimensional geometry,
\newblock \emph{CMS Books in Mathematics/Ouvrages de Math\'ematiques de la SMC,} \textbf{8}, Springer-Verlag,
New York, 2001.

\bibitem{FL}
\textsc{V.~Fonf, M.~Lin, P.~Wojtaszczyk,}
\newblock  A non-reflexive Banach space with all contractions mean ergodic
\newblock \emph{Israel J. Math.} \textbf{179} (2010), pp 479 -- 491. 

\bibitem{GR}
\textsc{S.~Grivaux, M.~Roginskaya,}
\newblock On Read's type operators,
\newblock \emph{Int. Math. Res. Not.}, Art. ID rnn 083, (2008) 42 pp.

\bibitem{GR2}
\textsc{S.~Grivaux, M.~Roginskaya,}
\newblock An example of a minimal action of the free semigroup $\mathbb{F}^{+}_{2}$  on the Hilbert space,
\newblock preprint 2013.

\bibitem{J}
\textsc{R. C.~James,}
\newblock Bases and reflexivity of Banach spaces,
\newblock \emph{Ann. of Math.}, \textbf{52}, (1950), pp 518 -- 527.

\bibitem{K1}
\textsc{N. J.~Kalton,}
\newblock Schauder bases and reflexivity,
\newblock \emph{Studia Math.}  \textbf{38}  (1970), pp 255--266.


\bibitem{Kw}
\textsc{S.~Kwapien,}
\newblock On operators factorizable through $L_p$-spaces,
\newblock \emph{Bull. Soc. Math. France}  \textbf{31-32}  (1972), pp 215 -- 225.

\bibitem{JKP}
\textsc{I. B.~Jung, E.~Ko, C.~Pearcy,}
\newblock Some nonhypertransitive operators,
\newblock \emph{Pacific J. Math.}  \textbf{220}  (2005), pp 329 -- 340.

\bibitem{Lom}
\textsc{V.~Lomonosov,}
\newblock Invariant subspaces of the family of operators that commute with a completely continuous operator,
\newblock \emph{Funkcional. Anal. i Prilozen} \textbf{7} (1973), pp 55 -- 56. 

\bibitem{L}
\textsc{V.~Lomonosov,}
\newblock An extension of Burnside's theorem to infinite-dimensional spaces,
\newblock \emph{Israel J. Math.},  \textbf{75}  (1991),  pp 329 -- 339.

\bibitem{RR}
\textsc{H.~Radjavi, P.~Rosenthal,}
\newblock Invariant subspaces,
\newblock Second edition, \emph{Dover Publications, Inc.}, Mineola, NY, 2003.

\bibitem{R}
\textsc{C.~Read,}
\newblock A solution to the invariant subspace problem,
\newblock  \emph{Bull. London Math. Soc.},  \textbf{16}  (1984), pp 337 -- 401.


\bibitem{R1}
\textsc{C.~Read,}
\newblock  A solution to the invariant subspace problem on the space $\ell_{1}$,
\newblock \emph{Bull. London Math. Soc.}, \textbf{17} (1985), pp 305 -- 317.

\bibitem{R2}
\textsc{C.~Read,}
\newblock The invariant subspace problem on some Banach spaces with separable dual,
\newblock \emph{Proc. London Math. Soc.}, \textbf{58}  (1989), pp 583 -- 607.

\bibitem{R3}
\textsc{C.~Read,}
\newblock The Invariant Subspace Problem for a class of Banach spaces,
$2$: hypercyclic operators,
\newblock \emph{Israel J. Math.}, \textbf{63} (1988), pp 1 --
40.

\bibitem{TS}
\textsc{T.~Schlumprecht,V.~Troitsky, } 
\newblock On quasi-affine transforms of Read's operator,
\newblock \emph{Proc. Amer. Math. Soc.} \textbf{131} (2003), pp 1405 -- 1413 

\bibitem{Z}
\textsc{M.~Zippin},
\newblock A remark on bases and reflexivity in Banach spaces,
\newblock \emph{Israel J. Math.},  \textbf{6}  (1968), pp  74 -- 79.

\end{thebibliography}
\end{document}